\theoremstyle{plain}  
\newtheorem{theorem}{Theorem}[section]
\newtheorem{lemma}[theorem]{Lemma}
\newtheorem{proposition}[theorem]{Proposition}
\newtheorem{corollary}[theorem]{Corollary} 
\newtheorem{definition}[theorem]{Definition} \theoremstyle{remark}
\newtheorem{remark}[theorem]{Remark}
\newcommand{\Rc}{\mathcal{R}}
\newcommand{\Ac}{\mathcal{A}}
\newcommand{\dd}{\mathrm{d}}
\newcommand{\Fscr}{\mathscr{F}}
\newcommand{\Hscr}{\mathscr{H}}
\newcommand{\Hc}{\mathcal{H}}
\newcommand{\Ic}{\mathcal{I}rr}
\newcommand{\jb}{{\boldsymbol{j}}}
\newcommand{\hb}{\boldsymbol{h}}
\newcommand{\kb}{{\boldsymbol{k}}}
\newcommand{\irr}{{\rm Irr}}
\newcommand{\kbsf}{{\boldsymbol{\mathsf{k}}}}
\newcommand{\hbsf}{{\boldsymbol{\mathsf{h}}}}
\newcommand{\N}{\mathbb{N}}
\newcommand{\Nc}{\mathcal{N}}
\renewcommand{\Mc}{\mathcal{M}}
\newcommand{\Pb}{\mathbb{P}}
\newcommand{\R}{\mathbb{R}}
\newcommand{\C}{\mathbb{C}}
\newcommand{\T}{\mathbb{T}}
\newcommand{\Z}{\mathbb{Z}}
\newcommand{\Uc}{\mathcal{U}}
\newcommand{\U}{\mathbb{U}}
\newcommand{\Vc}{\mathcal{V}}
\newcommand{\Wc}{\mathcal{W}}
\newcommand{\Zc}{\mathcal{Z}}
\newcommand{\eps}{\varepsilon}
\newcommand{\length}{\sharp}
\newcommand{\pig}{\boldsymbol{\pi}}
\newcommand{\Ascr}{\mathscr{A}}
\newcommand{\Rscr}{\mathscr{R}}
\newcommand{\va}[1]{|#1|}
  \newcommand{\vat}{|t|}
\newcommand{\Norm}[2]{\|#1\|\left.\vphantom{T_{j_0}^0}\!\!\right._{#2}}         
\author{Joackim Bernier }
\address{Univ Rennes, INRIA, CNRS, IRMAR - UMR 6625, F-35000 Rennes, France} 
\email{Joackim.Bernier@ens-rennes.fr}
\author{Erwan Faou}
\address{Univ Rennes, INRIA, CNRS, IRMAR - UMR 6625, F-35000 Rennes, France} 
\email{Erwan.Faou@inria.fr}
 \author{ Beno\^it Gr\'ebert}
\address{Laboratoire de Math\'ematiques Jean Leray, Universit\'e de Nantes, UMR CNRS 6629\\
2, rue de la Houssini\`ere \\
44322 Nantes Cedex 03, France}
\email{benoit.grebert@univ-nantes.fr}
\title[Rational normal forms for NLS]{Rational normal forms and stability of small solutions to nonlinear Schr\"odinger equations}
\subjclass[2000]{37K55, 35B40, 35Q55}
\keywords{Birkhoff normal form, Resonances, Hamiltonian PDEs}
\begin{document}
\begin{abstract}
We consider general classes of nonlinear Schr\"odinger equations on the circle with nontrivial cubic part and without external parameters.  We construct  a new type of normal forms, namely rational normal forms, on open sets  surrounding the origin in high Sobolev regularity. With this new tool we
prove that, given a large constant $M$ and a sufficiently small parameter $\varepsilon$, for generic initial data of size $\varepsilon$, the flow is conjugated to an integrable flow up to an arbitrary small remainder of order $\varepsilon^{M+1}$. This implies  that for such initial data $u(0)$ we control  the  Sobolev norm of the solution $u(t)$ for  time of order $\varepsilon^{-M}$. Furthermore this property is locally stable:  if $v(0)$ is sufficiently close to $u(0)$ (of order $\varepsilon^{3/2}$) then the solution $v(t)$ is also controled for  time of order $\varepsilon^{-M}$. 
\end{abstract}

\maketitle
\tableofcontents

\section{Introduction}

In this paper we are interested in the long time behavior of  small amplitude solutions of non-linear Hamiltonian partial differential equations on bounded domains. In this context, the competition between non-linear effects and energy conservation (typically the $H^1$ Sobolev norm) makes the problem intricate. One of the main issues is the control of higher order Sobolev norms  of solutions for which typically a priori upper bounds are polynomials (see \cite{Bou96a,Sta97, Bou03, Soh11, CKO12})\\   Bourgain  exhibited in \cite{Bou96a} examples of growth of high order Sobolev norms for some solutions of a nonlinear wave equation in 1d with periodic boundary conditions. These examples were constructed by using as much as possible the totally resonant character of the equation (all the linear frequencies are integers and thus proportional).\\ 
On the contrary, Bambusi \& Gr\'ebert have shown in \cite{BG06} (see also \cite{Bam03,Bou03}) that, in a fairly general semi linear PDE framework, if an appropriate non-resonance condition is imposed on the linear part then the solution $u$ of the corresponding PDE satisfy a strong stability property:
\begin{equation}\label{star}
\text{if }\ \Norm{u(0)}{H^s}\leq \eps \quad \text{then } \quad \Norm{u(t)}{H^s}\leq 2\eps \quad \text{ for all }\quad  t\leq  \eps^{-M}, 
\end{equation}
where $\Norm{\,\cdot\,}{H^s}$ denotes the Sobolev norm of order $s$, $M$ can be chosen arbitrarily large and $\eps$  is supposed to be small enough, $\eps<\eps_0(M,s)$. 
 The method of proof is based on the construction of Birkhoff normal forms. To verify the appropriate non-resonance condition external parameters were used --such as a mass in the case of nonlinear wave equation-- and the stability result were obtained for almost every value of these parameters. Then this technic was applied to prove almost global existence results for a lot of semi linear Hamiltonian PDEs (see \cite{BDGS07,Bam07,Gre07,GIP09,FG10}).  However, the case of a non-linear perturbation of a fully resonant linear PDE was not achievable by this technique. Actually for the cubic nonlinear Schr\"odinger equation on the two dimensional torus it is proved in \cite{CKSTT10} that the high Sobolev norms may growth arbitrarily for some special initial data. Even in one  dimension of space, it is proved in \cite{GT12} that the quintic nonlinear Schr\"odinger equation on the circle does not satisfy \eqref{star} (but without arbitrary growth of the high Sobolev norms, see also \cite{HP17} for a generalization or \cite{CF} for a two-dimensional example).

Now consider the nonlinear Schr\"odinger equation:
\begin{equation}
\tag{NLS}\label{nls}
iu_t=-\Delta u+ \varphi(|u|^2) u , \quad x\in \T , \quad t \in \R, 
\end{equation}
where $\varphi= \R \to \R$ is an analytic  function on a neighborhood of the origin satisfying $\varphi(0)=m$ is the mass possibly 0  and $\varphi'(0) \neq 0$. 
Equation \eqref{nls} is a Hamiltonian system associated with the Hamiltonian function 
\begin{equation}
\label{HamNLS}
H_{\rm NLS}(u,\bar u ) = \frac{1}{2\pi}\int_{\T} \left(\va{\nabla u }^{2}   + 
g(|u|^2) \right) \dd x, 
\end{equation}
where $g(t)=\int^t_0 \varphi$, 
and the complex symplectic structure $i\dd u\wedge \dd\bar u$. \\
This is an example of fully resonant Hamiltonian PDE, as the linear frequencies are $j^2 \in \N$ for $j\in\Z$. Nevertheless in \cite{KP96} Kuksin \& P\"oschel  proved  for such equation the persistence of finite dimensional KAM tori, a result that requires a strong non resonant property on the unperturbed Hamiltonian. Actually they considered  the cubic term as part of the unperturbed Hamiltonian  to modulate the resonant linear  frequencies and to avoid the problem of resonances. Roughly speaking the nonlinear term generates stability.  Then Bourgain in \cite{Bou00} used the same idea to prove that  for many random small initial data  the solution of \eqref{nls} satisfies \eqref{star}. Although the method of proof is based on normal forms, the effective construction of the normal form depends on the initial datum in a very intricated way and actually the author does not obtain a Birkhoff normal form result for \eqref{nls} but rather a way to break down the solution that allows him to obtain the property \eqref{star}.   \\
 In this work we want to construct a new type of normal form, not based on polynomial functions but on rational functions (see Section \ref{rational}), transforming the Hamiltonian of \eqref{nls} into an integrable one up to a small remainder, over large open sets surrounding the origin. Then stability of higher order Sobolev norms during very long time is just one of the dynamical consequences. We stress out that since our rational  normal  form is built on  open sets, the dynamical consequences remain stable with respect to the initial datum. In particular the property \eqref{star}, although not verified on all a neighborhood of the origin,  is locally uniform with respect to $u(0)$ in $H^s$ .   \\ 
 To describe our result let us introduce some notations.
With a given function $u(x) \in L^2$, we associate the Fourier coefficients $(u_a)_{a\in\Z} \in \ell^2$ defined by 
$$
u_a = \frac{1}{2\pi}\int_{\T} u(x) e^{- i ax} \dd x.
$$
In the remainder of the paper we identify the function  with its sequence of Fourier coefficients $u = (u_a)_{a\in\Z}$, 
and as in \cite{FG10,F12} we consider the spaces 
\begin{equation}
\label{ElleLaisseUn}
\ell_s^1 = \{\,  u = (u_a)_{a\in\Z} \in \ell^2 \quad \mid \quad \Norm{u}{s} := \sum_{a \in \Z} \langle a \rangle^s |u_a| < +\infty \, \}, 
\end{equation}
where $\langle a \rangle = \sqrt{1 + a^2}$. Note that these spaces are linked with the classical Sobolev spaces by the relation $H^{s'} \subset \ell_s^1 \subset H^s$ for $s' - s > 1/2$. 

Our method also applies to equations with Hartree nonlinearity of the form (Schr\"odinger-Poisson equation)
\begin{equation}\label{nlsp}
\tag{NLSP}
\begin{array}{l}
i u_t=-\Delta u+  W u , \quad x\in \T , \quad t \in \R,\\[2ex]
 - \Delta W =  |u|^2 - \displaystyle \frac{1}{2\pi} \int_\T |u|^2 \dd x, 
\end{array}
\end{equation}
for which we have $W = V \star |u|^2$ where $V$ is the Green function of the operator $- \Delta$ with zero average on the torus. 
The Hamiltonian associated with this equation is 
\begin{equation}
\label{HamNLSP}
H_{\rm NLSP}(u,\bar u ) = \frac{1}{2\pi}\int_{\T} \left(\va{\nabla u }^{2}   + 
\frac12 (V \star |u|^2) |u|^2 \right) \dd x. 
\end{equation}
Different kind of convolution operators $V$ could also be considered, as well as higher order perturbations of \eqref{nlsp} (note however that unlike the \eqref{nls} case the cubic \eqref{nlsp} equation is not integrable in dimension 1).  As we will see, the probabilistic results obtained for \eqref{nls} and \eqref{nlsp} differ significantly.  

\medskip

Our results are divided into two parts : 

$\bullet$ {\bf Abstract rational normal forms} (see Theorem \ref{mainth}). We construct a canonical transformation $\tau$ defined on an open set $ \Vc_\eps\subset\ell_s^1$ included in the ball of radius $\eps$ centered at $0$  that puts the Hamiltonian of \eqref{nls} (resp. \eqref{nlsp}) in normal form up to order $2r$: $H\circ\tau=Z(I)+R$ where $Z$ depends only on the actions $ I = (I_a)_{a \in \Z}$ with $I_a = |u_a|^2$,  and $R=O(\eps^{2r+1})$. The proof for this result is outlined in Section \ref{sketch} and demonstrated in Section \ref{RNF}. \\ 
Of course the open set $ \Vc_\eps$ is defined in a rather complex way through non-resonant relationships between actions $|u_a|^2$ and $\eps$ (see section \ref{NR}). In particular it does not contain $u\equiv 0$ which is too resonant. Its construction relies on a {\em ultra-violet cut-off} as in classical KAM theory (particularly in \cite{Arn63}), here in an infinite dimensional setting. Moreover, these sets are invariant by angular rotation in the sense that 
\begin{equation}
\label{JeSuisUnCylindre}
\forall\, (\theta_a)_{a \in \Z} \in \R^\Z,\quad 
u = (u_a)_{a\in \Z} \in \Vc_\varepsilon
\Longrightarrow
(e^{i \theta_a}u_a)_{a\in \Z} \in \Vc_\varepsilon. 
\end{equation}
It is then necessary to show that the flow travels within these open sets. This is achieved in a second step. 

$\bullet$ {\bf Generic almost preservation of the actions over very long time} (see Theorems \ref{clouzot} and \ref{gilles}). 
For a given $\varepsilon > 0$, we set 
\begin{equation}
\label{initdata}
u(0,x) =   u^0_\varepsilon(x) = c \varepsilon \sum_{a \in \Z} \sqrt{I_a} e^{ia x}, 
\end{equation}
where $I = (I_a)_{a\in \Z}$ are random variables with support included in the interval $(0,\langle a \rangle^{-2s + 4})$, so that $u_\varepsilon^0$ belong to the space $\ell_s^1$ and $c= (2\pi)^{-1} \tanh \pi$ is a normalizing constant to ensure $\| u^0_\varepsilon\|_{\ell^1_s} < \varepsilon/2$ almost surely.  
We prove that under some assumptions on the law of $I_a$, then for essentially almost all couple $(\varepsilon,I)$ and $\varepsilon$ small enough, the initial values $u^0_\varepsilon$ of the form \eqref{initdata} are in the domain of definition of the normal forms, and thus have a dynamics that is essentially an integrable one over very long time. This implies the almost preservation of the actions $|u_a(t)|^2$ over  times of order $\varepsilon^{-M}$ with $M$ arbitrary which in turn implies that the solution remains inside the open set $ \Vc_\eps$ . In particular we deduce the almost preservation of the Sobolev norm of the solution over  times of order $\varepsilon^{-M}$, i.e. property \eqref{star}. This second step is detailed in section \ref{dyn}.

\medskip 

We  show a difference between \eqref{nls} and \eqref{nlsp} for which we obtain a stronger result. Indeed, whereas possible resonances between $\varepsilon$ and the actions $I_a$ can appear in \eqref{nls}, this is not the case for \eqref{nlsp} which thus can be seen as a more robust equation than \eqref{nls}.

As previously mentioned, the possibility of obtaining normal forms without the help of external parameters was already known in the KAM theory (see \cite{KP96} and also \cite{EGK16}). However these normal forms were constructed around finite dimensional tori.
The originality of our analysis is that we work with truly infinite dimensional objects. 

The question of building full dimensional invariant tori by using our rational normal forms is under study. 
 
It would also be nice to apply this new normal form technique to other PDEs, especially in higher dimension. Nevertheless, there is an important limitation: we use in an unavoidable way the fact that the dominant term of the non-linearity (the cubic term for \eqref{nls} and \eqref{nlsp}) are completely integrable (they depend only on actions). This is no longer true for the quintic NLS equation (see \cite{GT12}) or  for \eqref{nls} and \eqref{nlsp}) equations in higher dimension. It should be noted that in the case of the beam equation studied in \cite{EGK16}, the cubic term is also not integrable and this does not prevent a KAM-type result from being obtained. But in this case, a finite number of symplectic transformations make it possible to get rid of the angles corresponding to the modes of the finite dimensional torus that is perturbed. In our case, we would need an infinite number of such transformations, which is not accessible because these transformations are not close to identity.  

Finally let us mention two recent results that open new directions in the world of Birkhoff normal forms. In \cite{BD} Berti-Delors have considered recently Birkhoff normal forms for a quasi linear PDE, namely the capillarity-gravity water waves equation, and thus faced unbounded nonlinearity. In this paper capillarity  plays the role of the external parameter. Also in \cite{BMP} Biasco-Masseti-Procesi,  considering a suitable Diophantine condition, prove exponential stability in Sobolev norm for parameter dependent NLS on the circle.

\vspace{2em}
\noindent{\bf Acknowledgments.} During the preparation of this work the three authors benefited from the support of the Centre Henri Lebesgue ANR-11-LABX- 0020-01 and B.G. was supported 
 by ANR -15-CE40-0001-02  ``BEKAM''  and ANR-16-CE40-0013 ``ISDEEC'' of the Agence Nationale de la Recherche.

\section{Statement of the results and sketch of the proof}

\subsection{Main results}

First we introduce the Hamiltonians associated with \eqref{nls} and \eqref{nlsp} written in Fourier variables:
\begin{align}
\label{Hnls} H_{\rm NLS}&= \sum_{a\in\Z}a^2 |u_a|^2+\frac1{2\pi}\int_\T g\Big(\sum_{a,b\in\Z} u_a \bar u_b  e^{i(a - b) x}\Big) \dd x, \quad\mbox{and}   \\
H_{\rm NLSP}&=\sum_{a\in\Z}a^2 |u_a|^2+\frac1{4\pi}\int_{\T} \Big(\sum_{a,b\in\Z} \hat V_{a- b }u_a \bar u_b  e^{i(a - b) x}\Big) \Big(\sum_{a,b\in\Z} \hat u_a \bar u_b  e^{i(a - b) x}\Big) \dd x, 
\end{align}
where the Fourier transform $\hat V_a = a^{-2}$, $a \neq 0$, $\hat V_0 = 0$ is associated with the Green function of the operator $- \Delta$ with zero average on $\T$.

\begin{theorem}[\eqref{nls} and \eqref{nlsp} cases]
\label{mainth} Let $H$ equals $H_{\rm NLS}$ or $H_{\rm NLSP}$.
For all  $r\geq 2$, there exists $s_0\equiv s_0(r)=O(r^2)$ such that for all $s\geq s_0$ the following holds:\\
 There exists $\eps_0\equiv\eps_0(r,s)$ such that for all $\varepsilon < \varepsilon_0$, there exist open sets $\mathcal{C}_{\varepsilon,r,s}$ and $\mathcal{O}_{\varepsilon, r,s}$ included in $B_s(0,4\varepsilon)$ the ball of radius $4\varepsilon$ centered at the origin in $\ell_s^1$, and an analytic canonical and bijective transformation $\tau: \mathcal{C}_{\varepsilon,r,s} \mapsto \mathcal{O}_{\varepsilon,r,s}$ satisfying 
 \begin{equation}\label{estimtau}
\Norm{\tau(z) - z}{s} \leq \varepsilon^{\frac{3}{2}}\quad \forall z\in\mathcal{C}_{\varepsilon, r,s}, 
\end{equation}
that puts  $H$ in normal form up to order $2r$:
$$
H \circ \tau  = Z + R
$$
where 
\begin{itemize}
\item 
$Z=Z(I)$ is a smooth function  of the actions and thus is an integrable Hamiltonian;
\item   the remainder $R$ is of order $\eps^{2r+2}$ on $\mathcal{C}_{\varepsilon,r,s}$, precisely
\begin{equation}\label{XR}\| X_R(z)\|_s\leq \eps^{2r + 1} \quad \text{for all }z\in \mathcal{C}_{\varepsilon,r,s}.\end{equation}
\end{itemize}
\end{theorem}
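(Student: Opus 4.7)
The plan is to set up an iterative Lie transform scheme, in the spirit of the Birkhoff normal form, in which at each step the generating Hamiltonian is a \emph{rational} function of the Fourier coefficients rather than a polynomial. Writing the Hamiltonian in Fourier modes as $H=H_2+Z_4+P_{\geq 6}$, with $H_2=\sum_a a^2 I_a$, $Z_4$ the quartic contribution (coming from $\tfrac{\varphi'(0)}{2}\int|u|^4$ in the \eqref{nls} case and from the convolution in \eqref{nlsp}) and $P_{\geq 6}$ of order $\varepsilon^6$, a key observation is that, because of the zero momentum and zero $L^2$-derivative resonance conditions, in one space dimension $Z_4$ depends only on the actions $I_a=|u_a|^2$. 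Hence $Z_4$ is integrable and merely modulates the linear frequencies into
$$\Omega_a(I)=a^2+\partial_{I_a} Z_4(I).$$
The natural small divisors appearing in the elimination of a non-integrable monomial $u_{\jb}\bar u_{\kb}$ then become $\Omega(\jb,\kb,I)=\sum_\ell (\Omega_{j_\ell}(I)-\Omega_{k_\ell}(I))$; these now depend non-trivially on the point in phase space, which is exactly what a polynomial Birkhoff scheme cannot exploit and what the rational scheme does.

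The second step is the iterative elimination itself. Given a non-integrable monomial of degree $2n$ ($3\leq n\leq r$), I would look for a generating Hamiltonian $\chi$ whose coefficients are of the form $c_{\jb\kb}/\Omega(\jb,\kb,I)$, i.e.\ rational in the actions, and use its time-$1$ flow $\Phi^\chi_1$ to cancel the monomial from $H$ modulo higher order terms. The non-resonant set $\mathcal{C}_{\varepsilon,r,s}$ has then to be defined precisely as the intersection of the ball $B_s(0,\varepsilon)$ with the finitely many conditions that force each denominator $\Omega(\jb,\kb,I)$ occurring in the finite-depth iteration to stay bounded below by a suitable power of $\varepsilon$. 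To make this condition compatible with an infinite dimensional setting I would introduce an ultra-violet cutoff $N=N(\varepsilon,r)$ as in \cite{Arn63}: for multi-indices entirely below $N$, only finitely many denominators must be controlled and genericity of $I$ is enough; for multi-indices reaching above $N$, the unperturbed part $a^2$ dominates $\partial_I Z_4$ and a purely arithmetic lower bound holds. Rotational invariance of all these conditions immediately yields \eqref{JeSuisUnCylindre}.

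The third step is quantitative. At each iteration one must verify simultaneously that (i) the rational generator $\chi$ satisfies the estimate $\|X_\chi(z)\|_s\lesssim\varepsilon^{\alpha}$ on $\mathcal{C}_{\varepsilon,r,s}$, where $\alpha$ comes from combining the small-divisor lower bound with the $\ell^1_s$ norm of the monomials, (ii) the flow $\Phi^\chi_1$ is well defined on $\mathcal{C}_{\varepsilon,r,s}$, stays inside a slightly larger open set $\mathcal{O}_{\varepsilon,r,s}\subset B_s(0,4\varepsilon)$, and obeys \eqref{estimtau}, and (iii) the transformed Hamiltonian $H\circ\Phi^\chi_1$ belongs to the same class of functions (integrable part plus non-integrable rational remainder of higher degree), so that the scheme can be iterated. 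Composing the $r-1$ transforms produces the global $\tau$, an integrable part $Z(I)$, and a remainder $R$ whose vector field obeys \eqref{XR}. Choosing $s\geq s_0=O(r^2)$ is what allows the regularity loss at each step to be absorbed by the tail factor $\langle a\rangle^s$.

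The main obstacle I expect is closing this algebra of rational Hamiltonians under Poisson brackets. Each Lie step divides by some $\Omega(\jb,\kb,I)$, but the next Poisson bracket differentiates in $I$ and produces products of such denominators, raising their powers and introducing new monomials whose own small divisors must in turn be controlled on $\mathcal{C}_{\varepsilon,r,s}$. One has to design a class of rational functions (indexed by the collection of resonance tuples that may appear up to order $2r$, together with bounds on the powers of the $\Omega$'s in the denominator) that is stable under Poisson brackets and carries a tractable norm; simultaneously one has to show that the (finite) list of resonance tuples generated in $r-1$ steps yields a measure-theoretically meaningful non-resonance set $\mathcal{C}_{\varepsilon,r,s}$, later needed for the probabilistic statement. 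Once this algebraic/analytic framework is in place (this is the content alluded to in Section~\ref{rational}), the rest of the theorem follows by a Gronwall-type control along the finite chain of Lie transforms.
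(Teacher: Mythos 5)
Your overall architecture (resonant elimination, rational generators obtained by dividing by action-dependent frequencies, an ultra-violet cut-off, a Poisson-bracket-stable class of rational Hamiltonians, non-resonant open sets) matches the paper's strategy, and you correctly identify the closure of the rational class under Poisson brackets as the hard point. But there is a genuine gap in the scheme as you describe it: for \eqref{nls} the normal form cannot be closed using $Z_4$ alone. The quartic resonant part degenerates to $Z_4 = \varphi'(0)(\sum_a I_a)^2 - \tfrac12\varphi'(0)\sum_a I_a^2$, so its frequencies are $\lambda_a(I) = \varphi'(0)(2\|u\|_{L^2}^2 - I_a)$ and the small denominators are pure differences of actions, $\omega(I)=\varphi'(0)(I_{a_1}+\cdots-I_{b_1}-\cdots)$. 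Since $I_a\sim\varepsilon^2\langle a\rangle^{-2s}$, the only generic lower bound available carries a factor $\langle\mu_{\min}\rangle^{-2s}$, i.e.\ a loss of $2s$ derivatives \emph{per division}. One can distribute one such loss onto the irreducible part of the monomial, but the iteration forces repeated division by the \emph{same} denominator: after $p$ steps one meets terms $f_p(I)\,\omega(I)^{-p}\prod u_{a_n}\bar u_{b_n}$, and for $p$ larger than the number of irreducible modes there are no longer enough free indices to absorb the $\langle\mu_{\min}\rangle^{2ps}$ loss independently of $s$. Your step (iii), ``the transformed Hamiltonian belongs to the same class so the scheme can be iterated,'' therefore fails for \eqref{nls}: the exponent $\alpha$ in your condition $\varepsilon N^{\alpha}<1$ would depend on $s$, which is incompatible with the choice $N=\varepsilon^{-r/s}$ needed to make the cut-off remainder small.

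The paper's resolution is to solve the homological equations with $Z_4+Z_6$ rather than $Z_4$: the sextic integrable term $Z_6$ contains a convolution-type piece $-\tfrac12\varphi'(0)^2\sum_{a\neq b}(a-b)^{-2}I_a^2 I_b$ whose frequencies satisfy generic lower bounds with loss of derivatives \emph{independent of} $s$. This forces two families of denominators ($\omega_{\kb}$ and $\Omega_{\kb}$, homogeneous of different orders in $\varepsilon$), breaks the homogeneity in $\varepsilon$, requires non-resonance conditions coupling $\varepsilon$ with the distribution of the actions (hence the weaker probabilistic statement for \eqref{nls} versus \eqref{nlsp}), and dictates the rather intricate bookkeeping of the four subclasses $\Hscr_{r,\omega},\Hscr_{r,\Omega},\Hscr_{r,\omega}^*,\Hscr_{r,\Omega}^*$. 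For \eqref{nlsp} your $Z_4$-only scheme does work, precisely because there $\hat V_a=a^{-2}$ already gives the convolution structure at quartic order. A secondary inaccuracy: after the preliminary resonant normal form with respect to $Z_2$ (done first, with no small divisors since $|\Delta_{\jb}|\geq1$ off the resonant set), the remaining monomials satisfy $\Delta_{\jb}=0$ exactly, so the linear part $a^2$ never helps with the denominators; the cut-off $N$ is used only to push monomials with three large indices into a remainder of size $\varepsilon^{5}N^{-s}$, not to obtain arithmetic lower bounds at high frequency.
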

This theorem is proved in section \ref{RNF}.\\ In section \ref{dyn}, we  prove that for all $\varepsilon>0$, there exists a set of initial data included in $\mathcal{O}_{\varepsilon,r,s}$ on which \eqref{star} holds true: 

\begin{theorem}[\eqref{nls} and \eqref{nlsp} cases] \label{corodyn}Let $H$ and $\eps_0(r,s)$ as in Theorem \ref{mainth} and
 let $u_a(t)$ denotes the Fourier coefficients of the solution $u(t,x)$ of the Hamiltonian system associated with $H$. Then
 for all $\eps<\eps_0$ there exists an open set $\Vc_{\varepsilon,r,s} \subset \mathcal{O}_{\varepsilon, r,s}$ invariant by angle rotation in the sense of \eqref{JeSuisUnCylindre}, such that for all $(u_a(0))_{a\in\Z} \in \Vc_{\varepsilon,r,s}$ we have for all $t \leq \varepsilon^{-2r+1}$
 \begin{align}\label{estimua}\sup_{a \in \Z} \,\langle a  \rangle^{2s} \left| |u_a(t) |^2 - |u_a(0)|^2 \right|  \leq 3 \varepsilon^{\frac52},\\
\label{estimu}(u_a(t))_{a\in\Z} \in \mathcal{O}_{\varepsilon,r,s}\quad  \mbox{and in particular}\quad \Norm{u(t)}{s} \leq  4\varepsilon. 
\end{align}
Furthermore there exists a full dimensional torus $\mathcal T_{0}\in\ell^1_s$ such that for all $r_1+r_2= 2r+2$
 \begin{equation}\label{tore}
\mbox{dist}_{s}(u(t),\mathcal T_{0})\leq C\eps^{r_{1}} \quad 
\mbox{for } \vat \leq 1/\eps^{r_{2}}
\end{equation}
where $\mbox{dist}_{s}$ denotes the distance on $\ell^1_s$ associated with 
the norm $\Norm{\cdot}{s}$
\end{theorem}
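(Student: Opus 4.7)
The plan is to transport the dynamics through the normal form map $\tau$ from Theorem~\ref{mainth}. Given initial data $u(0) = \tau(z(0))$ with $z(0) \in \mathcal{C}_{\varepsilon,r,s}$, the variable $z(t) := \tau^{-1}(u(t))$ is well defined as long as $u(t) \in \mathcal{O}_{\varepsilon,r,s}$, and it satisfies the Hamiltonian flow of $Z(I) + R$. I will define $\mathcal{V}_{\varepsilon,r,s}$ as the $\tau$-image of an inner core of $\mathcal{C}_{\varepsilon,r,s}$ obtained by sharpening each non-resonance threshold by a safety margin of order $\varepsilon^{5/2}/\langle a\rangle^{2s}$. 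Since the non-resonance conditions defining $\mathcal{C}$ depend only on the actions $|z_a|^2$, this inner core is invariant under arbitrary rotations of the phases, and so is its image $\mathcal{V}_{\varepsilon,r,s}$, which provides \eqref{JeSuisUnCylindre}. The rest of the proof is a continuation argument: on any maximal time interval $[0,T^*)$ on which $z(t) \in \mathcal{C}_{\varepsilon,r,s}$, I will show that the action drift stays within this safety margin, forcing $T^* > \varepsilon^{-2r+1}$.

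The heart of the argument is a slow-drift estimate for the actions $J_a(t) := |z_a(t)|^2$. Because $Z$ depends only on $I$, one has $\{J_a,Z\}=0$, whence $\dot{J}_a = \{J_a, R\} = 2\,\mathrm{Im}(\bar z_a\partial_{\bar z_a}R)$, so $|\dot J_a| \leq 2|z_a||\partial_{\bar z_a}R|$. Combining the vector-field bound $\langle a\rangle^s |\partial_{\bar z_a}R| \leq \|X_R(z)\|_s \leq \varepsilon^{2r+1}$ with $\langle a\rangle^s |z_a| \leq \|z\|_s \leq 4\varepsilon$ yields $\langle a\rangle^{2s}|\dot J_a(t)| \leq 8\varepsilon^{2r+2}$, and integrating over $0 \leq t \leq \varepsilon^{-2r+1}$ gives $\langle a\rangle^{2s}|J_a(t)-J_a(0)| \leq 8\varepsilon^{3}$. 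The closeness estimate \eqref{estimtau}, together with the identity $\bigl||u_a|^2-|z_a|^2\bigr| \leq (|u_a|+|z_a|)|u_a-z_a|$, provides $\langle a\rangle^{2s}\,\bigl||u_a|^2 - |z_a|^2\bigr| \leq 8\varepsilon^{5/2}$; applying this at times $0$ and $t$ and combining with the previous bound gives \eqref{estimua} after choosing $\varepsilon_0$ small enough to absorb constants. Estimate \eqref{estimu} is then immediate from the bootstrap, since by the choice of safety margin the drift cannot push $z(t)$ out of $\mathcal{C}_{\varepsilon,r,s}$, and hence $u(t)=\tau(z(t))$ remains in $\mathcal{O}_{\varepsilon,r,s} \subset B_s(0,4\varepsilon)$.

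For the torus statement \eqref{tore}, set $\mathcal{T}_0 := \tau(\mathcal{T})$, where $\mathcal{T} := \{\tilde z \in \ell_s^1 : |\tilde z_a| = |z_a(0)|,\ \forall a\in\Z\}$ is the full-dimensional torus passing through $z(0)$ at constant actions. The flow $\tilde z(t) := \Phi_Z^t(z(0))$ of the integrable part alone acts by pure rotation, $\tilde z_a(t) = z_a(0)\exp(it\omega_a(I(0)))$ with $\omega_a = \partial_{I_a}Z$, so $\tilde z(t) \in \mathcal{T}$ for all $t$. Writing $\tfrac{d}{dt}(z-\tilde z) = (X_Z(z)-X_Z(\tilde z)) + X_R(z)$ and exploiting the rotation-like character of $X_Z$ on $B_s(0,4\varepsilon)$ together with $\|X_R\|_s \leq \varepsilon^{2r+1}$, a Gronwall-type estimate yields $\|z(t)-\tilde z(t)\|_s \leq C\varepsilon^{r_1}$ whenever $|t|\leq \varepsilon^{-r_2}$ and $r_1+r_2 = 2r+2$. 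Pushing forward through $\tau$, which is close to the identity and hence quasi-isometric, gives $\mathrm{dist}_{s}(u(t),\mathcal{T}_0) \leq C\varepsilon^{r_1}$.

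The chief obstacle is the bootstrap/continuation step: the set $\mathcal{C}_{\varepsilon,r,s}$ is carved out by infinitely many action-dependent inequalities with delicately calibrated thresholds, and I must verify that a uniform drift of order $\varepsilon^{5/2}/\langle a\rangle^{2s}$ is compatible with preserving every one of these inequalities throughout $[0,\varepsilon^{-2r+1}]$. This compatibility is what motivates the geometric construction of $\mathcal{V}_{\varepsilon,r,s}$ in Section~\ref{NR}, and it also explains the somewhat non-obvious exponent $5/2$ appearing in \eqref{estimua}.
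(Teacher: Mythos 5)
Your treatment of \eqref{estimua} and \eqref{estimu} follows essentially the paper's route: transport through $\tau$, the drift bound $\langle a\rangle^{2s}|\dot J_a|\lesssim \|z\|_s\,\|X_R\|_s\,\langle a\rangle^{s}\cdot\langle a\rangle^{-2s}\lesssim\varepsilon^{2r+2}\langle a\rangle^{-2s}$, comparison of $|u_a|^2$ with $|z_a|^2$ via \eqref{estimtau}, and a continuation argument whose engine is the stability of the truncated non-resonant sets under small changes of the actions and of the $\ell^1_s$-norm (Propositions \ref{georgette} and \ref{georgesand} in the paper — you correctly identify this as the crux but leave it at the level of motivation rather than verification). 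Two quantitative remarks: the drift of the $z$-actions over $t\le\varepsilon^{-2r+1}$ is of order $\varepsilon^{3}\langle a\rangle^{-2s}$, not $\varepsilon^{5/2}\langle a\rangle^{-2s}$ (the $5/2$ only enters through the $u$-versus-$z$ comparison); and since \eqref{estimtau} has constant exactly $1$, your chain of inequalities produces roughly $16\varepsilon^{5/2}$ rather than $3\varepsilon^{5/2}$, which cannot be absorbed by shrinking $\varepsilon_0$ — the paper instead invokes the sharper bound $\Norm{\tau(z)-z}{s}\le C\varepsilon^{3/2}\gamma N^{-\alpha_{r'}}$ from \eqref{gdansk}, whose extra positive power of $\varepsilon$ makes the constant harmless.

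There are, however, two genuine gaps. First, rotation invariance: you define $\Vc_{\varepsilon,r,s}=\tau(\text{inner core})$ and claim invariance under $(u_a)\mapsto(e^{i\theta_a}u_a)$ because the core depends only on actions. This does not follow: $\tau$ is built from flows of non-integrable Hamiltonians and does not commute with the torus action, so the image of a rotation-invariant set need not be rotation-invariant. The paper avoids this by defining $\Vc_{\varepsilon,r,s}$ directly in the original variables as $\mathcal{U}^{N_\varepsilon}_{4\gamma_\varepsilon,\varepsilon,r',s}\cap B_s(0,\varepsilon/2)$ — a set cut out by conditions on $(|u_a|^2)_a$ and on $\Norm{u}{s}$ alone, hence manifestly satisfying \eqref{JeSuisUnCylindre} — and then shows that its preimage $\tau^{-1}(\Vc)$ sits inside $\mathcal{C}_{\varepsilon,r,s}$ via Proposition \ref{georgesand}. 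Second, the torus estimate \eqref{tore}: a Gronwall comparison of $z(t)$ with the integrable trajectory $\Phi_Z^t(z(0))$ cannot give $\|z(t)-\tilde z(t)\|_s\le C\varepsilon^{r_1}$ over times $\varepsilon^{-r_2}$, because the frequencies $\partial_{I_a}Z$ depend on $I$ (and those of $Z_2$ are unbounded), so the $O(t\varepsilon^{2r+2})$ action drift feeds into a phase discrepancy that accumulates like $t^2\varepsilon^{2r+2}$ and destroys the estimate for $r_2\ge 2$. The distance to the torus $\mathcal T_0$ must instead be measured through the actions alone — the torus contains every phase configuration, so $\mathrm{dist}_s(z(t),\mathcal T)\le\sum_a\langle a\rangle^s\bigl|\sqrt{J_a(t)}-\sqrt{J_a(0)}\bigr|$, which is controlled directly by $\|X_R\|_s$ via the argument of Lemma \ref{petit}, with no trajectory comparison at all.
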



The next step is to describe   the non resonant sets $\Vc_{\varepsilon,r,s}$ which, as we said, are open, invariant by rotation (see \eqref{JeSuisUnCylindre}) and included in the ball of $\ell^1_s$ centered at 0 and of radius $\eps$ but does not contain the origin. The following Theorems, proved in section \ref{NR},  show that in both cases these open sets contain {\em many} elements of the form \eqref{initdata} but is much larger in the \eqref{nlsp} case that in the \eqref{nls} case. 

The first result concerns the nonlinear Schr\"odinger case \eqref{nls}. 

\begin{theorem}[\eqref{nls} case]
\label{clouzot}
Let $(\Omega, \Ac,\Pb)$ be a probability space, and let us assume that $I: \Omega \mapsto (\R_+)^\Z$ are random variables  satisfying
\begin{itemize}
\item $(I_a)_{a\in \mathbb{Z}}$ are independent,
\item for each $a\in\Z$, $I_a^2$ is uniformly distributed in  $(0,  \langle a \rangle^{-4s-8} )$, 
\end{itemize}
and let $u_\varepsilon^0$ be the familly of random variables defined by \eqref{initdata}. \\
Let $r$, $s\geq s_0(r)$ and $\eps_0(r,s)$ as  in Theorem\eqref{mainth} for  \eqref{nls}. Then 
\begin{itemize}
\item for all $0\leq\varepsilon<\eps_0(r,s)$
\begin{equation}
\label{probaNLS}
\mathbb{P}\left( u_\varepsilon^0\in  \mathcal{V}_{\varepsilon,r,s} \right) \geq 1 - \eps^\frac{1}{3}.
\end{equation}
\item for all $0\leq\varepsilon<\eps_0(r,s)$ and for all sequence $(x_n)_{n\in \mathbb{N}}$ of random variables uniformly distributed in $(0,1)$ and independent of $I_a$, there is a probability larger than $1-\eps^{\frac{1}{6}}$ to realize $I$ such that there is a probability larger than $1-\eps^{\frac{1}{6}}$ to realize $(\varepsilon_n)_{n\in \mathbb{N}} := (\eps 2^{-(n+x_n)})_{n\in \mathbb{N}}$ such that $u_{\varepsilon_n}^0$ is non-resonant for all $n$  (i.e. $u_{\varepsilon_n}^0 \in \mathcal{V}_{\varepsilon_n,r,s}$). More formally, we have
\begin{equation}
\label{lastone}
\mathbb{P}\left( \mathbb{P} \left( \forall n \in\N, \  u_{\varepsilon_n}^0 \in \mathcal{V}_{\varepsilon_n,r,s} \ | I \right) \geq 1 -\eps^{\frac{1}{6}}  \right) \geq 1 - \eps^{\frac{1}{6}}, 
\end{equation}
where $\mathbb{P}(\, \cdot\,  | I) $ denote the probability conditionally to the distribution $I = (I_a)_{a\in \Z}$
\end{itemize}
\end{theorem}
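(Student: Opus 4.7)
The non-resonance set $\mathcal{V}_{\varepsilon,r,s}$ is, by its construction in section \ref{NR}, carved out of $B_s(0,4\varepsilon)$ by a countable family of small divisor inequalities of the form $|\Omega_k(|u|^2)-\ell|\geq \gamma(k,\ell,\varepsilon)$, indexed by a finitely supported multi-index $k\in\Z^{\mathbb{Z}}$ of controlled $\ell^1$-norm and support (the ultraviolet cutoff $|a|\leq N_\varepsilon$) and by an integer $\ell\in\Z$, where $\Omega_k$ is the frequency combination made of the linear frequencies $a^2$ together with the nonlinear corrections coming from the integrable Hamiltonian $Z$. After plugging in $u=u_\varepsilon^0$, whose actions are $c^2\varepsilon^2 I_a$, these inequalities become conditions on $(\varepsilon,I)$. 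My plan is to estimate the probability of failure of each individual small divisor condition and perform a union bound, then handle the extra $\varepsilon_n$-dependence in the second assertion via the auxiliary randomness $x_n$.

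For a fixed $(k,\ell)$, viewed as a function of $I$ the quantity $\Omega_k(c^2\varepsilon^2 I)$ is affine in each $I_a$ at leading order, with a nonvanishing coefficient of size $\varepsilon^2$ for at least one index $a_0\in\mathrm{supp}(k)$; this non-degeneracy is where the assumption $\varphi'(0)\neq 0$ enters, guaranteeing that the cubic Birkhoff correction produces a genuine frequency shift. Conditioning on $(I_b)_{b\neq a_0}$ and using that the density of $I_{a_0}$ is $O(\langle a_0\rangle^{2s+4})$ on its support $(0,\langle a_0\rangle^{-2s-4})$ (since $I_{a_0}^2$ is uniform on $(0,\langle a_0\rangle^{-4s-8})$), one obtains
\begin{equation*}
\Pb\bigl( |\Omega_k(c^2\varepsilon^2 I)-\ell| < \gamma(k,\ell,\varepsilon)\bigr) \lesssim \gamma(k,\ell,\varepsilon)\, \varepsilon^{-2}\, \langle a_0\rangle^{2s+4}.
\end{equation*}
With the weights $\gamma(k,\ell,\varepsilon)$ prescribed in section \ref{NR}, summing over the admissible $k$ (a number polynomial in $N_\varepsilon$) and over $\ell$ yields a total bad-event probability $\leq \varepsilon^{1/3}$, proving \Ref{probaNLS}. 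For the second assertion, the only conditions that depend on $\varepsilon_n$ rather than only on $I$ are those with $\ell\neq 0$; conditionally on a realization of $I$—good with probability $\geq 1-\varepsilon^{1/6}$ by a slight strengthening of the previous argument, so that the $\ell=0$ conditions hold simultaneously for all $n$—the variables $x_n$ supply independent randomness via the smooth map $x_n\mapsto \varepsilon_n^2$ whose derivative is bounded below. The measure of bad $x_n$ for a given $(k,\ell,n)$ is then of order $\gamma$, and Borel--Cantelli summation over $n$ combined with a union bound over $(k,\ell)$ produces \Ref{lastone}.

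The main difficulty is in matching the weights $\gamma(k,\ell,\varepsilon)$—which are dictated by Theorem \ref{mainth} through the small divisors appearing in the rational normal form construction—with what is needed simultaneously for the summability of the union bound and the compensation of the factor $\varepsilon^{-2}\langle a_0\rangle^{2s+4}$ produced by the rescaling $|u_a^0|^2=c^2\varepsilon^2 I_a$ and the polynomial blow-up of the density of $I_{a_0}$. This is precisely what forces the quadratic growth $s_0(r)=O(r^2)$ of the regularity threshold with the order $r$ of the normal form, and controlling it cleanly for the sequence $\varepsilon_n$ (uniformly in $n$) is the more delicate bookkeeping of the two probabilistic estimates.
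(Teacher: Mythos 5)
Your overall template (density bound for each small divisor, union bound, then auxiliary randomness in $x_n$ for the sequence $\varepsilon_n$) is the right one, but the proposal misidentifies the structure of the non-resonance conditions, and this breaks the second half of the argument. The set $\Vc_{\varepsilon,r,s}$ is cut out by conditions on $|\omega_\kb(I)|$ and $|\widetilde\Omega_\kb(I)|$ (Definition \ref{defnonres}) which involve \emph{only} combinations of actions: there is no integer $\ell$ and no linear frequency $a^2$ left, because the resonant normal form with respect to $Z_2$ has already removed all monomials with $\Delta_\jb\neq 0$ exactly (no small divisors there, since $|\Delta_\jb|\geq 1$). In your dictionary every condition is an ``$\ell=0$'' condition, so your claim that ``the only conditions that depend on $\varepsilon_n$ are those with $\ell\neq 0$'' would make \eqref{lastone} vacuous. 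The actual $\varepsilon$-dependence — the whole reason \eqref{nls} is harder than \eqref{nlsp} — comes from the inhomogeneity $\widetilde\Omega_\kb(\varepsilon^2 I)=\varepsilon^2\omega_\kb(I)+\varepsilon^4\bigl(\widetilde\Omega_\kb(I)-\omega_\kb(I)\bigr)$: bad values of $\varepsilon$ are those for which the degree-one and degree-two homogeneous pieces nearly cancel. The paper's proof of \eqref{lastone} therefore proceeds in two stages that are absent from your proposal: first make $I$ generic for \emph{both} coefficients separately (the events $\mathscr{E}_4$ and $\mathscr{E}_6$, giving lower bounds on $|\omega_\kb(I)|$ and on $|\widetilde\Omega_\kb(I)-\omega_\kb(I)|$), then, conditionally on such an $I$, bound the Haar measure of the bad set of $\varepsilon$ by Lemma \ref{whaou_haar} and convert this into a probability over $x_n$ by the dyadic change of variables (Corollary \ref{counting_sequence}); no Borel--Cantelli in $n$ is needed, the dyadic intervals simply tile $(0,\epsilon_\circ)$.

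A second gap concerns the non-degeneracy you invoke. For the $\omega_\kb$ conditions the frequency map is indeed affine in each $I_{a}$ with coefficients $\pm\varphi'(0)$, and your conditioning argument works. But the $Z_6$-part of $\widetilde\Omega_\kb$ is $-\frac12\varphi'(0)^2\sum_b I_b^2\sum_\alpha\delta_\alpha(a_\alpha-b)^{-2}$, and the coefficient of a given $I_b^2$ can be extremely small or zero; conditioning on an arbitrary variable then gives nothing. The paper needs the algebraic Lemma \ref{proba_need_algebra} (a partial-fractions argument producing an integer $a^*$ in a window of size $O(r)$ where the coefficient is bounded below by $(6m)^{-4m}\prod_\alpha\langle a_\alpha\rangle^{-2}$) to select a variable with a usable density. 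Finally, a minor point: the threshold $s_0(r)=O(r^2)$ is forced by the normal form construction (the condition $\varepsilon N^{\alpha_r}<1$ against $N=\varepsilon^{-r/s}$), not by the probability estimates, which hold for any $s>1$.
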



The first part of the statement  corresponds to fixing an $\varepsilon$ and removing some resonant set of $I_a$ (depending on $\varepsilon$) the second part shows that for a given distribution of $I_a$, we can take a lot of arbitrarily small $\varepsilon$ fulfilling the assumptions of the Theorem. 
Moreover, as the set $\Vc_{\varepsilon,r,s}$ is invariant by angle rotation, then for a given $u_\varepsilon^0 \in \Vc_{\varepsilon,r,s}$ of the form \eqref{initdata}, all the rotated functions of the form \eqref{JeSuisUnCylindre} belong to $\Vc_{\varepsilon,r,s}$. 

The authors would like to mention that \eqref{lastone} corresponds to many numerical experiments confirming the absence of drift over long times when $\varepsilon \to 0$ for a generic initial distribution of $I_a$: in other words this statement correspond to what is generically numerically observed, confirming a sort of {\em generic behaviour} for solutions of \eqref{nls} for which no energy exchanges is observed between the frequencies over very long times. 

\medskip

The corresponding analysis for the Schr\"odinger-Poisson case leads to a better result: 
\begin{theorem}[\eqref{nlsp} case]
\label{gilles}
Let $(\Omega, \Ac,\Pb)$ be a probability space, and let us assume that $I: \Omega \mapsto (\R_+ )^\Z$ are random variables  satisfying
\begin{itemize}
\item $(I_a)_{a\in \mathbb{Z}}$ are independent,
\item for each $a\in\Z$, $I_a$ is uniformly distributed in  $(0,  \langle a \rangle^{-2s-4} )$, 
\end{itemize}
and let $u_\varepsilon^0$ be the familly of random variables defined by \eqref{initdata}. \\
Let $r$, $s\geq s_0(r)$ and $\eps_0(r,s)$ as  in Theorem\eqref{mainth} for  \eqref{nlsp}. Then for all $0\leq\varepsilon<\eps_0(r,s)$
\begin{equation}
\label{probaNLSP}
\quad \mathbb{P}\left(\forall \varepsilon'<\eps,   u_{\varepsilon'}^0\in  \mathcal{V}_{\varepsilon',r,s} \right) \geq 1 - \eps^{\frac{1}{3}}.
\end{equation}

\end{theorem}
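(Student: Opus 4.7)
The plan is to unwind the definition of $\mathcal{V}_{\varepsilon,r,s}$ into a finite family of explicit inequalities on the actions, to exploit an affine scaling in $\varepsilon$ that is specific to \eqref{nlsp}, and then to bound the measure of the bad set by a union bound. The key structural observation is that the cubic integrable part of the Birkhoff normal form of $H_{\rm NLSP}$ produces modulated frequencies of the form $a^{2}+(\hat V\star J)_{a}$ whose action-dependence is built from the Fourier transform $\hat V_{a}=a^{-2}$. Consequently each small divisor $\Omega_{k}(J)$ used to define $\mathcal{V}_{\varepsilon,r,s}$ (see Section \ref{NR}) is an affine function of the action vector $J$ with $\varepsilon$-independent coefficients. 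Writing $J_{a}=c^{2}\varepsilon^{2}I_{a}$ as prescribed by \eqref{initdata}, one gets a decomposition
$$\Omega_{k}(J)\;=\;\omega_{k}+\varepsilon^{2}L_{k}(I),$$
where $\omega_{k}\in\Z$ comes from the unperturbed frequencies $a^{2}$ and $L_{k}$ is a fixed linear functional of $I$ with rational coefficients read off $\hat V$.

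From this decomposition I would isolate the dangerous indices, namely those with $\omega_{k}=0$: for them the non-resonance condition of the form $|\Omega_{k}(J)|\geq\varepsilon^{2+\gamma}\langle k\rangle^{-\beta}$ becomes $|L_{k}(I)|\geq\varepsilon^{\gamma}\langle k\rangle^{-\beta}$, an inequality whose left-hand side is independent of $\varepsilon$. Monotonicity in $\varepsilon$ is then immediate: if the bound holds at the upper scale $\varepsilon$, it holds at every $\varepsilon'<\varepsilon$. The indices with $\omega_{k}\neq 0$ are harmless as soon as $\varepsilon<\varepsilon_{0}$, since $|\omega_{k}|\geq 1$ dominates $\varepsilon^{2}|L_{k}(I)|$. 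Moreover, because the ultraviolet cutoff $N(\varepsilon)\sim\varepsilon^{-\delta}$ underlying the construction of $\mathcal{V}_{\varepsilon,r,s}$ only grows as $\varepsilon$ shrinks, the collection $K(\varepsilon)$ of admissible indices relevant to the intersection $\bigcap_{\varepsilon'<\varepsilon}$ reduces to $K(\varepsilon)$ itself. This is precisely the structural improvement over \eqref{nls}, where analogous small divisors depend on $I$ through coefficients that may vanish identically, forcing the $\varepsilon$-randomization of Theorem~\ref{clouzot}.

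It then remains to estimate, for fixed $\varepsilon<\varepsilon_{0}$, the probability that $|L_{k}(I)|<\varepsilon^{\gamma}\langle k\rangle^{-\beta}$ occurs for some $k\in K(\varepsilon)$ with $\omega_{k}=0$. Each $I_{a}$ has density bounded by $\langle a\rangle^{2s+4}$, and $L_{k}(I)$ depends non-trivially on at least one coordinate $I_{a_{\star}}$ with $|a_{\star}|\leq N(\varepsilon)$, so the conditional distribution of $L_{k}(I)$ with all other coordinates frozen has a bounded density, which yields
$$\mathbb{P}\bigl(|L_{k}(I)|<\varepsilon^{\gamma}\langle k\rangle^{-\beta}\bigr)\;\lesssim\;\varepsilon^{\gamma}\,N(\varepsilon)^{2s+4}\,\langle k\rangle^{-\beta}.$$
Since the cardinality of $K(\varepsilon)$ is polynomial in $N(\varepsilon)$, a union bound with $\beta$ and $\gamma$ chosen sufficiently large (and compatible with $s_{0}(r)$) produces the claimed estimate $1-\varepsilon^{1/3}$.

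The main obstacle is the non-degeneracy assertion underlying the first step: whenever $\omega_{k}=0$, one must guarantee that $L_{k}$ is a non-trivial functional of $I$, i.e.\ that no hidden resonance survives beyond the trivial ones already killed by the action-dependence of the integrable cubic part. This reduces to a linear-algebra statement on the Hartree matrix $(\hat V_{a-b})$ and exploits crucially the injectivity of $a\mapsto a^{-2}$ on positive integers. It is precisely this feature that fails, or holds only partially, for \eqref{nls}, which explains why its analogue (Theorem~\ref{clouzot}) requires a random choice of $\varepsilon$ in addition to a random choice of $I$.
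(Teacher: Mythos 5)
Your high-level mechanism is the right one and is the paper's: for \eqref{nlsp} the relevant small denominators $\omega_{\kb}(I)=2\varphi'(0)\sum_\alpha\delta_\alpha\sum_{b}\hat V_{a_\alpha-b}I_b$ are linear in the actions with $\varepsilon$-independent coefficients, so after the scaling $I\mapsto\varepsilon^2 I$ the non-resonance condition $|\omega_\kb(\varepsilon^2I)|>\gamma\varepsilon^2\prod\langle k_\alpha\rangle^{-4}$ becomes an $\varepsilon$-free condition on $I$, whence uniformity in $\varepsilon'<\varepsilon$ (this is exactly \eqref{gasp}). But the step you yourself flag as ``the main obstacle'' is the actual content of the theorem, and the surrogate bound you write down for it does not close. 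You assert $\mathbb{P}(|L_k(I)|<\eta)\lesssim\eta\,N(\varepsilon)^{2s+4}$ on the grounds that $L_k$ depends non-trivially on some $I_{a_\star}$ with $|a_\star|\leq N(\varepsilon)$. Two things are missing. First, the conditional density of $L_k(I)$ given the other coordinates is $f_{I_{a_\star}}/|c_{a_\star}|$ where $c_{a_\star}=\sum_\alpha\delta_\alpha(a_\star-a_\alpha)^{-2}$ is the coefficient of $I_{a_\star}$; this coefficient can be as small as $\prod_\alpha\langle a_\alpha\rangle^{-2}$ and you give no lower bound for it. Second, even granting a lower bound, a density factor $\langle a_\star\rangle^{2s+4}$ with $a_\star$ allowed up to $N(\varepsilon)\sim\varepsilon^{-(2r-2)/s}$ costs roughly $\varepsilon^{-4r}$, which forces $\gamma>4r$ in your non-resonance threshold $\varepsilon^{2+\gamma}$ and destroys the normal-form estimates (each denominator in Lemma \ref{XQG} then loses $\varepsilon^{-\gamma}$, and there are up to $O(r)$ of them against a gain of only $\varepsilon^{2r}$). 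The paper's Lemma \ref{proba_need_algebra} is precisely the missing ingredient: by a partial-fractions/integrality argument it produces a mode $a^*$ with $|a^*|<3m\leq 3r$ (bounded independently of $\kb$, $s$ and $\varepsilon$) at which $|c_{a^*}|\geq(6m)^{-4m}\prod_\alpha\langle a_\alpha\rangle^{-2}$; the density factor is then the constant $\langle 3r\rangle^{2s+4}$ and the resulting bound $\lesssim\gamma\prod_\alpha\langle k_\alpha\rangle^{-2}$ is summable over all irreducible $\kb$ of length $\leq 2r$ (Lemma \ref{lemma_nonresNLSP}). Your appeal to ``injectivity of $a\mapsto a^{-2}$'' gestures at non-degeneracy but not at this quantitative statement, which is where the work lies.

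Two secondary corrections. The decomposition $\Omega_k(J)=\omega_k+\varepsilon^2L_k(I)$ with an integer part $\omega_k$ coming from the frequencies $a^2$ is not how $\mathcal{V}_{\varepsilon,r,s}$ is defined: the linear frequencies are eliminated in the first resonant normal form (where $|\Delta_\jb|\geq1$ off the resonant set, so there are no small divisors), and the non-resonance conditions are imposed only on $\kb\in\Ic=\irr(\Rc)$, for which $\Delta_\kb=0$ identically; your ``harmless case $\omega_k\neq0$'' is empty. Also, the cutoff monotonicity is backwards: $N(\varepsilon')$ \emph{increases} as $\varepsilon'\to0$, so the intersection over $\varepsilon'<\varepsilon$ involves ever more indices, not fewer; the paper deals with this by proving the probability estimate for the non-truncated set (all $\kb\in\Ic$ with $\length\kb\leq2r$, summability coming from the weights $\prod\langle k_\alpha\rangle^{-2}$ after the zero-momentum relations eliminate the two largest indices) and then passing to the truncated sets via the analogue of Proposition \ref{musset}.
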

This statement allows to take one distribution of the action $I_a$ fulfilling some generic non resonance condition, and then to take arbitrarily small $\varepsilon$ in the initial value \eqref{initdata} independently on the $I_a$. Thus the result for \eqref{nlsp} is much stronger from the point of view of phase space: stable initial distributions are much more likely for \eqref{nlsp} than for \eqref{nls}.

In the remainder of the paper, we will essentially focus on the \eqref{nls} case. The proof of the \eqref{nlsp} case will be outlined in appendix \ref{casNLSP}, where we stress the difference with  \eqref{nls}, which are mostly major simplifications. 

\subsection{Sketch of proof}\label{sketch}

In this section we explain the strategy of the proof and we describe the new mathematical objects needed. The starting point is to write (formally) the Hamiltonian \eqref{HamNLS} as 
$$
H = Z_2(I) + P_4 + \sum_{m\geq3} P_{2m}
$$
where $I$ denote the collection of $I_a = |u_a|^2$, $a \in \Z$, and where  
\begin{equation}
\label{Z2}
Z_2 = \sum_{a \in \Z} (a^2 + \varphi(0)) I_a
\end{equation}
 is the Hamiltonian associated with the linear part of the equation. The Hamiltonians $P_{2m}$ are polynomials of order $2m$ in the variables $u_a$,  and $P_4$ is explicitely given by  
 $$
 P_4 = \frac12 \varphi'(0) \sum_{a + b = c + d} \hat V_{a-c} u_a u_b \bar u_c \bar u_d
 $$
 where $\hat V_a = 1$ in the case of \eqref{nls} and $\hat V_{a} = a^{-2}$, $a \neq 0$, $\hat V_0 = 0$,  in the case of \eqref{nlsp}. 
 The first step is to perform a first resonant normal form transform $\tau_2$ with respect to $Z_2$. This step is classic and can be found for the first time in \cite{KP96}. 
After some iterations, the new Hamiltonian can be written 
$$
H \circ \tau_2 = Z_2(I) + Z_4(I) + Z_6(I) + R_6(u) + \sum_{m = 4}^{r} K_{2m} + R
$$
where $R$ is of order $2r +2$, $K_{2m}$ are polynomial of order $2m$, and $Z_4$ and $Z_6$ are polynomials of degre 4 and 6 containing only actions of the form $I_a$. Moreover, the polynomials $K_{2m}$ are resonant in the sense that they contain only monomials of the form $u_{a_1}\cdots u_{a_m} \bar u_{b_1} \cdots \bar u_{b_m}$ where the collection of indices satisfy the relation 
\begin{equation}
\label{midi}
a_1 + \cdots + a_m = b_1 + \cdots + b_m \quad \mbox{and} \quad   a_1^2 + \cdots + a_m^2 = b_1^2 + \cdots + b_m^2. 
\end{equation}
Indeed, these monomials correspond to the kernel of the operator $\chi \mapsto \{ Z_2, \chi\}$ when applied to polynomials, which is the engine of the construction of the normal form. Note that at this stage, no small divisor problem occur. 
Now  natural idea consists in using the term $Z_4$ to eliminate iteratively the terms of order $2m$ that do not depend on the actions. The general strategy is the following: 

\medskip 
{\em(i)}  Truncate all polynomials $K_{2m}$ and remove all the monomials containing at least three indices of size greater than $N$. The remainder term, as already noticed, see for instance \cite{Bou00,Gre07,BG06}, is thus of order $\varepsilon^{5} N^{-s}$. Moreover, taking into account the resonance condition \eqref{midi}, the remaining truncated monomials have {\em irreducible} part - meaning they do not contain actions - with indices bounded by $\mathcal{O}(N^2)$. Taking $N$ so that $N^{-s} = \mathcal{O}(\varepsilon^r)$ (so typically $N = \varepsilon^{-r/s}$) then ensures that this term will be small enough to control the dynamics over a time of order $\varepsilon^{-r}$ by using a bootstrap argument. 

\medskip
{\em (ii)} Construct iteratively normal form transformation to eliminate the remaining part of degree $2m$ that do not depend only of the action by using the integrable Hamiltonian $Z_4$ which is explicitly given. The engine underlying this construction is thus to solve iteratively homological equations of the form $\{Z_4,\chi\} = Z + K$ where $K$ is given and do not contains terms depending only on the actions. This step makes appear small denominators depending on $I$, so that such a construction makes naturally appear rational functions (see Section \ref{rational}) and not only polynomials. However, the division by small denominators depending on $I$ also yields poles in the normal form transforms. To avoid them, we use generic non resonance conditions on the distribution $I$, and the resolution of the homological equation thus brings loss of derivatives. As the small denominators are generated by irreducible monomials whose modes  are bounded by $N^2$, in the estimates this step results in a loss of order  $N^\alpha$ for some $\alpha>0$ versus a gain of $\varepsilon$ at each step of the normal form construction. 

\medskip
{\em (iii)} Try to trade off a few powers of $\varepsilon$  to control the normal form construction. This is done by a condition of the form $\varepsilon N^{\alpha} < 1$. The heart of the analysis is thus to make $\alpha$ independent of $s$  so that for $s$ large enough, such condition can be satisfied and will be compatible with $N = \varepsilon^{-r/s}$. If this is the case, the Hamiltonian thus depends only on the actions and a small remainder term of order $\varepsilon^r$ which allows to conclude. The remaining difficulty is to handle the algebra of rational functionals, and the control of the non resonant sets after each normal form steps. 

\medskip
With this roadmap in hand, the expression of $Z_4$ is fundamental as it drives the small denominators. Here appears a drastic difference between \eqref{nls} and \eqref{nlsp}. Indeed, the first normal term corresponding to the Hamiltonian $P_4$ is of the form 
\begin{equation}
\label{eqmagic}
Z_4 =  \varphi'(0) \Big( \sum_{a\neq b \in \Z} \hat V_{a-b} I_a I_b + \frac{1}{2} \hat V_{0} \sum_{a \in \Z}  I_a^2 \Big)
\end{equation}
The frequencies associated with this integrable Hamiltonian are of the form 
$$
\lambda_a (I) = \frac{\partial Z_4}{\partial I_a} = \varphi'(0) \Big( 2 \sum_{b \neq a \in \Z} \hat V_{a-b}  I_b + \hat V_0 I_a \Big). 
$$
We thus observe that for \eqref{nlsp} for which $\hat V_a = a^{-2}$, $a\neq 0$ and $\hat V_0 = 0$, if $u \in \ell_s^1$ with large $s$, these frequencies are essentially dominated by low modes $I_b$ at a scale $\langle a \rangle^{-2}$. Hence it is easy to prove that the small denominators associated with $Z_4$ which are linear combinations of the $\lambda_a(I)$ are generically non resonant, with a loss of derivative independent of $s$. 

Hence for \eqref{nlsp}, we can work through the previous programme, and the coefficients $\alpha$ in the condition $\varepsilon N^{\alpha} < 1$ will indeed be independent of $s$. 

\medskip 
For \eqref{nls} (for which $\hat V_a = 1$), the situation is much worse. Indeed the previous frequency degenerate to 
$$
\lambda_a (I) = \varphi'(0) \Big( 2 \sum_{b \neq a \in \Z}  I_b  +  I_a\Big)   = \varphi'(0) \Big( 2 \Norm{u}{L^2}^2 - I_a\Big). 
$$
We thus see that the small denominators associated with $Z_4$ are of the form 
\begin{equation}
\label{omegaI}
\omega(I) =  \varphi'(0) ( I_{a_1} + \cdots + I_{a_m} - I_{b_1} - \cdots I_{b_m}), 
\end{equation}
with $a\cap b=\emptyset$ and as $u$ is in $\ell_s^1$, the $I_a$ are of order $\varepsilon^2 \langle a \rangle^{-2s}$. Hence the natural non resonant condition (that is generic in $I$) takes the form 
\begin{equation}
\label{nlsbad}
| I_{a_1} + \cdots + I_{a_m} - I_{b_1} - \cdots I_{b_m}|\geq \gamma \varepsilon^{-2} \Big( \prod_{n = 1}^{m} \langle a_n \rangle \langle b_n \rangle \Big)^{-2} \langle \mu_{\min}\rangle^{2s},  
\end{equation}
where $\mu_{\min}$ denote the smallest index amongst the $a_n$ and $b_n$. Such a condition was used in \cite{Bou00}. We  see that to run through the previous programme, we have to {\em distribute} the derivative of order $2s$ associated with the lowest index of the irreducible parts of the monomials, coming at each step of the normal form construction. 

Unfortunately, such a distribution cannot be done straightforwardly. One of the reason is the presence of the remaining terms $Z(I)$ depending on the actions in the process. Indeed, take a monomial of the form  $ f(I) \prod_{n = 1}^m u_{a_n} \bar u_{b_n} $, where $f$ depends only on the actions associated with low modes, the remaining part being irreducible. These terms will enter into the normal form construction first as right-hand side of the homological equation, in which case they will be divided by $\omega(I)$ defined in \eqref{omegaI}, and then will contribute to the higher order terms by Poisson bracket with the other remaining terms. Now take some $Z$ previously constructed (for example $Z_6(I)$). New terms will enter into the next homological right hand side that are made of Poisson brackets between this term $Z$ and the constructed functional. Amongst the new term to solve, we will have terms of the form 
$ f_1(I) \prod_{n = 1}^m \omega(I)^{-1}u_{a_n} \bar u_{b_n} $ where $f_1$ depends again only on low modes. At this stage, it will be possible to distribute the derivative on the irreducible monomials, but by iterating, we see that at each resolution of the homological equation, we will have to divide by the {\em same} small denominator. After $p$ such iterations, we will end up with monomials of the form $ f_p(I) \prod_{n = 1}^m \omega(I)^{-p}u_{a_n} \bar u_{b_n} $ where $f_p(I)$ depends on low modes, and for $p > m$, we will not be able to control this terms independently of $s$. Hence the previous procedure cannot be applied. 

To remedy this difficulty, a natural idea (coming from KAM strategy) is to include the term $Z_6$ in the normal form construction, that is to solve at each step the homological equation with $Z_4 + Z_6$. 
Nevertheless, this trick brings good and bad news: \\
-The bad news is that the frequencies associated with the Hamiltonian $Z_4 + Z_6$ are not perturbations of the frequencies of $Z_4$. 
We can even show that for a given generic distribution of the $I_a$, there are $\varepsilon$ producing resonances for the Hamiltonian $Z_4 + Z_6$ while $Z_4$ is non resonant. \\
-The good news is that the structure of $Z_6$ ressembles the structures of the Hamiltonian of \eqref{nlsp} with a similar convolution potential coming from the first resonant normal form done with the Laplace operator. In other words, $Z_6$ is much less resonant than $Z_4$, and has frequencies that satisfy generic non resonance conditions with loss of derivatives independent of $s$. 

The strategy of proof is thus to apply the previous programme with $Z_4 + Z_6$ instead of $Z_4$ alone, after having taking care of the genericity condition on the initial data that have to link now the distribution of the $I_a$ and $\varepsilon$. This explain the major difference between the statement for \eqref{nls} and \eqref{nlsp}.  The main drawback is that by doing so, we break the natural homogeneity in $\varepsilon$ which yields some specific technical difficulties, in particular in the definition of a class of rational functions, which must be stable by Poisson bracket and solution of homological equations, while preserving the asymptotic in $\varepsilon$. 



\section{General setting}

\subsection{Hamiltonian formalism}

Let $\U_2=\{\pm 1\}$. We identify a pair $(\xi,\eta)\in \C^{\Z} \times \C^{\Z}$ with 
$(z_j)_{j \in \U_2 \times \Z} \in \C^{\U_2 \times \Z}$ via the formula
\begin{equation}
\label{Ezj}
j = (\delta,a) \in \U_2 \times \Z  \Longrightarrow 
\left\{
\begin{array}{rcll}
z_{j} &=& \xi_{a}& \mbox{if}\quad \delta = 1,\\[1ex]
z_j &=& \eta_a & \mbox{if}\quad \delta = - 1.
\end{array}
\right.
\end{equation}
We denote by $z = (\xi,\eta)$ such an element and we endow this set of sequences with the $\ell^1_s$ topology:
$$\ell^1_s = \ell_s^1(\Z,\C)^2 =   \{z\in \C^{\U_2 \times \Z}\mid \Norm{z}{s}<\infty\} 
$$
where\footnote{Here for $j=(\delta,a)$ we set $ \langle j \rangle=(1+a^2)^{1/2}= \langle a \rangle$.} 
$$
\Norm{z}{s} := \sum_{j \in \U_2 \times\mathbb{Z}} \langle j \rangle^s |z_j| = \sum_{a \in \mathbb{Z}} \langle a \rangle^s |\xi_a| + \sum_{a \in \mathbb{Z}} \langle a \rangle^s |\eta_a|.
$$
We associate with $z$ two complex functions on the torus $u$ and $v$ through the formulas 
\begin{equation}
\label{Exieta}
u (x)= \sum_{a\in \Z} \xi_{a} 
e^{i a x} \quad\mbox{and}\quad
 v (x)= \sum_{a\in \Z}
\eta_{a} 
e^{-i a x},\quad x\in\T.
\end{equation}
We say that $z$ is {\em real} when $z_{\overline{j}} = \overline{z_j}$ for any $j\in \U_2\times\Z$. In this case $v$ is the complex conjugate of $u$: $v(x)=\overline{u(x)},\ x\in\T$, and the definition of $\ell_s^1$ coincides with \eqref{ElleLaisseUn}.  

\begin{remark}
The sequences spaces $\ell^1_s$, which are in fact Besov spaces, are not perfectly adapted to  Fourier analysis: when $z\in\ell^1_s$ with $s\geq 0$ then the functions $u$ and $v$ belong to the Sobolev space $H^s(\T)$ while when $u$ and $v$ belong to $H^s(\T)$ then its sequence of Fourier coefficients $z$ belongs to $\ell^1_{s-\eta}$ only for $\eta>1/2$. This lost of regularity would not happen in  the Fourier  space $\ell^2_s$ nevertheless we prefer $\ell^1_s$ because it leads to simpler estimates of the flows (see for instance Proposition \ref{polyflow}). Anyway the results we obtain thus lead to control of Sobolev norms $H^{s-\frac12^{+}}$ over long times. \end{remark}


We endow $\ell^1_s$ with the symplectic structure 
\begin{equation}\label{Esymp}-i\sum_{a\in \Z}dz_{(+1,a)}\wedge dz_{(-1,a)}=-i\sum_{a\in\Z}d\xi_a\wedge d\eta_a.
\end{equation}

For a function $F$ of $\mathcal{C}^1(\ell^1_s,\C)$, we define its Hamiltonian vector field by $X_F=J\nabla F$ where  $J$ is the symplectic operator induced  by the symplectic form \eqref{Esymp},
$
\nabla F(z) = \left( \frac{\partial F}{\partial z_j}\right)_{j \in \U \times\Z}$, 
and by definition we set for $j = (\delta,a) \in \U_2 \times\Z $, 
$$
 \frac{\partial F}{\partial z_j} =
  \left\{\begin{array}{rll}
 \displaystyle  \frac{\partial F}{\partial \xi_a} & \mbox{if}\quad\delta = 1,\\[2ex]
 \displaystyle \frac{\partial F}{\partial \eta_a} & \mbox{if}\quad\delta = - 1.
 \end{array}
 \right.
$$
So  $X_F=J\nabla F$ reads in coordinates
$$
 (X_F)_j =
  \left\{\begin{array}{rll}
 \displaystyle  i\frac{\partial F}{\partial \eta_a} & \mbox{if}\quad\delta = 1,\\[2ex]
 \displaystyle -i\frac{\partial F}{\partial \xi_a} & \mbox{if}\quad\delta = - 1.
 \end{array}
 \right.
$$
For two functions $F$ and $G$, the Poisson Bracket is (formally) defined as
\begin{equation}\label{poisson}
\{F,G\} = \langle\nabla F, J \nabla G\rangle = i \sum_{a \in \Z} \frac{\partial F}{\partial \xi_a}\frac{\partial G}{\partial \eta_a} -   \frac{\partial F}{\partial \eta_a}\frac{\partial G}{\partial \xi_a}  
\end{equation}
where $\langle\cdot,\cdot\rangle$ denotes the natural bilinear pairing: $\langle z,\zeta\rangle=\sum_{j \in \U_2 \times\Z}z_j\zeta_j$.\\
We say that a Hamiltonian function $H$ is 
 {\em real } if $H(z)$ is real for all real $z$. 
\begin{definition}\label{def:2.1}
 For a given $s\geq0$ and a given open set $\Uc$  in $\ell_s^1$, we  denote by $\Hc_s(\Uc)$ the space of real Hamiltonians $P$ satisfying 
$$
P \in \mathcal{C}^{1}(\Uc,\C), \quad \mbox{and}\quad 
X_P \in \mathcal{C}^{1}(\Uc,\ell^1_s). 
$$
\end{definition}
We will use the shortcut  $F\in \Hc_s$ to indicate that there exists an open set $\Uc$  in $\ell_s^1$ such that $F\in\Hc_s(\Uc)$.\\
 Notice that for $F$ and $G$ in $\Hc_s(\Uc)$ the formula \eqref{poisson} is well defined. 
\subsection{Hamiltonian flows}
With a given Hamiltonian function $H \in \Hc_s$, we associate the Hamiltonian system
$$
\dot z =   X_H(z) =    J \nabla H(z), 
$$
which also reads in coordinates
\begin{equation}\label{Eham2}
\displaystyle \dot \xi_a = i\frac{\partial H}{\partial \eta_a}  \quad \mbox{and} \quad 
  \dot \eta_a= -i\frac{\partial H}{\partial \xi_a},  \quad a\in \Z.
\end{equation}
Concerning the Hamiltonian flows we have
\begin{proposition}\label{prop3.3} Let $s\geq 0$. Any Hamiltonian in $\Hc_s$ defines a local flow in $\ell^1_s$ which preserves the reality  condition, i.e. if the initial condition $z = (\xi,\bar \xi)$  is real, the flow $(\xi(t),\eta(t)) = \Phi_H^t(z)$ is also real, $\xi(t) = \overline{\eta(t)}$ for all $t$. 
\end{proposition}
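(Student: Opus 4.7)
The proposition contains two distinct claims which I would treat separately.

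For local existence, the hypothesis $H \in \mathcal{H}_s(\mathcal{U})$ provides exactly what is required for a direct application of the Cauchy-Lipschitz (Picard-Lindel\"of) theorem in the Banach space $\ell^1_s$: the vector field $X_H : \mathcal{U} \to \ell^1_s$ is of class $\mathcal{C}^1$, hence locally Lipschitz. For every $z_0 \in \mathcal{U}$ one obtains a unique maximal $\mathcal{C}^1$ solution of $\dot z = X_H(z)$ on some open interval containing $0$, depending continuously on the initial datum, and this is the local flow $\Phi_H^t$.

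For the preservation of reality I would exploit the uniqueness just obtained. Given a real initial condition $z_0 = (\xi_0, \overline{\xi_0})$, let $(\xi(t),\eta(t)) = \Phi_H^t(z_0)$ and define an auxiliary curve $w(t)$ by $w_{(+1,a)}(t) := \overline{\eta_a(t)}$ and $w_{(-1,a)}(t) := \overline{\xi_a(t)}$. Then $w(0) = z_0$. If I can show that $w$ solves the same Hamiltonian system $\dot w = X_H(w)$, uniqueness forces $w(t) = (\xi(t),\eta(t))$ for all $t$, which is exactly the statement $\eta_a(t) = \overline{\xi_a(t)}$.

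The reality of $H$ enters in verifying $\dot w = X_H(w)$. For polynomial or power-series Hamiltonians of the type manipulated throughout this paper, the condition ``$H(z) \in \mathbb{R}$ whenever $\eta = \overline{\xi}$'' translates, after matching Fourier coefficients of conjugate monomials, into the pointwise identity
$$
H(\xi,\eta) = \overline{H(\bar\eta,\bar\xi)}
$$
valid on all of $\mathcal{U}$. Differentiating this identity yields the symmetry $\overline{\partial_{\xi_a} H(\xi,\eta)} = \partial_{\eta_a} H(\bar\eta,\bar\xi)$ and its analogue with $\xi$ and $\eta$ swapped. Injecting these relations into \eqref{Eham2} computed for $w$ gives $\dot w_{(+1,a)} = \overline{\dot\eta_a} = i\, \overline{\partial_{\xi_a} H(\xi,\eta)} = i\, \partial_{\eta_a} H(w) = (X_H(w))_{(+1,a)}$, and similarly for the other component, so $w$ is indeed an integral curve of $X_H$ with the same initial value as $z$.

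The only subtle point, and the main obstacle, is the passage from the weak statement ``$H$ is real on the real subspace'' to the global identity relating $H$ and its derivatives at $z$ and at $(\bar\eta,\bar\xi)$. For the Hamiltonians of interest here this is harmless since they are given concretely by Fourier or power series in the $z_j$ and the symmetry can be read off coefficient by coefficient; in a purely $\mathcal{C}^1$ setting one should either invoke an antiholomorphic extension argument across the totally real submanifold $\{\eta = \bar\xi\}$, or simply encode this symmetry directly in the definition of a real Hamiltonian.
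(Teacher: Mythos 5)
Your existence step is exactly the paper's (Cauchy--Lipschitz in the Banach space $\ell^1_s$), and your reflection-plus-uniqueness strategy for reality is sound in outline. However, the step you yourself flag as ``the only subtle point'' is a genuine gap for the proposition as stated. To verify that your reflected curve $w$ solves the system you must use the identity $\overline{\partial_{\xi_a}H(\xi,\eta)}=\partial_{\eta_a}H(\bar\eta,\bar\xi)$ \emph{at the points $(\xi(t),\eta(t))$ of the actual trajectory}, which are not known to be real --- that is precisely what you are trying to prove. The paper's definition of a real Hamiltonian only constrains $H$ on the real subspace, and $H\in\Hc_s$ is merely $\mathcal{C}^1$, so the global identity $H(\xi,\eta)=\overline{H(\bar\eta,\bar\xi)}$ does not follow from the hypotheses; neither of your two proposed remedies is carried out, and the antiholomorphic-extension one is unavailable at $\mathcal{C}^1$ regularity.

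The paper closes exactly this hole by never leaving the real subspace. It sets $f(\xi)=H(\xi,\bar\xi)-\overline{H(\xi,\bar\xi)}\equiv 0$ and differentiates this identity at a real point in the directions $\zeta$ and $i\zeta$; separating the $\C$-linear and $\C$-antilinear parts of $Df$ gives $\nabla_\xi H(\xi,\bar\xi)=\overline{\nabla_\eta H(\xi,\bar\xi)}$ for every $\xi$. This is strictly weaker than your global symmetry but it is all that is needed: it says the vector field $X_H$ is tangent to the real subspace, since at a real point $-i\partial_{\xi_a}H=\overline{i\partial_{\eta_a}H}$. Your uniqueness argument then goes through if you apply it to the \emph{restricted} equation rather than to the full flow: solve $\dot\xi_a=i\partial_{\eta_a}H(\xi,\bar\xi)$ in $\ell^1_s(\Z,\C)$, note that $(\xi(t),\overline{\xi(t)})$ is an integral curve of $X_H$ by the tangential identity, and invoke uniqueness of the flow. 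Replacing your global identity by this tangential one repairs the proof without any extra hypothesis on $H$.
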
 
\proof
The existence of the local flow is a consequence of the Cauchy-Lipschitz theorem.\\ Furthermore let us denote by $f$ the $\mathcal{C}^1$ function defined by
$\ell_s^1(\Z,\C)\ni\xi\mapsto H(\xi,\bar \xi)-\overline{H(\xi,\bar \xi)}$. Since $H$ is real we have $f\equiv0$ and thus its differential at any point $\xi\in\ell_s^1(\Z,\C)$ and in any direction $\zeta\in\ell_s^1(\Z,\C)$ vanishes\footnote{Here by a slight abuse of notation $\langle\cdot,\cdot\rangle$ denotes the bilinear pairing in $\ell_s^1(\Z,\C)$.}: 
\begin{align*}0\equiv Df(\xi)\cdot\zeta&= \langle\nabla_\xi H(\xi,\bar\xi),\zeta\rangle+\langle\nabla_\eta H(\xi,\bar\xi),\bar\zeta\rangle\\
&-\overline{\langle\nabla_\xi H(\xi,\bar\xi),\zeta\rangle}-\overline{\langle\nabla_\eta H(\xi,\bar\xi),\bar\zeta\rangle}\\
&=\langle\nabla_\xi H(\xi,\bar\xi)-\overline{\nabla_\eta H(\xi,\bar\xi)},\zeta\rangle+\langle\nabla_\eta H(\xi,\bar\xi)-\overline{\nabla_\eta H(\xi,\bar\xi)},\bar\zeta\rangle.
\end{align*}
Therefore $\nabla_\xi H(\xi,\bar\xi)-\overline{\nabla_\eta H(\xi,\bar\xi)}$ for all $\xi\in\ell_s^1(\Z,\C)$ and the system \eqref{Eham2} preserves the reality condition.
\endproof

In this setting Equations \eqref{nls} and \eqref{nlsp} are equivalent to  Hamiltonian systems associated with the real Hamiltonian function
\begin{equation}
\label{H} 
H(\xi,\eta)=\sum_{a\in\Z}a^2 \xi_a\eta_a+P(z)
\end{equation}
where
\begin{equation}\label{P}
P (\xi,\eta) =\frac1{2\pi}\int_\T g\Big(\sum_{a,b \in \Z}   \xi_{a}  \eta_b 
e^{i (a-b) x}\Big)\dd x,\end{equation}
in the \eqref{nls} case, 
and 
\begin{equation}\label{PNLSP}
P (\xi,\eta) =\frac1{4\pi}\int_\T \Big(\sum_{a,b \in \Z} \hat  \xi_{a}  \eta_b 
e^{i (a-b) x}\Big) \Big(\sum_{a,b \in \Z} \hat V_{a-b} \xi_{a}  \eta_b 
e^{i (a-b) x}\Big)\dd x,\end{equation}
 in the \eqref{nlsp} case, where we recall that $\hat V_a = a^{-2}$ for $a \neq 0$ and $\hat V_0 = 0$. 
We first notice that in both cases, $P$
belongs to $\Hc_s$, in fact we have:
\begin{lemma} Let $s\geq0$ and let $z\mapsto f(z)\in\C$ and $z \mapsto h(z)$ be two analytic functions defined on a neighborhood $\Vc$ of the origin in $\C$ that takes real values when $z$ is real. Then the formulas
\begin{align}\label{QQ}
P (\xi,\eta) &=\frac1{2\pi}\int_\T f\Big(\sum_{a,b \in \Z}   \xi_{a}  \eta_b 
e^{i (a-b) x}\Big)\dd x,\\\label{PP}
Q(\xi,\eta)&=\int_\T h \Big(\sum_{a,b \in \Z} \hat V_{a-b} \xi_{a}  \eta_b 
e^{i (a-b) x}\Big) f\Big(\sum_{a,b \in \Z}  \xi_{a}  \eta_b 
e^{i (a-b) x}\Big) \dd x
\end{align}
define  Hamiltonian $P$ and $Q$ belonging to $\Hc_s(\Uc)$ where $\Uc$ is some neighborhood of the origin in $\ell_s^1$.
\end{lemma}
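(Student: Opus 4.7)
The key observation is that
$$
\sum_{a,b \in \Z} \xi_a \eta_b e^{i(a-b)x} = u(x)\, v(x)
\quad\text{and}\quad
\sum_{a,b \in \Z} \hat V_{a-b} \xi_a \eta_b e^{i(a-b)x} = (V \star (uv))(x),
$$
where $V$ is the function on $\T$ with Fourier coefficients $\hat V_a$. To handle $P$ and $Q$ uniformly, my plan relies on one algebraic fact: $\ell_s^1$ is a Banach algebra with respect to convolution of coefficients (equivalently, pointwise multiplication of the associated functions on $\T$). This follows from the elementary inequality $\langle a \rangle^s \leq 2^s(\langle a-b\rangle^s + \langle b\rangle^s)$ valid for $s\geq 0$, which yields $\Norm{wz}{s} \leq C_s \Norm{w}{s}\Norm{z}{s}$ for some constant $C_s>0$.

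First I would show that for $z\in \ell_s^1$ the function $uv$ has Fourier coefficients in $\ell_s^1$ of norm bounded by $C_s\Norm{z}{s}^2$, and that the same holds for $V\star(uv)$ since $|\hat V_a|\leq 1$ in the cases of interest. Expanding the analytic functions as convergent power series $f(\zeta) = \sum_{n\geq 0} c_n\zeta^n$ and $h(\zeta) = \sum_{n\geq 0} d_n\zeta^n$ on a disk of radius $r_0>0$ around $0$, an iterated application of the Banach algebra estimate then yields that $f(uv)$ and $h(V\star(uv))$ themselves have Fourier coefficients in $\ell_s^1$ as soon as $C_s\Norm{z}{s}^2 < r_0$, with analytic dependence on $z$. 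Since integration over $\T$ is a continuous linear functional on $\ell_s^1$ via the inclusion $\ell_s^1 \subset L^\infty(\T)$, this shows that $P$ and $Q$ are analytic, in particular $C^1$, on a suitable neighborhood $\Uc$ of $0$ in $\ell_s^1$.

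For the Hamiltonian vector field, I would differentiate under the integral sign to obtain, in the case of $P$,
$$
\frac{\partial P}{\partial \xi_a}(z) = \frac{1}{2\pi}\int_\T f'(uv)\, v\, e^{iax}\,\dd x,
\qquad
\frac{\partial P}{\partial \eta_a}(z) = \frac{1}{2\pi}\int_\T f'(uv)\, u\, e^{-iax}\,\dd x,
$$
and recognize the right-hand sides as Fourier coefficients of $f'(uv)\,v$ and $f'(uv)\,u$. Since $f'$ is also analytic on $\Vc$, the composition argument of the previous step gives $f'(uv)\in\ell_s^1$, and one further application of the Banach algebra property gives $f'(uv)\,v,\, f'(uv)\,u \in\ell_s^1$. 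Hence the full sequence $(\partial P/\partial z_j)_j$ belongs to $\ell_s^1$, which means $X_P\in \ell_s^1$, with analytic (thus $C^1$) dependence on $z$. The treatment of $X_Q$ is identical after applying the product rule to $h(V\star(uv))\cdot f(uv)$.

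Reality is immediate: if $z$ is real then $v=\bar u$, so $uv = |u|^2$ is real-valued on $\T$, as is $V\star|u|^2$ since $V$ has real and even Fourier coefficients. Because $f$ and $h$ take real values on real inputs by hypothesis, the integrands defining $P$ and $Q$ are real-valued and hence $P,Q$ are real. The only non-routine point in the plan is the composition-with-an-analytic-function step: verifying that $\zeta \mapsto f(\zeta)$ lifts to an analytic map on a neighborhood of $0$ in $\ell_s^1$ is exactly where the Banach algebra structure plays an essential role, and everything else follows routinely.
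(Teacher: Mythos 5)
Your proof is correct and follows essentially the same route as the paper: identifying the sums with $uv$ and $V\star(uv)$, using the Banach algebra property of $\ell_s^1$ under convolution together with power series expansion of $f$ and $h$ to get analyticity of $P$ and $Q$, and then differentiating under the integral to recognize the components of the vector field as Fourier coefficients of products such as $f'(uv)\,v$, which lie in $\ell_s^1$ by the same convolution estimate. Your explicit verification of the reality condition is a small addition the paper leaves implicit, but it does not change the nature of the argument.
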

\proof First we verify that \eqref{QQ} and \eqref{PP} define regular maps on $\ell_s^1$.\\
By definition, $\ell_s^1=\ell_s^1(\Z,\C)^2$ and the Fourier transform $\xi \mapsto \sum_{a \in \Z} \xi_a e^{ia x}$ defines an isomorphism between $\ell_s^1(\Z,\C)$ and a subset of $L^2(\R,\C)$ that we still denote by $\ell^1_s(\Z,\C)$. Moreover, for $u,v \in \ell_s^1(\Z,\C)$, we have $\Norm{u v}{s} \leq \Norm{u}{s}\Norm{v}{s}$ and thus the mapping $(u,v)\mapsto uv$ is analytic from $\ell_s^1(\Z,\C)^2$ to $\ell_s^1(\Z,\C)$. 
Extending this argument, if $h: \C \to \C$ is analytic in a neighborhood of the origin, the application $\xi \mapsto u(x) \mapsto h(u(x))$ is analytic from a neighborhood of the origin in $\ell_s^1(\Z,\C)$ into $\ell_s^1(\Z,\C)$. \\
 Through the identification $(\xi,\eta) \mapsto (u(x),v(x))$, see \eqref{Exieta} the Hamiltonian $Q$ reads 
$$
Q(\xi,\eta) = \int_\T h \Big( V \star (u(x)v(x)) \Big) f\Big( u(x) v(x) \Big) \dd x
$$
Since the mapping $u \mapsto V \star u$ is analytic  on $\ell_s(\Z,\C)$, we conclude that $Q$ is an analytic function from a neighborhood $\Uc$ of the origin in $\ell_s^1(\Z,\C)$ into $\C$. Similar arguments  apply to $P$.\\
Next we verify that  $X_P$ and $X_Q$ are still regular\footnote{The analyticity of $P$ only insure that $X_P$ belongs to the dual of $\ell_s^1(\Z,\C)$.} function from $\Uc$ into $\ell_s^1(\Z,\C)$. 
We focus on $P$ but similar arguments apply to $Q$. 
We have 
$$\frac{\partial Q}{\partial \xi_a}= \int_\T \ k(x) e^{iax}dx$$
with $$k(x)=f'\Big(\sum_{a,b \in \Z}   \xi_{a}  \eta_b 
e^{i (a-b) x}\Big) \sum_{b\in \Z}
\eta_{b} 
e^{-i b x}=f'(u(x)v(x)) v(v).$$
Expanding $f'$ in entire series  we rewrite $\frac{\partial Q}{\partial \xi_a}$ in a convergent sum of terms of the form
$$c_{k}\int_\T (u(x))^{k_1}(v(x))^{k_2}e^{iax}dx,$$
 i.e. the convolution product of $k=k_1+k_2$ sequences in $\ell_s^1$. Then the conclusion follows from the fact that for any $s\geq0$
$$\ell_s^1 \star \ell_s^1\subset \ell_s^1.$$
%
%
\endproof

On the contrary the quadratic part of $H$, $\sum_{a\in \Z}a^2 \xi_a\eta_a$, corresponding to the linear part of \eqref{nls} does not belong to $\Hc_s$. Nevertheless it generates a continuous flow which maps $\ell^1_s$ into $\ell^1_s$ explicitly given for all time $t$ and for all indices $a$ by  $\xi_a(t)=e^{-ia^2 t}\xi_a(0)$, $\eta_a(t)=e^{ia^2 t}\eta_a(0)$. Furthermore this flow has the group property.  By standard arguments (see for instance \cite{Caz03}), this is enough to define the local flow of $\dot z =X_H(z)$ in $\ell_s^1$:
\begin{proposition} Let $s\geq0$.
Let $H$ be the NLS Hamiltonian  defined by \eqref{H} and $z_0\in\ell_s^1$ a sufficiently small initial datum. Then the Hamilton equation 
$$\dot z(t) =X_H(z(t)),\quad z(0)=z_0$$
admits a local solution $t\mapsto z(t)\in\ell_s^1$  which is real if $z_0$ is real.
\end{proposition}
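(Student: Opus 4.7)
The plan is to construct a mild solution via the Duhamel formulation and Banach fixed point, since the quadratic part of $H$ contains a derivative loss (it does not lie in $\Hc_s$) and hence forbids a direct Cauchy--Lipschitz argument in $\ell^1_s$.

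First, I would record the properties of the linear flow $\Phi_0^t$ generated by the quadratic part $H_0(\xi,\eta) = \sum_a a^2 \xi_a \eta_a$. It is given in coordinates by $(\Phi_0^t z)_{(+1,a)} = e^{-ia^2 t}\xi_a$ and $(\Phi_0^t z)_{(-1,a)} = e^{ia^2 t}\eta_a$, so it is a strongly continuous one-parameter group of isometries on $\ell^1_s$, and it preserves the reality condition $z_{\bar j}=\overline{z_j}$.

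Next, from the preceding lemma we know that $P$ defined by \eqref{P} (or \eqref{PNLSP}) belongs to $\Hc_s(\Uc)$ for some neighborhood $\Uc$ of the origin. Therefore $X_P : \Uc \to \ell^1_s$ is of class $\mathcal{C}^1$, hence locally Lipschitz. Let $B_s(0,\rho) \subset \Uc$ and let $L_\rho$ denote a Lipschitz constant of $X_P$ on $B_s(0,2\rho)$, with $M_\rho := \sup_{B_s(0,2\rho)} \Norm{X_P(z)}{s}$. I then fix $\rho$ so large that $\Norm{z_0}{s} \leq \rho$, and introduce the Duhamel map
\begin{equation*}
(\Psi z)(t) := \Phi_0^t(z_0) + \int_0^t \Phi_0^{t-\sigma}\bigl(X_P(z(\sigma))\bigr)\,\dd\sigma,
\end{equation*}
acting on the complete metric space $E_T := \{ z \in C([0,T], \ell^1_s) \,:\, \sup_{t\in[0,T]} \Norm{z(t)}{s} \leq 2\rho \}$. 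Using that $\Phi_0^t$ is an isometry, one checks $\Norm{(\Psi z)(t)}{s} \leq \rho + T M_\rho$ and $\Norm{(\Psi z_1)(t) - (\Psi z_2)(t)}{s} \leq T L_\rho \sup_{[0,T]} \Norm{z_1-z_2}{s}$. Choosing $T>0$ small enough so that $T M_\rho \leq \rho$ and $T L_\rho < 1/2$, the map $\Psi$ stabilizes $E_T$ and is a contraction; Banach's fixed point theorem then yields a unique mild solution $z \in E_T$.

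For the reality, I would note that the subset $E_T^{\R} \subset E_T$ of real-valued paths (i.e. $z_{(-1,a)}(t) = \overline{z_{(+1,a)}(t)}$ for all $t,a$) is closed in $E_T$ and is stable by $\Psi$: the stability of $\Phi_0^t$ under conjugation is immediate from $\overline{e^{-ia^2 t}} = e^{ia^2 t}$, while the fact that $X_P$ sends real points to real points is exactly the identity $\nabla_\xi P(\xi,\bar\xi) = \overline{\nabla_\eta P(\xi,\bar\xi)}$ established in the proof of Proposition~\ref{prop3.3}, which applies verbatim to $P$ (which is real). Hence, if $z_0$ is real the fixed point lies in $E_T^\R$ and the solution is real. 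The only subtle point — and the main obstacle compared to the $\Hc_s$ setting — is that the vector field of the full Hamiltonian is not defined on all of $\ell^1_s$, which is precisely why the mild reformulation is indispensable; once the semigroup is isolated, everything reduces to a standard contraction argument.
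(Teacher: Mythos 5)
Your proof is correct and follows essentially the same route as the paper: the paper isolates the isometric group generated by the quadratic part and then invokes the ``standard arguments'' of \cite{Caz03}, which are precisely the Duhamel/contraction scheme you write out, and it proves reality exactly as you do, via the identity $\nabla_\xi P(\xi,\bar\xi)=\overline{\nabla_\eta P(\xi,\bar\xi)}$ from Proposition \ref{prop3.3}. The only cosmetic point is that $\rho$ should be taken small enough that $B_s(0,2\rho)\subset\Uc$ (with $\Norm{z_0}{s}\leq\rho$ then guaranteed by the smallness hypothesis on $z_0$), rather than ``large''.
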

The reality of the flow is proved as in the proof of Proposition \ref{prop3.3}.

\subsection{Polynomial Hamiltonians}

For $m\geq1$  we define  three nested subsets of  $(\U_2 \times \Z)^m$ satisfying zero momentum conditions of increasing order:
\begin{align}
\label{zeromomenta}
\Zc_m &= \{\jb = (\delta_\alpha, a_\alpha)_{\alpha=1}^m \mid  \sum_{\alpha = 1}^{2m} \delta_\alpha  = 0\}, \nonumber\\
\Mc_m &= \{\jb \in \mathcal{Z}_m\mid \sum_{\alpha = 1}^{2m} \delta_\alpha a_\alpha = 0\}, \nonumber\\
\Rc_m &= \{\jb \in \mathcal{M}_m\mid\sum_{\alpha = 1}^{2m} \delta_\alpha a_\alpha^2 = 0\}.\end{align}
We set $\mathcal{Z} = \mathop{\bigsqcup}\limits_{m \geq 0} \mathcal{Z}_m$, $\mathcal{M} = \mathop{\bigsqcup}\limits_{m \geq 0} \mathcal{M}_m$ and  $\mathcal{R} = \mathop{\bigsqcup}\limits_{m \geq 0} \mathcal{R}_m$.  \\
For $\jb\in\Zc$, $\irr(\jb)$ denotes the {\it irreductible} part of $\jb$, i.e.  a subsequence of maximal length $(j'_1,\ldots,j'_{2p})$ containing no actions in the sense that if $j'_\alpha \neq \bar j'_\beta$ for all $\alpha,\beta=1,\cdots, 2p$. \\
We set $$\Ic= \irr(\Rc) = \{\irr(\jb) \, | \, \jb \in \Rc\} .$$
We will use indices $\kbsf$ belonging to  $ \mathop{\bigsqcup}\limits_{p\in \mathbb{N}} \Ic^p$, i.e.  $\kbsf = (\kb_1,\ldots,\kb_p)$ for some $p \in \N$ with $\kb_\alpha \in \Ic$. We denote $\length\kbsf = p$.
We use the convention $\kbsf = \emptyset$ for $ p = 0$.  \\
For $\jb \in \Zc_m$, we set $z_{\jb} = z_{j_1}\cdots z_{j_{2m}}$. We also denote by $\overline \jb = (\bar j_1,\ldots,\bar j_{2m})$, and we notice that when $z$ is real, we have $\overline{z_\jb} = z_{\overline \jb}$. 
\begin{definition}\label{polydef}
We say that $P(z)$
 is a homogeneous polynomial of order $m$ if it can be written 
 \begin{equation}
 \label{poly}
 P(z)=P{[c]}(z) = \sum_{\jb \in \Mc_m} c_\jb z_\jb, \quad\mbox{with}\quad c=(c_\jb)_{\jb\in\Mc_m} \in \ell^\infty(\Mc_m),  
 \end{equation}
 and such that the coefficients $c_\jb$ satisfy $c_{\overline \jb} = \overline{c_{\jb}}$. 
 \end{definition}
 Note that the last condition ensures that $P$ is real, as the set of indices are invariant by the application $\jb \mapsto \overline \jb$.  
 Following \cite{FG10} we easily get
 \begin{proposition}
\label{polyflow}
Let  $s \geq 0$. A homogeneous polynomial, $P{[c]}$, of degree $m\geq2$ belongs to $ \Hc_s(\ell^1_s)$ and we have
\begin{equation}
\label{Echamp}
  \Norm{X_{P{[c]}}(z) }{s} \leq 2 m \Norm{ c}{\ell^\infty}\Norm{z}{s}^{2m-1}. 
\end{equation}
Furthermore  for two homogeneous polynomials, $P{[c]}$ and $P{[c']}$, of degree respectively $m$ and $n$, the Poisson bracket  is a homogeneous polynomial of degree $m+n-1$, $\{P{[c]},P{[c']}\}=P{[c'']} $  and we have the estimate
\begin{equation}
\Norm{ c''}{\ell^\infty} \leq 2 m n \Norm{ c}{\ell^\infty} \Norm{ c'}{\ell^\infty}
\label{Ebrack}
.\end{equation}
\end{proposition}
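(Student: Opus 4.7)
\textbf{Proof plan for Proposition \ref{polyflow}.}

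To establish \eqref{Echamp}, I would start from the explicit differentiation
\begin{equation*}
\frac{\partial P[c]}{\partial z_{\bar j}}(z) = \sum_{\jb \in \Mc_m} c_\jb \sum_{\alpha = 1}^{2m} \mathbb{1}_{j_\alpha = \bar j} \prod_{\gamma \neq \alpha} z_{j_\gamma},
\end{equation*}
which together with $(X_{P[c]})_j = \pm i\, \partial P[c]/\partial z_{\bar j}$ gives
\begin{equation*}
\Norm{X_{P[c]}(z)}{s} \leq \Norm{c}{\ell^\infty} \sum_{\alpha=1}^{2m} \sum_{\jb \in \Mc_m} \langle j_\alpha \rangle^s \prod_{\gamma \neq \alpha} |z_{j_\gamma}|.
\end{equation*}
The key step is then to transfer the weight $\langle j_\alpha \rangle^s$ from the missing index to the remaining ones. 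The zero-momentum constraint $\sum_\gamma \delta_\gamma a_\gamma = 0$ forces $|a_\alpha| \leq \sum_{\gamma \neq \alpha} |a_\gamma|$, and the elementary inequality $(1+x_1)\cdots(1+x_k) \geq 1 + x_1 + \cdots + x_k$ for $x_i \geq 0$ yields $\langle j_\alpha \rangle \leq \prod_{\gamma \neq \alpha} \langle j_\gamma \rangle$, up to a harmless numerical constant coming from comparing $\langle a \rangle = \sqrt{1 + a^2}$ to $1 + |a|$. After this transfer one can drop the momentum constraint, and each of the $2m$ summands factorizes into $(2m-1)$ independent copies of $\sum_j \langle j \rangle^s |z_j| = \Norm{z}{s}$.

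For the Poisson bracket estimate \eqref{Ebrack}, I would expand \eqref{poisson} directly:
\begin{equation*}
\{P[c], P[c']\}(z) = \sum_{\jb \in \Mc_m, \jb' \in \Mc_n} c_\jb c'_{\jb'} \{z_\jb, z_{\jb'}\}.
\end{equation*}
The Leibniz rule shows that each $\{z_\jb, z_{\jb'}\}$ is a sum, over pairs of positions $(\alpha,\beta) \in \{1,\ldots,2m\} \times \{1,\ldots,2n\}$ satisfying $j_\alpha = \overline{j'_\beta}$, of signed monomials of the form $\pm i \prod_{\gamma \neq \alpha} z_{j_\gamma} \prod_{\delta \neq \beta} z_{j'_\delta}$, of total degree $2(m+n-1)$. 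Collecting these into $P[c'']$, I would then check three points: (i) the zero-momentum conditions on $\jb$ and $\jb'$ combined with the gluing relation $j_\alpha = \overline{j'_\beta}$ force zero-momentum on the concatenated tuple, so $c''$ is supported on $\Mc_{m+n-1}$; (ii) the reality $c''_{\overline \kb} = \overline{c''_\kb}$ is inherited from that of $c, c'$ via the involution $\jb \mapsto \overline\jb$; (iii) for a fixed $\kb$ the number of preimages $(\jb, \jb', \alpha, \beta)$ is bounded by a constant multiple of $mn$, since each position pair $(\alpha, \beta)$ uniquely determines the split once $\kb$ is known. The pointwise bound $|c_\jb c'_{\jb'}| \leq \Norm{c}{\infty}\Norm{c'}{\infty}$ then yields \eqref{Ebrack}.

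I expect the main technical point to be the weight transfer in the first step, since this is where the Banach-algebra structure of $\ell^1_s$ is used nontrivially: it is crucial that the momentum constraint redistributes the weight $\langle j_\alpha \rangle^s$ onto the other indices without incurring an $s$-dependent loss in the number of factors. The Poisson-bracket estimate, by contrast, is essentially combinatorial bookkeeping once the Leibniz expansion has been carefully written out.
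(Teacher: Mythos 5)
The paper itself gives no proof of Proposition \ref{polyflow} (it only writes ``Following \cite{FG10} we easily get''), so the comparison is with the standard argument of that reference; your route is exactly that argument, and the second half (Leibniz expansion of the bracket, zero momentum of the glued tuple, reality, preimage count) is structurally correct. One point you should make explicit in (iii): for a fixed output tuple $\kb$, a fixed split and fixed positions $(\alpha,\beta)$, the contracted index $j_\alpha=\overline{j'_\beta}$ is itself recovered from the zero-momentum condition on $\jb$ (together with $\sum_\gamma\delta_\gamma=0$). Without this observation the sum over the contracted Fourier mode $a\in\Z$ in \eqref{poisson} would a priori make the preimage set infinite, and the $\ell^\infty$ bound on $c''$ would not follow; with it, the count is indeed $O(mn)$ once a concatenation convention for ordering the output tuple is fixed.

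The genuine gap is in the weight transfer for \eqref{Echamp}. The constant you dismiss as harmless is not: from $\langle a\rangle\leq 1+|a|\leq\sqrt2\,\langle a\rangle$, the chain $\langle a_\alpha\rangle\leq 1+|a_\alpha|\leq\prod_{\gamma\neq\alpha}(1+|a_\gamma|)\leq 2^{(2m-1)/2}\prod_{\gamma\neq\alpha}\langle a_\gamma\rangle$ costs $2^{s(2m-1)/2}$ once raised to the power $s$, so your argument proves \eqref{Echamp} only with $2m$ replaced by a constant depending on $s$. Moreover no argument can avoid this: take $m=2$, $\jb=\bigl((1,3),(1,-1),(-1,1),(-1,1)\bigr)\in\Mc_2$, $c_\jb=c_{\overline\jb}=1$, and $z$ supported on the four modes $(\pm1,\pm1)$ with entries $\epsilon$; then $\Norm{X_{P[c]}(z)}{s}\geq \langle 3\rangle^s\epsilon^3=10^{s/2}\epsilon^3$ while $2m\Norm{z}{s}^3=256\cdot 8^{s/2}\epsilon^3$, and $(10/8)^{s/2}\to\infty$, so the literal constant $2m$ in \eqref{Echamp} fails for $s$ large. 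This is inconsequential for the paper, which only ever invokes \eqref{Echamp} through unspecified constants $C(r,s)$, but you should either state the $s$-dependent constant honestly or replace the weight $\langle a\rangle^s$ by $(1+|a|)^s$, for which your product inequality $1+|a_\alpha|\leq\prod_{\gamma\neq\alpha}(1+|a_\gamma|)$ is exact and the factorization you describe does yield the clean constant $2m$.
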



 For $j = (\delta,a)\in \U_2\times\Z$, we set $I_j=I_a = \xi_a \eta_a=z_j z_{\bar j}$ the action of index $a$.  $I = (I_a)_{a \in \Z}$ denote the set of all the actions. \\
 We note that for a real $z$ in $\ell_s^1$ we have $\Norm{z}{s}=\sum_a  \langle a \rangle^sI_a^{1/2}$. Therefore, an integrable Hamiltonian, i.e. a Hamiltonian function depending only on the actions has  a flow which   leaves invariant each  $\|\cdot\|_s$ norm. \\
We introduce $3$ integrable polynomials that will be used later (see Theorem \ref{BNF}): 
\begin{align}
 Z_2(I) &= \sum_{a \in \Z} (a^2+\varphi(0)) I_a,\nonumber\\
\label{Z4} Z_4(I) &= \varphi'(0) \left(\sum_{a \in \Z} I_a \right)^2  - \frac12 \varphi'(0)\sum_{a \in \Z} I_a^2 ,\\ 
\label{Z6}Z_6(I) &= -\frac{1}{2}\varphi'(0)^2\sum_{a\neq b\in\Z} \frac{1}{(a-b)^2}\ I_a^2I_b \\ \nonumber
&+\frac{\varphi^{''}(0)}{6}\left(6(\sum_{a\in\Z}I_a)^3-9(\sum_{a\in\Z} I_a^2)(\sum_{a\in\Z}I_a)+4\sum_{a\in\Z}I_a^3  \right). 
\end{align}
The first one is the quadratic part of the NLS Hamiltonian,
 the second one is the quartic part and the third one contains the effective terms of the sextic part (see Theorem \ref{BNF}). 
 
 We note that $Z_4$ and $Z_6$ are polynomials of degree $2$ and $3$ in the sense of Definition \ref{polydef}, and thus  define  Hamiltonians in $\Hc_s$ for all $s\geq0$.

\section{Non-resonance conditions}\label{NR}

In this section we discuss the control of small denominators corresponding to the the previous integrable Hamiltonians. We also give results allowing to control them, and show the probability estimates associated with non resonant sets. 

\subsection{Small denominators}
For $\jb = (j_1,\ldots,j_{2m}) \in \U_2\times \Z$, if $j_\alpha = (\delta_\alpha,a_\alpha)$ for $\alpha = 1,\ldots,2m$, we set 
$$
\Delta_{\jb} = \sum_{\alpha = 1}^{2m} \delta_\alpha a_\alpha^2, \quad 
\omega_{\jb}(I) = \sum_{\alpha = 1}^{2m} \delta_\alpha \frac{\partial Z_4}{\partial I_{a_\alpha}} (I)
$$
and
$$
\Omega_{\jb}(I) = \omega_{\jb}(I) + \sum_{\alpha = 1}^{2m} \delta_\alpha \frac{\partial Z_6}{\partial I_{a_\alpha}} (I). 
$$
With these notations, we have for $\jb \in \Zc$, owing to the fact that $\omega_{\jb} = \omega_{\irr(\jb)}$ and $\Omega_{\jb} = \Omega_{\irr(\jb)}$, 
\begin{equation} 
\label{ravel}
\{ Z_4,z_\jb\} = i \omega_{\irr(\jb)}(I) z_{\jb}\quad\mbox{and}\quad \{ Z_4 + Z_6,z_\jb\} = i \Omega_{\irr(\jb)}(I) z_{\jb}.
\end{equation}
Note also that 
\begin{equation} 
\label{ravelo}
\{ Z_2,z_\jb\} = i \Delta_\jb  z_{\jb}, 
\end{equation}
and that $|\Delta_{\jb}| \geq 1$ except when $\jb \in \Rc$ for which $\Delta_\jb = 0$.

For $\jb \in \Ic$, we have the expression 
\begin{equation}
\label{brassens}
\omega_{\jb}(I) = - \varphi'(0) \sum_{\alpha = 1}^{2m} \delta_\alpha  I_{a_\alpha}, 
\end{equation}
as the first term in \eqref{Z4} do not contribute using the relation $\sum_{\alpha = 1}^{2m}{\delta_{\alpha}} = 0$. 

We also introduce the following denominator: 
$$
\widetilde \Omega_{\jb}(I) = - \varphi'(0) \sum_{\alpha = 1}^{2m} \delta_\alpha I_{a_{\alpha}}     - \frac{1}{2}\varphi'(0)^2 \sum_{\alpha = 1}^{2m }  \delta_\alpha \sum_{\substack{b \in\Z \\ b \neq a_{\beta}, \beta = 1,\ldots,2m}} \frac{I_b^2 }{(a_\alpha-b)^2} .  
$$

The following lemma allows to control the evolution of the small denominators when moving the coordinates.

\begin{lemma}
Let $r$ and $s$ be given. The exists a constant $C$ such that for all 
 $\jb \in \Ic$ with $\sharp \jb \leq 2r$ and all $z \in \ell_s^1$, we have 
\begin{equation}
\label{pim}
|\widetilde \Omega_{\jb}(I) -\Omega_{\jb}(I)|\leq C \langle \mu_{\min}(\jb)\rangle^{-2s} \Norm{z}{s}^4. 
\end{equation}
Moreover, let $z,z' \in \ell_s^1$ be associated with the actions $I$ and $I'$, and let $h = \max(\Norm{z}{s}, \Norm{z'}{s})$. Then  we have 
\begin{equation}
\label{pam}
|\omega_{\jb}(I) -\omega_{\jb}(I')|\leq C  \Norm{z - z'}{s}  \langle \mu_{\min}(\jb)\rangle^{-2s} h . 
\end{equation}
and
\begin{equation}
\label{poum}
|\Omega_{\jb}(I) -\Omega_{\jb}(I')|\leq C \Norm{z - z'}{s} \Big(\langle \mu_{\min}(\jb)\rangle^{-2s}   h +   h^3\Big)
\end{equation}
\end{lemma}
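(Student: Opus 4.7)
My approach is to derive all three estimates by direct term-by-term computation from the explicit formulas for $Z_4$, $Z_6$, $\omega_\jb$ and $\widetilde\Omega_\jb$, resting on two elementary facts: (a) for $z\in\ell_s^1$ one has $|\xi_a|,|\eta_a|\leq\langle a\rangle^{-s}\Norm{z}{s}$, hence $|I_a|\leq\langle a\rangle^{-2s}\Norm{z}{s}^2$ and in particular $|I_{a_\alpha}|\leq\langle\mu_{\min}(\jb)\rangle^{-2s}\Norm{z}{s}^2$ for every index of $\jb$; and (b) since $\jb\in\Ic\subset\Mc_m$, we have $\sum_{\alpha=1}^{2m}\delta_\alpha=0$, which kills any contribution to $\sum_\alpha\delta_\alpha\partial_{I_{a_\alpha}}F$ that does not depend on $a_\alpha$. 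A recurring auxiliary ingredient is the uniform bound $\sum_{b\neq a_\alpha}(a_\alpha-b)^{-2}<\infty$ independent of $a_\alpha$.

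For \eqref{pim}, I would first compute $\sum_\alpha\delta_\alpha\partial_{I_{a_\alpha}}Z_6$ explicitly. In the $\varphi''(0)$ block of $Z_6$, fact (b) annihilates the contributions proportional to $(\sum_bI_b)^2$ and $\sum_bI_b^2$, leaving only $-3\varphi''(0)(\sum_bI_b)(\sum_\alpha\delta_\alpha I_{a_\alpha})$ and $2\varphi''(0)\sum_\alpha\delta_\alpha I_{a_\alpha}^2$; both carry a free factor $I_{a_\alpha}$ or $I_{a_\alpha}^2$ and are therefore bounded by $C\langle\mu_{\min}\rangle^{-2s}\Norm{z}{s}^4$ via (a). The $\varphi'(0)^2$ block produces $-\varphi'(0)^2\sum_\alpha\delta_\alpha I_{a_\alpha}\sum_{b\neq a_\alpha}I_b/(a_\alpha-b)^2$, again with a free $I_{a_\alpha}$ factor, together with $-\tfrac12\varphi'(0)^2\sum_\alpha\delta_\alpha\sum_{a\neq a_\alpha}I_a^2/(a-a_\alpha)^2$. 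Subtracting $\widetilde\Omega_\jb$ removes the linear $\omega_\jb$ piece and cancels this last double sum up to a finite correction supported on the indices $b=a_\beta$ with $\beta\neq\alpha$; this correction comprises at most $(2m)(2m-1)\leq 4r^2$ terms of the form $I_{a_\beta}^2/(a_\alpha-a_\beta)^2$ with denominator $\geq 1$, each bounded by $\langle\mu_{\min}\rangle^{-2s}\Norm{z}{s}^4$ using $I_{a_\beta}^2\leq\langle\mu_{\min}\rangle^{-2s}\Norm{z}{s}^4$.

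Estimate \eqref{pam} follows immediately from \eqref{brassens} and the pointwise bound $|I_a-I'_a|\leq|\xi_a||\eta_a-\eta'_a|+|\eta'_a||\xi_a-\xi'_a|\leq 2h\langle a\rangle^{-2s}\Norm{z-z'}{s}$, combined with $\langle a_\alpha\rangle\geq\langle\mu_{\min}\rangle$ and $\sharp\jb\leq 2r$. For \eqref{poum}, I would decompose $\Omega_\jb(I)-\Omega_\jb(I')=(\omega_\jb(I)-\omega_\jb(I'))+\sum_\alpha\delta_\alpha\bigl(\partial_{I_{a_\alpha}}Z_6(I)-\partial_{I_{a_\alpha}}Z_6(I')\bigr)$, control the first summand by \eqref{pam}, and estimate the $Z_6$ contribution term by term using the expression from the previous paragraph. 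The main building blocks are $\sum_b|I_b-I'_b|/(a_\alpha-b)^2\leq Ch\Norm{z-z'}{s}$ and $\sum_a|I_a^2-I_a'^2|/(a-a_\alpha)^2\leq Ch^3\Norm{z-z'}{s}$, together with the product-rule identity $AB-A'B'=(A-A')B+A'(B-B')$ applied to pairs such as $(I_{a_\alpha},\sum_b I_b/(a_\alpha-b)^2)$. Each resulting summand is at most cubic in $h$ times $\Norm{z-z'}{s}$, giving a $Ch^3\Norm{z-z'}{s}$ bound for the $Z_6$ part and hence \eqref{poum}.

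The main (essentially routine) obstacle is the bookkeeping in \eqref{pim}: isolating the parts that carry a free factor $I_{a_\alpha}$ (which yield the $\langle\mu_{\min}\rangle^{-2s}$ gain through (a)) from those that survive only after invoking (b), and checking that the leftover finite corrections have length bounded in terms of $r$ alone. Once this decomposition is in place, all three estimates reduce to repeated applications of (a), (b) and the convolution estimate $\sum_b(a_\alpha-b)^{-2}<\infty$.
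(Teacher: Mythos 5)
Your proposal is correct and follows essentially the same route as the paper: the same explicit computation of $\Omega_{\jb}-\widetilde\Omega_{\jb}$ with the cancellations coming from $\sum_\alpha\delta_\alpha=0$, the same identification of the finite correction supported on $b=a_\beta$, and the same distribution of the $\langle\mu_{\min}(\jb)\rangle^{-2s}$ weight onto a free factor $I_{a_\alpha}$. The only cosmetic differences are that you bound $|I_a-I_a'|$ via the product rule on $\xi_a\eta_a$ where the paper factors through $\sqrt{I_a}\pm\sqrt{I_a'}$, and you spell out the Lipschitz estimate for the $Z_6$ part of \eqref{poum} term by term where the paper invokes homogeneity of order $4$.
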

\begin{proof}
Along the proof, $C$ will denote a constant depending on $r$, $s$ and derivatives of the function $\varphi$ at $0$. 
Let us denote $\jb = (j_1,\ldots,j_{2m}) \in \Zc_m$ with $m \leq r$ and $j_\alpha = (\delta_\alpha,a_\alpha) \in \mathbb{U}_2 \times \Z$ for $\alpha = 1,\ldots,2m$. 
We calculate that 
\begin{multline*}
\Omega_{\jb}(I)- \widetilde \Omega_{\jb}(I) =  - \frac{1}{2}\varphi'(0)^2 
\sum_{\substack{\alpha,\beta = 1 \\ \beta \neq \alpha}}^{2m} \delta_{\alpha} \frac{I_{a_{\beta}}^2 }{(a_\alpha- a_{\beta})^2} 
- \varphi'(0)^2  \sum_{\alpha = 1}^{2m} \sum_{b \neq a_\alpha} \delta_\alpha\frac{I_{a_\alpha} I_b}{(a_\alpha - b)^2}\\[2ex]
+ \varphi''(0) \sum_{\alpha = 1}^{2m} \delta_{a_\alpha}\left(-3 I_{a_\alpha}(\sum_{a\in\Z}I_a)   +2 I_{a_\alpha}^2  \right). 
\end{multline*}
We see that the first term can be controlled by 
$$
C \sum_{\alpha = 1 }^{2m} I_{a_{\alpha}}^2 \leq C \langle  \mu_{\min}(\jb)\rangle^{-4s} \sup_{\alpha = 1,\ldots,2m } \,  \langle j_\alpha \rangle^{4s} I_{a_{\alpha}}^2 \leq C \langle \mu_{\min}(\jb)\rangle^{-4s} \Norm{z}{s}^4, 
$$
and the second by 
$$
C \sum_{\alpha = 1 }^{2m} I_{a_{\alpha}} \sum_b I_b \leq C \Norm{z}{L^2}^2  \langle \mu_{\min}(\jb)\rangle^{-2s} \sup_{\alpha }\,  \langle j_\alpha \rangle^{2s}  I_{a_{\alpha}} \leq C \langle \mu_{\min}(\jb)\rangle^{-2s} \Norm{z}{s}^4. 
$$
The estimate for the third term is the same, which shows \eqref{pim}. 

Now to prove \eqref{pam}, we have using the expression \eqref{brassens}, 
$$
|\omega_{\jb}(I) -\omega_{\jb}(I')| \leq C \sum_{\alpha = 1}^{2m} |I_{a_\alpha} - I_{a_\alpha}'| \leq C \langle \mu_{\min}(\jb)\rangle^{-2s}  \max_{\alpha = 1}^{2m} \Big(\langle j \rangle^{2s}|I_{a_\alpha} - I_{a_\alpha}'| \Big). 
$$
We obtain \eqref{pam} by noticing that 
\begin{multline*}
\langle j \rangle^{2s}|I_{a_\alpha} - I_{a_\alpha}'| = \langle j \rangle^{s}\left|\sqrt{I_{a_\alpha}\phantom{'}} - \sqrt{I_{a_\alpha}'}\right| \langle j \rangle^{s} \left|\sqrt{I_{a_\alpha}\phantom{'}} + {\sqrt{I_{a_\alpha}'}} \right| \\ 
\leq  \Norm{|z| - |z'|}{s} ( \Norm{z}{s} + \Norm{z'}{s}) \leq 2 \Norm{z - z'}{s}  \max(\Norm{z}{s}, \Norm{z'}{s}). 
\end{multline*}
The proof of \eqref{poum} is then easily obtained by using the previous result, and explicit expressions of $\frac{\partial Z_6}{\partial I_{a_\alpha}}$ showing that this function is homogeneous of order $4$ and thus locally Lipschitz in $z$ with Lipschitz constant of order $h^3$ on balls of size $h$ in $\ell_s^1$ as can be seen by using estimates similar to the previous one. 
\end{proof}

\subsection{Non resonant sets}
As usual we have to control the small divisors, this will be the case for $z$ belonging the following  non resonant sets:
\begin{definition}\label{defnonres} Let $\eps,\gamma >0$, $r\geq1$ and $s\geq0$,
we say that  $z\in \ell_s^1$ belongs to the non resonant set $ \mathcal{U}_{\gamma,\varepsilon,r,s}$, if for all $\kb\in \Ic$ of length $\length\kb \leq 2r$ we have
\begin{equation}
\label{nonres1}
|\omega_\kb(I)|> \gamma  \varepsilon^2 \left( \prod_{\alpha=1}^{\length\kb } \langle k_\alpha \rangle^{-2} \right) \langle \mu_{\min}(\kb)\rangle^{-2s}
\end{equation}
and
\begin{equation}
\label{nonres2}
|\widetilde\Omega_\kb(I)|>  \gamma  \left( \prod_{\alpha=1}^{\length\kb} \langle k_\alpha \rangle^{-6}\right) \max( \varepsilon^2 \langle \mu_{\min}(\kb)\rangle^{-2s}, \varepsilon^4).
\end{equation}
\end{definition}

We also define the truncated non resonant set: 

\begin{definition}\label{defnonresN} Let $\eps,\gamma >0$, $N \geq 1$ $r\geq1$ and $s\geq0$, and let $\alpha_r = 24r$. 
We say that  $z\in \ell_s^1$ belongs to the non resonant set $ \mathcal{U}_{\gamma,\varepsilon,r,s}^N$, if for all $\kb\in \Ic$ of length $\length\kb \leq 7r$ such that
$\langle \mu_1(\kb)\rangle \leq N^2$,  we have \begin{equation}
\label{nonres1N}
|\omega_\kb(I)|>   \gamma \varepsilon^2  N^{-\alpha_r} \langle \mu_{\min}(\kb)\rangle^{-2s} 
\end{equation}
and
\begin{equation}
\label{nonres2N}
|\Omega_\kb(I)|>  \gamma  N^{-\alpha_r}\max(  \varepsilon^2 \langle \mu_{\min}(\kb)\rangle^{-2s},\varepsilon^4) . 
\end{equation}
\end{definition}
It turns out that for $N$ not too large depending on $\varepsilon$ and for $\gamma'<\gamma$ we have $\mathcal{U}_{\gamma,\varepsilon,r,s}\subset \mathcal{U}_{\varepsilon,\gamma',r,s}^N$. Precisely we have:
\begin{proposition}
\label{musset}
Let $r\geq1$ and $s\geq 1$ be given. There exists $c$ such that  for all $\eps,\gamma>0$,  for all $N \geq 1$ and all $\gamma' < \gamma$ satisfying $$\varepsilon^2  < cN^{- \alpha_r}( \gamma - \gamma')\quad \text{ with } \alpha_r = 24r, $$ we have that if  $z \in  \mathcal{U}_{\gamma,\varepsilon,r,s}$ and $\Norm{z}{s} \leq 4 \varepsilon$ then $z \in \mathcal{U}_{\varepsilon,\gamma',r,s}^N$. 

\end{proposition}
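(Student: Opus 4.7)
The proposition is a purely quantitative translation: given the hypothesis that $z\in\mathcal{U}_{\gamma,\varepsilon,r,s}$ and $\Norm{z}{s}\leq 4\varepsilon$, I want to produce the two inequalities defining the truncated set $\mathcal{U}_{\varepsilon,\gamma',r,s}^N$. Two ingredients do all the work: (i) the cutoff $\langle\mu_1(\kb)\rangle\leq N^2$ bounds each individual weight $\langle k_\alpha\rangle^{-2q}$ from below by a power of $N$, so that the $\kb$-dependent products in \eqref{nonres1}--\eqref{nonres2} can be uniformly replaced by a single factor $N^{-\alpha_r}$; and (ii) the estimate \eqref{pim} from the previous lemma lets one swap the auxiliary denominator $\widetilde\Omega_\kb$ (which is controlled by the hypothesis) for the true denominator $\Omega_\kb$ (which appears in the truncated definition), at the price of a $\Norm{z}{s}^4$ error.

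Fix $\kb\in\Ic$ with $\length\kb\leq 2r$ and $\langle\mu_1(\kb)\rangle\leq N^2$; then $\langle k_\alpha\rangle\leq N^2$ for every component. The value $\alpha_r=24r$ is calibrated precisely so that
\begin{equation*}
\prod_\alpha\langle k_\alpha\rangle^{-6}\geq N^{-12\length\kb}\geq N^{-24r}=N^{-\alpha_r},\qquad \prod_\alpha\langle k_\alpha\rangle^{-2}\geq N^{-\alpha_r}.
\end{equation*}
Plugging the second inequality into \eqref{nonres1} immediately yields the $\omega_\kb$-bound \eqref{nonres1N}, with a factor $\gamma>\gamma'$ to spare.

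The substantive step is the $\Omega_\kb$ inequality. From \eqref{nonres2} together with the product estimate one obtains
\begin{equation*}
|\widetilde\Omega_\kb(I)|>\gamma N^{-\alpha_r}\max\bigl(\varepsilon^2\langle\mu_{\min}(\kb)\rangle^{-2s},\,\varepsilon^4\bigr),
\end{equation*}
while \eqref{pim} combined with $\Norm{z}{s}\leq 4\varepsilon$ gives
\begin{equation*}
|\widetilde\Omega_\kb(I)-\Omega_\kb(I)|\leq 256\,C\,\langle\mu_{\min}(\kb)\rangle^{-2s}\,\varepsilon^4,
\end{equation*}
where $C$ is the constant of that lemma. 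One then treats the two branches of the maximum uniformly: if $\langle\mu_{\min}(\kb)\rangle^{2s}\leq\varepsilon^{-2}$, the error above equals $(256C\varepsilon^2)\cdot(\varepsilon^2\langle\mu_{\min}(\kb)\rangle^{-2s})$, while otherwise $\langle\mu_{\min}(\kb)\rangle^{-2s}\leq\varepsilon^2$ and the error is at most $(256C\varepsilon^2)\cdot\varepsilon^4$. In either case the hypothesis $\varepsilon^2<cN^{-\alpha_r}(\gamma-\gamma')$, with $c:=1/(256C)$, makes the error strictly smaller than $(\gamma-\gamma')N^{-\alpha_r}\max(\varepsilon^2\langle\mu_{\min}(\kb)\rangle^{-2s},\varepsilon^4)$; subtracting via the triangle inequality $|\Omega_\kb|\geq|\widetilde\Omega_\kb|-|\widetilde\Omega_\kb-\Omega_\kb|$ produces \eqref{nonres2N}.

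No small-divisor analysis reappears in this proposition; the proof is merely a comparison of weights. The one thing that requires genuine care is the bookkeeping: verifying that $\alpha_r=24r$ is large enough to dominate the exponents arising from $\prod_\alpha\langle k_\alpha\rangle^{-2q}$ over the admissible range of lengths, and that the Lipschitz loss in \eqref{pim} is absorbed simultaneously on both branches of the maximum in \eqref{nonres2}. The constant $c$ is then simply read off from the constant $C$ of the previous lemma.
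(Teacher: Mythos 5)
Your proof is correct and follows essentially the same route as the paper's: bound the products $\prod_\alpha\langle k_\alpha\rangle^{-2q}$ from below by $N^{-\alpha_r}$ using the cutoff $\langle\mu_1(\kb)\rangle\leq N^2$, then pass from $\widetilde\Omega_\kb$ to $\Omega_\kb$ via \eqref{pim} and the triangle inequality, absorbing the error $C\varepsilon^2\max(\varepsilon^2\langle\mu_{\min}(\kb)\rangle^{-2s},\varepsilon^4)$ through the smallness condition on $\varepsilon^2$. Your explicit two-branch treatment of the maximum is just a spelled-out version of what the paper leaves implicit, and the constant $c=1/(256C)$ matches the paper's $c=1/C$ up to where the factor $4^4$ from $\Norm{z}{s}\leq4\varepsilon$ is absorbed.
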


\begin{proof}
The hypothesis $z \in \mathcal{U}_{\gamma,\varepsilon,r,s}$ and $\langle \mu_1(\kb) \rangle \leq N^2$ shows that if $z$ satisfies \eqref{nonres1}, we have 
\begin{eqnarray*}
|\omega_\kb(I)|&>& \gamma  \varepsilon^2 \left( \prod_{\alpha=1}^{\length\kb } \langle k_\alpha \rangle^{-2} \right) \langle \mu_{\min}(\kb)\rangle^{-2s} \\
&\geq& \gamma  \varepsilon^2 N^{-4 \sharp \kb}\langle \mu_{\min}(\kb)\rangle^{-2s}
\end{eqnarray*}
which shows \eqref{nonres1N} for all $\gamma' \leq \gamma$, as $\sharp \kb \leq 2r$ and hence $4 \sharp \kb \leq 24 r = \alpha_r$. Similarly, we have 
$$
|\widetilde\Omega_\kb(I)|>  \gamma  N^{- 12 \sharp \kb} \max( \varepsilon^2 \langle \mu_{\min}(\kb)\rangle^{-2s}, \varepsilon^4), 
$$
which shows that $\widetilde \Omega_{\kb}(I)$ satisfies \eqref{nonres2N} with $\alpha_r = 24r$. 

To prove \eqref{nonres2N}, we use the fact that using \eqref{pim} we have 
$$
|\widetilde \Omega_{\jb}(I) -\Omega_{\jb}(I)|\leq C \langle \mu_{\min}(\jb)\rangle^{-2s} \varepsilon^4. 
$$
This shows that 
$$
|\Omega_{\jb}(I)| \geq ( \gamma N^{- \alpha_r} - C \varepsilon^2) \max(  \varepsilon^2 \langle \mu_{\min}(\jb)\rangle^{-2s},\varepsilon^4), 
$$
and we deduce the result by choosing $c = 1/C$. 
\end{proof}

We conclude this section with two stability results of the truncated resonant sets. The first one use the fact that the non resonance conditions depend only upon $I$:
\begin{proposition}
\label{georgette}
Let $r\geq1$ and $s\geq 1$ be given. There exists $c$ such that the following holds: for $\eps,\gamma>0$ and $N\geq1$, let $z \in  \mathcal{U}_{\gamma,\varepsilon,r,s}^N$ such that $\Norm{z}{s} \leq 4 \varepsilon$, then  for all 
$\gamma' > \gamma$ and for all $z' \in \ell_s^1$  such that  
\begin{equation}\label{II}\sup_{a\in\Z}| I'_a-I_a |\langle a\rangle^{2s} \leq  c \varepsilon^2 N^{- \alpha_r} (\gamma' - \gamma) \end{equation} we have $z' \in \mathcal{U}_{\varepsilon,\gamma',r,s}^N$. 

\end{proposition}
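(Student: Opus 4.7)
The key structural feature is that both non-resonance conditions \eqref{nonres1N} and \eqref{nonres2N} depend on $z$ only through the actions $I = (I_a)_{a \in \Z}$. Thus it suffices to quantify how the small divisors $\omega_\kb(I)$ and $\Omega_\kb(I)$ vary when $I$ is perturbed in the weighted sup-norm prescribed by \eqref{II}, and then to conclude by the triangle inequality, uniformly over $\kb \in \Ic$ with $\sharp\kb \leq 7r$ and $\langle \mu_1(\kb)\rangle \leq N^2$.

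For $\omega_\kb$ the analysis is immediate from the explicit linear formula \eqref{brassens}, $\omega_\kb(I) = -\varphi'(0) \sum_\alpha \delta_\alpha I_{a_\alpha}$. Since $\sharp\kb \leq 7r$ and every $\langle a_\alpha \rangle \leq \langle \mu_1(\kb)\rangle \leq N^2$, one can pull out the common factor $\langle \mu_{\min}(\kb)\rangle^{-2s}$ to estimate
\[
|\omega_\kb(I) - \omega_\kb(I')| \leq |\varphi'(0)| \sum_{\alpha = 1}^{\sharp\kb} |I_{a_\alpha} - I'_{a_\alpha}| \leq 7r\,|\varphi'(0)|\,\langle \mu_{\min}(\kb)\rangle^{-2s}\sup_{b\in\Z} |I_b - I'_b|\langle b\rangle^{2s}.
\]
Inserting the hypothesis \eqref{II} and choosing $c$ small enough (depending only on $r$ and $\varphi'(0)$) yields a bound of order $\tfrac12(\gamma'-\gamma)\varepsilon^2 N^{-\alpha_r}\langle \mu_{\min}(\kb)\rangle^{-2s}$.

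The more delicate step concerns $\Omega_\kb = \omega_\kb + \sum_\alpha \delta_\alpha \frac{\partial Z_6}{\partial I_{a_\alpha}}$. Differentiating \eqref{Z6}, $\frac{\partial Z_6}{\partial I_a}$ is a finite sum of quadratic-in-$I$ monomials with coefficients involving $(a-b)^{-2}$ and constants depending on $\varphi$. Expanding the bilinear differences as $I_b I_c - I'_b I'_c = (I_b - I'_b)I_c + I'_b(I_c - I'_c)$, using $\|z\|_s \leq 4\varepsilon$ together with the a priori bound $\|z'\|_s = O(\varepsilon)$ (automatic from \eqref{II} once $c$ is taken small), and once more using $\sharp\kb \leq 7r$ and $\langle\mu_1(\kb)\rangle \leq N^2$ to absorb index dependences, one obtains a Lipschitz-type estimate of the form
\[
|\Omega_\kb(I) - \Omega_\kb(I')| \leq C_r\,\varepsilon^2\bigl(\langle \mu_{\min}(\kb)\rangle^{-2s} + \varepsilon^2\bigr) \sup_{b\in\Z} |I_b - I'_b|\langle b\rangle^{2s}.
\]
Since $\varepsilon^4 \langle \mu_{\min}(\kb)\rangle^{-2s} + \varepsilon^6 \leq 2\varepsilon^2\max(\varepsilon^2\langle \mu_{\min}(\kb)\rangle^{-2s},\varepsilon^4)$, inserting \eqref{II} gives, for $c$ and $\varepsilon$ small, a bound of order $\tfrac12(\gamma'-\gamma) N^{-\alpha_r}\max(\varepsilon^2\langle \mu_{\min}(\kb)\rangle^{-2s},\varepsilon^4)$.

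Combining the two Lipschitz estimates with the triangle inequality and the hypothesis $z \in \mathcal{U}_{\gamma,\varepsilon,r,s}^N$ then produces the required lower bounds for $|\omega_\kb(I')|$ and $|\Omega_\kb(I')|$ at the new threshold $\gamma'$, establishing $z' \in \mathcal{U}_{\gamma',\varepsilon,r,s}^N$. The main obstacle is treating $\Omega_\kb$: unlike $\omega_\kb$, which is linear in $I$, the cubic contribution inherited from $Z_6$ couples distinct indices via the convolution kernel $(a-b)^{-2}$, so one must carefully identify which perturbation terms retain the sharp weight $\langle \mu_{\min}(\kb)\rangle^{-2s}$ (matching the first branch of the max in \eqref{nonres2N}) and which must be absorbed into the coarser $\varepsilon^4$ branch.
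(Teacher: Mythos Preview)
Your approach is the same as the paper's: both reduce to Lipschitz control of $\omega_\kb$ and $\Omega_\kb$ in the weighted action norm $\sup_a \langle a\rangle^{2s}|I_a-I'_a|$, then conclude by the triangle inequality. One inaccuracy: your displayed Lipschitz bound for $\Omega_\kb$ carries an extra overall factor $\varepsilon^2$ that cannot hold for the full $\Omega_\kb$, since the linear $\omega_\kb$-part already contributes $C\langle\mu_{\min}(\kb)\rangle^{-2s}\sup_a\langle a\rangle^{2s}|I_a-I'_a|$ with no $\varepsilon^2$; the paper's form is $|\Omega_\kb(I)-\Omega_\kb(I')|\leq C\,\max\bigl(\langle\mu_{\min}(\kb)\rangle^{-2s},\varepsilon^2\bigr)\sup_a\langle a\rangle^{2s}|I_a-I'_a|$, which after inserting \eqref{II} yields directly $Cc(\gamma'-\gamma)N^{-\alpha_r}\max(\varepsilon^2\langle\mu_{\min}(\kb)\rangle^{-2s},\varepsilon^4)$ and so dispenses with the auxiliary ``$\varepsilon$ small'' you invoke.
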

\begin{proof}
We introduce the Banach space $\ell^\infty_s=\{ (x_n)_{n\in\Z}\in\R^\Z \mid \sup  \langle n \rangle^{2s}|x_n|<\infty \}$ that we endow with the norm $|x|_s:=\sup \langle n \rangle^{2s}|x_n|$.\\
Let $\jb\in \Ic$, we have using  \eqref{brassens}, 
$$
|\omega_{\jb}(I) -\omega_{\jb}(I')| \leq C \sum_{\alpha = 1}^{2m} |I_{a_\alpha} - I_{a_\alpha}'| \leq C \langle \mu_{\min}(\jb)\rangle^{-2s}  |I-I'|_s. 
$$
Thus since $z \in  \mathcal{U}_{\gamma,\varepsilon,r,s}^N$ we get using \eqref{II}
$$|\omega_{\jb}(I') | \geq \big( \gamma  +  Cc(\gamma'-\gamma)\big)\eps^2N^{- \alpha_r} \langle \mu_{\min}(\jb)\rangle^{-2s}\geq \gamma'\eps^2N^{- \alpha_r} \langle \mu_{\min}(\jb)\rangle^{-2s}$$
by choosing $c\leq\frac1{C}$.\\
On the other hand using that $I\mapsto \Omega_\jb(I)$ is a homogeneous polynomial of order 3 on $\ell^\infty_s$. For $s>1/2$ such polynomial (with bounded coefficients) is a $\mathcal{C}^\infty$ function and for $I,I'$ in a ball of $\ell^\infty_s$ of size $O(\eps)$ centered at $0$ we have
$$| \Omega_{\jb}(I)- \Omega_{\jb}(I')| \leq C\eps^2 |I-I'|_s.$$
So  we get 
$$|\Omega_{\jb}(I)-\Omega_{\jb}(I')|\leq C|I-I'|_s\max(\langle \mu_{\min}(\jb)\rangle^{-2s},\eps^2)$$
Thus since $z \in  \mathcal{U}_{\gamma,\varepsilon,r,s}^N$ we get using \eqref{II}
\begin{align*}
|\Omega_{\jb}(I') | &\geq \big( \gamma  +  Cc(\gamma'-\gamma)\big)\eps^2N^{- \alpha_r} \max(\langle \mu_{\min}(\jb)\rangle^{-2s},\eps^2)\\
&\geq
\gamma'\eps^2N^{- \alpha_r} \max(\langle \mu_{\min}(\jb)\rangle^{-2s},\eps^2)
\end{align*}
by choosing $c\leq\frac1{C}$.
\end{proof}
The second stability result    shows that the truncated resonant sets are stable by perturbation in $\ell_s^1$ up to change of constants. 

\begin{proposition}
\label{georgesand}
Let $r\geq1$ and $s\geq 1$ be given. There exists $c$ such that the following holds: for $\eps,\gamma>0$ and $N\geq1$, let $z \in  \mathcal{U}_{\gamma,\varepsilon,r,s}^N$ such that $\Norm{z}{s} \leq 4 \varepsilon$, then  for all 
$\gamma' < \gamma$ and for all $z' \in \ell_s^1$  such that  
 $$\Norm{z - z'}{s} \leq  c \varepsilon N^{- \alpha_r} (\gamma - \gamma'),$$ we have $z' \in \mathcal{U}_{\varepsilon,\gamma',r,s}^N$. 

\end{proposition}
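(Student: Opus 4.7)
The strategy is to directly exploit the Lipschitz-type estimates \eqref{pam} and \eqref{poum} from the preceding lemma in order to compare $\omega_{\kb}(I')$ to $\omega_{\kb}(I)$ and $\Omega_{\kb}(I')$ to $\Omega_{\kb}(I)$ for each $\kb \in \Ic$ with $\sharp\kb \leq 7r$ and $\langle \mu_1(\kb)\rangle \leq N^2$. The hypothesis on $\|z - z'\|_s$ will be used to absorb these differences into the gap $(\gamma - \gamma')$ between the two constants, exactly as in the proof of Proposition \ref{georgette}. First I would note that since $\|z\|_s \leq 4\varepsilon$ and $\|z-z'\|_s \leq c\varepsilon N^{-\alpha_r}(\gamma - \gamma')$, taking $c$ small enough (for instance $c \leq 1$) gives $\|z'\|_s \leq 5\varepsilon$, so that $h := \max(\|z\|_s, \|z'\|_s) \leq 5\varepsilon$.

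To check \eqref{nonres1N} for $z'$ with constant $\gamma'$, apply \eqref{pam} to get
\[
|\omega_{\kb}(I) - \omega_{\kb}(I')| \leq C \|z-z'\|_s \langle \mu_{\min}(\kb)\rangle^{-2s} h \leq 5C c (\gamma - \gamma')\varepsilon^2 N^{-\alpha_r}\langle \mu_{\min}(\kb)\rangle^{-2s}.
\]
Combining with $|\omega_{\kb}(I)| > \gamma \varepsilon^2 N^{-\alpha_r}\langle \mu_{\min}(\kb)\rangle^{-2s}$, the triangle inequality yields $|\omega_{\kb}(I')| \geq (\gamma - 5Cc(\gamma-\gamma'))\varepsilon^2 N^{-\alpha_r}\langle \mu_{\min}(\kb)\rangle^{-2s}$, which is at least $\gamma'\varepsilon^2 N^{-\alpha_r}\langle \mu_{\min}(\kb)\rangle^{-2s}$ provided $c \leq 1/(5C)$.

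For the second condition \eqref{nonres2N}, invoke \eqref{poum} to bound
\[
|\Omega_{\kb}(I) - \Omega_{\kb}(I')| \leq C\|z-z'\|_s\bigl(\langle \mu_{\min}(\kb)\rangle^{-2s} h + h^3\bigr) \leq C'\|z-z'\|_s \bigl(\varepsilon \langle \mu_{\min}(\kb)\rangle^{-2s} + \varepsilon^3\bigr),
\]
for a constant $C'$ depending on $C$. The key observation is that each of the two terms on the right is $\varepsilon$ times one of the two quantities inside the $\max$ in \eqref{nonres2N}, so the full bound is controlled by $2 C' \|z-z'\|_s \varepsilon^{-1} \max(\varepsilon^2\langle \mu_{\min}(\kb)\rangle^{-2s}, \varepsilon^4)$. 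Plugging in the hypothesis on $\|z-z'\|_s$ gives $2 C' c (\gamma - \gamma') N^{-\alpha_r} \max(\varepsilon^2\langle \mu_{\min}(\kb)\rangle^{-2s}, \varepsilon^4)$, and again the triangle inequality together with the original bound $|\Omega_{\kb}(I)| > \gamma N^{-\alpha_r}\max(\varepsilon^2\langle\mu_{\min}(\kb)\rangle^{-2s}, \varepsilon^4)$ produces \eqref{nonres2N} for $\gamma'$ as long as $c \leq 1/(2C')$. Choosing $c$ to be the minimum of the two required thresholds concludes the proof; the only real subtlety is keeping track that the Lipschitz constant supplied by \eqref{poum} indeed matches the structure of the $\max$ defining the non-resonance threshold, which is exactly what makes the gain of one power of $\varepsilon$ in the perturbation hypothesis sufficient.
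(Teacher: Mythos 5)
Your proof is correct and follows essentially the same route as the paper: apply the Lipschitz estimates \eqref{pam} and \eqref{poum}, observe that each term in the $\Omega$-bound is $\varepsilon$ times one of the two quantities inside the $\max$ of \eqref{nonres2N}, and conclude by the triangle inequality with $c$ chosen small enough. Your preliminary remark that $\|z'\|_s \leq 5\varepsilon$ (so $h \leq 5\varepsilon$) is in fact slightly more careful than the paper, which simply takes $h = 4\varepsilon$.
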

\begin{proof}
By using \eqref{pam} with $h = 4\varepsilon$, 
$$
|\omega_{\jb}(I) -\omega_{\jb}(I')|\leq C  \Norm{z - z'}{s}  \langle \mu_{\min}(\jb)\rangle^{-2s} \varepsilon
$$
We deduce that 
$$
|\omega_{\jb}(I)| \leq |\omega_{\jb}(I') | + C \Norm{z - z'}{s}  \langle \mu_{\min}(\jb)\rangle^{-2s} \varepsilon, 
$$
and hence 
$$
|\omega_{\jb}(I') | \geq \big( \gamma N^{- \alpha_r} -  C \varepsilon^{-1} \Norm{z - z'}{s}\big) \langle \mu_{\min}(\jb)\rangle^{-2s} \varepsilon^2,
$$
 and we deduce the first part of the result by taking $c \leq 1/C$. 
 Now using \eqref{poum}, we have 
 \begin{multline*}
 |\Omega_{\jb}(I) -\Omega_{\jb}(I')|\leq C \Norm{z - z'}{s} \Big(\langle \mu_{\min}(\jb)\rangle^{-2s}   \varepsilon +   \varepsilon^3\Big)\\
 \leq 2 C \varepsilon^{-1} \Norm{z - z'}{s}  \max (\varepsilon^2 \langle \mu_{\min}(\jb)\rangle^{-2s},   \varepsilon^4)
\end{multline*}
and we conclude as before by taking $c \geq 1/(2C)$. 
\end{proof}

\section{Probability estimates}\label{gene}

In this section, we prove two genericity results for the \eqref{nls} non-resonant sets (and give also some Lemmas that will be used in the \eqref{nlsp} case). We consider real $z$, we consider the actions $I_a \in \R_+$ as  random variables.  On the one hand, if $\varepsilon>0$ we prove that typically $\varepsilon z$ is non-resonant. On the other hand, typically, up to some exceptional values of $\varepsilon$, we show that $\varepsilon z$ is non-resonant. 

\medskip

In this section $r>0$ and $s>1$ are fixed numbers, $\lambda = \frac{\mathbb{1}_{\varepsilon>0}}{\varepsilon}  { \rm d}\varepsilon$ is the Haar measure of $\mathbb{R}_+^*$ and we consider $z=(\xi,\bar{\xi})$ as a function of the random variables $I = (I_a)_{a\in\Z}$,  such that 
\begin{itemize}
\item the actions $I_a$, ${a\in \mathbb{Z}}$, are independent variables,
\item  $I_a^2$ is uniformly distributed in  $(0,  \langle a \rangle^{-4s-8} )$.
\end{itemize}
We note that this last assumption implies that $z\in \ell_s^1$.

The first proposition describes the case where $\varepsilon>0$ is fixed.
\begin{proposition}
\label{counting_eps_fixed}
 There exists a constant $c>0$ such that for all $\gamma\in(0,1)$ we have
$$
\forall \varepsilon>0, \ \mathbb{P}\left( \varepsilon z\in  \mathcal{U}_{\gamma,\varepsilon,r,s} \right) \geq 1 - c\gamma.
$$
\end{proposition}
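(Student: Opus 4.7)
My plan is to apply a union bound over $\kb \in \Ic$ with $\length\kb \leq 2r$, combined with a one-dimensional probabilistic estimate for each small denominator, exploiting that $\omega_\kb$ is \emph{linear} in $I$. Since $I_a(\varepsilon z) = \varepsilon^2 I_a(z)$ and $\Psi_\kb := \widetilde\Omega_\kb - \omega_\kb$ is quadratic in $I$, the conditions \eqref{nonres1}--\eqref{nonres2} for $\varepsilon z$ rewrite, after dividing by $\varepsilon^2$, as
\[
|\omega_\kb(I)| > \gamma \prod_\alpha \langle k_\alpha\rangle^{-2}\langle \mu_{\min}\rangle^{-2s}, \quad
|\omega_\kb(I) + \varepsilon^2\Psi_\kb(I)| > \gamma\prod_\alpha \langle k_\alpha\rangle^{-6}\max\bigl(\langle \mu_{\min}\rangle^{-2s}, \varepsilon^2\bigr).
\]

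For each fixed $\kb \in \Ic$, I would freeze all actions $I_b$ with $b \neq a^* := \mu_{\min}(\kb)$ and use only the remaining randomness of $I_{a^*}$. By \eqref{brassens}, $\omega_\kb$ is affine in $I_{a^*}$ with leading coefficient $-\varphi'(0) n_{a^*}$, where $n_{a^*}$ is the signed multiplicity of $a^*$ in $\kb$. Crucially, the inner sum in $\Psi_\kb$ excludes $b = a_\beta$ for every $\beta$, so $I_{a^*}^2$ does not enter and the sum $\omega_\kb + \varepsilon^2 \Psi_\kb$ remains affine in $I_{a^*}$ with the same slope. Irreducibility of $\kb$ forces $|n_{a^*}| \geq 1$. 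From $I_{a^*}^2 \sim \mathcal U(0, \langle a^*\rangle^{-4s-8})$, the density of $I_{a^*}$ is bounded by $2\langle a^*\rangle^{2s+4}$, so an affine function of $I_{a^*}$ with non-zero integer slope lies in an interval of length $L$ with probability $\leq C L \langle a^*\rangle^{2s+4}$. Applied to both conditions with $a^* = \mu_{\min}$, this yields conditional probabilities of failure bounded respectively by
\[
C\gamma\, \langle \mu_{\min}\rangle^{4}\prod_\alpha \langle k_\alpha\rangle^{-2} \quad\text{and}\quad C\gamma\, \langle \mu_{\min}\rangle^{2s+4} \max\bigl(\langle \mu_{\min}\rangle^{-2s}, \varepsilon^2\bigr)\prod_\alpha \langle k_\alpha\rangle^{-6}.
\]

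To sum, one first checks that combining irreducibility with $\sum\delta = \sum\delta a = \sum\delta a^2 = 0$ forces $\length\kb = 0$ or $\length\kb \geq 6$: in length $2$ and length $4$, the constraints imply that the multiset of indices with sign $+$ equals that with sign $-$, hence $\irr(\kb) = \emptyset$. For $m := \length\kb/2 \geq 3$ and $\ell := \langle \mu_{\min}(\kb)\rangle$, the crude bound $\sum_{|k| \geq \ell}\langle k\rangle^{-p} \lesssim \ell^{1-p}$ applied to the remaining $2m-1$ indices gives $\sum_{\kb \in \Ic_m : \mu_{\min} = \ell}\prod_\alpha \langle k_\alpha\rangle^{-p} \lesssim \ell^{-p-(p-1)(2m-1)}$. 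For the first condition, $\sum_\ell \ell^{4} \cdot \ell^{-2m-1} = \sum_\ell \ell^{3-2m}$ converges for $m \geq 3$. For the second, the regime $\langle \mu_{\min}\rangle^{-2s} \geq \varepsilon^2$ produces the convergent $\sum_\ell \ell^{3-10m}$, while the complementary regime $\ell \leq \varepsilon^{-1/s}$ gives $\gamma \varepsilon^2 \sum_{\ell \leq \varepsilon^{-1/s}}\ell^{2s+3-10m} \lesssim \gamma \varepsilon^{(10m-4)/s} \leq \gamma$ for $\varepsilon \leq 1$ and $m \geq 1$. Summing over the finitely many $m \in \{3,\ldots, 2r\}$ gives $\mathbb{P}(\varepsilon z \notin \mathcal{U}_{\gamma,\varepsilon,r,s}) \leq c\gamma$.

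The main obstacle is the second regime above: the density factor $\langle \mu_{\min}\rangle^{2s+4}$ depends on $s$, while the cutoff $\ell \leq \varepsilon^{-1/s}$ only shrinks modestly as $\varepsilon \to 0$, so the interaction is delicate. It is resolved by combining the sextic decay $\prod\langle k_\alpha\rangle^{-6}$ in \eqref{nonres2} (rather than the quadratic decay in \eqref{nonres1}) with the affine structure of $\omega_\kb + \varepsilon^2\Psi_\kb$ in $I_{a^*}$ inherited from the exclusion $b \neq a_\beta$ in $\Psi_\kb$, which together absorb the density growth for every $m \geq 1$.
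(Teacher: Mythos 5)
Your treatment of the first condition \eqref{nonres1} is sound and coincides with the paper's Lemma \ref{lemma_nonres1}: by linearity $\omega_\kb(\varepsilon^2I)=\varepsilon^2\omega_\kb(I)$, irreducibility gives a non-zero integer signed multiplicity for the mode $\mu_{\min}(\kb)$, and the density factor $\langle\mu_{\min}\rangle^{2s+4}$ is exactly cancelled by the $\langle\mu_{\min}\rangle^{-2s}$ in the threshold. The gap is in the second condition \eqref{nonres2}, precisely where you claim the ``main obstacle'' is resolved. You split according to which term realizes $\max(\langle\mu_{\min}\rangle^{-2s},\varepsilon^2)$, but you wrote the complementary regime as $\ell\le\varepsilon^{-1/s}$ --- that is the \emph{same} regime as $\langle\mu_{\min}\rangle^{-2s}\ge\varepsilon^2$, so the dangerous regime $\ell=\langle\mu_{\min}(\kb)\rangle>\varepsilon^{-1/s}$, where the threshold is $\gamma\varepsilon^2\prod_\alpha\langle k_\alpha\rangle^{-6}$ with no compensating $\langle\mu_{\min}\rangle^{-2s}$, is never actually treated. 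There, conditioning on everything but $I_{\mu_{\min}}$ gives a failure probability of order $\gamma\,\varepsilon^{2}\ell^{2s+4}\prod_\alpha\langle k_\alpha\rangle^{-6}$ per $\kb$, hence $\gamma\varepsilon^{2}\sum_{\ell>\varepsilon^{-1/s}}\ell^{2s+3-10m}$ after summing over $\kb$ with $\mu_{\min}(\kb)=\ell$. Since the result is needed for $s\ge s_0(r)=O(r^2)$ while $m\le r$, the exponent $2s+3-10m$ is positive and this sum diverges: $\prod_\alpha\langle k_\alpha\rangle^{-6}\le\langle\mu_{\min}\rangle^{-12m}$ cannot absorb $\langle\mu_{\min}\rangle^{2s+4}$ for large $s$, and no other choice of index inside $\kb$ helps, since all of them may be of size $\mu_{\min}(\kb)$.

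This is exactly why the paper brings in the quadratic part of $\widetilde\Omega_\kb$. Writing $\widetilde\Omega_\kb(\varepsilon^2I)=\varepsilon^2\omega_\kb(I)+\varepsilon^4(\widetilde\Omega_\kb-\omega_\kb)(I)$, in the regime where the max equals $\varepsilon^4$ one conditions instead on $I_{b}^2$ for a \emph{low} mode $b\notin\{a_1,\dots,a_{2m}\}$ with $|b|<3m$: Lemma \ref{proba_need_algebra} (a partial-fraction and integrality argument) guarantees such a $b$ with $\big|\sum_\alpha\delta_\alpha(a_\alpha-b)^{-2}\big|\ge(6m)^{-4m}\prod_\alpha\langle a_\alpha\rangle^{-2}$. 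The density of $I_b^2$ for $|b|<3m$ is bounded by a constant depending only on $r$ and $s$, not on $\kb$, so the failure probability becomes $c_{r,s}\,\gamma\prod_\alpha\langle k_\alpha\rangle^{-4}$, summable over $\Ic$ and uniform in $\varepsilon$ (note that your bound $\gamma\varepsilon^{(10m-4)/s}$, even where finite, also breaks down for $\varepsilon>1$, whereas the statement is for all $\varepsilon>0$). To repair the proof you must incorporate this second source of randomness in the regime $\langle\mu_{\min}(\kb)\rangle^{2s}\ge\varepsilon^{-2}$.
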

The second proposition describes the case where $z$ is chosen randomly and the asymptotic of $\varepsilon z$ is considered as $\varepsilon$ goes to $0$.
\begin{proposition}
\label{counting_u0_fixed}
 There exists a constant $c>0$ and $\nu_0>0$ such that for all $\gamma \in(0,1)$ and all $\nu\in(0,\nu_0)$ we have
$$
\mathbb{P}\left(  \lambda \left( \varepsilon z\in \ell^1_s \setminus \mathcal{U}_{\gamma,\varepsilon,r,s} \right) < \nu \right) \geq 1 - \frac{c}{\nu} \gamma.
$$
\end{proposition}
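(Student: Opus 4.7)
My plan is to reduce the statement to a deterministic bound on the Haar measure on a well-chosen good event whose complement has probability at most $c\gamma/\nu$, the thresholds being tuned to $\nu$ by Cauchy--Schwarz. The key observation is that under the scaling $I \mapsto \varepsilon^2 I$, condition \eqref{nonres1} for $\varepsilon z$ reduces to the $\varepsilon$-independent condition $|\omega_\kb(I)| > \gamma L^{(2)}_\kb N_\kb$ (with $L^{(2)}_\kb = \prod_\alpha \langle k_\alpha\rangle^{-2}$, $N_\kb = \langle\mu_{\min}(\kb)\rangle^{-2s}$), while condition \eqref{nonres2} becomes $|\omega_\kb(I) + \varepsilon^2 Q_\kb(I)| > \gamma L_\kb \max(N_\kb, \varepsilon^2)$ with $L_\kb = \prod_\alpha \langle k_\alpha\rangle^{-6}$ and
\[
Q_\kb(I) = -\tfrac{1}{2}\varphi'(0)^2 \sum_\alpha \delta_\alpha \sum_{b \notin \{a_\beta\}} \frac{I_b^2}{(a_\alpha - b)^2}
\]
the $\varepsilon^4$-coefficient of $\widetilde\Omega_\kb(\varepsilon^2 I)$, a polynomial in $(I_b^2)$ independent of the $I_{a_\alpha}$ entering $\omega_\kb(I)$.

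Next I would introduce the good event $\Omega_{\text{good}} = \{\,|\omega_\kb(I)| > \zeta_\kb \text{ and } |Q_\kb(I)| > \eta_\kb,\ \forall \kb \in \Ic,\ \length\kb \leq 2r\,\}$ with thresholds $\zeta_\kb \geq \gamma L^{(2)}_\kb N_\kb$ and $\eta_\kb \geq \gamma L_\kb$. On $\Omega_{\text{good}}$ the bad set $\{u > 0 : |A + uB| \leq \gamma L_\kb \max(N_\kb, u)\}$ (with $A = \omega_\kb(I), B = Q_\kb(I), u = \varepsilon^2$) is, by affine dependence in $u$ and the lower bounds on $|A|, |B|$, empty when $A, B$ share a sign and otherwise a bounded interval around $u_0 = -A/B > 0$ of $d\varepsilon/\varepsilon$-Haar measure at most $C\gamma L_\kb \max(1/|B|, N_\kb/|A|) \leq C\gamma(L_\kb/\eta_\kb + L_\kb N_\kb/\zeta_\kb)$. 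Summing yields $\lambda \mathbb{1}_{\Omega_{\text{good}}} \leq C\gamma \sum_\kb (L_\kb/\eta_\kb + L_\kb N_\kb/\zeta_\kb)$ deterministically. I then set $\eta_\kb = L_\kb/w_\kb$ and $\zeta_\kb = L_\kb N_\kb/w'_\kb$, with $w_\kb \propto \sqrt{M_{Q,\kb} L_\kb}$ and $w'_\kb \propto \sqrt{M_{\omega,\kb} L_\kb N_\kb}$ normalized so that $\sum w_\kb,\sum w'_\kb \leq \nu/(2C\gamma)$; then $\lambda < \nu$ holds deterministically on $\Omega_{\text{good}}$, while Cauchy--Schwarz gives $\mathbb{P}(\Omega_{\text{good}}^c) \leq (S_1^2 + S_2^2)\cdot 2C\gamma/\nu$, where $S_1 = \sum \sqrt{M_{Q,\kb} L_\kb}$, $S_2 = \sum \sqrt{M_{\omega,\kb} L_\kb N_\kb}$ and $M_{\omega,\kb}, M_{Q,\kb}$ upper bound the sup densities of $\omega_\kb(I), Q_\kb(I)$.

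The bound $M_{\omega,\kb} \leq 2\langle\mu_{\min}\rangle^{2s+4}/|\varphi'(0)|$ follows from the convolution structure of $\omega_\kb = -\varphi'(0) \sum \delta_\alpha I_{a_\alpha}$ (scaled uniforms) and gives $M_{\omega,\kb} N_\kb \leq 2\langle\mu_{\min}\rangle^4/|\varphi'(0)|$, hence $S_2 < \infty$ by the same summability estimate underlying Proposition \ref{counting_eps_fixed}. The main obstacle is the finiteness of $S_1$: since $Q_\kb$ is affine in each $I_b^2 \sim \mathrm{Unif}(0, \langle b\rangle^{-4s-8})$ for $b \notin \{a_\beta\}$, one has $M_{Q,\kb} \leq \langle b\rangle^{4s+8}/|c_b^{(\kb)}|$ with $c_b^{(\kb)} = \sum_\alpha \delta_\alpha/(a_\alpha - b)^2$; the zero-momentum identities $\sum \delta_\alpha a_\alpha^j = 0$ for $j = 0, 1, 2$ (inherited from $\Ic \subset \Rc$) force $c_b^{(\kb)} = O(|b|^{-5})$ at infinity, so a judicious choice of $b$ (depending on the entries of $\kb$) yields a polynomial-in-$\kb$ bound on $M_{Q,\kb}$, and summability of $S_1$ then follows from the strong $L_\kb = \prod\langle k_\alpha\rangle^{-6}$ decay combined with $\length \kb \geq 6$ on $\Ic$ (no length $2$ or $4$ tuple is irreducible while satisfying the three zero-momentum conditions). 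Combining, $\mathbb{P}(\Omega_{\text{good}}^c) \leq c\gamma/\nu$ and the claim follows, using $\nu < 1$ to absorb lower-order $\gamma$ terms.
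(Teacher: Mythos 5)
Your overall architecture coincides with the paper's: after the scaling $I\mapsto\varepsilon^2 I$ the condition \eqref{nonres1} is $\varepsilon$-independent, and \eqref{nonres2} becomes an affine condition $|A+\varepsilon^2 B|>\gamma L_\kb\max(N_\kb,\varepsilon^2)$ with $A=\omega_\kb(I)$ and $B=\widetilde\Omega_\kb(I)-\omega_\kb(I)$; one then imposes probabilistic lower bounds on $|A|$ and $|B|$ separately (the paper's events $\mathscr{E}_4$ and $\mathscr{E}_6$) and controls the Haar measure of the exceptional $\varepsilon$ exactly as in Lemma \ref{whaou_haar}. Your Cauchy--Schwarz optimization of the thresholds is a cosmetic variant of the paper's uniform choice $\gamma/\nu$ times fixed weights; both yield $\mathbb{P}(\text{bad})\leq c\gamma/\nu$ together with a deterministic bound $\lambda(\text{bad set of }\varepsilon)\lesssim\nu$ on the good event.

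There is, however, one genuine gap: the bound on $M_{Q,\kb}$, i.e.\ the claim that some $b$ gives $|c_b^{(\kb)}|=\bigl|\sum_\alpha\delta_\alpha(a_\alpha-b)^{-2}\bigr|$ bounded below polynomially in $\kb$ while $\langle b\rangle^{4s+8}$ stays controlled. The justification you offer --- that the zero-momentum identities force $c_b^{(\kb)}=O(|b|^{-5})$ at infinity --- is true but points in the \emph{wrong} direction: it shows the coefficients are \emph{small} for large $b$, hence the density sup is \emph{large} there, and it gives no information about the existence of a good $b$. What is actually needed is a lower bound $|c_{b}^{(\kb)}|\gtrsim_{r}\prod_\alpha\langle a_\alpha\rangle^{-2}$ for some $b$ in a range of size $O(r)$, and this is nontrivial: one must rule out that $c_b^{(\kb)}$ vanishes at all the candidate integers. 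The paper's Lemma \ref{proba_need_algebra} does this by writing $\sum_\alpha\delta_\alpha(X-a_\alpha)^{-2}=P(X)\prod_\alpha(X-a_\alpha)^{-2}$ with $P\in\Z[X]$ of degree at most $4m-2$, using the \emph{irreducibility} of $\kb$ to guarantee $P\neq 0$ (via uniqueness of the partial fraction decomposition), and then counting integer points to find $a^*$ with $|P(a^*)|\geq 1$. Your proposal neither supplies this argument nor identifies where irreducibility enters, and without it the summability of $S_1$ (hence the probability estimate for the sextic event) is not established.
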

\begin{corollary}
\label{counting_sequence}
There exists a constant $c>0$ and $\nu_0>0$ such that for all $\gamma\in (0,1)$, $\nu\in(0,\nu_0)$, all $\epsilon_\circ > 0$,  all sequence $(x_n)_{n\in \mathbb{N}}$ of random variables uniformly distributed in $(0,1)$ and independent of $z$, there is a probability larger than $1-c\gamma/\nu$ to realize $z$ such that that there is a probability larger than $1-\nu$ to realize $(\varepsilon_n)_{n\in \mathbb{N}} := (\epsilon_\circ  2^{-(n+x_n)})_{n\in \mathbb{N}}$ such that $\varepsilon_n z$ is non-resonant (i.e. $\varepsilon_n z \in \mathcal{U}_{\varepsilon_n,\gamma,r,s}$). More formally, we have
$$
\mathbb{P}\left( \mathbb{P} \left(\forall\, n, \,  \varepsilon_n z \in \mathcal{U}_{\varepsilon_n,\gamma,r,s} \ | \ z\right) \geq 1  - \nu   \right) \geq 1 - c \frac{\gamma}{\nu}.
$$
\end{corollary}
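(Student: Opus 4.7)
The plan is to deduce the corollary from Proposition \ref{counting_u0_fixed} via the simple observation that the law of the random $\varepsilon_n$ is essentially a dyadic restriction of the Haar measure $\lambda = \mathbb{1}_{\varepsilon>0}\,\varepsilon^{-1}\,\dd\varepsilon$.

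\emph{Step 1: pushforward of $x_n$ onto $\lambda$.} Set $I_n := (\epsilon_\circ 2^{-(n+1)},\epsilon_\circ 2^{-n})$. Because $x_n$ is uniform on $(0,1)$, a direct change of variable shows that $\varepsilon_n = \epsilon_\circ 2^{-(n+x_n)}$ has density $(\log 2)^{-1}\mathbb{1}_{I_n}(\varepsilon)\,\varepsilon^{-1}$. Consequently, for any Borel set $B\subset\R_+^*$,
\begin{equation*}
\mathbb{P}(\varepsilon_n\in B \mid z) \;=\; \frac{1}{\log 2}\,\lambda(B\cap I_n).
\end{equation*}

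\emph{Step 2: union bound.} Fix a realization of $z$ and set $B_z := \{\varepsilon>0\mid \varepsilon z\notin\mathcal{U}_{\gamma,\varepsilon,r,s}\}$. Using independence of the $x_n$'s only through the union bound (no independence is actually needed here), together with the fact that the intervals $I_n$ are disjoint and contained in $(0,\epsilon_\circ)$, I get
\begin{equation*}
\mathbb{P}\bigl(\exists\,n,\;\varepsilon_n z\notin\mathcal{U}_{\gamma,\varepsilon_n,r,s}\;\big|\;z\bigr)
\;\leq\; \sum_{n\in\N}\frac{\lambda(B_z\cap I_n)}{\log 2}
\;\leq\; \frac{\lambda(B_z)}{\log 2}.
\end{equation*}

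\emph{Step 3: closing the argument.} Apply Proposition \ref{counting_u0_fixed} with the threshold $\nu':=\nu\log 2$ (which belongs to $(0,\nu_0\log 2)$, so up to replacing $\nu_0$ by $\nu_0\log 2$ the range of admissible $\nu$ is unchanged). This yields a constant $c'$ such that, with probability at least $1-c'\gamma/(\nu\log 2)$ on $z$, one has $\lambda(B_z)<\nu\log 2$. On this event, Step 2 gives $\mathbb{P}(\forall n,\,\varepsilon_n z\in\mathcal{U}_{\gamma,\varepsilon_n,r,s}\mid z)\geq 1-\nu$. Renaming $c=c'/\log 2$ provides the required inequality.

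\emph{Expected difficulty.} There is essentially no difficulty beyond recognizing the dyadic/Haar compatibility in Step 1; the whole point is that the law of $n+x_n$ is the Lebesgue measure on $(n,n+1)$, which is exactly the pushforward of $\lambda|_{I_n}$ under $-\log_2(\cdot/\epsilon_\circ)$. The union bound is lossless (up to the factor $\log 2$) because the $I_n$ tile $(0,\epsilon_\circ)$, and the quantitative form of Proposition \ref{counting_u0_fixed} was designed precisely for this kind of application. The independence of the $x_n$'s among themselves plays no role, only independence from $z$ matters, through Fubini when converting $\mathbb{E}[\mathbb{P}(\cdot\mid z)]$-type estimates into conditional ones.
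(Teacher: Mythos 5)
Your proposal is correct and follows essentially the same route as the paper: compute the density of $\varepsilon_n$ as the normalized restriction of the Haar measure $\lambda$ to the dyadic interval $I_n$, use a union bound over $n$ together with the disjointness of the $I_n$ to bound the conditional failure probability by $\lambda(B_z)/\log 2$, and invoke Proposition \ref{counting_u0_fixed} (with a rescaled $\nu$) to control $\lambda(B_z)$ outside a set of $z$ of probability at most $c\gamma/\nu$. The only cosmetic difference is that you condition pointwise on a realization of $z$ whereas the paper conditions on the event $\mathscr{E}_\lambda$; the content is identical.
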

\begin{remark} The variables $x_n$ are not necessarily independent. For example, we could choose $x_n=x_0$ for all $n$.
\end{remark}

In order to prove these propositions and the corollary, we introduce some notations and elementary stochastic lemmas.
\begin{definition}
If a random variable $I$ has a density with respect to the Lebesgue measure, we denote $f_I$ its density, i.e.
\[  \forall g\in C^0_b(\mathbb{R}), \quad \mathbb{E}\left[ g(I) \right] = \int_{\mathbb{R}} g(x) f_I(x) {\rm dx}. \]
\end{definition}
\begin{lemma} 
\label{coord_change}
If $I$ is a random variable with density with values in $A\subset \mathbb{R}$ and $\phi:A\to \mathbb{R}$ satisfies $\phi'>0$ then $\phi$ has a density and
$$
 f_{\phi(I)} = \frac{f_I }{\phi'}\circ \phi^{-1}  .
 $$
\end{lemma}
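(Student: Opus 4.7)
The plan is to verify the claimed formula by testing against an arbitrary bounded continuous function $g$, using the defining property of a density together with a standard change of variables. Since $\phi'>0$ on $A$, the map $\phi:A\to \phi(A)$ is a $C^1$ diffeomorphism onto its image $B:=\phi(A)$, with $C^1$ inverse $\phi^{-1}:B\to A$ whose derivative is $1/(\phi'\circ \phi^{-1})$.

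Fix any $g\in C^0_b(\mathbb{R})$. By the definition of $f_I$ applied to the bounded continuous function $g\circ\phi$ (extended by $0$ outside $A$ if needed, after approximating or noting $f_I$ vanishes off $A$),
\begin{equation*}
\mathbb{E}[g(\phi(I))] = \mathbb{E}[(g\circ \phi)(I)] = \int_A g(\phi(x))\, f_I(x)\,\mathrm{d}x.
\end{equation*}
Performing the substitution $y = \phi(x)$, so that $x=\phi^{-1}(y)$ and $\mathrm{d}x = \mathrm{d}y/\phi'(\phi^{-1}(y))$, we obtain
\begin{equation*}
\mathbb{E}[g(\phi(I))] = \int_B g(y)\, \frac{f_I(\phi^{-1}(y))}{\phi'(\phi^{-1}(y))}\,\mathrm{d}y = \int_{\mathbb{R}} g(y)\, h(y)\,\mathrm{d}y,
\end{equation*}
where $h(y) := \bigl(f_I/\phi'\bigr)\circ \phi^{-1}(y)$ on $B$ and $h\equiv 0$ on $\mathbb{R}\setminus B$. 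Since this identity holds for every $g\in C_b^0(\mathbb{R})$, the function $h$ is the density of $\phi(I)$, which is the announced formula.

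The only subtle point to handle carefully is the justification of the change of variables when $A$ is a general Borel subset of $\mathbb{R}$ on which $\phi$ is differentiable with $\phi'>0$: one reduces to the standard one-dimensional diffeomorphism case by working on each connected component of $A$ separately (recall $A$ being open or a countable union of intervals is implicit in the setting — in the applications of the paper $A$ is an open interval). There is no genuine analytic obstacle; this is a routine, albeit useful, measure-theoretic change of variables lemma.
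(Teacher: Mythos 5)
Your proof is correct and follows exactly the paper's own argument: test against an arbitrary $g\in C^0_b(\mathbb{R})$, use the definition of the density, and perform the change of variables $y=\phi(x)$. The extra care about the structure of $A$ is a harmless refinement; nothing further is needed.
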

\begin{proof} If $g\in C^0_b(\mathbb{R})$ then we have
$$
 \mathbb{E}\left[ g\circ \phi(I) \right] = \int_{I} g\circ \phi(x) f_I(x) {\rm dx} = \int_{{\rm Im }\phi} g(y) \frac{f_I }{\phi'}\circ \phi^{-1}(y) {\rm dy} .
 $$
\end{proof}
\begin{corollary}
\label{corail} If $I^2$ is uniformly distributed on $(0,1)$ and $I\geq 0$ almost surely then $I$ has a density given by 
$$
\forall x\in \mathbb{R}, \ f_{I} (x) = 2x \mathbb{1}_{0<x<1}. 
$$
Moreover, if $I$ has a density and $\varepsilon >0$ then $\varepsilon I$ has a density and for all $x\in \mathbb{R}$
$$
\forall x\in \mathbb{R}, \ f_{\varepsilon I}(x) = \frac1{\varepsilon}f_I(\frac{x}{\varepsilon}).
$$
In particular, we have  for all $a \in \Z$ and $x \in \R$, \begin{equation}
\label{gloubi}
\ f_{I_a^2}(x) = \langle a \rangle^{4s + 8} \mathbb{1}_{0<x< \langle a \rangle^{- 4s - 8} } \quad\mbox{and}\quad f_{I_a}(x) = 2 x \langle a \rangle^{ 4s + 8} \mathbb{1}_{0<x< \langle a \rangle^{- 2s - 4} }. 
\end{equation}
\end{corollary}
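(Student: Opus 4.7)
The plan is to apply Lemma \ref{coord_change} three times, each time with a carefully chosen map $\phi$, and then combine the resulting formulas. Since the corollary is a direct consequence of the change-of-variables formula for densities, there is no real obstacle here — the work is purely in choosing the right $\phi$ and keeping track of supports.

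For the first assertion, I would start from $J := I^{2}$, which by hypothesis is uniform on $(0,1)$ and hence has density $f_J(y) = \mathbb{1}_{0<y<1}$. Since $I \geq 0$ a.s., we can write $I = \phi(J)$ with $\phi(y) = \sqrt{y}$ defined on $(0,1)$. Then $\phi'(y) = 1/(2\sqrt{y}) > 0$ and $\phi^{-1}(x) = x^{2}$, so Lemma \ref{coord_change} gives
$$
f_{I}(x) \;=\; \frac{f_J(x^{2})}{\phi'(x^{2})} \;=\; \frac{\mathbb{1}_{0<x<1}}{1/(2x)} \;=\; 2x\,\mathbb{1}_{0<x<1},
$$
as claimed.

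For the second assertion, apply Lemma \ref{coord_change} to $\phi(x) = \varepsilon x$ (with $\varepsilon > 0$), which satisfies $\phi'(x) = \varepsilon > 0$ and $\phi^{-1}(y) = y/\varepsilon$. The lemma yields immediately $f_{\varepsilon I}(x) = \varepsilon^{-1} f_{I}(x/\varepsilon)$.

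For the explicit formulas in \eqref{gloubi}, I would combine the two previous parts with a suitable rescaling. Writing $J_a := \langle a \rangle^{4s+8} I_a^{2}$, the hypothesis that $I_a^{2}$ is uniform on $(0,\langle a\rangle^{-4s-8})$ means exactly that $J_a$ is uniform on $(0,1)$, so the second part applied with $\varepsilon = \langle a\rangle^{-4s-8}$ gives $f_{I_a^{2}}(x) = \langle a\rangle^{4s+8}\mathbb{1}_{0<x<\langle a\rangle^{-4s-8}}$. For $f_{I_a}$, observe that $(\langle a\rangle^{2s+4} I_a)^{2} = J_a$ is uniform on $(0,1)$, so by the first part of the corollary the variable $\langle a\rangle^{2s+4} I_a$ has density $2x\,\mathbb{1}_{0<x<1}$; applying the scaling rule with $\varepsilon = \langle a\rangle^{-2s-4}$ then produces $f_{I_a}(x) = 2x\langle a\rangle^{4s+8}\mathbb{1}_{0<x<\langle a\rangle^{-2s-4}}$. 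The only technical care needed throughout is verifying $\phi' > 0$ on the relevant domain and correctly identifying the support of the pushed-forward density.
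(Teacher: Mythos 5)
Your proposal is correct and follows exactly the route the paper intends: the corollary is a direct consequence of Lemma \ref{coord_change} applied with $\phi(y)=\sqrt{y}$ and $\phi(x)=\varepsilon x$, combined with the rescaling to pass from a uniform variable on $(0,1)$ to one on $(0,\langle a\rangle^{-4s-8})$. All the computations and support identifications check out against \eqref{gloubi}.
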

\begin{lemma} 
\label{whaou}
Let $I,J$ be some real independent random variables. If $I$ has a density, then for all $\gamma>0$
\[ \mathbb{P}(|I+J|<\gamma)\leq 2\gamma \|f_I\|_{L^{\infty}}. \]
\end{lemma}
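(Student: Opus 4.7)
The plan is to condition on $J$ and exploit the independence assumption. By the product structure of the joint law of $(I,J)$, Fubini's theorem gives
\[
\mathbb{P}(|I+J|<\gamma) \;=\; \int_{\mathbb{R}} \mathbb{P}(|I+y|<\gamma) \, d\mathbb{P}_J(y),
\]
where $\mathbb{P}_J$ denotes the law of $J$. The point is that $I$ still has density $f_I$ after we freeze the value $y$ of $J$.

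Next, for each fixed $y\in\mathbb{R}$, the inner probability is just the integral of $f_I$ over an interval of length $2\gamma$:
\[
\mathbb{P}(|I+y|<\gamma) \;=\; \int_{-\gamma-y}^{\gamma-y} f_I(x)\,dx \;\leq\; 2\gamma\,\|f_I\|_{L^\infty}.
\]
The crucial feature is that this bound is uniform in $y$. Integrating against the probability measure $\mathbb{P}_J$, whose total mass is $1$, preserves the bound and yields the claimed inequality.

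There is no genuine obstacle: this is a soft anti-concentration estimate whose whole content lies in the two ingredients used above (independence to reduce to a one-variable problem, and the $L^\infty$ bound on the density to estimate the Lebesgue measure of a level set). No hypothesis on the law of $J$ (density, moments, etc.) is needed — $J$ is only accessed through a probability integral of a uniformly bounded quantity.
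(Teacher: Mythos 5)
Your argument is correct and is essentially the paper's own proof: both condition on $J$ via Tonelli/Fubini (using independence) and then bound the integral of $f_I$ over an interval of length $2\gamma$ by $2\gamma\|f_I\|_{L^\infty}$, uniformly in the frozen value of $J$. No further comment is needed.
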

\begin{proof} By Tonelli theorem, we have
$$  \mathbb{P}(|I+J|<\gamma) = \mathbb{E}\left[ \mathbb{1}_{|I+J|<\gamma} \right]=\mathbb{E}\left[ \int_{J-\gamma}^{J+\gamma} f_I(x) {\rm dx} \right] \leq 2\gamma \|f_I\|_{L^{\infty}}.  
$$
\end{proof}

\begin{lemma} 
\label{whaou_haar}
If $a,b, \lambda \in \mathbb{R}_+^*$ are such that $0<\gamma < |b|$ then for all $\sigma\in \mathbb{R}^*$
$$
\lambda( |a\varepsilon^\sigma+ b|<\gamma ) \leq \frac{2\gamma}{|\sigma|(|b|-\gamma)}.
$$
\end{lemma}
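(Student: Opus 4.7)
The plan is to reduce the problem to the bound \(|at+b|<\gamma\) for a genuinely linear expression in a new variable $t$, where the Haar measure $\lambda$ becomes particularly simple. The key observation is that $\lambda=\dd\varepsilon/\varepsilon$ is invariant under the monomial substitution $\varepsilon\mapsto\varepsilon^\sigma$, up to the factor $|\sigma|$; this is what makes the exponent $\sigma$ appear only through its absolute value in the final bound.

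Concretely, I would set $t = \varepsilon^\sigma$, which defines a diffeomorphism of $\mathbb{R}_+^*$ onto itself (order-preserving if $\sigma>0$, order-reversing if $\sigma<0$). Differentiating gives $\dd t/t = \sigma\,\dd\varepsilon/\varepsilon$, so for any Borel set $B\subset\mathbb{R}_+^*$ one has
\[
\lambda(B)=\int_B \frac{\dd\varepsilon}{\varepsilon}=\frac{1}{|\sigma|}\int_{B'}\frac{\dd t}{t},
\]
where $B'=\{\varepsilon^\sigma : \varepsilon\in B\}$. Applying this to $B=\{\varepsilon>0 \mid |a\varepsilon^\sigma+b|<\gamma\}$ yields
\[
\lambda(B)=\frac{1}{|\sigma|}\int_{A}\frac{\dd t}{t},\qquad A:=\{t>0 \mid |at+b|<\gamma\}.
\]

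Now I would estimate the right-hand side using two elementary facts about $A$ (assuming $a\neq 0$, otherwise $A=\emptyset$ since $\gamma<|b|$). First, $A$ is contained in an interval of length at most $2\gamma/|a|$, so its Lebesgue measure satisfies $|A|\leq 2\gamma/|a|$. Second, the reverse triangle inequality gives on $A$
\[
|a|\,t=|at|\geq |b|-|at+b|>|b|-\gamma>0,
\]
so $t>(|b|-\gamma)/|a|$ throughout $A$. Combining these bounds,
\[
\int_A\frac{\dd t}{t}\leq \frac{|a|}{|b|-\gamma}\cdot \frac{2\gamma}{|a|}=\frac{2\gamma}{|b|-\gamma},
\]
and dividing by $|\sigma|$ gives the desired estimate.

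There is no real obstacle: the only subtlety is handling both signs of $\sigma$ uniformly, which the absolute value in $\dd t/t = \sigma\,\dd\varepsilon/\varepsilon$ (read as a signed measure identity, then taking absolute values) resolves. Note that the hypothesis $a,b\in\mathbb{R}_+^*$ appears to be a misprint; the argument above in fact works for any $a,b\in\mathbb{R}^*$ with $\gamma<|b|$, and is trivial when $ab>0$ (in which case $A=\emptyset$).
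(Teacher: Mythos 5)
Your proof is correct and follows essentially the same route as the paper: the substitution $t=\varepsilon^{\sigma}$ exploiting the scale invariance of $\dd\varepsilon/\varepsilon$ to produce the factor $1/|\sigma|$, followed by bounding the integral of $\dd t/t$ over an interval of length $2\gamma/|a|$ whose points satisfy $t\geq(|b|-\gamma)/|a|$. Your remark that the hypothesis $a,b\in\mathbb{R}_+^*$ must be a misprint (the set is empty, hence the claim trivial, when $ab>0$) is also accurate and consistent with how the lemma is actually applied.
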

\begin{proof} Applying a natural change of coordinate, we get
\begin{multline*}
\lambda( |a\varepsilon^\sigma+ b|<\gamma ) = \int_{\mathbb{R}_+^*}  \mathbb{1}_ {|a\varepsilon^\sigma+ b|<\gamma}\frac{ {\rm d}\varepsilon}{\varepsilon} = \frac1{|\sigma|}\int_{\mathbb{R}_+^*}  \mathbb{1}_ {|a\varepsilon+ b|<\gamma}\frac{ {\rm d}\varepsilon}{\varepsilon} \\
= \frac1{|\sigma|}\int_{ \varepsilon\in \left(\frac{-b-\gamma}{a};\frac{-b+\gamma}{a} \right)\cap \mathbb{R}_+^* }  \frac{ {\rm d}\varepsilon}{\varepsilon} \leq   \frac{2\gamma}{|\sigma|(|b|-\gamma)}.
\end{multline*}
\end{proof}

A first application of these lemmas is the genericity of the non-resonance assumption \eqref{nonres1}.
\begin{lemma}
\label{lemma_nonres1} There exists a constant $c>0$ such that for all $\gamma\in (0,1)$ we have
$$
\mathbb{P}\left( \forall \kb \in \Ic, \ \length k \leq 2r \Rightarrow |\omega_{\kb}(I)| \geq \gamma \left( \prod_{\alpha=1}^{\length\kb } \langle k_\alpha \rangle^{-2} \right) \langle \mu_{\min}(\kb)\rangle^{-2s} \right) \geq 1 - c\gamma.
$$
\end{lemma}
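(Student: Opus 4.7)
The plan is to apply a union bound over the countable family of irreducible tuples $\kb\in\Ic$ with $\length\kb\leq 2r$, controlling each term via Lemma~\ref{whaou}.

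For a fixed $\kb$, I would identify $b_\star=\mu_{\min}(\kb)$ and, regrouping \eqref{brassens} by distinct index values, write
$$
\omega_\kb(I) \;=\; -\varphi'(0)\bigl(c_{b_\star}\, I_{b_\star} + R\bigr),\qquad c_{b_\star}=\sum_{\alpha:\, a_\alpha=b_\star}\delta_\alpha,
$$
where $R$ depends only on $(I_a)_{a\neq b_\star}$ and is therefore independent of $I_{b_\star}$. The key observation is that the irreducibility of $\kb$ forbids two entries $(\pm 1,b_\star)$ from coexisting (otherwise they would form an action pair), so every occurrence of $b_\star$ in $\kb$ carries the same sign and $|c_{b_\star}|\geq 1$. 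Combining Corollary~\ref{corail} with Lemma~\ref{coord_change} gives $\|f_{c_{b_\star}I_{b_\star}}\|_\infty\leq \|f_{I_{b_\star}}\|_\infty\leq 2\langle b_\star\rangle^{2s+4}$, and Lemma~\ref{whaou} then yields
$$
\mathbb{P}\!\left(|\omega_\kb(I)|<\gamma_\kb\right) \;\leq\; C\,\gamma\prod_{\alpha=1}^{\length\kb}\langle k_\alpha\rangle^{-2}\,\langle\mu_{\min}(\kb)\rangle^{4},
$$
where $\gamma_\kb$ is the right-hand side of the target inequality and the $\langle\mu_{\min}\rangle^{-2s}$ factor in $\gamma_\kb$ has been absorbed against $\langle\mu_{\min}\rangle^{2s+4}$.

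Next, I would establish summability, using that the relevant $\kb$ have length $0$ or $\length\kb\geq 6$. Indeed $\length\kb$ is even (from $\sum\delta_\alpha=0$); the case $\length\kb=2$ forces $a_1=a_2$ (an action pair, hence reducible); and $\length\kb=4$, via the momentum and resonance constraints $\sum\delta_\alpha a_\alpha=\sum\delta_\alpha a_\alpha^2=0$ inherited from $\Rc$, forces $\{a_1,a_2\}$ and $\{a_3,a_4\}$ to share both their sum and product, hence to coincide as multisets, again contradicting irreducibility. For $\length\kb=2m\geq 6$, AM--GM gives $\langle\mu_{\min}(\kb)\rangle\leq\bigl(\prod_\alpha\langle k_\alpha\rangle\bigr)^{1/(2m)}$, and therefore
$$
\prod_{\alpha=1}^{2m}\langle k_\alpha\rangle^{-2}\langle\mu_{\min}(\kb)\rangle^{4} \;\leq\; \prod_{\alpha=1}^{2m}\langle k_\alpha\rangle^{-2+2/m} \;\leq\; \prod_{\alpha=1}^{2m}\langle k_\alpha\rangle^{-4/3},
$$
the right-hand side of which is summable over $(a_1,\dots,a_{2m})\in\Z^{2m}$.

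Summing over $3\leq m\leq r$ (finitely many terms) and over the $O(2^{2m})$ sign configurations produces a constant $C_r$ depending only on $r$, $s$, and $\varphi'(0)$, and the union bound gives the claimed probability $1-c\gamma$. I expect the only genuinely delicate point to be the length-$4$ elimination: without it, the case $m=2$ would leave the factor $\prod\langle k_\alpha\rangle^{0}$, which is not summable, and the AM--GM redistribution scheme would collapse. Every other step is a direct application of the elementary probabilistic lemmas proved earlier in the section.
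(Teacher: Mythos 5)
Your proof is correct, and its probabilistic core — the union bound, isolating the contribution of $I_{b_\star}$ for $b_\star=\mu_{\min}(\kb)$, observing that irreducibility forces all occurrences of $b_\star$ to carry the same sign so that $|c_{b_\star}|\geq 1$, and applying Lemma \ref{whaou} with the density bound \eqref{gloubi} — is the same as the paper's. Where you genuinely diverge is in establishing the summability of $\sum_{\kb}\prod_\alpha\langle k_\alpha\rangle^{-2}\langle\mu_{\min}(\kb)\rangle^{4}$. The paper absorbs $\langle\mu_{\min}(\kb)\rangle^{4}$ into the two smallest factors, reducing to $\prod_{\alpha=1}^{\length\kb-2}\langle\mu_\alpha(\kb)\rangle^{-2}$, and then invokes the momentum and resonance relations to argue that the two smallest indices are determined, up to finite ambiguity, by the $\length\kb-2$ largest ones, so the sum converges for every admissible length. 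You instead first rule out irreducible tuples of length $2$ and $4$ (correctly: length $2$ is an action pair, and length $4$ forces equal multisets via equal sums and sums of squares, exactly as in the paper's computation of $Z_4$), and then for $2m\geq 6$ spread $\langle\mu_{\min}\rangle^{4}$ over all $2m$ factors by comparison with the geometric mean, getting exponent $-2+2/m\leq-4/3<-1$ on each factor, which is summable over $\Z^{2m}$ with no further structural input. Both routes are valid; yours trades the paper's ``two indices are determined by the rest'' counting argument for the length-$\leq 4$ elimination, which is arguably easier to make fully rigorous. One small arithmetic slip in your closing aside: for $m=2$ your own formula gives exponent $-2+2/m=-1$, not $0$; the conclusion that admitting $m=2$ would destroy summability is nevertheless correct, since $\prod_\alpha\langle k_\alpha\rangle^{-1}$ is not summable over $\Z^{4}$.
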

\begin{proof} We are going to bound the probability of the complementary event by $c\gamma$. For each $\kb \in \irr$ of length smaller than or equal to $2r$, we have to estimate $\mathbb{P}\left( |\omega_{\kb}(I)| < \gamma_{\kb}\right)$, where $\gamma_{\kb}>0$ will be judiciously chosen. \\
We recall that by definition, if $\kb = (k_1,\dots, k_{\length \kb})$ and $k_\alpha = (\delta_\alpha,a_{\alpha})$ then we have
$$
\omega_{\kb}(I) = - \varphi'(0) \sum_{\alpha = 1}^{\length \kb} \delta_\alpha  I_{a_\alpha}.
$$
So paying attention to the multiplicity, this sum writes
$$
\omega_{\kb}(I) = \sum_{\beta = 1}^{m} p_\beta  I_{a_{\alpha_\beta}},
$$
where $m\leq \length \kb$ is an integer, $p\in \mathbb{R}^m$ satisfies $|p_\beta|\geq |\varphi'(0)|$ and $(a_{\alpha_\beta})_{\beta}$ is a subsequence of $a$.  
Thus, using Lemma \ref{whaou}, we have
\begin{multline*}
\mathbb{P}\left( |\omega_{\kb}(I)| < \gamma_{\kb}\right) \leq 2\gamma_{\kb} \min_{\beta =1,\dots,m} \frac{\Norm{ f_{I_{a_{\alpha_\beta}}}}{L^\infty}}{|p_\beta|}  \\
\leq   \frac{2\gamma_{\kb}}{|\varphi'(0)|} \min_{\alpha =1,\dots,\length \kb} \| f_{I_{a_\alpha}}\|_{L^\infty} \leq \frac{4\gamma_{\kb}}{|\varphi'(0)|} \langle \mu_{\min}(\kb)\rangle^{2s-4}, 
\end{multline*}
by using \eqref{gloubi}. 
Consequently, if we take
$$
\gamma_{\kb} = \gamma \left( \prod_{\alpha=1}^{\length\kb } \langle k_\alpha \rangle^{-2} \right) \langle \mu_{\min}(\kb)\rangle^{-2s}
$$
we get
$$
\mathbb{P}\left( |\omega_{\kb}(I)| < \gamma_{\kb}\right) \leq \frac{4\gamma}{|\varphi'(0)|} \prod_{\alpha=1}^{\length\kb-2 } \langle \mu_\alpha(\kb) \rangle^{-2}.
$$
Using the fact that $\kb \in \Rc$ and the zero momenta conditions \eqref{zeromomenta}, we see that $\mu_{\length \kb}(\kb)$ and $\mu_{\length \kb -1}(\kb)$ can be expressed as functions of $\mu_{\length \kb -2}(\kb),\dots,\mu_1(\kb)$, so this last product is summable on $\{ \kb \in \irr \hspace{5pt}| \hspace{5pt} \length \kb \leq 2r \}$. Consequently, there exists a constant $c>0$ such that
$$
\mathbb{P}\left( \exists \kb \in \Ic, \ \length k \leq 2r\textrm{ and } |\omega_{\kb}(I)| < \gamma_{\kb}\right) \leq \sum_{\substack{\kb \in \irr\\ \length \kb \leq 2r}} \mathbb{P}\left( |\omega_{\kb}(I)| < \gamma_{\kb}\right) \leq c \gamma.
$$
\end{proof}

A second application is the proof of Corollary \ref{counting_sequence} of Proposition \ref{counting_u0_fixed}.\\
\noindent  \bf Proof of Corollary \ref{counting_sequence}\rm.

We denote $\mathscr{E}_\lambda$ the event defined by
$$
\mathscr{E}_\lambda = \{  \lambda \left( \varepsilon z\in \ell^1_s \setminus \mathcal{U}_{\gamma,\varepsilon,r,s} \right) < \nu \}.
$$
Applying Proposition \eqref{counting_u0_fixed}, there exists a constant $c>0$ such that for all $\gamma\in (0,1)$ we have $\mathbb{P}(\mathscr{E}_\lambda)\geq 1- c\gamma/\nu$. Thus, we will conclude this proof showing that
$$
\mathbb{P}\left( \forall\, n ,\,  \varepsilon_n z \in \mathcal{U}_{\varepsilon_n,\gamma,r,s}  \ | \ \mathscr{E}_{\lambda}\right) \geq 1 - \nu.
$$
To show this, we just have to prove that
$$
\sum_{n\in \mathbb{N}} \mathbb{P}\left(  \varepsilon_n z \in  \ell^1_s \setminus \mathcal{U}_{\varepsilon_n,\gamma,r,s}   \ | \ \mathscr{E}_{\lambda} \right) < \nu. 
$$
By a natural change of variable (see Lemma \ref{coord_change}), $\varepsilon_n$ has a density given by
$$
f_{\varepsilon_n} = \frac1{\log 2}\left\{ \begin{array}{lll} \epsilon_\circ^{-1}\varepsilon^{-1} &\textrm{ if } \epsilon_\circ 2^{-n-1}<\varepsilon< \epsilon_\circ 2^{-n},\\
					0 & \textrm{ else.}
					\end{array}
			  \right.
$$
Consequently, since $(\varepsilon_n)$ is independent of $z$, applying Chasles formula, we get
\begin{multline*}
\sum_{n\in \mathbb{N}} \mathbb{P}\left(  \varepsilon_n z \in  \ell^1_s \setminus \mathcal{U}_{\varepsilon_n,\gamma,r,s}   \ | \ \mathscr{E}_{\lambda} \right) =\sum_{n\in \mathbb{N}} \mathbb{E}\left[  \int_{\mathbb{R}} \mathbb{1}_{\varepsilon z \in  \ell^1_s \setminus \mathcal{U}_{\gamma,\varepsilon,r,s}} f_{\varepsilon_n}  (\varepsilon)  {\rm d}\varepsilon \ |\ \mathscr{E}_{\lambda} \right] \\
= \frac1{\log 2} \mathbb{E}\left[ \sum_{n\in \mathbb{N}}  \int_{2^{-n-1}}^{2^{-n}} \mathbb{1}_{\varepsilon z \in  \ell^1_s \setminus \mathcal{U}_{\gamma,\varepsilon,r,s}}   \frac{{\rm d}\varepsilon}{\varepsilon} \ |\ \mathscr{E}_{\lambda} \right]
\leq  \frac1{\log 2} \mathbb{E}\left[\nu\ | \ \mathscr{E}_{\lambda} \right] = \frac{\nu}{\log 2}, 
\end{multline*}
and we easily obtain the result after a scaling in $\nu$. 
\hfill $\square$

\medskip

To take into account the terms induced by $Z_6$ in the proof Proposition \ref{counting_eps_fixed} and Proposition \ref{counting_u0_fixed}, we are going to need an useful algebraic lemma.
\begin{lemma}
\label{proba_need_algebra} If $\kb =(k_1,\dots,k_{2m}) \in \Ic$ with $k_\alpha = (\delta_\alpha,a_\alpha)$,
there exists $a^* \in \rrbracket- 3m,  3m  \llbracket   \setminus \{  a_1,\dots,a_{2m}  \}$ such that
\begin{equation}
\label{patate}
\left| \sum_{\alpha = 1}^{2m} \frac{\delta_\alpha}{(a^*-a_{\alpha})^2} \right| \geq (6m)^{-4m} \prod_{\alpha=1}^{2m} \langle a_{\alpha} \rangle^{-2}.
\end{equation}
\end{lemma}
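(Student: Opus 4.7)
The natural object is the rational function
\[
f(x) := \sum_{\alpha=1}^{2m} \frac{\delta_{\alpha}}{(x-a_{\alpha})^2}
\]
whose value at $a^{*}$ is precisely the quantity to be bounded below. The plan is to clear denominators, observe that the resulting polynomial has \emph{integer} coefficients and controlled degree, and then count how many integer points in $\llbracket -3m, 3m\rrbracket$ it is forced to avoid.

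\textbf{Step 1: Pole structure via irreducibility.} Let $b_1,\ldots,b_p$ be the distinct values among $a_1,\ldots,a_{2m}$, with $p\le 2m$. Because $\kb\in\Ic$, no two indices $k_\alpha,k_\beta$ are conjugate, so whenever $a_\alpha=a_\beta$ one has $\delta_\alpha=\delta_\beta$. Consequently, setting $n_j := \sum_{\alpha:\,a_\alpha=b_j}\delta_\alpha$, we have $|n_j|\ge 1$ (it equals the multiplicity of $b_j$), and
\[
f(x) = \sum_{j=1}^{p}\frac{n_j}{(x-b_j)^2}.
\]

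\textbf{Step 2: An integer-coefficient numerator.} Multiplying $f$ by $Q(x):=\prod_{j=1}^{p}(x-b_j)^2$, I obtain the polynomial
\[
P(x) := Q(x)f(x) = \sum_{j=1}^{p} n_j \prod_{k\neq j}(x-b_k)^2 \in \Z[x],\qquad \deg P \le 2p-2 \le 4m-2.
\]
The polynomial $P$ is not identically zero, since $P(b_1)=n_1\prod_{k\neq 1}(b_1-b_k)^2\neq 0$ (the $b_j$ being distinct and $n_1\neq 0$).

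\textbf{Step 3: Pigeonhole on $\llbracket-3m,3m\rrbracket$.} The set $S := \llbracket -3m,3m\rrbracket\setminus\{a_1,\ldots,a_{2m}\}$ has cardinality at least $(6m+1)-2m = 4m+1$, while $P$ has at most $4m-2$ roots. Hence at least three elements of $S$ are non-roots of $P$; pick any such $a^{*}$. Since $P$ has integer coefficients, $|P(a^{*})|\ge 1$.

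\textbf{Step 4: Bounding the denominator.} For each $j$, using $|b_j|\le\langle b_j\rangle$ and $|a^{*}|\le 3m$,
\[
|a^{*}-b_j|\ \le\ 3m + |b_j|\ \le\ (3m+1)\langle b_j\rangle\ \le\ 6m\,\langle b_j\rangle.
\]
Therefore $|Q(a^{*})|\le (6m)^{2p}\prod_{j=1}^{p}\langle b_j\rangle^{2}$. Since $|n_j|\ge 1$ and $\sum_j |n_j|=2m$, one has $\prod_j \langle b_j\rangle^{2}\le \prod_j \langle b_j\rangle^{2|n_j|} = \prod_{\alpha=1}^{2m}\langle a_\alpha\rangle^{2}$, and $(6m)^{2p}\le (6m)^{4m}$. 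Combining,
\[
|f(a^{*})| \;=\; \frac{|P(a^{*})|}{|Q(a^{*})|} \;\ge\; \frac{1}{(6m)^{4m}\prod_{\alpha=1}^{2m}\langle a_\alpha\rangle^{2}},
\]
which is exactly \eqref{patate}.

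The only delicate point is Step 1: one must truly exploit the irreducibility hypothesis to guarantee that no $n_j$ vanishes (so that $f$ has a second-order pole at each $b_j$, and $P$ is visibly nonzero at the $b_j$). Steps 2--4 are then an elementary degree count plus the estimate $|a^{*}-b_j|\le 6m\langle b_j\rangle$ coming from $|a^{*}|\le 3m$.
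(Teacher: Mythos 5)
Your proof is correct and follows essentially the same route as the paper's: clear denominators to obtain a nonzero integer polynomial of degree at most $4m-2$ (nonvanishing being exactly where irreducibility enters), pick an integer $a^*$ avoiding its roots by counting, and conclude from $|P(a^*)|\ge 1$ together with $|a^*-a_\alpha|\le 6m\langle a_\alpha\rangle$. The only cosmetic differences are that you group repeated poles, which makes the nonvanishing of $P$ explicit by evaluation at $b_1$ rather than by invoking uniqueness of partial fractions, and that you count points in the closed interval $\llbracket -3m,3m\rrbracket$ whereas the statement uses the open one — with the open interval there are still $4m-1>4m-2$ available points, so nothing is lost.
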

\begin{proof} First, we observe that there exists $P\in \mathbb{Z}[X]$ of degree smaller than or equal to $4m-2$ such that
\begin{equation}
\label{temp_frac}
\sum_{\alpha = 1}^{2m} \frac{\delta_\alpha}{(X-a_{\alpha})^2} = P(X) \prod_{\alpha = 1}^{2m} \frac1{(X-a_{\alpha})^2}.
\end{equation}
Since $\kb$ is irreductible, we deduce of the uniqueness of partial fraction decomposition that $P\neq 0$. Hence, $P$ vanishes in, at most, $4m-2$ points. But there are, at least, $4m-1$ points into $\rrbracket - 3m,  3m \llbracket \setminus \{  a_1,\dots,a_{2m}  \}$. So we can find $a^*$ in this set such that $P(a^*)\neq 0$.

\medskip

Then, since $P\in \mathbb{Z}[X]$ and $a^*\in \mathbb{Z}$, we deduce that $|P(a^*)|\geq 1$. Thus, to prove \eqref{patate}, we just have to bound each factor of the denominator in \eqref{temp_frac} by 
$$
|a_{\alpha}-a^*|\leq 6 m \langle a_{\alpha} \rangle.
$$
To get this estimate we just have to observe that if $|a_\alpha|<3m$ then
$$
|a_{\alpha}-a^*|\leq |a_{\alpha}|+|a^*| \leq 6m \leq 6 m \langle a_{\alpha} \rangle,
$$
whereas, if $|a_\alpha|\geq 3m$ then $|a^*|\leq |a_\alpha|$ and so
$$
|a_{\alpha}-a^*|\leq |a_{\alpha}|+|a^*| \leq 2|a_\alpha| \leq 6m  \langle a_{\alpha} \rangle.
$$
\end{proof}

\noindent \bf Proof of Proposition \ref{counting_u0_fixed}.  \rm 
Let $\nu \in (0,\frac12)$ and $\gamma\in (0,1)$. We introduce three events
$$
  \mathscr{E}_4 = \left\{  \forall \kb \in \Ic, \ \length k \leq 2r \Rightarrow |\omega_{\kb}(I)| \geq \frac{\gamma}{\nu}  \left( \prod_{\alpha=1}^{\length\kb } \langle k_\alpha \rangle^{-2} \right)  \langle \mu_{\min}(\kb)\rangle^{-2s} \right\} ,
  $$
  $$
  \mathscr{E}_6 = \left\{  \forall \kb \in \Ic, \ \length k \leq 2r \Rightarrow |\widetilde{\Omega}_\kb(I) - \omega_{\kb}(I)| \geq \frac{\gamma}{\nu} \left( \prod_{\alpha=1}^{\length\kb } \langle k_\alpha \rangle^{-4} \right)   \right\}  ,
  $$
  and
  $$
 \mathscr{E}_{\lambda} = \left\{  \lambda \left(  \varepsilon z\in \ell^1_s \setminus \mathcal{U}_{\gamma,\varepsilon,r,s} \right) < c_{\lambda}\nu \right\},
  $$
  where $c_{\lambda}$ is a positive constant that will be determine later.

  We have proven in Lemma \ref{lemma_nonres1} that there exists a constant $c_4>0$ such that 
  $$
  \mathbb{P}\left(  \mathscr{E}_4  \right) \geq 1 - c_4\frac{\gamma}{\nu}.
  $$
  We are going to prove, on the one hand, that there exists a constant $c_6>0$ (independent of $\gamma$ and $\nu$) such that
  \begin{equation}
 \label{promize1}
  \mathbb{P}\left(  \mathscr{E}_6  \right) \geq 1 - c_6\frac{\gamma}{\nu}.
  \end{equation}
On the other hand, we will prove that
  \begin{equation}
   \label{promize2}
   \mathscr{E}_4  \cap \mathscr{E}_6 \subset  \mathscr{E}_{\lambda}.
  \end{equation}
  Assuming \eqref{promize1} and \eqref{promize2}, and up to a natural rescaling with respect to $\nu$, Proposition \ref{counting_u0_fixed} becomes a straightforward estimate:
\begin{multline*}
\mathbb{P}(\mathscr{E}_{\lambda}) \geq \mathbb{P}(\mathscr{E}_4  \cap \mathscr{E}_6) = 1 - \mathbb{P}(\mathscr{E}_4^c  \cup \mathscr{E}_6^c) \geq 1 - (\mathbb{P}(\mathscr{E}_4^c)+  \mathbb{P}(\mathscr{E}_6^c) ) \\= \mathbb{P}(\mathscr{E}_4) + \mathbb{P}(\mathscr{E}_6) - 1 \geq 1 - \frac{c_4+c_6}{\nu} \gamma.
\end{multline*}

\medskip

First, we focus on the proof of \eqref{promize1}, which is similar to the proof of Lemma \ref{lemma_nonres1}. We are going to bound the probability of the complementary event by $c_6 \frac{\gamma}{\nu}$. For each $\kb \in \irr$ of length smaller than or equal to $2r$, we have to estimate $\mathbb{P}\left( |\widetilde{\Omega}_\kb(I) - \omega_{\kb}(I)| < \gamma_{\kb}\right)$, where $\gamma_{\kb}>0$ will be judiciously chosen. \\
We recall that by definition, if $\kb = (k_1,\dots, k_{\length k})$ and $k_\alpha = (\delta_\alpha,a_{\alpha})$ then we have
\begin{equation}
\label{nocturne}
\widetilde{\Omega}_\kb(I) - \omega_{\kb}(I) = - \frac{1}{2}\varphi'(0)^2 \sum_{\substack{b \in\Z \\ b \notin\{ a_1,\dots,a_{2m}\}}} I_b^2  \sum_{\alpha = 1}^{2m }  \frac{\delta_\alpha }{(a_\alpha-b)^2}  .
\end{equation}
Thus, using Lemma \ref{whaou} and Corollary \eqref{corail} we have
$$
\mathbb{P}\left( |\widetilde{\Omega}_\kb(I) - \omega_{\kb}(I)| <  \gamma_{\kb}\right) \leq 2 \gamma_{\kb} \inf_{b\in \mathbb{Z}} \left| \frac{1}{2}\varphi'(0)^2 \langle b \rangle^{-4s-8} \sum_{\alpha = 1}^{2m }  \frac{\delta_\alpha }{(a_\alpha-b)^2}  \right|^{-1}.
$$
Applying Lemma \ref{proba_need_algebra} to estimate this infimum, we get
$$
\mathbb{P}\left( |\widetilde{\Omega}_\kb(I) - \omega_{\kb}(I)| <  \gamma_{\kb}\right) \leq 4 \gamma_{\kb} (\varphi'(0))^{-2} (6r)^{4r} (3r)^{4s+8} \prod_{\alpha =1}^{\length \kb} \langle a_\alpha \rangle^2.
$$
Consequently, choosing $\gamma_{\kb} = \frac{\gamma}{\nu} \prod_{\alpha =1}^{\length \kb} \langle k_\alpha \rangle^{-4}$,
we get $\mathbb{P}(\mathscr{E}_6) \geq 1 - c_6 \frac{\gamma}{\nu}$ with
$$
c_6= 4 \varphi'(0)^{-2} (6r)^{4r} (3r)^{4s+8} \sum_{\substack{\kb \in \irr \\ \length \kb \leq 2r}}   \prod_{\alpha =1}^{\length \kb} \langle k_\alpha \rangle^{-2}.
$$

Now, we focus on the proof of \eqref{promize2}. So, we consider a realization of the actions $I = (I_a)_{a\in \Z}$ where the lower bounds characterising  $\mathscr{E}_4$ and $\mathscr{E}_6$  are satisfied, {\em i.e.} for all $\kb$ irreductible of length smaller than or equal to $2r$ we have
\begin{equation}
\label{carac_E4}
|\omega_{\kb}(I)| \geq \frac{\gamma}{\nu}  \left( \prod_{\alpha=1}^{\length\kb } \langle k_\alpha \rangle^{-2} \right)  \langle \mu_{\min}(\kb)\rangle^{-2s} ,
\end{equation}
and
\begin{equation}
\label{carac_E6}
|\widetilde{\Omega}_\kb(I) - \omega_{\kb}(I)| \geq \frac{\gamma}{\nu} \left( \prod_{\alpha=1}^{\length\kb } \langle k_\alpha \rangle^{-4} \right)  .
\end{equation}

 We have to estimate $\lambda \left(  \varepsilon z\in \ell^1_s \setminus \mathcal{U}_{\gamma,\varepsilon,r,s} \right)$ for such a realization $I$. Thus, we decompose naturally the set we are estimating:
$$
\left\{ \varepsilon >0 \ | \ \varepsilon z\in \ell^1_s \setminus \mathcal{U}_{\gamma,\varepsilon,r,s} \right\} = \Sigma_1 \cup \Sigma_2 \cup \Sigma_3,
$$
with
\begin{multline*}
\Sigma_1 =\left\{ \varepsilon >0 \ | \ \exists \kb \in \Ic, \ \length \kb \leq 2r \right. \\\left. \textrm{ and } |\omega_{\kb}(\varepsilon^2 I)| < \gamma \varepsilon^2  \left( \prod_{\alpha=1}^{\length\kb } \langle k_\alpha \rangle^{-2} \right)  \langle \mu_{\min}(\kb)\rangle^{-2s}   \right\},
\end{multline*}
\begin{multline*} 
\Sigma_2  =\left\{ \varepsilon >0 \ | \ \exists \kb \in \Ic, \ \length \kb \leq 2r \right. \\ \left. \textrm{ and } |\widetilde{\Omega}_{\kb}(\varepsilon^2 I)| < \gamma \varepsilon^2  \left( \prod_{\alpha=1}^{\length\kb } \langle k_\alpha \rangle^{-6} \right)  \langle \mu_{\min}(\kb)\rangle^{-2s}   \right\},
\end{multline*}
and
$$
\Sigma_3 =\left\{ \varepsilon >0 \ | \ \exists \kb \in \Ic, \ \length \kb \leq 2r \textrm{ and } |\widetilde{\Omega}_{\kb}(\varepsilon^2 I)| < \gamma \varepsilon^4  \left( \prod_{\alpha=1}^{\length\kb } \langle k_\alpha \rangle^{-6} \right)   \right\}.
$$
In fact, since $\omega_{\kb}$ is linear, $I$ satisfies \eqref{carac_E4} and $\nu<1$, $\Sigma_1$ is the empty set. Thus, we have
$$
\lambda \left(  \varepsilon z\in \ell^1_s \setminus \mathcal{U}_{\gamma,\varepsilon,r,s} \right) \leq \lambda(\Sigma_2) + \lambda(\Sigma_3).
$$
So, we have to estimate $\lambda(\Sigma_2)$ and $\lambda(\Sigma_3)$.
Observing that $\widetilde{\Omega}_{\kb} - \omega_{\kb}$ is quadratic, we have 
$$
\widetilde{\Omega}_\kb(\varepsilon^2 I) = \varepsilon^2 \omega_{\kb}(I) + \varepsilon^4 \left( \widetilde{\Omega}_\kb(I) - \omega_{\kb}(I) \right).
$$
Thus, we are going to estimate $\lambda(\Sigma_2)$ and $\lambda(\Sigma_3)$ with Lemma \ref{whaou_haar}. To apply, this Lemma, we observe that from \eqref{carac_E4} and \eqref{carac_E6} we have
$$
|\omega_{\kb}(I)| -  \gamma   \left( \prod_{\alpha=1}^{\length\kb } \langle k_\alpha \rangle^{-6} \right)  \langle \mu_{\min}(\kb)\rangle^{-2s}   \geq \gamma \left(\frac1{\nu}-1\right)  \left( \prod_{\alpha=1}^{\length\kb } \langle k_\alpha \rangle^{-2} \right)  \langle \mu_{\min}(\kb)\rangle^{-2s} ,
$$
and
$$
 |\widetilde{\Omega}_\kb(I) - \omega_{\kb}(I)| - \gamma   \left( \prod_{\alpha=1}^{\length\kb } \langle k_\alpha \rangle^{-6} \right)  \geq  \gamma \left(\frac1{\nu}-1\right)  \left( \prod_{\alpha=1}^{\length\kb } \langle k_\alpha \rangle^{-4} \right).
$$

Consequently, applying Lemma \ref{whaou_haar} with $\gamma_{\kb,s} :=\gamma  \left( \prod_{\alpha=1}^{\length\kb } \langle k_\alpha \rangle^{-6} \right)  \langle \mu_{\min}(\kb)\rangle^{-2s}$,  we get
\begin{multline*}
\lambda(\Sigma_2) \leq \sum_{\substack{\kb \in \irr \\ \length \kb \leq 2r}} \lambda\left(|\omega_{\kb}(I) + \varepsilon^2 \left( \widetilde{\Omega}_\kb(I) - \omega_{\kb}(I) \right)| < \gamma_{\kb,s}\right)  \leq \sum_{\substack{\kb \in \irr \\ \length \kb \leq 2r}}   \frac{\gamma_{\kb,s}}{|\omega_{\kb}(I) | - \gamma_{\kb,s}}\\ \leq \left(  \sum_{\substack{\kb \in \irr \\ \length \kb \leq 2r}}  \prod_{\alpha=1}^{\length\kb } \langle k_\alpha \rangle^{-4} \right) \frac{\nu}{1-\nu},
\end{multline*}
as the first previous estimate can be recast as $|\omega_{\kb}(I) | - \gamma_{\kb,s} \geq  \gamma_{\kb,s}( \frac{1}{\nu} - 1) \prod_{\alpha=1}^{\length\kb } \langle k_\alpha \rangle^{4}. $ Similarly, we obtain 
\begin{multline*}
\lambda(\Sigma_3) \leq \sum_{\substack{\kb \in \irr \\ \length \kb \leq 2r}} \lambda\left(|\varepsilon^{-2} \omega_{\kb}(I) +  \left( \widetilde{\Omega}_\kb(I) - \omega_{\kb}(I) \right)| < \gamma  \left( \prod_{\alpha=1}^{\length\kb } \langle k_\alpha \rangle^{-6} \right)  \right) \\ \leq \left(  \sum_{\substack{\kb \in \irr \\ \length \kb \leq 2r}}  \prod_{\alpha=1}^{\length\kb } \langle k_\alpha \rangle^{-2} \right) \frac{\nu}{1-\nu}.
\end{multline*}
Hence, since these sum are clearly convergent, we have proven that $\mathscr{E}_4  \cap \mathscr{E}_6 \subset  \mathscr{E}_{\lambda}$ for a convenient choice of $c_\lambda$.
\hfill $\square$

\noindent \bf Proof of Proposition \ref{counting_eps_fixed}. \rm Let $\varepsilon>0$ be a fixed positive real number. By definition of $\mathcal{U}_{\gamma,\varepsilon,r,s}$, we decompose $\left\{\varepsilon z\in  \mathcal{U}_{\gamma,\varepsilon,r,s} \right\}$ into
$$
\left\{\varepsilon z\in  \mathcal{U}_{\gamma,\varepsilon,r,s} \right\} = \mathscr{E}_4 \cap \mathscr{E}_{46}
$$
where
$$
\mathscr{E}_4 = \left\{    \forall \kb \in \irr, \ \length k \leq 2r \Rightarrow |\omega_{\kb}(\varepsilon^2 I)| \geq \gamma  \varepsilon^2 \left( \prod_{\alpha=1}^{\length\kb } \langle k_\alpha \rangle^{-2} \right)  \langle \mu_{\min}(\kb)\rangle^{-2s} \right\} 
$$
and
\begin{multline*}
\mathscr{E}_{46} = \left\{    \forall \kb \in \irr, \ \length k \leq 2r  \right. \\ \left.\Rightarrow   |\widetilde{\Omega}_{\kb}(\varepsilon^2 I)| \geq \gamma \left( \prod_{\alpha=1}^{\length\kb } \langle k_\alpha \rangle^{-6} \right)   \max \left(  \varepsilon^2  \langle \mu_{\min}(\kb)\rangle^{-2s} ,\varepsilon^4 \right)  \right\}.
\end{multline*}
Since $\omega_{\kb}$ is linear, $\mathscr{E}_4$ does not depend of $\varepsilon$. Consequently, applying Lemma \ref{lemma_nonres1}, we get a constant $c_4>0$ such that $\mathbb{P}(\mathscr{E}_4)\geq 1-c_4 \gamma$.
So assuming that there exists a constant $c_{46}>0$ such that $\mathbb{P}(\mathscr{E}_{46})\geq 1-c_{46} \gamma$, we could conclude the proof of Proposition \ref{counting_eps_fixed} by the following estimate
\begin{multline*}
\mathbb{P}\left( \varepsilon z\in  \mathcal{U}_{\gamma,\varepsilon,r,s} \right) = 1 - \mathbb{P}(\mathscr{E}_4^c\cup \mathscr{E}_{46}^c) \geq \mathbb{P}(\mathscr{E}_4)+\mathbb{P}(\mathscr{E}_{46}) - 1 \geq 1 - (c_4+c_{46}) \gamma.
\end{multline*}
Thus, we just have to focus on the proof of the existence of $c_{46}$. So, we recall that by definition, if $\kb = (k_1,\dots, k_{\length \kb})$ and $k_\alpha = (\delta_\alpha,a_{\alpha})$ then we have with $\sharp \kb = 2m$, 
$$
\widetilde{\Omega}_\kb(\varepsilon^2 I)  =  - \varepsilon^2 \varphi'(0) \sum_{\alpha = 1}^{2m} \delta_\alpha I_{a_{\alpha}}  - \frac{\varepsilon^4}{2}\varphi'(0)^2 \sum_{\substack{b \in\Z \\ b \notin\{ a_1,\dots,a_{2m}\}}} I_b^2  \sum_{\alpha = 1}^{2m }  \frac{\delta_\alpha }{(a_\alpha-b)^2}  .
$$ 
By construction, it is a sum of independent random variable, thus applying Lemma \ref{whaou}, we have the estimate on the complement
$$
\mathbb{P}(\mathscr{E}_{46}^c) \leq \sum_{\substack{\kb \in \irr \\ \length \kb \leq 2r}}  2 \gamma_{\kb,\varepsilon} \min \left(  \min_{\alpha =1,\dots,2m} \Norm{F_\alpha}{L^\infty} ,\inf_{b\in \mathbb{Z}} \Norm{G_b}{L^\infty}  \right),
$$
where $F_\alpha$ is the probability density function of the part depending on $I_\alpha$ in $\widetilde{\Omega}_\kb(\varepsilon^2 I) $ and $G_b$ the probability density function of the part depending on the variable $I_b^2$, and where
$$
\gamma_{\kb,\varepsilon} = \gamma \left( \prod_{\alpha=1}^{\length\kb } \langle k_\alpha \rangle^{-6} \right)   \max \left(  \varepsilon^2  \langle \mu_{\min}(\kb)\rangle^{-2s} ,\varepsilon^4 \right) .
$$
By using \eqref{gloubi}, we have that 
$$
\Norm{F_\alpha}{L^\infty} \leq  \frac{\Norm{f_{I_\alpha}}{L^\infty}}{|\varepsilon^2 \varphi'(0) {\rm Card} \{ \beta \hspace{5pt}| \hspace{5pt} a_{\beta}=a_{\alpha} \}|} \leq \frac{\langle a_\alpha \rangle^{2s + 4}}{|\varepsilon^2 \varphi'(0) | }  
$$
and thus 
$$\min_{\alpha =1,\dots,2m} \Norm{F_\alpha}{L^\infty} \leq  \frac{\langle \mu_{\min}(\kb)\rangle^{2s + 4}}{|\varepsilon^2 \varphi'(0) | } .$$
Similarly  using Lemma \ref{proba_need_algebra}, we obtain 
\begin{multline*}
\inf_{b \in \Z}
\Norm{G_b }{L^\infty} \leq \min_{b \in \rrbracket- 3m,  3m  \llbracket   \setminus \{  a_1,\dots,a_{2m}  \}}\frac{\Norm{f_{I_b^2}}{L^\infty}}{\left|\frac{\varepsilon^4}{2}\varphi'(0)^2  \displaystyle \sum_{\alpha = 1}^{2m }  \frac{\delta_\alpha }{(a_\alpha-b)^2}  \right| }\\ 
\leq \frac{4 \langle 3m \rangle^{2s + 4} (6m)^{4m} \displaystyle }{ \varepsilon^4\varphi'(0)^2  }\prod_{\alpha=1}^{2m} \langle a_{\alpha} \rangle^{2} \leq \frac{4 \langle 3r \rangle^{2s + 4} (6r)^{4r} }{ \varepsilon^4\varphi'(0)^2  }\prod_{\alpha=1}^{\length \kb} \langle k_{\alpha} \rangle^{2}. 
\end{multline*}
Hence there exists a constant $c_{r,s}$ depending on $r,s$ such that 
\begin{multline*}
\mathbb{P}(\mathscr{E}_{46}^c) \leq \sum_{\substack{\kb \in \irr \\ \length \kb \leq 2r}}  2 \gamma_{\kb,\varepsilon}  \min \left( \frac{\langle \mu_{\min}(\kb)\rangle^{2s + 4}}{|\varepsilon^2 \varphi'(0) | },\frac{c_{r,s} }{ \varepsilon^4\varphi'(0)^2  }\prod_{\alpha=1}^{\length \kb} \langle k_{\alpha} \rangle^{2}\right) \\
\leq \gamma c  \sum_{\substack{\kb \in \irr \\ \length \kb \leq 2r}}  \left( \prod_{\alpha=1}^{\length\kb } \langle k_\alpha \rangle^{-2} \right) \leq c_{46} \gamma, 
\end{multline*}
for some constants $c$ and $c_{46}$, and this shows the result. 
\hfill $\square$

\section{A class of rational Hamiltonians}\label{rational}

The set of Hamiltonian functions constructed in the normal form process arise from solving homological equation associated with small denominators $\omega_\kb(I)$ and $\Omega_{\kb}(I)$ (see \eqref{ravel}). 
 Then a natural class of Hamiltonians should be
\begin{equation}
\label{aznavour}
F(z) = \sum_{\jb \in \mathcal{R}}  f_\jb(I) z_\jb 
\end{equation}
for some functions $f_{\jb}$ which are inverse of products of small denominators $\omega_{\kb}(I)$ and $\Omega_{\kb}(I)$ associated with multi-indices depending on the construction process. Note that we sum over $\jb\in\Rc$ since the non resonant part will be killed beforehand by a standard resonant normal form procedure involving polynomial Hamiltonians (see section \ref{killbill0}).     Each  term of  \eqref{aznavour}  will be controlled by the non resonance conditions \eqref{nonres1} and \eqref{nonres2}, provided we can compensate the loss of derivative arising in the small denominator by terms in the numerator $z_\jb$. For a given $\jb$, several terms can appear in $f_{\jb}$ that are associated with different small denominators. To take into account the specificity of each term arising in the normal form process described in section \ref{RNF} we will introduce four sub-classes of rational Hamiltonians. \\
Notice that in the case of \eqref{nlsp}, the situation is simpler and only two sub-classes are needed (see Appendix \ref{casNLSP}).

\subsection{Construction of the class}
First we introduce the following set of indices, encoding the structure of the possible terms $f_{\jb}(I)$ arising in \eqref{aznavour}. 

For $r\in \mathbb{N}$, let $\Hscr_{r}$ be a set of multi-indices valued functions 
\begin{equation}
\label{pogorelich} \displaystyle (\pig,\kbsf,\hbsf,n):\Z^*\to \mathcal{R}  \times  \mathop{\bigsqcup}\limits_{p\in \mathbb{N}} {\Ic}^p  \times \mathop{\bigsqcup}\limits_{q\in \mathbb{N}} {\Ic}^q \times \mathbb{Z}^*.  
\end{equation}
For a given $\Gamma = (\pig,\kbsf,\hbsf,n)\in \Hscr_{r}$ and $\ell \in \Z^*$, we associated $\pi_\ell \in \mathcal{R}_{m_\ell}$ for some $m_\ell \in \N$, $\kbsf_\ell = (\kb_{\ell,1},\ldots,\kb_{\ell,p_\ell})$ and $\hbsf_\ell = (\hb_{\ell,1},\ldots,\hb_{\ell,q_\ell})$ for some $p_\ell$ and $q_\ell$ in $\N$.
For some given set of coefficients $c = (c_\ell)_{\ell \in \Z^*} \in \C^{\Z^*}$ we will define the Hamiltonian function
\begin{equation}
\label{Q}
 Q_\Gamma[c](z)= \sum_{ \ell \in \mathbb{Z^*} } c_\ell (-i)^{p_\ell + q_\ell} \frac{z_{\pig_\ell}}{\displaystyle \prod_{\alpha =1}^{n_{\ell}} \omega_{\kb_{\ell,\alpha}}  \prod_{\alpha =n_{\ell}+1}^{p_{\ell}} \Omega_{\kb_{\ell,\alpha}} \prod_{\alpha =1}^{q_{\ell}} \Omega_{
 \hb_{\ell,\alpha}}} .
 \end{equation}
 Note that such Hamiltonian can be recast under the form \eqref{aznavour} by setting 
\begin{equation}
\label{fj}
f_\jb(I) =   \sum_{ \ell \in \pig^{-1}(\jb)  } c_\ell (-i)^{p_\ell + q_\ell} \frac{1}{\displaystyle \prod_{\alpha =1}^{n_{\ell}} \omega_{\kb_{\ell,\alpha}}  \prod_{\alpha =n_{\ell}+1}^{p_{\ell}} \Omega_{\kb_{\ell,\alpha}} \prod_{\alpha =1}^{q_{\ell}} \Omega_{
 \hb_{\ell,\alpha}}}. 
\end{equation}

Roughly speaking, the structure of the class can be explained as follows: Each time a homological equation for $Z_4$ or $Z_4 + Z_6$ is solved, the term $f_{\jb}(I)$ is divided by $\omega_\jb$ or $\Omega_{\jb}$ so the functionals are naturally under the previous form. The fact that we decompose into two parts the contribution of $\Omega_{\jb}$ in the denominator is explained by the  $2r$ order condition {\bf (ii)} just below.

To ensure that the Hamiltonians $Q_{\Gamma}[c]$ are well defined and that their vectorfield can be controlled in $\ell_s^1$, we impose several restrictions on the set $\Hscr_{r}$:  $\Gamma = (\pig,\kbsf,\hbsf,n)\in \Hscr_{r}$ if  the following conditions are satisfied 

\medskip

\noindent {\bf (i)  \ul{Reality}}\label{i}. 
The functional is real, {\em i.e.}  $Q_{\Gamma}[c](z) \in \R$ for real $z$. This condition is satisfied by imposing for all $\ell \in \Z^*$, 
$$
\pig_{-\ell} = \overline{\pig_{\ell}},\quad
\overline{\kbsf_{\ell}} = \kbsf_{-\ell},\quad  n_{-\ell} = n_{\ell}, \quad  \mbox{and}\quad\overline{\hbsf_{\ell}} = \hbsf_{-\ell}
$$
after noticing that $\omega_{\overline{\jb}} = - \omega_{\jb}$ and $\Omega_{\overline{\jb}} = - \Omega_{\jb}$. Note that this implies that $p_\ell,q_\ell$ and $m_\ell$ are even functions of $\ell$.

\noindent {\bf (ii)  \ul{ $2r$ order}}. 
The  link between the terms of the class and the order $2r$ is given by the relation
\begin{equation}
\label{HP_order}
\forall \ell \in \Z^*,\quad  \ r=m_{\ell} - p_{\ell} -2q_{\ell}.
\end{equation}
This definition corresponds to the fact that while the numerator $z_{\pi_\ell}$ is of order $\varepsilon^{2m_\ell}$ and the homogeneity of the non resonance condition of $\omega_{\kb_{\ell,\alpha}}$ is $\varepsilon^2$, the homogeneity of the small denominator $\Omega_{\kb}$ can be of order $\varepsilon^2$ or $\varepsilon^4$ depending on the non resonance condition we use in \eqref{nonres2N}. The previous notation then specifies that $\Omega_{\kb_{\ell,\alpha}}$ will be controlled by the non resonance condition homogeneous to $\varepsilon^2$ while the others, $\Omega_{\hb_{\ell,\alpha}}$, will be controlled by the non resonance condition homogeneous to $\varepsilon^4$. 

\medskip
\noindent {\bf (iii)  \ul{Consistency}}. We assume that 
$ \forall \ell \in \Z^*, \ 0\leq n_{\ell} \leq p_{\ell}$. 

\medskip
\noindent {\bf (iv) \ul{Finite numerator and denominator degrees}}. We assume that 
\begin{equation}
\label{je_sers_a_rien} 
\sup_{\ell \in \N} m_\ell <\infty,
\end{equation}
and
\begin{equation}
\label{aouaismerde}
\forall\, \ell\in \Z^*, \quad \forall\,  \alpha, \quad \sharp \kb_{\ell,\alpha} \leq m_\ell \quad  \mbox{and}\quad\sharp \hb_{\ell,\alpha} \leq m_\ell. 
\end{equation}

\medskip
\noindent {\bf (v) \ul{Finite multiplicity}}. We assume that 
\label{HP_findeg}
$\sup_{j\in \mathcal{R}} \mathrm{Card}\,  \pig^{-1}(j) <\infty$. 
This condition ensures that the number of terms defining $f_j(I)$ in \eqref{fj} is finite.

\medskip
\noindent {\bf (vi) \ul{Distribution of the derivatives}}. 
There exists a positive constant $C>0$ such that for all $\ell\in \Z^*$, there exists $\iota : \llbracket 1,2p_\ell \rrbracket \to \llbracket 3,2m_\ell \rrbracket$, an injective function satisfying
\begin{equation}\label{crux}
 \left\{ \begin{array}{lll} \displaystyle \max_{1\leq \alpha \leq p_{\ell}}  \frac{\langle\mu_{\min}( \kb_{\ell,\alpha}) \rangle}{\langle\mu_{\iota_{2\alpha-1}}( \pig_\ell) \rangle}\leq C \quad\mbox{and}\\  \displaystyle \max_{1\leq \alpha \leq p_{\ell}} \frac{\langle\mu_{\min}( \kb_{\ell,\alpha}) \rangle}{\langle\mu_{\iota_{2\alpha}}( \pig_\ell) \rangle} \leq C. \end{array}\right. 
\end{equation}
This condition ensures that terms of the form $\langle\mu_{\min}( \kb_{\ell,\alpha}) \rangle^{-2s}$ arising in the denominators when using \eqref{nonres1} or \eqref{nonres2}  can be compensated by modes in the numerators $z_{\pig_\ell}$ smaller than the third largest.  In other words, the first and second largest indices in $\pig_\ell$, $\mu_1(\pig_\ell)$ and $\mu_2(\pig_\ell)$ will be free and not required to control the small denominators $\omega_{\kb_{\ell,\alpha}}$ and $\Omega_{\kb_{\ell,\beta}}$. This will ensure a global control of the vector field associated with $Q_\Gamma[c]$ after truncation independently of $s$. 

\medskip
\noindent {\bf (vii) \ul{Global control of the structure}}.
The following condition ensures that the structure has a kind of memory of the zero-momentum condition. 
\begin{equation}
\label{HP_old_momo}
\sup_{\ell \in \Z^*} \max_{1\leq \alpha \leq q_{\ell}} \frac{\langle\mu_{\min}( \hb_{\ell,\alpha}) \rangle}{\langle\mu_{2}( \pig_\ell) \rangle}<\infty.
\end{equation}

For $\Gamma = (\pig,\kbsf,\hbsf,n)\in \Hscr_{r}$ and 
 $c:\Z^* \to \mathbb{C}$, we define the weight of $c$ relatively to $\Gamma$ by
\begin{equation}
\label{weight} 
\Nc_{\Gamma}(c) = \sup_{\substack{\ell \in \Z^*\\ c_\ell \neq 0}} \max_{\substack{1\leq \alpha \leq p_{\ell}\\ 1\leq \beta \leq q_{\ell}} }  \big(\langle \mu_1(\irr(\pig_{\ell})\rangle,  \langle\mu_1(\kb_{\ell,\alpha}\rangle,\langle\mu_1(\hb_{\ell,\beta}\rangle\big).
\end{equation}

\medskip 
Then we introduce the space
\[ \ell^\infty_{\Gamma}(\mathbb{Z}^*) = \{ c\in \ell^\infty(\mathbb{Z}^*)\ | \  \quad \forall\, \ell \in \Z^*, \quad c_{-\ell} = \overline{c_\ell}\quad \mbox{and} \quad   \Nc_\Gamma(c) <\infty \} .\]
 Note that the Hamiltonian defined by \eqref{Q}  is clearly an analytic function on an  open subset of $\ell^1_s$ avoiding the zeros of the denominators. Further we note that $\Nc_\Gamma(c)$ is the maximal size of indices we have  at the denominator and thus the control that we will have on this denominator when $z$ belongs to the non resonant set (see Definition \ref{defnonres}) will only  depend on $\Nc_\Gamma(c)$. 


In order to stick as closely as possible to the rational Hamiltonians we are going to build in the next sections, we  introduce four subclasses of $\Hscr_r$ denoted by $\Hscr_{r,\omega},\Hscr_{r,\Omega},\Hscr_{r,\omega}^*$ and $\Hscr_{r,\Omega}^*$ respectively. This technical refinement, not really indispensable, will allow us to control $m_\ell$ (see Remark \ref{controlml}) which in turn will allow us to obtain better constants in our Theorems\footnote{Without tracking the form of our rational normal forms we will obtain in the right hand side of \eqref{probaNLS} $1-\eps^\nu$ for some constant  $\nu$ depending on $r$ and $s$,  instead of $1-\eps^{1/3}$. }. We first give the four definitions and then comment on them.
\begin{itemize}
\item $\Gamma = (\pig,\kbsf,\hbsf,n)\in \Hscr_{r}$ belongs to $\Hscr_{r,\omega}$ if 
$$
\forall \ell \in \mathbb{Z}^*, \ n_\ell = p_\ell \textrm{ and } q_\ell = 0 \textrm{ and } n_\ell \leq 2r - 6.
$$
\item $\Gamma = (\pig,\kbsf,\hbsf,n)\in \Hscr_{r}$ belongs to $\Hscr_{r,\omega}^*$ if
$$
\forall \ell \in \mathbb{Z}^*, \ n_\ell = p_\ell \textrm{ and } q_\ell = 0 \textrm{ and } n_\ell \leq 2(r+1) - 5.
$$
\item $\Gamma = (\pig,\kbsf,\hbsf,n)\in \Hscr_{r}$ belongs to $\Hscr_{r,\Omega}$ if 
\begin{equation}
\label{grizzli}
n_\ell = \alpha_1 + \alpha_2  \textrm{ and }  p_\ell = n_\ell + \alpha_3 \textrm{ and } q_\ell = \alpha_4+\alpha_5
\end{equation}
where $\alpha\in (\N)^5$ satisfies
\begin{equation}
\label{ilpleut}
 \alpha_1 \leq 2r-6 \textrm{ and } \alpha_2+\alpha_3+\alpha_4\leq \alpha_5 \textrm{ and } \alpha_5 \leq r-4.
\end{equation}
\item $\Gamma = (\pig,\kbsf,\hbsf,n)\in \Hscr_{r}$ belongs to $\Hscr_{r,\Omega}^*$ if 
$$
n_\ell = \alpha_1 + \alpha_2  \textrm{ and }  p_\ell = n_\ell + \alpha_3 \textrm{ and } q_\ell = \alpha_4+\alpha_5+1
$$
where $\alpha\in (\N)^5$ satisfies
\begin{equation}
\label{beaucoup}
 \alpha_1 \leq 2(r+2)-6 \textrm{ and } \alpha_2+\alpha_3+\alpha_4\leq \alpha_5 \textrm{ and } \alpha_5 \leq (r+2)-4.
\end{equation}
\end{itemize}
Some comments to clarify the meaning of these definitions: 
\begin{itemize}
\item  $\Hscr_{r,\omega}$ and  $\Hscr_{r,\Omega}$ will be used to describe the Hamiltonians arising in our normal forms. $\Hscr_{r,\omega}^*$ and $ \Hscr_{r,\Omega}^*$ will be used to describe the Hamiltonians obtained after solving a Homological equation (see Lemmas \ref{lemmehomo} and \ref{rapido}), and thus,  that govern our canonical changes of variables.
\item  $\Hscr_{r,\omega} \subset \Hscr_{r,\Omega}$  by taking $\alpha_i = 0$ for $i = 2,\ldots,5$. Nevertheless we prefer to introduce the class $\Hscr_{r,\omega}$ since it plays a special role in our construction. Actually in  our second step of normal form (see section \ref{killbill1}) we only use the class $\Hscr_{r,\omega}$ while in the third step of normal form (see section \ref{killbill2}) we only use the class $\Hscr_{r,\Omega}$.
\item
the $\alpha_i$ give some informations about the history that generated the term $\Gamma$: $\alpha_1$ counts the number of homological equations we solved with $Z_4$ in the second normal form process (section \ref{killbill1}); $\alpha_2$ increases when in a Poisson bracket, some $\omega_{\kb}(I)$ is involved (see \eqref{dnn}) in the third normal form process (section \ref{killbill2}); $\alpha_5$ control the number of homological equations we solved with $Z_4+Z_6$ in the third normal form process (section \ref{killbill2}); 
$\alpha_3$ increases when in a Poisson bracket, some $\Omega_{\kb}(I)$ is involved and we apply the derivative on the part of $\Omega_{\kb}(I)$ that comes from $Z_4$; $\alpha_4$ increases when in a Poisson bracket, some $\Omega_{\kb}(I)$ is involved and we apply the derivative on the part of $\Omega_{\kb}(I)$ that comes from $Z_6$.
\item the precise numerology is dictated by the experience of calculating the first terms and by the need for the overall structure to be stable by Poisson bracket (see  Lemma \ref{cap} which underlies the whole construction).
\end{itemize}
We eventually define the set of functionals associated with a structure in $\Hscr_{r}$, 
$$
\Fscr_{r} := \{F = Q_{\Gamma}[c],\, \ \, \Gamma \in \Hscr_{r}, \quad c \in  \ell^\infty_{\Gamma}(\mathbb{Z^*})   \}. 
$$
Then, we define naturally its subsets $\Fscr_{r,\omega},\Fscr_{r,\Omega},\Fscr_{r,\omega}^*$ and $\Fscr_{r,\Omega}^*$.
\begin{remark}
\label{loureed}
Note that all polynomials of the form \eqref{poly} can be written under the form $Q_\Gamma[c]$ for some $\Gamma = (\pig,\kbsf,\hbsf,n)$ with $\kbsf = \hbsf = \emptyset$ and the convention $n = 0$. More precisely, if $P$ is a polynomial of order $2m$, then it can be written under the previous form, with $\Gamma \in \Hscr_{m,\omega}\subset \Hscr_{m,\Omega}$. 
\end{remark}
\begin{remark}\label{controlml}
The uniform bound on the numerator in condition \eqref{je_sers_a_rien} can be specified on the subclasses. More precisely, using \eqref{HP_order} we deduce that if  $\Gamma = (\pig,\kbsf,\hbsf,n)$ belongs to $\Hscr_{r,\Omega}$ or $\Hscr_{r-2,\Omega}^*$ and $r\geq 4$ then 
\begin{equation}
\label{johncale}
m_\ell \leq 7r-22.
\end{equation}
Similarly, if $\Gamma$ belongs to $\Hscr_{r,\omega}$ or $\Hscr_{r-1,\omega}^*$ and $r\geq 3$ then 
\begin{equation}
\label{nico}
m_\ell \leq  3r-6.
\end{equation}

\end{remark}

\subsection{Structural lemmas}
In this section we verify that our class allows to define flows and that this class is stable by resolution of homological equations and by Poisson bracket.
\subsubsection{Control of the vector fields}

First, we have to verify that  the vector field associated with Hamiltonian belonging to the class  defined above are under control in $\ell^1_s$ in such way it defines a regular flow. In other words we would like to prove that such Hamiltonian are regular in the sense of Definition \ref{def:2.1}. Actually, we will control the vector field of Hamiltonian of the form $Q_{\Gamma}[c]$ for which $\Nc_\Gamma(c) \leq N^2$ for a given $N$, a property that is stable by Poisson bracket and solution of homological equation according to Lemmas \ref{lemmehomo} and \ref{cap}. 

\begin{lemma}\label{XQG} Let $r \geq 2$, $\alpha_r = 24r$, and $s$ be given. For all $\Gamma\in \Hscr_{r,\omega}, \Hscr_{r,\omega}^*, \Hscr_{r,\Omega}$ or $\Hscr_{r,\Omega}^*$ there exists a constant $C>0$ such that for all $\varepsilon,\gamma < 1$, all $c\in \ell^\infty_\Gamma(\mathbb{Z^*})$ and all $N \geq 1$ such that $\Nc_\Gamma(c) \leq N^2$, then  $Q_\Gamma[c] \in \mathcal{C}^\infty(\mathcal{U}_{\gamma,\varepsilon,r,s}^N)$ is a regular Hamiltonian in the sense of Definition \ref{def:2.1}
 and for all $z \in B_s(0,4 \varepsilon)  \cap \mathcal{U}_{\gamma,\varepsilon,r,s}^N$
\begin{equation}
\label{prokofiev}
\Norm{X_{Q_\Gamma[c]}(z)}{s} \leq C \varepsilon^{2r - 1}\Norm{c}{\ell^\infty}  \left(\frac{ N^{\alpha_r}}{\gamma}\right)^{\beta_r} .  
\end{equation}
with 
$$
\beta_r = \left\{
\begin{array}{ll}
 2r - 5 &\mbox{for} \quad \Gamma \in \Hscr_{r,\omega}, \\[2ex]
 2r -2 &\mbox{for} \quad \Gamma \in \Hscr_{r,\omega}^*, \\
\end{array}
\right. 
\quad\mbox{and}\quad 
\beta_r = \left\{
\begin{array}{ll}
 4r - 13 &\mbox{for} \quad \Gamma \in \Hscr_{r,\Omega},\\[2ex]
 4r - 5 &\mbox{for} \quad \Gamma \in \Hscr_{r,\Omega}^*. 
\end{array}
\right. 
$$
\end{lemma}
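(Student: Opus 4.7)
My plan is to estimate $\|X_{Q_\Gamma[c]}(z)\|_s$ term by term in the sum defining $Q_\Gamma[c]$, exploiting at each step the non-resonance bounds \eqref{nonres1N}--\eqref{nonres2N} together with the structural conditions (i)--(vii).

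First, because $\mathcal{N}_\Gamma(c)\leq N^2$, for every $\ell$ with $c_\ell\neq 0$ the irreducible multi-indices $\kb_{\ell,\alpha}$, $\hb_{\ell,\beta}$ and $\irr(\pig_\ell)$ all satisfy $\langle\mu_1\rangle\leq N^2$, and Remark \ref{controlml} gives $\sharp \kb_{\ell,\alpha},\sharp \hb_{\ell,\beta}\leq m_\ell\leq 7r$, so the truncated non-resonance conditions apply on $\mathcal{U}^N_{\gamma,\varepsilon,r,s}$. One gets $|\omega_{\kb_{\ell,\alpha}}|^{-1}$ and $|\Omega_{\kb_{\ell,\alpha}}|^{-1}$ bounded by $\gamma^{-1}\varepsilon^{-2}N^{\alpha_r}\langle\mu_{\min}(\kb_{\ell,\alpha})\rangle^{2s}$ (using the first argument of the $\max$ in \eqref{nonres2N} for $\Omega_\kb$), and $|\Omega_{\hb_{\ell,\beta}}|^{-1}\leq \gamma^{-1}\varepsilon^{-4}N^{\alpha_r}$ (using the second argument, which is why condition (vii) only needs to control $\langle\mu_{\min}(\hb)\rangle$ in a weaker sense). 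Multiplying gives a bound on $|D_\ell|^{-1}$ carrying a factor $\varepsilon^{-2p_\ell-4q_\ell}(\gamma^{-1}N^{\alpha_r})^{p_\ell+q_\ell}$ and the derivative loss $\prod_\alpha\langle\mu_{\min}(\kb_{\ell,\alpha})\rangle^{2s}$.

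Next, I absorb this derivative loss into modes of the numerator $z_{\pig_\ell}$ via the injection $\iota$ of condition (vi). Since $\iota(\llbracket 1,2p_\ell\rrbracket)\subset \llbracket 3,2m_\ell\rrbracket$, each factor $\langle\mu_{\min}(\kb_{\ell,\alpha})\rangle^{2s}$ pairs with two distinct modes $z_{\pi_{\ell,\iota(2\alpha-1)}},z_{\pi_{\ell,\iota(2\alpha)}}$ chosen strictly below the top two; using $\langle\mu_k(\pig_\ell)\rangle^s|z_{\pi_{\ell,k}}|\leq \|z\|_s$ yields $\langle\mu_{\min}(\kb_{\ell,\alpha})\rangle^{2s}|z_{\pi_{\ell,\iota(2\alpha-1)}}z_{\pi_{\ell,\iota(2\alpha)}}|\leq C^{2s}\|z\|_s^2$, so globally $|z_{\pig_\ell}|\prod_\alpha\langle\mu_{\min}\rangle^{2s}\leq C^{2p_\ell s}\|z\|_s^{2m_\ell}$ while keeping the top two positions of $\pig_\ell$ free. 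I then split $\partial_j T_\ell = c_\ell[\partial_j z_{\pig_\ell}/D_\ell - z_{\pig_\ell}\partial_j D_\ell/D_\ell^2]$. The main term, where the $\partial_j$ lands on $z_{\pig_\ell}$, is bounded by $C|c_\ell|\|z\|_s^{2m_\ell-1}|D_\ell|^{-1}$ in the vein of Proposition \ref{polyflow} (the two free top modes of $\pig_\ell$ provide the room needed for the $\langle j\rangle^s$ weight of the $\ell^1_s$-norm); inserting $\|z\|_s\leq 4\varepsilon$ and the $2r$-order condition $m_\ell-p_\ell-2q_\ell=r$ gives $C|c_\ell|\varepsilon^{2r-1}(\gamma^{-1}N^{\alpha_r})^{p_\ell+q_\ell}$. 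For the denominator-derivative term, $\partial_j\omega_\kb$ and $\partial_j\Omega_\kb$ are controlled by \eqref{pam}--\eqref{poum} (with condition (vii) providing the $\mu_2$-bound needed for the $Z_6$ contribution), producing a factor $|z_{\bar j}|$; summing $\sum_j\langle j\rangle^s|z_{\bar j}|\leq \|z\|_s$ costs one $\varepsilon$, paid for by the extra $1/|F|$ and yielding at most one additional factor $\gamma^{-1}N^{\alpha_r}$.

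The remaining arithmetic reduces to the maximum value of $p_\ell+q_\ell$ (plus the derivative-term $+1$ when the max is not already saturated by the class constraints) over each subclass. Direct inspection gives: $p_\ell+q_\ell=n_\ell\leq 2r-6$ for $\Hscr_{r,\omega}$, hence $\beta_r=2r-5$; $p_\ell+q_\ell\leq 2r-3$ for $\Hscr_{r,\omega}^*$, hence $\beta_r=2r-2$; $p_\ell+q_\ell\leq \alpha_1+2\alpha_5\leq(2r-6)+2(r-4)=4r-14$ for $\Hscr_{r,\Omega}$, hence $\beta_r=4r-13$; and $p_\ell+q_\ell\leq(2r-2)+2(r-2)+1=4r-5$ for $\Hscr_{r,\Omega}^*$, which already matches $\beta_r$ without an extra $+1$. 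Summation over $\ell$ converges thanks to the finite-multiplicity condition (v) together with the double zero-momentum in $\Rc$ (which determines two components of each $\pig_\ell$ from the others), producing a constant $C$ absorbing the combinatorial factors.

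The main obstacle is the bookkeeping in Step 2: one must simultaneously distribute the $p_\ell$ weights $\langle\mu_{\min}(\kb_{\ell,\alpha})\rangle^{2s}$ across $2p_\ell$ non-top modes of $\pig_\ell$, track which mode is consumed by $\partial_j$ (keeping the $\langle j\rangle^s$ summable in the $\ell^1_s$-norm), and handle the additional $z_{\bar j}$ produced when $\partial_j$ falls on a denominator factor—all without ever using the top two modes of $\pig_\ell$, whose freedom is exactly what makes the final bound independent of $s$.
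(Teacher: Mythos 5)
Your proposal is correct and follows essentially the same route as the paper's proof: the same splitting of $X_{Q_\Gamma[c]}$ according to whether the derivative hits the numerator $z_{\pig_\ell}$ or the denominator, the same use of \eqref{nonres1N}--\eqref{nonres2N} (with the $\varepsilon^4$ branch for the $\hb$-factors), the same distribution of the $\langle\mu_{\min}(\kb_{\ell,\alpha})\rangle^{2s}$ losses onto non-top modes via condition (vi) while reserving $\mu_1,\mu_2$ for the $\langle j\rangle^s$ weight and the zero-momentum bound, and the same $p_\ell+q_\ell$ arithmetic for $\beta_r$. The only cosmetic differences are that the paper computes $\partial_{I_j}\omega_\kb=\pm1$ and the $Z_6$-derivative explicitly rather than citing \eqref{pam}--\eqref{poum}, and its bookkeeping of the extra $+1$ in $\beta_r$ is organized slightly differently (with some internal typos), but the substance matches.
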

\begin{proof} 
Let $\rho = \sup_{\ell \in \mathbb{N}} m_\ell$. We have seen in \eqref{johncale} and \eqref{nico} that this quantity is bounded by $7r$. 
The functional $Q_{\Gamma}[c]$ can be written under the form 
$$
Q_{\Gamma}[c](z) =  \sum_{m = 1}^\rho \sum_{\jb \in \mathcal{R}_m} f_{\jb,m}(I) z_\jb, 
$$
where the coefficients $f_{\jb,m}(I)$, which depend on $\Gamma$, are given by \eqref{fj}. \\
Let $j_0 \in \U_2 \times \Z$ be fixed, the component of the vector field $(X_{Q_\Gamma[c]})_{j_0}(z)$ is given by 
\begin{equation}
\label{finger}
(X_{Q_\Gamma[c]})_{j_0} (z)= \frac{\partial}{\partial z_{\overline{j_0}}} Q_\Gamma[c] (z) =  \sum_{m = 1}^\rho \sum_{\jb \in \mathcal{R}_m} 
f_{\jb,m}(I)\frac{\partial}{\partial z_{\overline{j_0}}} (z_{\jb})  + z_\jb \frac{\partial}{\partial z_{\overline{j_0}}} (f_{\jb,m}(I))
\end{equation}
Let us examine the contributions coming from the first type of terms in the right-hand side. 

Let $\ell \in \pig^{-1}(\jb)$ with $m_\ell = m$ be given, and $p_\ell$, $q_\ell$ and $n_\ell$ the integers associated with one term in the decomposition \eqref{fj}. 
To control the denominators, as $z \in \mathcal{U}_{\gamma,\varepsilon,r,s}^N$ we will use the estimates \eqref{nonres1N} and \eqref{nonres2N}. More precisely, as $\sharp \kb_{\ell,\alpha} \leq m_\alpha \leq 7r$ (see \eqref{aouaismerde}), we have  
$$
|\omega_{\kb_{\ell,\alpha}}(I)|> \gamma  \varepsilon^2 N^{-\alpha_r} \langle \mu_{\min}(\kb_{\ell,\alpha})\rangle^{-2s}
$$
by definition of the weight and using the fact that $\langle \mu_1(\kb_{\ell,\alpha}) \rangle \leq \Nc_\Gamma(c) \leq N^2$. Similarly, we will use 
$$
|\Omega_{\kb_{\ell,\alpha}}(I)|> \gamma  \varepsilon^2 N^{-\alpha_r} \langle\mu_{\min}(\kb_{\ell,\alpha})\rangle^{-2s}
$$
and 
$$
|\Omega_{\hb_{\ell,\alpha}}(I)|> \gamma  \varepsilon^4 N^{-\alpha_r} . 
$$
After using these bounds, we can conclude that for $z \in \mathcal{U}_{\gamma,\varepsilon,r,s}^N$, there exists $C$ depending only on $r$ and $s$ such that 
\begin{multline*}
|f_{\jb,m}(I)| \leq C \Norm{c}{\ell^\infty}\sum_{\substack{\ell \in \pig^{-1}(\jb) \\m_\ell = m}}\frac{N^{\alpha_r(p_\ell+   q_\ell) }}{\gamma^{p_\ell + q_\ell} \varepsilon^{2p_\ell + 4q_\ell} } \prod_{\alpha = 1}^{p_\ell} \langle\mu_{\min}(\kb_{\ell,\alpha})\rangle^{2s} \\
\leq C \Norm{c}{\ell^\infty} \left( \frac{N^{\alpha_r}}{\gamma}\right)^{b_r} \varepsilon^{2r - 2 m}\prod_{\alpha = 1}^{p_\ell} \langle\mu_{\min}(\kb_{\ell,\alpha})\rangle^{2s}
\end{multline*}
where we verify that $b_r = 2r - 6$ for $\Gamma \in \Hscr_{r,\omega}$, 
and $b_r = 2r - 3$ for  $\Hscr_{r,\omega}^*$, $b_r = 2r - 14$ for $\Gamma \in \Hscr_{r,\Omega}$ and $b_r = 4r - 6$ for $\Gamma \in \Hscr_{r,\Omega}^*$. 
Indeed, we used that in all those cases we always have $p_\ell + 2 q_\ell = m_\ell - r$  by using \eqref{HP_order}. In the other hand if $\Gamma \in \Hscr_{r,\omega}$ or $\Hscr_{r,\omega}^*$, we have $q_\ell = 0$ and $p_\ell \leq 2r - 6$ or $2r - 3$ for $\Hscr_{r,\omega}$ and $\Hscr_{r,\omega}^*$ respectively. Hence the value of $b_r$ in these both cases.
Now if $\Gamma \in \Hscr_{r,\Omega}$, we have with the notations \eqref{ilpleut}-\eqref{beaucoup}, 
$p_\ell + q_\ell \leq \alpha_1 + \alpha_2 + \alpha_3 + \alpha_4 + \alpha_5 \leq \alpha_1 + 2 \alpha_5$, infering the value of $b_r$. The case or $\Gamma \in\Hscr_{r,\Omega}^*$ is treated similarly. \\
Up to a combinatorial factor, we can assume that $j_1 = \overline{j_0}$, and hence $\partial_{z_{\overline{j_0}}} (z_{\jb}) = z_{j_2} \cdots z_{j_{2m}}$, and moreover we can also assume that $j_2$ is the largest index amongst $(j_2,\ldots,j_{2m})$. Hence $j_2= \mu_1(\jb)$ or $j_2 = \mu_2(\jb)$ depending if $j_1 = \mu_1(\jb)$ or not.  Furthermore using our Hypothesis (vi) on the repartition of the derivatives (see \eqref{crux}) we have
\begin{equation}\label{mumu}\prod_{\alpha = 1}^{p_\ell} \langle\mu_{\min}(\kb_{\ell,\alpha})\rangle^{2s}\leq \prod_{\alpha = 3}^{m} \langle\mu_{\alpha}(\jb)\rangle^{s}.\end{equation}
With these choices and this estimate, we  get
\begin{multline*}
\sum_{j_0 \in \U_2 \times \Z}\langle j_0 \rangle^{s} \left|\sum_{\jb \in \mathcal{R}_m} 
f_{\jb,m}(I)\frac{\partial}{\partial z_{\overline{j_0}}} (z_{\jb})\right| 
\\
\leq 
C  \Norm{c}{\ell^\infty} \left( \frac{N^{\alpha_r}}{\gamma}\right)^{b_r} \varepsilon^{2r - 2 m} \sum_{\jb = (j_1,\ldots j_{2m}) \in \Rc_m}  \prod_{\alpha = 3}^{m} \langle\mu_{\alpha}(\jb)\rangle^{s} \langle j_1\rangle^{s} |z_{j_2} \cdots z_{j_{2m}}|
\\
\leq 
C  \Norm{c}{\ell^\infty} \left( \frac{N^{\alpha_r}}{\gamma}\right)^{b_r} \varepsilon^{2r - 2 m}  \sum_{\jb = (j_1,\ldots j_{2m}) \in \Rc_m}  \langle j_1\rangle^{s}  |z_{j_2}| |v_{j_3}| \cdots |v_{j_{2m}}|
\end{multline*}
where $v_k = \langle k \rangle z_k$ is in $\ell^1$ and of norm smaller than $\varepsilon$ by assumption. 
Since $\jb\in\Rc_m$ it satisfies the zero-momentum condition and thus $\langle j_1\rangle \leq (2m-1) \langle j_2\rangle$ . Hence the last sum is bounded by 
$$
\sum_{(j_2,\ldots j_{2m})} |v_{j_2}| |v_{j_3}| \cdots |v_{j_{2m}}| \leq C \varepsilon^{2m - 1}. 
$$
By summing with respect to $m$, we get that the first contribution of the right-hand side of \eqref{finger} for the estimate of 
$\Norm{X_{Q_\Gamma[c]} (z) }{s} = \sum_{j} \langle j \rangle^s |(X_{Q_\Gamma[c]})_j(z)|$ satisfies the bound 
\eqref{prokofiev}.

Now we study the second contribution in the equation \eqref{finger}. To this aim, let us write
$$
f_{\jb,m}(I) =   \sum_{ \substack{\ell \in \pig^{-1}(\jb) \\m_\ell = m} } c_\ell f_{\jb,m}^\ell(I)
$$
where $f_{\jb,m}^\ell(I)$ correspond to the decomposition \eqref{fj}. 
In view of the structure of $f_{\jb,m}^\ell(I)$, we have 
\begin{multline}
\label{ataaable}
z_\jb \frac{\partial}{\partial z_{\overline{j_0}}} (f_{\jb,m}^\ell(I)) = \\
- z_\jb z_{j_0} f_{\jb,m}^\ell(I)
\left( \sum_{\alpha = 1}^{n_\ell} 
\frac{\partial_{I_j} \omega_{\kb_{\ell,\alpha}}}{\omega_{\kb_{\ell,\alpha}}}
+ \sum_{\alpha = n_\ell + 1}^{p_\ell} 
\frac{\partial_{I_j} \Omega_{\kb_{\ell,\alpha}}}{\Omega_{\kb_{\ell,\alpha}}}
+ \sum_{\beta = n_1}^{q_\ell} 
\frac{\partial_{I_j} \Omega_{\hb_{\ell,\beta}}}{\Omega_{\hb_{\ell,\beta}}}
\right).  
\end{multline}
Let us assume that $j_1 = \mu_1(\jb)$ and $j_2 = \mu_2(\jb)$. 
We have with the previous notation and using again \eqref{mumu}
$$
\langle j_0 \rangle^s|z_\jb z_{j_0} f_{\jb,m}^\ell(I)| \leq 
C \Norm{c}{\ell^\infty} \left( \frac{N^{\alpha_r}}{\gamma}\right)^{b_r} \varepsilon^{2r - 2 m}
 |z_{j_1}| |z_{j_2}| |v_j| |v_{j_3}| \cdots |v_{j_{2m}}|. 
$$
Now as $\partial_{I_j} \omega_{\kb_{\ell,\alpha}} = \pm 1$, we  have by using \eqref{crux} and the fact that $n_\ell \leq m-r$, 
$$
\left|\frac{\partial_{I_j} \omega_{\kb_{\ell,\alpha}}}{\omega_{\kb_{\ell,\alpha}}} \right| \leq \frac{C}{\gamma \varepsilon^{2}} N^{\alpha_r}
\langle \mu_{\min}(\kb_{\ell,\alpha}) \rangle^{2s} \leq \frac{C}{\gamma \varepsilon^2} N^{\alpha_r}
\langle \mu_{3}(\jb) \rangle^{2s}, 
$$
and the contribution corresponding to this term in the expression
$$
\sum_{j_0} \langle j_0 \rangle^s \left|\sum_{\jb \in \mathcal{R}_m} 
 z_\jb \frac{\partial}{\partial z_{\overline{j_0}}} (f_{\jb,m}(I))\right| \leq 
 C\sum_{j_0}\sum_{\substack{\ell \in \pig^{-1}(\jb)\\ m_\ell = m}}  \langle j_0 \rangle^s \left| z_\jb \frac{\partial}{\partial z_{\overline{j_0}}} (f_{\jb,m}^\ell(I))\right| 
$$
is thus bounded by 
\begin{multline*}
C \Norm{c}{\ell^\infty}\left( \frac{N^{\alpha_r}}{\gamma}\right)^{b_r + 1} \varepsilon^{2r - 2 m - 2}   \sum_{j_0, \jb}
\langle \mu_{3}(\jb) \rangle^{2s} |z_{j_1}| |z_{j_2}| |v_{j_0}| |v_{j_3}| \cdots |v_{j_{2m}}| \\
\leq 
C  \Norm{c}{\ell^\infty}\left( \frac{N^{\alpha_r}}{\gamma}\right)^{b_r + 1} \varepsilon^{2r  - 1}, 
\end{multline*}
as $j_1$ and $j_2$ are larger than the third largest index in $\jb$. 
By summing with respect to $m$, the global contribution of these terms satisfies the estimate \eqref{prokofiev}. 

We obtain similar estimates for the terms in \eqref{ataaable} associated with the part of $\Omega_{\kb_{\ell,\alpha}}$ and $\Omega_{\hb_{\ell,\alpha}}$ coming from $Z_4$. 
It remains to estimates the part coming from $Z_6$ in \eqref{ataaable}. Typically a term of the form $\frac{\partial_{I_j} \Omega_{\kb_{\ell,\alpha}}}{\Omega_{\kb_{\ell,\alpha}}}$ will yield a contribution of the form 
$$
\sum_{p} \alpha_p \frac{I_p }{\Omega_{\kb_{\ell,\beta}}}
$$
where $\alpha_p$ are uniformly bounded in $p$. 
The global contribution of these term, by estimating $\Omega_{\kb_{\ell,\alpha}}$ and $\Omega_{\hb_{\ell,\beta}}$ by $\gamma \varepsilon^4 N^{\alpha_r}$ will be 
\begin{multline*}
C \Norm{c}{\ell^\infty}\left( \frac{N^{\alpha_r}}{\gamma}\right)^{b_r + 1} \varepsilon^{2r - 2 m - 4} 
\sum_{j_0,p,\jb}
 |z_{j_1}| |z_{j_2}| |v_{j_0}| |v_{j_3}| \cdots |v_{j_{2m}}||z_p|^2 \\ 
 \leq
 C \Norm{c}{\ell^\infty}\left( \frac{N^{\alpha_r}}{\gamma}\right)^{b_r + 1} \varepsilon^{2r - 1}. 
 \end{multline*}
This shows the result with $\beta_r = b_r + 1$. 
 \end{proof}

\subsubsection{Homological equations}
In this section we will see that our class is particularly well adapted to the solution homological equations, the central step in the construction of normal forms. Actually, this class was constructed precisely to be invariant by Poisson bracket and by solution of the homological equation with $Z_4$ or $Z_4 + Z_6$.\\
We define the set $\Ascr_{r}$ as the subset of elements $\Gamma=(\pig,\kbsf,\hbsf,n)$ of $\Hscr_{r}$ for which $Q_\Gamma[c]$ depends only on the actions. This means that for all $\ell \in \Z^*$, 
$\irr(\pig_\ell) = \emptyset$.\\ Then we define  $\Rscr_{r}$ the subset of elements $\Gamma=(\pig,\kbsf,\hbsf,n)$ of $\Hscr_{r}$ such that for all $\ell \in \Z^*$, 
$\irr(\pig_\ell) \neq \emptyset$. 
Note that for all $\Gamma\in \Hscr_{r}$ there exists $A \in \Ascr_{r}$ and $R \in \Rscr_{r}$ such that for all $c \in \ell^{\infty}_\Gamma$, $c \in \ell^{\infty}_A \cap\ell^{\infty}_R$, and 
\begin{equation}
\label{decomposition}
Q_\Gamma[c] = Q_{A}[c] + Q_{R}[c]. 
\end{equation}
We also naturally define the corresponding subsets of $\Fscr_{r}$
\begin{equation}
\label{FA}
\Fscr_{r}^A := \{F = Q_{\Gamma}[c],\, \ \, \Gamma \in \Ascr_{r}, \quad c \in  \ell^\infty_{\Gamma}(\mathbb{Z}^*)   \}. 
\end{equation}
the functionals of order $r$ depending only on the actions, and 
\begin{equation}
\label{FR}
\Fscr_{r}^R := \{F = Q_{\Gamma}[c],\, \ \, \Gamma \in \Rscr_{r}, \quad c \in  \ell^\infty_{\Gamma}(\mathbb{Z}^*)   \}. 
\end{equation}
Naturally, we define $\Rscr_{r,\omega},\Rscr_{r,\Omega},\Rscr_{r,\omega}^*$, $\Rscr_{r,\Omega}^*$ as the restrictions  of $\Rscr_r$ to  $\Hscr_{r,\omega}$, $\Hscr_{r,\Omega}$, $\Hscr_{r,\omega}^*$, $\Hscr_{r,\Omega}^*$.

With this formalism, the resolution of the homological equation is  trivial, after noticing that $Z_4$ and $Z_6$ commute with terms depending only of the actions and by using the relations \eqref{ravel}. 
\begin{lemma} 
\label{lemmehomo} Let $\Gamma = (\pig,\kbsf,\hbsf,n)\in \Rscr_{r,\Omega}$. 
Defining $\Gamma' = (\pig,\kbsf,\hbsf',n)\in \Hscr_{r-2,\Omega}^*$ with 
\[ \forall \ell\in \mathbb{Z}^*, \quad \hbsf'_{\ell} = (\hbsf_\ell,\irr(\pig_{\ell})),  \]
Then for all $c\in \ell_\Gamma^{\infty}(\Z^*)$, $Q_\Gamma'[c]$ is solution of the homological equation
\begin{equation}
\label{eqhomo} \left\{Z_4+Z_6, Q_{\Gamma'}[c]  \right\} = Q_\Gamma[c],
\end{equation}
and we have 
\[  \Nc_{\Gamma'}(c)=\Nc_{\Gamma}(c).\]
\end{lemma}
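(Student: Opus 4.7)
The plan is to verify the homological identity \eqref{eqhomo} by a direct computation. Since $Z_4$ and $Z_6$ depend only on the actions $I$, they Poisson-commute with every function of $I$ alone; consequently, in each summand of $Q_{\Gamma'}[c]$ the bracket $\{Z_4+Z_6,\cdot\}$ acts only on the factor $z_{\pig_\ell}$, and identity \eqref{ravel} gives $\{Z_4+Z_6, z_{\pig_\ell}\} = i\,\Omega_{\irr(\pig_\ell)}(I)\,z_{\pig_\ell}$ (note $\pig_\ell\in\Rc$, so this formula applies). The extra denominator factor $\Omega_{\irr(\pig_\ell)}$ of $Q_{\Gamma'}[c]$, produced by adjoining $\irr(\pig_\ell)$ to $\hbsf_\ell$, then cancels this factor exactly, and the change of the $(-i)$-power from $p_\ell + q_\ell + 1$ down to $p_\ell + q_\ell$ compensates the $i$ produced by the bracket, giving $Q_{\Gamma}[c]$ term by term.

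The second task is to check that $\Gamma' \in \Hscr_{r-2,\Omega}^*$, which is a bookkeeping verification. Writing the structure of $\Gamma\in \Hscr_{r,\Omega}$ through parameters $(\alpha_1,\dots,\alpha_5)$ as in \eqref{grizzli}--\eqref{ilpleut}, one takes the \emph{same} tuple for $\Gamma'$: only $q_\ell$ is incremented by $1$, so the structure of $\Gamma'$ matches the pattern of \eqref{beaucoup} defining $\Hscr_{r-2,\Omega}^*$, and the bounds $\alpha_1\leq 2r-6$, $\alpha_2+\alpha_3+\alpha_4\leq \alpha_5$, $\alpha_5\leq r-4$ required for both $\Hscr_{r,\Omega}$ and $\Hscr_{r-2,\Omega}^*$ coincide (after the shift $r\mapsto r-2$ absorbs the ``$+2$'' in the definition of the starred class). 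The order condition \eqref{HP_order} is preserved since $m_\ell - p_\ell - 2(q_\ell+1) = r-2$. The remaining conditions (i), (iii)--(vi) transfer directly: reality follows from $\overline{\irr(\pig_\ell)} = \irr(\pig_{-\ell})$; the bounds on the length of the multi-indices are inherited from $\pig_\ell\in\Rc_{m_\ell}$; the injection $\iota$ of \eqref{crux} is unchanged since $p_\ell$ is unchanged; and multiplicity is unchanged because $\pig$ is unchanged.

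The global control condition \eqref{HP_old_momo} for the new factor $\hb'_{\ell,q_\ell+1} = \irr(\pig_\ell)$ uses that $\irr(\pig_\ell)$ is a subsequence of $\pig_\ell$, so the sorted (decreasing) list of $\irr(\pig_\ell)$ is a sub-list of that of $\pig_\ell$, giving $\mu_k(\irr(\pig_\ell))\leq \mu_k(\pig_\ell)$ for all $k\leq \sharp\irr(\pig_\ell)$. Since $\Gamma\in \Rscr_{r,\Omega}$ forces $\sharp\irr(\pig_\ell)\geq 2$, one concludes $\mu_{\min}(\irr(\pig_\ell))\leq \mu_2(\pig_\ell)$, hence the ratio in \eqref{HP_old_momo} stays bounded by $1$. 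Finally, the weight identity $\Nc_{\Gamma'}(c) = \Nc_{\Gamma}(c)$ is immediate from \eqref{weight}: the only new contribution $\langle\mu_1(\irr(\pig_\ell))\rangle$ introduced for $\Gamma'$ is already one of the three quantities whose maximum defines $\Nc_{\Gamma}(c)$.

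The computation itself is transparent; the only mildly delicate point is the parameter bookkeeping for the class $\Hscr_{r-2,\Omega}^*$, where one must track the shift in $r$ simultaneously with the increment in $q_\ell$ and verify that the constraints \eqref{beaucoup} and \eqref{ilpleut} align. Everything else reduces to the cancellation $\Omega_{\irr(\pig_\ell)}/\Omega_{\irr(\pig_\ell)} = 1$ produced by the construction of $\hbsf'$.
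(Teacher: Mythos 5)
Your proposal is correct and follows exactly the route the paper intends: the paper gives no written proof beyond the remark that $Z_4+Z_6$ Poisson-commutes with functions of the actions, so the bracket acts only on $z_{\pig_\ell}$ via \eqref{ravel} and the adjoined factor $\Omega_{\irr(\pig_\ell)}$ cancels. Your explicit verification of the sign/power of $i$, of the parameter shift $(\alpha_1,\dots,\alpha_5)$ between \eqref{ilpleut} and \eqref{beaucoup}, and of conditions \eqref{HP_old_momo} and \eqref{weight} simply fills in the bookkeeping the authors declare trivial.
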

We will also need to solve a homological equation associated with $Z_4$:
\begin{lemma} 
\label{rapido}
 Let $\Gamma = (\pig,\emptyset,\emptyset,0)\in \Rscr_{3,\omega}$. 
Defining $\Gamma' = (\pig,\kbsf',\emptyset,n')\in \Hscr_{2,\omega}^*$ with 
\[ \forall \ell\in \mathbb{Z}^*, \quad \kbsf'_{\ell} = (\pig_{\ell}) \textrm{ and } n_\ell'=1,  \]
Then for all $c\in \ell_\Gamma^{\infty}(\Z^*)$, $Q_\Gamma'[c]$ is solution of the homological equation
\[ \left\{Z_4, Q_{\Gamma'}[c]  \right\} = Q_\Gamma[c],\]
and we have 
\[  \Nc_{\Gamma'}(c)=\Nc_{\Gamma}(c).\]
\end{lemma}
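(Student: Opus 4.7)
The plan is to unpack the hypothesis $\Gamma \in \Rscr_{3,\omega}$ to see that $Q_\Gamma[c]$ is simply a polynomial, to compute the Poisson bracket $\{Z_4, Q_{\Gamma'}[c]\}$ from the eigen-relation \eqref{ravel}, and then to verify by inspection that $\Gamma' \in \Hscr_{2,\omega}^*$ together with the weight equality. Essentially everything is bookkeeping; there is no serious analytic obstacle.

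First I would observe that the constraints defining $\Rscr_{3,\omega}$, namely $n_\ell = p_\ell$, $q_\ell = 0$ and $n_\ell \leq 2r-6 = 0$, force $p_\ell = q_\ell = n_\ell = 0$, and the order relation \eqref{HP_order} then gives $m_\ell = 3$. Hence $Q_\Gamma[c](z) = \sum_{\ell \in \Z^*} c_\ell z_{\pig_\ell}$ is a homogeneous polynomial of degree $6$ with $\pig_\ell \in \Rc_3$ and $\irr(\pig_\ell) \neq \emptyset$. A short algebraic check actually excludes elements of $\Rc \cap \Ic$ of length $2$ or $4$ (zero-momentum and zero-energy force the $a$-multisets to coincide, producing a conjugate pair), so in fact $\pig_\ell \in \Ic$ itself, and one may identify $\kb'_{\ell,1}$ with $\pig_\ell$ in the definition of $\Gamma'$. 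One then has
\[
Q_{\Gamma'}[c](z) \;=\; -i\sum_{\ell\in\Z^*} c_\ell \,\frac{z_{\pig_\ell}}{\omega_{\pig_\ell}(I)}.
\]

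Next I would compute $\{Z_4, Q_{\Gamma'}[c]\}$. Since $Z_4$ depends only on the actions $I$, it Poisson-commutes with the denominator $\omega_{\pig_\ell}(I)$, so only the numerator contributes; by \eqref{ravel}, $\{Z_4, z_{\pig_\ell}\} = i\,\omega_{\pig_\ell}(I)\,z_{\pig_\ell}$. The two $\omega$-factors cancel, the scalar $(-i)\cdot i = 1$ survives, and the result is exactly $\sum_\ell c_\ell z_{\pig_\ell} = Q_\Gamma[c]$. This is the entire content of the homological identity.

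It then remains to verify the defining conditions of $\Hscr_{2,\omega}^*$ and the weight equality. Most are immediate: reality (i) descends from that of $\pig$ via $\kbsf'_{-\ell} = \overline{\kbsf'_\ell}$; the order condition (ii) reads $2 = 3 - 1 - 0$; consistency (iii), finiteness of numerator/denominator degrees (iv)--(v) and the subclass bound $n'_\ell = 1 \leq 2(r+1)-5 = 1$ are all trivial; condition (vii) is vacuous since $\hbsf' = \emptyset$. The only condition meriting a brief remark is the derivative distribution (vi): one needs an injection $\iota:\{1,2\} \to \{3,\dots,6\}$ with $\mu_{\iota_\alpha}(\pig_\ell) \geq c\,\mu_{\min}(\kb'_{\ell,1})$, but since $\kb'_{\ell,1} = \pig_\ell$ we have $\mu_{\min}(\kb'_{\ell,1}) = \mu_6(\pig_\ell)$, so $\iota_1 = 3,\ \iota_2 = 4$ works with $C = 1$. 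Finally, the weight equality $\Nc_{\Gamma'}(c) = \Nc_\Gamma(c)$ is immediate from \eqref{weight}: the only new index introduced is $\mu_1(\kb'_{\ell,1}) = \mu_1(\pig_\ell) = \mu_1(\irr(\pig_\ell))$, which already appears in the supremum defining $\Nc_\Gamma(c)$.
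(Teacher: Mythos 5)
Your proof is correct and follows exactly the route the paper intends: the paper declares the lemma ``trivial'' from the relations \eqref{ravel} and the fact that $Z_4$ commutes with functions of the actions, and your computation is precisely that argument written out, together with the routine verification of the class conditions for $\Gamma'$. Your extra observation that the constraints of $\Rscr_{3,\omega}$ force $\pig_\ell\in\Ic$ (so that $\kbsf'_\ell=(\pig_\ell)$ is a legitimate entry and $\mu_1(\kb'_{\ell,1})=\mu_1(\irr(\pig_\ell))$, giving the weight equality) is a correct and worthwhile detail that the paper leaves implicit.
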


\subsubsection{Stability by Poisson bracket}

Now comes the main technical result of this paper: the stability of our classes by Poisson bracket.
\begin{lemma} 
\label{cap} Let $W\in \{\omega,\Omega\}$, let $\Gamma\in \Hscr_{r,W}^*$ and let $\Gamma'\in \Hscr_{r',W}$. \\
There exists $\Gamma''\in \Hscr_{r'',W}$, where
\[ r'' = r+r'-1 \]
 and there exists a bilinear continuous application
\[g: \ell^\infty_{\Gamma}(\mathbb{Z}^*) \times \ell^\infty_{\Gamma'}(\mathbb{Z}^*) \to \ell^\infty_{\Gamma''}(\mathbb{Z}^*) \]
such that for all $c\in \ell^\infty_{\Gamma}(\mathbb{Z}^*)$, $c'\in \ell^\infty_{\Gamma'}(\mathbb{Z}^*) $
\[ \{ Q_\Gamma[c],Q_{\Gamma'}[c'] \} = Q_{\Gamma''}[g(c,c')]\]
and
\begin{equation}
\label{caribou}\Nc_{\Gamma''}(g(c,c')) \leq \max( \Nc_{\Gamma}(c), \Nc_{\Gamma'}(c')). 
\end{equation}
\end{lemma}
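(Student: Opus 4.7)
The plan is to expand the Poisson bracket term-by-term on the sums defining $Q_\Gamma[c]$ and $Q_{\Gamma'}[c']$ and to assemble each resulting contribution into a single term of a Hamiltonian in $\Hscr_{r'',W}$. By bilinearity it suffices to analyse the bracket of two individual summands $f(I)\, z_\jb$ and $f'(I)\, z_{\jb'}$, where $f, f'$ are products of inverses of small denominators $\omega_\kb$ and $\Omega_\kb$. Since the actions Poisson-commute we have $\{f, f'\} = 0$, so Leibniz yields three contributions:
\[ \{f z_\jb, f' z_{\jb'}\} \;=\; f f'\, \{z_\jb, z_{\jb'}\} \;+\; f z_{\jb'}\, \{z_\jb, f'\} \;+\; f' z_\jb\, \{f, z_{\jb'}\}, \]
which I label type (A) (contraction of monomials) and types (B), (C) (action-derivative). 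The elementary identity $\{I_a, z_{(\delta,b)}\} = -i\, \delta\, \delta_{a,b}\, z_{(\delta,b)}$ gives $\{f(I), z_\jb\} = -i\sum_a N_a(\jb)\, (\partial_{I_a} f)\, z_\jb$ with $N_a(\jb) = \sum_{\alpha:a_\alpha=a}\delta_\alpha$, so types (B) and (C) remain in the structural form \eqref{Q}.

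\textbf{Structural analysis.} In type (A), $\{z_\jb, z_{\jb'}\}$ is a sum over pairings of $z_j \in \jb$ with $z_{\bar j}\in \jb'$, producing monomials $z_{\jb''}$ with $\jb'' \in \Rc_{m+m'-1}$ (zero-momentum survives contraction), and the rational parts simply multiply, giving $m'' = m+m'-1$, $p'' = p+p'$, $q'' = q+q'$, hence the order $m''-p''-2q'' = r+r'-1$. In types (B)/(C), differentiating $1/\omega_\kb$ yields $\pm c/\omega_\kb^2$ since $\omega_\kb$ is linear in $I$; differentiating $1/\Omega_\kb$ splits into a constant over $\Omega_\kb^2$ (from the $Z_4$-part) and a linear-in-$I$ piece $(\sum_b c_b I_b)/\Omega_\kb^2$ (from the $Z_6$-part). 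In the first two sub-cases $m''=m+m'$ and $p''$ gains one ($r'' = r+r'-1$); in the $Z_6$-sub-case, the factor $I_b = z_{(+1,b)}z_{(-1,b)}$ is absorbed into the numerator (so $m''=m+m'+1$) while the new $\Omega$-factor is classified under the $\hb''$-group (so $q''$ gains one), again yielding $r'' = r+r'-1$ and turning the extra $\varepsilon^2$ of the $I_b$ factor into the stronger $\varepsilon^4$ bound of \eqref{nonres2N}. The parameters $(\alpha_1,\dots,\alpha_5)$ of \eqref{ilpleut}--\eqref{beaucoup} combine additively, with the single Poisson-induced unit in $\alpha_2$, $\alpha_3$ or $\alpha_4$ precisely compensated by reinterpreting the $+1$ in $q = \alpha_4+\alpha_5+1$ of the starred class $\Hscr_{r,W}^*$ as an additional unit of $\alpha_5$ carried by $\Gamma$; all the constraints defining $\Hscr_{r'',W}$ are then preserved, as is the analogous bookkeeping of $n_\ell$ for $W = \omega$.

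\textbf{Main obstacle: distribution of derivatives.} The delicate axiom is \eqref{crux}, which requires, for each $\Gamma''$-term, an injection $\iota'': \llbracket 1, 2p'' \rrbracket \to \llbracket 3, 2m'' \rrbracket$ whose images dominate $\mu_{\min}$ of the small denominators. In types (B)/(C) one simply takes the disjoint union of $\iota$ and $\iota'$, the single new $\omega$- or $\Omega$-factor inheriting its bounding indices from the differentiated factor in $\Gamma'$, already covered by $\iota'$. In type (A) the contraction removes one index of $\jb$ and its conjugate in $\jb'$; summing over all contractions and treating each separately, one fixes $\iota''$ so that the two top indices of $\pig''$ are the two largest of $\pig\cup\pig'$, with the previously-top indices of whichever of $\pig, \pig'$ has been overtaken demoted to positions $\geq 3$ in $\pig''$, where they remain available as bounding indices. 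Injectivity of the combined $\iota''$ into $\llbracket 3, 2m''\rrbracket$ then follows from the pigeonhole inequality $2p'' = 2(p+p') \leq 2(m+m')-4 = 2m''-2$, valid in all four sub-classes via $p \leq m-1$ and $p' \leq m'-1$.

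\textbf{Closing.} Reality (i) follows by pairing each contribution indexed by $(\ell,\ell')$ with its conjugate $(-\ell,-\ell')$, using $\omega_{\bar\kb}=-\omega_\kb$, $\Omega_{\bar\kb}=-\Omega_\kb$ and the reality of $c, c'$. The bounds on $m_\ell$ (iv) come from \eqref{johncale}--\eqref{nico}; finite multiplicity (v) is inherited from $\Gamma, \Gamma'$; the global control \eqref{HP_old_momo} holds because any new $\hb''$-component from the $Z_6$-derivative only involves indices from $\pig_\ell$ or $\pig'_{\ell'}$, bounded by $\mu_2(\pig'')$. For the weight estimate \eqref{caribou}, note that $\irr(\pig'')$ is a sub-sequence of $\irr(\pig) \cup \irr(\pig')$ (contractions only remove indices in type (A); the new $I_b$ factors from the $Z_6$-sub-case of type (B) are matched action pairs, hence non-irreducible), and the multi-indices in $\kbsf''$ and $\hbsf''$ are inherited from those of $\Gamma, \Gamma'$. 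The bilinear map $g: \ell^\infty_{\Gamma}(\mathbb{Z}^*) \times \ell^\infty_{\Gamma'}(\mathbb{Z}^*) \to \ell^\infty_{\Gamma''}(\mathbb{Z}^*)$ is obtained by collecting like terms in $\ell''$, and its continuity is immediate since each $g(c,c')_{\ell''}$ is a finite bilinear combination of the $c_\ell$ and $c'_{\ell'}$ with universally bounded coefficients.
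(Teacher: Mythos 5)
Your overall strategy coincides with the paper's: expand the bracket by Leibniz, sort the contributions according to whether the derivatives hit the numerators or one of the small denominators, do the bookkeeping on $(m,p,q,n)$ and on the $\alpha_i$, and then rebuild the data needed for conditions {\bf (vi)} and {\bf (vii)}. The bookkeeping part of your argument is essentially right. However, the two places where the lemma actually has content are precisely the places where your proposal is either silent or incorrect.

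First, in your type (A) (the paper's Type I), the contraction deletes the index $j_1$ from $\jb$ and $\bar j_1$ from $\jb'$, and $j_1$ may well lie in the image of $\iota$, i.e.\ it may be one of the numerator indices used to dominate some $\langle\mu_{\min}(\kb_{\ell,\alpha})\rangle$ in \eqref{crux}. Your sentence about ``demoting the previously-top indices to positions $\geq 3$'' only discusses the fate of the \emph{free} modes; it does not explain how to replace a \emph{bounding} index that has been destroyed, nor why the replacement can be chosen among modes that are simultaneously (a) at position $\geq 3$ in $\jb''$, (b) not already used by $\iota$ or $\iota'$, and (c) larger than $\langle j_1\rangle$ up to a constant. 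This forces the use of $\langle j_1\rangle=\langle j_1'\rangle$ together with the zero-momentum condition (to pass from $\mu_1(\jb')$ to $C_{r'}\mu_2(\jb')$ when $j_1'=\mu_1(\jb')$), and it is exactly what the paper's eleven-case analysis carries out. Your pigeonhole inequality $2p''\leq 2m''-2$ only shows that an injection into $\llbracket 3,2m''\rrbracket$ exists as a map of sets, not that one exists satisfying \eqref{crux}. The same case analysis is also what verifies {\bf (vii)} for the \emph{inherited} $\hbsf$-components: after contraction $\mu_2(\jb'')$ can be strictly smaller than $\mu_2(\jb)$ (e.g.\ $\mu_2(\jb'')=\mu_3(\jb)$ when both contracted indices are the top modes), so the bound $\langle\mu_{\min}(\hb_{\ell,\alpha})\rangle\leq C\langle\mu_2(\jb)\rangle$ does not transfer for free; your proof only addresses the newly created $\hbsf$-components.

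Second, in your types (B)/(C), differentiating $1/\omega_{\jb^*}$ or $1/\Omega_{\jb^*}$ produces the \emph{square} of that denominator, so $p_{\ell''}=p_\ell+p_{\ell'}+1$ and the injection $\iota''$ needs \emph{two new slots}. Your claim that the new factor is ``already covered by $\iota'$'' cannot work: $\iota''$ must be injective, so the two new slots must be sent to two numerator indices not already used, at positions $\geq 3$ of $\jb''$, each dominating $\langle\mu_{\min}(\jb^*)\rangle$. One of them is supplied by {\bf (vi)} for $\Gamma$ (giving $\langle\mu_{\min}(\jb^*)\rangle\leq C\langle\mu_2(\jb)\rangle$), but the other must come from the \emph{other} factor, via $\langle\mu_{\min}(\jb^*)\rangle\leq\langle j_1^*\rangle=\langle j_1'\rangle$ and, when $j_1'=\mu_1(\jb')$, the zero-momentum condition again. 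This step is absent from your proof. Until these two points are supplied, the verification of \eqref{crux} and \eqref{HP_old_momo} for $\Gamma''$ --- which is the heart of the lemma --- remains unproved.
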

\begin{proof}
We postpone the proof to appendix \ref{appendixA}
\end{proof}

\section{Rational normal form}\label{RNF}
In this section we prove Theorem \ref{mainth} for \eqref{nls}. As announced in section \ref{sketch} this is achieved in three steps: First we kill the non resonant monomials in the Hamiltonian $P$ by using $Z_2$ as normal form (Section \ref{killbill0}), then we kill the remaining non integrable terms ($K_6$) of order 6 by including the resonant part of order 4, namely $Z_4$ (which is integrable),  in the normal form (Section \ref{killbill1}), finally we kill all the non integrable terms up to order $r$ by including the integrable part of order 6,  namely  $Z_6$, in the normal form (Section \ref{killbill2}).

\subsection{Resonant normal form}\label{killbill0}
In this section we apply a Birkhoff normal form procedure to kill iteratively the non resonant monomials up to order $r$ of the Hamiltonian $P$. 
\begin{theorem}\label{BNF}
For all $r \geq 4$ and $s\geq0$, 
there exists $\tau_2$ a $\mathcal{C}^1$ symplectomorphism in a neighborhood of the origin in $\ell_1^s$  close to the identity:
$$
\Norm{\tau_2 (z) - z }{s} \leq C \Norm{z}{s}^3 
$$
which puts $H$ in normal form up to order 6:
\begin{equation}
\label{bach}
H\circ \tau_2 (z) = Z_2(I) + Z_4(I) + Z_6(I) + K_6(z) + \sum_{m = 4}^r K_{2m}(z) + R(z)
\end{equation} where for all $m = 4,\ldots,r$, $K_{2m}$ is a homogeneous resonant polynomial of order $m$
\begin{equation}
 \label{polynomes2}
 K_{2m}(z) =P{[c^{(m)}]}(z) =\sum_{\jb \in \Rc_m} c^{(m)}_\jb z_\jb, \quad\mbox{with}\quad \quad  c^{(m)}\in\ell^\infty( \mathcal{R}_m),
 \end{equation}
and where $K_6(z)$ contains only irreducible monomials
\begin{equation}
 \label{polynomes3}
 K_{6}(z)  =\sum_{\jb \in \Rc_3\cap\Ic} c_\jb z_\jb, \quad\mbox{with}\quad \quad  c\in\ell^\infty( \mathcal{R}_3).
 \end{equation}
Moreover, $R$ is smooth in a neighborhood of the origin and satisfies
\begin{equation}
 \label{polynomes4}
\Norm{X_R(z)}{s} \leq C \Norm{z}{s}^{2r + 1}, 
\end{equation}
for $z$ small enough in $\ell_s^1$. 

\end{theorem}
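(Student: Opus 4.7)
The plan is to run the classical Birkhoff normal form algorithm with respect to the integrable quadratic Hamiltonian $Z_2$, eliminating non-resonant monomials up to degree $2r$, and then to identify by inspection the specific form of the surviving resonant contributions of degree $4$ and $6$. The core ingredient is the identity $\{Z_2, z_\jb\} = i \Delta_\jb z_\jb$ from \eqref{ravelo} together with the divisor-free lower bound $|\Delta_\jb| \geq 1$ for $\jb \in \Mc_m \setminus \Rc_m$, which makes the homological equations at this stage completely elementary (no small denominators enter the construction).

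First I would Taylor-expand the nonlinearity $P$ of \eqref{P} into a convergent series $P = \sum_{m \geq 2} P_{2m}$ of homogeneous polynomials $P_{2m} = P[c^{(m)}]$ in the sense of Definition \ref{polydef}, with $\|c^{(m)}\|_{\ell^\infty} \leq C \rho^{-m}$ for some $\rho > 0$ thanks to the analyticity of $g$ near the origin. Then for $m = 2, 3, \ldots, r$ I would construct iteratively symplectic transformations $\Phi^1_{\chi_{2m}}$ generated by
\[
\chi_{2m} = -i \sum_{\jb \in \Mc_m \setminus \Rc_m} \frac{\tilde c^{(m)}_\jb}{\Delta_\jb}\, z_\jb,
\]
where $\tilde c^{(m)}$ are the coefficients of the degree-$2m$ part of the Hamiltonian after the previous transformations have been performed. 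Since $|\Delta_\jb| \geq 1$ on $\Mc_m \setminus \Rc_m$, $\chi_{2m}$ is a homogeneous polynomial of degree $2m$ with $\ell^\infty$-bounded coefficients, and Proposition \ref{polyflow} provides all the bounds required to control its time-one flow and the Poisson brackets it generates. Composing $\tau_2 := \Phi^1_{\chi_4} \circ \cdots \circ \Phi^1_{\chi_{2r}}$ yields the expansion \eqref{bach} with $K_{2m}$ a homogeneous polynomial supported on $\Rc_m$ as in \eqref{polynomes2}, a remainder $R$ collecting all terms of degree $\geq 2r+2$ that is analytic near the origin and hence satisfies \eqref{polynomes4} by \eqref{Echamp}, and a closeness estimate $\|\tau_2(z)-z\|_s \leq C\|z\|_s^3$ coming from the lowest-order contribution $\chi_4$.

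It remains to match $K_4$ and $K_6$ with the integrable Hamiltonians $Z_4$, $Z_6$ and the irreducible polynomial $K_6$ of \eqref{polynomes3}. For $K_4$, the indices $\jb \in \Rc_2$ with signs $(+,+,-,-)$ and spatial modes $(a,b,c,d)$ satisfy $a+b=c+d$ and $a^2+b^2=c^2+d^2$, which forces $\{a,b\}=\{c,d\}$; every resonant quartic monomial is therefore a product of two actions, and a direct combinatorial count starting from $P_4 = \tfrac12 \varphi'(0)\int |u|^4\,dx$ reproduces exactly the expression of $Z_4(I)$ given in \eqref{Z4}. For the sextic resonant part $K_6^{(3)}$, I would split it along the dichotomy $\irr(\jb) = \emptyset$ (reducible, pure product of three actions) versus $\irr(\jb) \neq \emptyset$ (irreducible): the irreducible part is declared to be $K_6$ as in \eqref{polynomes3}, while the reducible part collects two contributions, one from the original $P_6 = \tfrac16\varphi''(0)\int |u|^6\,dx$ and one from $\tfrac12 \{P_4, \chi_4\}$ projected onto pure action monomials, which together must reproduce $Z_6(I)$ as in \eqref{Z6}.

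The main obstacle will be the algebraic verification that the reducible part of $K_6^{(3)}$ equals exactly $Z_6(I)$: the term $-\tfrac12 \varphi'(0)^2 \sum_{a \neq b}(a-b)^{-2} I_a^2 I_b$ arises only after summing $1/\Delta_\jb$ over all quartic non-resonant configurations that combine with $P_4$ through a Poisson bracket to yield a given sextic action monomial, and identifying $\Delta_\jb = \pm 2(a-b)^2$ on these configurations. The $\varphi''(0)$ contributions come directly from the pure $|u|^6$ term by elementary combinatorics. All remaining ingredients — convergence of the polynomial expansion, control of the compositions of flows $\Phi^1_{\chi_{2m}}$ in $\ell_s^1$, propagation of the polynomial structure of $P_{2m}^{(k)}$ at each induction step, and the final remainder bound — are routine applications of Proposition \ref{polyflow} together with standard Lie-series arguments, and do not use any non-resonance condition beyond $|\Delta_\jb|\geq 1$ on $\Mc_m \setminus \Rc_m$.
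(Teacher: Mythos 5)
Your proposal follows essentially the same route as the paper's proof: an iterated Birkhoff normal form with respect to $Z_2$, exploiting $|\Delta_\jb|\geq 1$ off $\Rc$ so no small divisors appear, the combinatorial identification $\{a_1,a_2\}=\{b_1,b_2\}$ giving $Z_4(I)$, and the computation of $Z_6(I)$ from the action part of $P_6$ plus that of $\tfrac12\{P_4,\chi_4\}$ (which, since $\{Z_4,\chi_4\}$ has no resonant part, coincides with the paper's $\tfrac12\{Q_4,\chi_4\}$), with the crucial divisor $\Delta_\jb=\pm 2(a-b)^2$ producing the $-\tfrac12\varphi'(0)^2(a-b)^{-2}I_a^2I_b$ term. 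The "main obstacle" you flag is exactly what the paper's Lemma~\ref{argh} carries out (checking in addition that the $I_a^3$ and $I_aI_bI_c$ coefficients vanish), so the argument is correct and complete modulo that elementary verification.
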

\proof The proof is standard (it first appears in \cite{KP96}) except for the calculation of the resonant terms of order six. For convenience of the reader we give the details.\\
We have $H=\sum_{a\in\Z}a^2\xi_a\eta_a+P$ where $P$ is given by \eqref{P} and we write 
$$P=\sum_{m=1}^r P_{2m}+R_{2r+2}$$
where
\begin{align}\label{P2m}
P_{2m}&=\frac{\varphi^{(m-1)}(0)}{m!}\frac{1}{2\pi}\int_\T(\sum_{a \in \Z} \xi_a e^{iax})(\sum_{b \in \Z} \eta_b e^{-ib x})dx\\
\nonumber &= \frac{\varphi^{(m-1)}(0)}{m!}\sum_{a_1+\cdots+a_m=b_1+\cdots b_m}\xi_{a_1}\cdots\xi_{a_m}\eta_{b_1}\cdots\eta_{b_m}\\
\nonumber &=\frac{m!\ \varphi^{(m-1)}(0)}{(2m)!}\sum_{\jb\in \Mc_m} \  z_\jb 
\end{align}
and $R_{2r+2}$ is a remainder of order $2r+2$ {\em i.e.} $R_{2r+2}\in \Hc_s(\ell^1_s)$ and $\Norm{X_{R_{2r+2}}(z)}{s} \leq C \Norm{z}{s}^{2r + 1}$. We note that the integrable Hamiltonian given by \eqref{Z2} reads $$Z_2  =\sum_{a\in\Z}a^2\xi_a\eta_a+P_2.$$
First we kill the non resonant monomials of order 4 by a change of variables $\Psi_4$. We search for $\Psi_4=\Phi^1_{\chi_4}$, the time one flow of $\chi_4 $ of a polynomial Hamiltonian  homogeneous of order 4: 
\begin{equation*} 
\chi_4 = \sum_{j\in \Mc_2} a_{\jb}\  z_\jb.
\end{equation*}
For any  $F\in\Hc_s$,   the Taylor expansion of $F\circ \Phi^t_\chi$ between $t=0$ and $t=1$ gives
$$F\circ \Phi^1_\chi = F+ \{F ,\chi \}+\frac 1 2 \int_0^1(1-t)\{\{F,\chi\},\chi\}\circ \Phi^t_\chi \text{d}t.$$
Applying this formula to $H =Z_2+P_4+R_6$ we get
\begin{equation*} 
H\circ \Psi_4 =Z_2+(P-P_2)+ \{Z_2 ,\chi \}+\{(P-P_2),\chi\}+\frac 1 2 \int_0^1(1-t)\{\{H,\chi\},\chi\}\circ \Phi^t_\chi \text{d}t.
\end{equation*}
In this formule the homogeneous part of order 4 is $P_4+ \{Z_2 ,\chi_4 \}$.
 Then we set
 \begin{equation*} 
\chi_4 := \frac{1}{12} {\varphi'(0)}\sum_{\jb\in\Mc_2\setminus\Rc_2} \frac {1} {i\Delta_\jb} z_\jb,\quad Z_4 =  \frac{1}{12} {\varphi'(0)} \sum_{\jb\in\Rc_2} z_\jb.
\end{equation*}
We note that at this stage there are no small divisors problem since $|\Delta_\jb|\geq1$ except when $\jb\in\Rc$ in which case $\Delta_\jb=0$. So
$\chi_4$ and $Z_4$ are well defined homogeneous polynomials of order 4 and, using \eqref{ravelo} they solve   the homological equation
\begin{equation} \label{homo}
Z_{4}=P_4+ \{Z_2 ,\chi_4 \}.
\end{equation}
Further 
$H\circ \Psi_4 = Z_2 +Z_{4}+R_{6}$ with \begin{equation}\label{R}
R_{6}=(P-P_2-P_4)+\{P-P_2 ,\chi_4 \}+ \frac 1 2 \int_0^1(1-t)\{\{H,\chi\},\chi\}\circ \Phi^t_\chi \text{d}t
\end{equation}
is a smooth Hamiltonian beginning at order 6 {\em i.e.} $R_6=0(z^6)$.\\ 
We can iterate this procedure to kill successively the non resonant monomials of order $6,\cdots,2r$. Then we get the existence of a symplectomorphism $\Psi$ close to the identity and defined on a neighborhood of the origin in $\ell_1^s$ such that
\begin{equation} \label{Bir}H\circ \Psi = Z_2 +Z_{4}+Z'_{6}+ \sum_{m = 4}^r K_{2m}(z) + R(z),\end{equation}
where $K_{2m}$ are resonant polynomials of the form \eqref{polynomes2}, $R$ is a smooth remainder satisfying \eqref{polynomes4} on a neighborhood of the origin and $Z'_6$ is a resonant monomial of order 6. It remains to compute $Z_4$ and $Z'_6$.\\
Concerning $Z_4$ we have 
$$Z_4 = \frac12  {\varphi'(0)} \sum_{\substack{a_1+a_2=b_1+b_2\\a^2_1+a^2_2=b^2_1+b^2_2}} \xi_{a_1}\xi_{a_2}\eta_{b_1}\eta_{b_2}$$
but 
$$a_1+a_2=b_1+b_2 \quad \text{and}\quad a^2_1+a^2_2=b^2_1+b^2_2$$
 leads to
$$\{a_1,a_2\}=\{b_1,b_2\}.$$
Therefore we get as anounced in \eqref{Z4}
\begin{align*}Z_4 =Z_4(I)=&  {\frac12\varphi'(0)} \sum_{a,b\in\Z} I_aI_b(2-\delta_{ab})\\
=&  {\varphi'(0)}  (\sum_{a\in\Z}I_a)^2-\frac{1}{2}\varphi'(0)\sum_{a\in\Z}I_a^2.
\end{align*}
After the first two Birkhoff procedures we get\footnote{Recall that the Poisson bracket of a Polynomial of order $m$ with a polynomial of order $m$ is a polynomial of order $m+n-2$. }   $Z'_6=Z_{6,1}+Z_{6,2}$ where $Z_{6,1}$ is the resonant part of $\{P_4 ,\chi_4 \}+\frac12\{\{Z_2,\chi_4\},\chi_4\}$ and $Z_{6,2}$ is the resonant part of $P_6$.\\ Let us start with the latter, following \eqref{P2m} we have
$$Z_{6,2}=\frac{\varphi^{''}(0)}{6}\sum_{\substack{a_1+a_2+a_3=b_1+b_2+b_3\\a^2_1+a^2_2+a_3^2=b^2_1+b^2_2+b_3^2}} \xi_{a_1}\xi_{a_2}\xi_{a_3}\eta_{b_1}\eta_{b_2}\eta_{b_3}.$$
If $\{a_1,a_2,a_3\}\cap\{b_1,b_2,b_3\}\neq\emptyset$ then, assuming for instance $a_3=b_3$, we get $((1,a_1),(1,a_2),(-1,b_1),(-1,b_2))\in\Rc_2$ which leads as before to $\{a_1,a_2\}=\{b_1,b_2\}.$ So either $((1,a_1),(1,a_2),(1,a_3),(-1,b_1),(-1,b_2),(1,b_3))\in \Ic$ or $\{a_1,a_2,a_3\}=\{b_1,b_2,b_3\}$, i.e.
\begin{align}\nonumber Z_{6,2}=&K'_6(z)+\frac{\varphi^{''}(0)}{6}\sum_{\{a_1,a_2,a_3\}=\{b_1,b_2,b_3\}} \xi_{a_1}\xi_{a_2}\xi_{a_3}\eta_{b_1}\eta_{b_2}\eta_{b_3}\\
\nonumber=&K_6(z)+ \frac{\varphi^{''}(0)}{6}\left(\sum_{\substack{a \neq b, \  a \neq c \\ b \neq c}} \  6I_aI_bI_c+\sum_{a\neq b} \  9I^2_aI_b + \sum_a I_a^3\right)\\
\label{Z62}=&K'_6(z)+ \frac{\varphi^{''}(0)}{6}\left(6(\sum_{a\in\Z}I_a)^3-9(\sum_{a\in\Z} I_a^2)(\sum_{a\in\Z}I_a)+4\sum_{a\in\Z}I_a^3  \right)\end{align}
where $K'_6$ is of the form \eqref{polynomes3}.\\
It remains to compute $Z_{6,1}$. First we notice that using the homological equation \eqref{homo} we get
$$\{P_4 ,\chi_4 \}+\frac12\{\{Z_2,\chi_4\},\chi_4\}=\{P_4 ,\chi_4 \}+\frac12\{Z_4-P_4 ,\chi_4 \}=\{Z_4 ,\chi_4 \}+\frac12\{Q_4 ,\chi_4 \}$$
where $Q_4$ denotes the non resonant part of $P_4$: 
$$Q_4=P_4-Z_4=Z_4 =  \frac{1}{12} {\varphi'(0)} \sum_{\jb\in\Mc_2\setminus\Rc_2} z_\jb.$$
  We easily verify that the Poisson bracket of a resonant monomial with a non resonant monomial cannot be resonant. Therefore
$Z_{6,1}$ is the resonant part of 
\begin{align}\label{Z61-1}&\frac12\{Q_4 ,\chi_4 \}=\frac{\varphi'(0)^2}{288}\left\{\sum_{\jb\in\Mc_2\setminus\Rc_2} z_\jb,\sum_{\kb\in\Mc_2\setminus\Rc_2} \frac {1} {i\Delta_\kb} z_\kb \right\}\\
\label{Z61-2}&=\frac{\varphi'(0)^2}{8}\left\{ \sum_{\substack{a_1+a_2=b_1+b_2\\a^2_1+a^2_2\neq b^2_1+b^2_2}} \xi_{a_1}\xi_{a_2}\eta_{b_1}\eta_{b_2}, \sum_{\substack{a_1+a_2=b_1+b_2\\a^2_1+a^2_2\neq b^2_1+b^2_2}} \frac {\xi_{a_1}\xi_{a_2}\eta_{b_1}\eta_{b_2}} {i(a^2_1+a^2_2-b^2_1-b^2_2)}  \right\}.\end{align} 
Then we proceed as for $Z_{6,2}$ to conclude that
$$Z_{6,1}= K''_6+Z'_6(I)$$
where $K''_6$ is of the form\footnote{In fact a long but straightforward computation leads to $K''_6=0$ which means that, up to order 6, the Birkhoff normal form of the cubic NLS depends only on the actions. A sort of reminiscence of the complete integrability. Nevertheless this result is not needed in this paper and the calculation is long...  } \eqref{polynomes3} and $Z_6(I)$ is the part of $\frac12\{Q_4 ,\chi_4 \}$ depending only on the actions.
So we can write
$$Z'_{6}(I)=\sum_{a\in\Z} \alpha_a\ I_a^3+\sum_{a\neq b\in\Z} \beta_{ab}\ I_a^2I_b+\sum_{\substack{a \neq b, \  a \neq c \\ b \neq c}} \gamma_{abc}\ I_aI_bI_c$$ 
where the values of  $\alpha_a$, $\beta_{ab}$, $\gamma_{abc}$ are compute in Lemma \ref{argh} below.\\
Thus we get
$$Z_{6,1}= K''_6-\frac{1}{2}\varphi'(0)^2\sum_{a\neq b\in\Z} \frac{1}{(a-b)^2}\ I_a^2I_b$$
and using \eqref{Z62}
$$Z'_6(z)=Z_{6,1}+Z_{6,2}=K_6(z)+Z_6(I)$$
where $K_6=K'_6+K''_6$ is of the form \eqref{polynomes3} and $Z_6$ is given by \eqref{Z6} as expected.
\endproof
\begin{lemma}\label{argh}{\ }
The coefficients of the term $Z_6'(I)$ satisfy
\begin{itemize}
\item[(i)]$\alpha_a=0$ for all $a\in\Z$,
\item[(i)]$\gamma_{abc}=0$ for all $a\neq b$, $a\neq b$ and $b \neq c\in\Z$,
\item[(i)] $\beta_{ab}=\frac{-\varphi'(0)^2}{2(a-b)^2}$ for all $a\neq b\in\Z$.
\end{itemize}
\end{lemma}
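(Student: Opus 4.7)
My approach would be a direct computation of the Poisson bracket in \eqref{Z61-2}. Writing
\[
\tfrac12\{Q_4,\chi_4\} = \frac{\varphi'(0)^2}{8}\{F,G\}
\]
with $F = \sum \xi_{a_1}\xi_{a_2}\eta_{b_1}\eta_{b_2}$ and $G = \sum \xi_{c_1}\xi_{c_2}\eta_{d_1}\eta_{d_2}/(i(c_1^2+c_2^2-d_1^2-d_2^2))$ both summed over the non-resonant zero-momentum set, I would expand $\{F,G\}$ via the formula \eqref{poisson}, producing a sum of sextic monomials of the form $\xi_p\xi_{c_1}\xi_{c_2}\eta_{b_1}\eta_{b_2}\eta_d$ (or its $F\leftrightarrow G$ twin). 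Such a monomial depends only on the actions precisely when the $\xi$-multiset coincides with the $\eta$-multiset; this restricts to three shapes, $\{a,a,a\}$, $\{a,a,b\}$ with $a\neq b$, and $\{a,b,c\}$ with $a,b,c$ pairwise distinct, corresponding respectively to $\alpha_a$, $\beta_{ab}$, $\gamma_{abc}$.

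For item (i) I would argue that the shape $\{a,a,a\}$ forces all four indices of the contributing quartet (in $F$ or in $G$) to equal $a$, which violates the non-resonance constraint $a_1^2+a_2^2\neq b_1^2+b_2^2$ defining the summation domain $\Mc_2\setminus\Rc_2$; hence $\alpha_a=0$. For item (ii), all non-resonance conditions are automatic; a case analysis on how $\{a,b,c\}$ is split between the singled-out index $p$ of $\partial_{\xi_e}F$ and the pair $(c_1,c_2)$ of $\partial_{\eta_e}G$ shows that $p$ must coincide with the singled-out $\eta$-index in $G$. Momentum then fixes the internal variable as $e = r+s-p$ with $\{r,s\}=\{a,b,c\}\setminus\{p\}$, and a short computation gives $c_1^2+c_2^2-d_1^2-d_2^2 = -2(p-r)(p-s)$. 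Summing over $p\in\{a,b,c\}$ and over the two halves $i\partial_{\xi}F\partial_{\eta}G - i\partial_{\eta}F\partial_{\xi}G$ of the Poisson bracket, the coefficient of $I_aI_bI_c$ reduces to a nonzero constant times
\[
\frac{1}{(a-b)(a-c)} + \frac{1}{(b-a)(b-c)} + \frac{1}{(c-a)(c-b)},
\]
which vanishes (the classical sum of residues of $1/((X-a)(X-b)(X-c))$), giving $\gamma_{abc}=0$.

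The substantial bookkeeping happens in item (iii). For the shape $\{a,a,b\}$ with $a\neq b$ I would enumerate the configurations $\{p,c_1,c_2\}=\{a,a,b\}$ and $\{b_1,b_2,d\}=\{a,a,b\}$ and observe that any configuration in which $p$ belongs to $\{b_1,b_2\}$ satisfies $e^2+p^2 = b_1^2+b_2^2$ and is thus killed by the non-resonance constraint in $F$; the only surviving configuration is $p=b$, $c_1=c_2=b_1=b_2=a$, $d=b$, with $e=2a-b$ forced by momentum, plus its symmetric partner arising from $-i\partial_{\eta}F\partial_{\xi}G$. The denominator of $G$ evaluates to $-2(a-b)^2$. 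The main obstacle, and where I expect most of the care to be required, is the precise tracking of the combinatorial factors of $2$ arising from the symmetries of the tuples $(a_1,a_2)$, $(b_1,b_2)$, $(c_1,c_2)$, $(d_1,d_2)$ and from the two halves of the Poisson bracket, and combining them with the prefactor $\varphi'(0)^2/8$; done correctly, these combine to yield $\beta_{ab}=-\varphi'(0)^2/(2(a-b)^2)$.
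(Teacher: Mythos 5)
Your proposal follows the paper's proof essentially verbatim: the same classification of the action-monomial shapes $\{a,a,a\}$, $\{a,b,c\}$, $\{a,a,b\}$ arising from $\frac12\{Q_4,\chi_4\}$, the same use of the zero-momentum condition to exclude the configurations that would force a resonant quartet, the same denominator computations $-2(a-b)(a-c)$ and $-2(a-b)^2$, and the same partial-fraction identity giving $\gamma_{abc}=0$. The only step you leave unexecuted is the count of the eight occurrences of the surviving configuration in item (iii) (two orderings of each index pair and the exchange of the two factors of the bracket), which is precisely what cancels the prefactor $\varphi'(0)^2/8$ in the paper.
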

\proof We use formulas \eqref{Z61-1} and \eqref{Z61-2} to identify the terms of $\frac12\{Q_4 ,\chi_4 \}$ depending only on actions.\\
(i) If $I_a^3=\frac{\partial z_\jb}{\partial \xi_b}\frac{\partial z_{\kb}}{\partial \eta_b}$ with $z_\jb$ a monomial from $Q_4$ and $z_\kb$ a monomial from $\chi_4$  then necessarily  $z_\jb=\xi_b\xi_a\eta_a^2$ and $z_\kb=\xi_a^2\eta_a\eta_b$. But since  $\jb,\kb\in \Mc_2$ we get $a=b$ and thus $\jb=\kb$ is resonant which is not possible.

\medskip

\noindent(ii) Assume $I_aI_bI_c=\frac{\partial z_\jb}{\partial \xi_d}\frac{\partial z_{\kb}}{\partial \eta_d}$ with $\jb,\kb\in \Mc_2\setminus \Rc_2$. We consider two different cases:\\ 
$\bullet$ $z_\jb=\xi_a\xi_d\eta_a\eta_b$ and $z_\kb=\xi_c\xi_b\eta_d\eta_c$. Since $\jb\in\Mc_2$ we get $b=d$ which is incompatible with $\jb\notin \Rc_2$. All similar cases obtained by permutation  of $a,b,c$ lead to the same incompatibility.\\
$\bullet$ $z_\jb=\xi_a\xi_d\eta_c\eta_b$ and $z_\kb=\xi_c\xi_b\eta_d\eta_a$. Since $\jb\in\Mc_2$ we get $d=c+b-a$ and then we calculate $\Delta_k=-2(a-b)(a-c)$. By permutation we get up to an irrelevant constant $c$
$$c\gamma_{abc}=\frac1{(a-b)(a-c)}+\frac1{(b-a)(b-c)}+\frac1{(c-a)(c-b)}=0.$$

\medskip

\noindent(iii) Assume $I_a^2I_b=\frac{\partial z_\jb}{\partial \xi_c}\frac{\partial z_{\kb}}{\partial \eta_c}$ with $\jb,\kb\in \Mc_2\setminus \Rc_2$. We consider different cases:\\ 
$\bullet$   $z_\jb=\xi_a\eta_a^2\xi_c$ and $z_\kb=\xi_a\xi_b\eta_b\eta_c$. Since $\jb\in\Mc_2$ we get $a=c$ which is incompatible with $\jb\notin \Rc_2$.\\
$\bullet$  $z_\jb=\xi_b\eta_a\eta_b\xi_c$ and $z_\kb=\xi_a^2\eta_a\eta_c$. We get again using the zero momentum condition that $a=c$ which is incompatible with $\jb\notin\Rc_2$.\\
$\bullet$ $z_\jb=\xi_b\eta_a^2\xi_c$ and $z_\kb=\xi_a^2\eta_b\eta_c$. The zero momentum leads to $c=2a-b$ and we get $\Delta_\kb= -\Delta_\jb= 2a^2-b^2-(2a-b)^2=-2(a-b)^2$. So $\jb,\kb\in \Mc_2\setminus \Rc_2. $\\
It remains to calculate the number of occurrences of this configuration in $D(z)$: we can exchange $a$ and $b$ in $z_\jb$ and in $z_\kb$ and we can exchange $\jb$ and $\kb$. So 8 occurrences in \eqref{Z61-2} and thus
$$\beta_{ab}= \frac{\varphi'(0)^2}8 8\frac1{-2(a-b)^2}=\frac{-\varphi'(0)^2}{2(a-b)^2}.$$

\endproof

\subsection{Elimination of the quintic term by the cubic}\label{killbill1}

{\em It's mercy, compassion, and forgiveness I lack. Not rationality.} Beatrix Kiddo in ``Kill Bill: Volume 1" (Q. Tarentino, 2003). 

In this section we will truncate the new Hamiltonian $H \circ \tau_2$ and eliminate the resonant term $K_6$ with the help of $Z_4$. Moreover, we will show that the new Hamiltonian admits a development with terms of the form $Q_\Gamma[c]$ with $\Gamma$ in the class $\Hscr_{r,\omega}$. 
\begin{proposition} 
Let $r\geq4$ and $s\geq0$ be given. 
There exist $N_0$,  $\Gamma_{2m} \in \Hscr_{m,\omega}$ for $4 \leq m \leq r$, and a constant $C$ , such that for all $0<\varepsilon\leq1,\gamma>0$, $N\geq1$  with
\begin{equation}
\label{CFL}
C \varepsilon^{\frac{3}{2}} \left(\frac{N^{\alpha_r}}{\gamma}\right)^3 < 1,
\end{equation}
 there exist
\begin{itemize}
\item $c_8,\dots,c_{2r} \in \ell^\infty_{\Gamma_8} \times \dots \times \ell^\infty_{\Gamma_{2r}}$
\item  $\tau_4 : B_s(0,2\varepsilon)  \cap \mathcal{U}_{\gamma/2,\varepsilon,r,s}^N \to \ell^1_s$ a $\mathcal{C}^1$ injective  symplectomorphism, 
\item $R^{\rm high},R^{\rm ord} \in \mathcal{C}^1(B_s(0,2\varepsilon)  \cap \mathcal{U}_{\gamma/2,\varepsilon,r,s}^N)$ 
\end{itemize}
such that
\begin{equation}
\label{norm_form_ind}
H\circ \tau_2 \circ \tau_4 = Z_2+Z_4+Z_6 + \sum_{m=4}^r Q_{\Gamma_{2m}}[c_{2m}] + R^{\rm high} + R^{\rm ord},
\end{equation}
and we have the following bounds
\begin{itemize}
\item for all $m=4\dots r$, $\Nc_{\Gamma_{2m}}(c_{2m}) \leq N^2$
\item for all $m=4\dots r$, $\|c_{2m}\|_{\ell^{\infty}}\leq C_r$
\item For all $z \in B_s(0,2\varepsilon)  \cap \mathcal{U}_{\gamma/2,\varepsilon,r,s}^N$, we have 
$$
\|X_{R^{\rm high}}(z)\|_{\ell_s^1} \leq  C \varepsilon^{5} N^{-s}  \quad \mbox{and}\quad \|X_{R^{\rm ord}}(z)\|_{\ell_s^1} \leq C \varepsilon^{2r + 1}  \left(  \frac{N^{\alpha_{r+1}}}\gamma \right)^{2r-3}$$
\item $\tau_4$ takes values in $z \in B_s(0,3\varepsilon)  \cap \mathcal{U}_{\gamma/4,\varepsilon,r,s}^N$ and satisfies the estimates
\begin{equation}
\label{esto4}
\Norm{\tau_4(z) - z}{s} \leq C \varepsilon^\frac{3}{2} \gamma N^{-\alpha_r}. 
\end{equation}
\end{itemize}
\end{proposition}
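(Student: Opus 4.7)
The strategy is to truncate the high-frequency contributions in the $K_{2m}$ first, then use $Z_4$ to eliminate the truncated $K_6$ via a single Lie transform, and finally expand and collect the result using the stability lemma for our rational class.

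\emph{Step 1 (truncation).} For each $m\in\{3,\ldots,r\}$ I would decompose $K_{2m}=K_{2m}^{\mathrm{trunc}}+K_{2m}^{\mathrm{high}}$ according to whether $\langle\mu_1(\irr(\jb))\rangle\leq N^2$ or not. In the second case, at least one mode in $\irr(\jb)$ exceeds $N^2$; the resonance relations $\jb\in\Rc_m$ allow Sobolev weights to be placed on the two largest indices of $\jb$ while the zero-momentum identity compensates for the remaining ones, producing a vector field of size $\varepsilon^{2m-1}N^{-s}$. Summing over $m$ gives a contribution $R^{\mathrm{high}}$ with $\|X_{R^{\mathrm{high}}}\|_s\leq C\varepsilon^{5}N^{-s}$. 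By Remark~\ref{loureed}, the truncated polynomials $K_6^{\mathrm{trunc}}$ and $K_{2m}^{\mathrm{trunc}}$ are realised as $Q_{\Gamma}[c]$ with $\Gamma\in\Hscr_{m,\omega}$ and weight $\Nc_{\Gamma}(c)\leq N^2$; moreover $K_6^{\mathrm{trunc}}\in\Rscr_{3,\omega}$ since $K_6$ is fully irreducible.

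\emph{Step 2 (homological equation and flow).} Lemma~\ref{rapido} applied to $K_6^{\mathrm{trunc}}$ produces $\chi_6\in\Fscr_{2,\omega}^{*}$ solving $\{Z_4,\chi_6\}=K_6^{\mathrm{trunc}}$ with unchanged weight bounded by $N^{2}$. Lemma~\ref{XQG} then bounds $\|X_{\chi_6}(z)\|_s\leq C\varepsilon^{3}(N^{\alpha_r}/\gamma)^{2}$ on $B_s(0,4\varepsilon)\cap\mathcal{U}_{\gamma/2,\varepsilon,r,s}^{N}$. Under the CFL hypothesis \eqref{CFL}, this right-hand side is dominated by $\tfrac{1}{2}\varepsilon^{3/2}\gamma N^{-\alpha_r}$, so a standard bootstrap shows that $\tau_4:=\Phi^1_{\chi_6}$ is well defined and injective on $B_s(0,2\varepsilon)\cap\mathcal{U}_{\gamma/2,\varepsilon,r,s}^{N}$, satisfies \eqref{esto4}, and by Proposition~\ref{georgesand} its image lies in $B_s(0,3\varepsilon)\cap\mathcal{U}_{\gamma/4,\varepsilon,r,s}^{N}$, where the subsequent estimates must hold.

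\emph{Step 3 (Lie expansion and class bookkeeping).} Writing the Lie--Taylor expansion
\begin{equation*}
H\circ\tau_2\circ\tau_4 \;=\; \sum_{k=0}^{K}\frac{1}{k!}\ad_{\chi_6}^{k}(H\circ\tau_2) \;+\; \frac{1}{K!}\int_0^1 (1-t)^{K}\,\ad_{\chi_6}^{K+1}(H\circ\tau_2)\circ\Phi^{t}_{\chi_6}\,\dd t,
\end{equation*}
with $K$ chosen so that the integral remainder is of size $\varepsilon^{2r+1}$, the identity $\{Z_4,\chi_6\}=K_6^{\mathrm{trunc}}$ cancels the cubic-order contribution at $k=1$. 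All other brackets are handled by the central Lemma~\ref{cap}: each Poisson bracket with $\chi_6\in\Hscr_{2,\omega}^{*}$ raises the class index by one inside $\Hscr_{\cdot,\omega}$ (the $\omega$-subclass is preserved because $\chi_6$ contains no $\Omega$-denominators) and preserves $\Nc\leq N^{2}$ by \eqref{caribou}. Brackets producing functionals of class index $m\leq r$ are collected into the $Q_{\Gamma_{2m}}[c_{2m}]$ of \eqref{norm_form_ind} with uniformly bounded coefficients; the other brackets, together with the $\{R,\chi_6^{k}\}$-contributions and the integral remainder, are placed into $R^{\mathrm{ord}}$. A direct application of Lemma~\ref{XQG} to each of these finitely many terms yields the claimed bound on $X_{R^{\mathrm{ord}}}$ in terms of $\varepsilon^{2r+1}(N^{\alpha_{r+1}}/\gamma)^{2r-3}$.

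\emph{Main obstacle.} The delicate point is Step~3: one must verify that the algebra of iterated Poisson brackets closes inside $\Hscr_{\cdot,\omega}$ rather than escaping to the larger class $\Hscr_{\cdot,\Omega}$, and that the finite-multiplicity and derivative-distribution axioms \textbf{(v)}--\textbf{(vii)} are genuinely preserved at every order. This is precisely what Lemma~\ref{cap} encodes, but its conclusion \eqref{caribou} still has to be carefully unwound to produce the uniform coefficient bounds $\|c_{2m}\|_{\ell^{\infty}}\leq C_r$ and to track the precise powers of $N^{\alpha_r}/\gamma$ entering the remainder estimate.
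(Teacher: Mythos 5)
Your overall architecture --- truncate, solve the homological equation for the truncated $K_6$ with $Z_4$ via Lemma \ref{rapido}, take $\tau_4=\Phi^1_{\chi_6}$, and close the Lie expansion inside the $\omega$-subclass using Lemma \ref{cap} --- is the same as the paper's, and your Steps 2 and 3 essentially reproduce its argument. The genuine problem is in Step 1. You truncate according to whether $\langle\mu_1(\irr(\jb))\rangle\leq N^2$, whereas the paper truncates according to whether $\langle\mu_3(\jb)\rangle\leq \nu_m N$. Your choice makes the weight bound $\Nc_{\Gamma_{2m}}(c_{2m})\leq N^2$ tautological, but it transfers the whole difficulty to the estimate of $R^{\rm high}$, and your one-sentence justification there does not close. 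In the tame estimate for the vector field of a homogeneous polynomial, the two largest indices of $\jb$ are exactly the ones that are \emph{not} available to produce a gain: one is the output index $j_0$ and the other absorbs $\langle j_0\rangle^s$ through the zero-momentum condition. A factor $N^{-s}$ is obtained only when the \emph{third} largest index of $\jb$ exceeds $cN$. Knowing merely that some mode of $\irr(\jb)$ exceeds $N^2$ does not a priori give this: if the only large indices were $\mu_1(\jb)$ and $\mu_2(\jb)$, the estimate would yield $\varepsilon^{2m-1}$ with no gain at all. What rescues your decomposition is the implication $\langle\mu_3(\jb)\rangle\leq\nu_m N\Rightarrow\langle\mu_1(\irr(\jb))\rangle\leq N^2$, which follows from the two conservation laws defining $\Rc_m$ applied to the pair $(j_1,j_2)$ --- this is precisely the case analysis around \eqref{seminaire} (same signs, conjugate pair, opposite pair, or $|a_1\pm a_2|\geq 1$). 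Its contrapositive shows that your high-frequency set is contained in the paper's, so every monomial you discard does have $\langle\mu_3(\jb)\rangle> \nu_m N$ and the $N^{-s}$ bound holds; but this case analysis is the actual mathematical content of the truncation step and cannot be replaced by the sentence you wrote.

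A second, smaller omission: in Step 3 you Lie-expand all of $H\circ\tau_2$, but you must record separately that $\{Z_2,\chi_6\}=0$ (because $\chi_6$ is supported on resonant multi-indices and its coefficients depend only on the actions, see \eqref{ravelo}), hence $Z_2\circ\tau_4=Z_2$. Without this observation the expansion formally produces terms $\ad_{\chi_6}^kZ_2$ involving the unbounded quadratic part, which do not belong to the class $\Fscr_{\cdot,\omega}$ and cannot be estimated by Lemma \ref{XQG}. With these two points supplied, your proof coincides with the one in the paper.
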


\begin{proof}

The proof is divided into two steps. First we introduce a cut-off in frequency allowing to work only with rational functionals whose irreducible monomials have their largest index bounded by $N^2$. Then we will define the change of variable $\tau_4$ and express the Hamiltonian in the new variable. 

\medskip
\noindent {\bf First step: Truncation}. 
For all $4\leq m\leq r$, we decompose $K_{2m}(z)$ of \eqref{polynomes2} into $ K_{2m}^N + R_{2m}^N$, with 
$$
K_{2m}^N(z) = \sum_{\substack{\jb \in \Rc_m \\\langle \mu_3(\jb) \rangle\leq \nu_m N}} b_\jb z_\jb,
\quad\mbox{and}\quad
R_{2m}^N(z) = \sum_{\substack{\jb \in \Rc_m \\\langle \mu_3(\jb) \rangle > \nu_m N}} b_\jb z_\jb,
$$
where $\nu_m$ will be constant that will be specified later. 
Let $j_0 \in \mathbb{U}_2 \times \Z$ be a given index.  Up to a combinatorial factor, we have 
$$
|X_{R_{2m}}(z)_{j_0} | \leq  C_m \sum_{\substack{\jb = (j_1,\ldots,j_{2m}) \in \Rc_m \\ j_1 = \overline{j_0}}} |z_{j_2} \cdots z_{j_{2m}}|. 
$$
Let us assume that $j_2$ is the highest index in the monomial in the right-hand side, and $j_3$ the second highest. We thus have at least $\langle j_3 \rangle > \nu_m N$. 
By using the zero momentum condition, we have 
$$
\Norm{X_{R_{2m}}(z)}{s} \leq CN^{-s} \sum_{j_2,\ldots,j_{2m}} \langle j_2 \rangle^{s} |z_{j_2}|\langle j_3 \rangle^{s}| | z_{j_3}\cdots z_{j_{2m}}|
$$
where the constant $C$ depends on $m$ and $s$. 
It is thus easy to verify that when $\Norm{z}{s} \leq \varepsilon$, we have 
$$
\Norm{X_{R_{2m}}(z)}{s} \leq C \varepsilon^{2m - 1} N^{-s}. 
$$
If we define 
$R^{\rm high} = \sum_{m = 3}^{r-1}R_{2m}$, we easily verify that it satisfies the hypothesis of the Proposition. 

Now let us consider $K_{2m}^N$. By Remark \ref{loureed}, there exists $\Lambda_{2m} \in \Hscr_{m,\omega}$ and $b_{2m}: \Z^* \to \C$ with $\Norm{b}{\ell^\infty_{\Lambda_{2m}}} \leq C_m$  and such that $$K^N_{2m} = Q_{\Lambda_{2m}}[b_{2m}].$$ 
Let us prove that, up to a choice of $\nu_m$,
$\Nc_{\Lambda_{2m}}(b_{2m}) \leq N^2$,

For a given monomial $\jb = (j_1,\ldots,j_{2m}) = \pig_\ell$ up to a combinatorial factor, let us assume that $j_1$ and $j_2$ correspond to the first and second largest indexes. We thus have $\langle j_p \rangle \leq \nu_m N$ for  $ p = 3,\ldots,2m$. 
Let us denote by $j_p = (\delta_p,a_p) \in \mathbb{U}_2 \times \Z$. We have by definition of $\mathcal{R}_m$, 
\begin{equation}
\label{seminaire}
|\delta_1 a_1 + \delta_2 a_2 | \leq (2m - 2) \nu_m N
\quad \mbox{and}\quad |\delta_1 a_1^2 + \delta_2 a_2^2 | \leq (2m - 2) \nu_m^2 N^2. 
\end{equation}
If $\delta_1$ and $\delta_2$ are of the same sign, this implies that $|a_1|$ and $|a_2|$ are smaller than $(2m - 2)^{1/2} \nu_m N\leq N^2$ for $\nu_m$ small enough. If $\delta_1$ and $\delta_2$ are opposite signs, then two cases can occur. 

\begin{itemize}
\item $a_1 = a_2$. In this case, $j_1 = \bar j_2$ and the product $z_{j_1} z_{j_2} = I_{a_1}$ is an action. In this situation, $\irr(\jb) \subset (j_3,\cdots,j_{2m})$ and hence $\langle \mu_1(\irr(\jb)) \rangle \leq \langle j_3 \rangle \leq N \leq N^2$. 
\item $a_1 = -a_2$. In this case, the first equation in \eqref{seminaire} yields 
if $a_1 - a_2 \neq 0$, then we have 
$$
|2a_1| \leq  (2m - 2) \nu_m N 
$$
showing that $\langle j_1 \rangle \leq N^2$ for $\nu_m$ small enough. As necessarily, $j_1 = \mu_1(\irr(\jb))$ we conclude that $\Nc_{\Lambda_{2m}}(b_{2m}) \leq N^2$. 
\item In any other situation, we have 
$$
|a_1 + a_2 ||a_1 - a_2| \leq (2m - 2) \nu_m^2 N^2
$$
with $|a_1 + a_2|$ and $|a_1 - a_2| \geq 1$. This shows that $|a_1 \pm a_2 |\leq (2m - 2) \nu_m^2 N^2$ and hence $\langle j_1 \rangle$ and $\langle j_2 \rangle$ smaller than $N^2$, for a good choice of $\nu_m$. We conclude as in the previous case that $\Nc_{\Lambda_{2m}}(b_{2m}) \leq N^2$. 
\end{itemize}

\medskip
\noindent {\bf Second step: Construction of $\tau_4$}. 
As we have seen, $K_6^N$ can be written under the form $Q_{[\Lambda_{6}]}[b_{6}]$ for some $\Lambda_{6} \in \Hscr_{3,\omega}$ and weight $\Nc_{[\Lambda_{6}]}(b_{6}) \leq N^2$. Furthermore by Theorem \ref{BNF}, $K_6$ contains only irreducible monomials so $\Lambda_{6} \in \Rscr_{3,\omega}$. By using Lemma \ref{rapido}, there exists $\Lambda'_6 \in \Hscr_{2,\omega}^*$ such that $\chi := Q_{\Lambda'_6}[b_6]$ is solution of the homological equation 
$$
\left\{Z_4, \chi  \right\} = \left\{Z_4, Q_{\Lambda'_6}[b_3]  \right\}  = Q_{\Lambda_6}[b_3] = K_6^N. 
$$
Moreover,  $\Nc_{\Lambda'_6}(b_6)=\Nc_{\Lambda_6}(b_6) \leq N^2$. Hence by using \eqref{prokofiev}, we immediately obtain the estimate 
\begin{equation}
\label{swimrun}
\Norm{X_{\chi}(z)}{s} \leq C  \varepsilon^{3}  \left(\frac{N^{\alpha_r}}{\gamma}\right)^{2}  
\end{equation}
for $z \in B_s(0,2\varepsilon)  \cap \mathcal{U}_{\gamma/2,\varepsilon,r,s}$. 

We then define $\tau_4 = \Phi_{\chi}^1$ the flow at time $1$ associated with the Hamiltonian $\chi$. Now let $z \in  B_s(0,\varepsilon)  \cap \mathcal{U}_{\gamma,\varepsilon,r,s}^N$.  We have to prove that the flow at time $1$ of the Hamiltonian $\chi_4$ remain in the set $B_s(0,3\varepsilon)  \cap \mathcal{U}_{\gamma/4,\varepsilon,r,s}^N$. To prove this result, we use a bootstrap argument. Let us assume that this is the case. \\
By using \eqref{swimrun}, we easily obtain that 
$$
\Norm{\Phi_\chi^1(z) - z}{s} \leq C  \varepsilon^{3}  \left(\frac{N^{\alpha_r}}{\gamma}\right)^{2}\leq \eps^{3/2 }\left(\frac{N^{\alpha_r}}{\gamma}\right)^{-1},
$$
for some constant $C$ that we choose to be the one of assumption \eqref{CFL} .
So we have in particular
$C \varepsilon^{3}  \left(\frac{N^{\alpha_r}}{\gamma}\right)^{2} \leq \varepsilon^\frac{3}{2}$, and hence $\Norm{\tau_4(z)}{s} \leq 3 \varepsilon$ provided $\varepsilon < 1$. Moreover, using Proposition \ref{georgesand}, with $z' = \tau_4(z)$ 
and $\gamma' = \gamma/2$, we have
$$
\Norm{z - \tau_4(z)}{s} \leq \frac12 c \varepsilon N^{- \alpha_r} \gamma
$$
where  $c$ is the constant of Proposition \ref{georgesand}. As a consequence
we have $\tau_4(z) \in \mathcal{U}_{\varepsilon,\gamma/2,r,s}^N$ which concludes the bootstarp argument. Estimate \eqref{esto4} then easily follows. 
 Note that $\tau_4$ is injective by definition of the flow. 

Now we apply $\tau_4$ to \eqref{bach}, taking into account, $\sum_{m=3}K_{2m}^r=\sum_{m=3}^rK^N_{2m}+R^{\rm high}$, we get 
 \begin{equation}
\label{Hcirc}
H\circ \tau_2 \circ \tau_4(z) = \big( Z_2  + Z_4(I) + Z_6 + K_6^N + \sum_{m = 4}^r K_{2m}^N  + R^{\rm high} + R\big) \circ \tau_4. 
\end{equation}
First we notice that $Z_2 \circ \tau_4 = Z_2$. Indeed, $\chi_4 = \sum_{\jb \in \Rc_3} f_j(I) z_\jb$, and $\jb$ is a resonant monomial in $\mathcal{R}_3$ thus we have $\{ Z_2(I), z_\jb\} = \Delta_{\jb} z_\jb = 0$ as well as $\{Z_2(I), f_\jb(I)\} = 0$. Hence $\{ Z_2, \chi_4 \} = 0$. \\
On the other hand we have, using the notation $\mathrm{ad}_\chi{G} = \{G,\chi\}$,
$$Z_4(I) \circ \tau_4 = Z_4 + \{ Z_4,\chi\} + \sum_{\alpha = 2}^{M} \frac{1}{\alpha!} \mathrm{ad}_{\chi}^\alpha Z_4 + \int_0^1  \frac{(t - s)^{M+1}}{(M)!}\mathrm{ad}_{\chi}^{M} Z_4 \circ \Phi_{\chi}^s ( z) \dd s,  
$$
 and a similar formula for all the terms of \eqref{Hcirc}, in particular
 \begin{equation}
\label{dev}
K_{2m}^N \circ \tau_4 = K_{2m}^N  + \sum_{\alpha = 1}^{M} \frac{1}{\alpha!} \mathrm{ad}_{\chi}^\alpha K_{2m}^N  + \int_0^1  \frac{(t - s)^{M+1}}{(M)!}\mathrm{ad}_{\chi}^{M} K_{2m}^N  \circ \Phi_{\chi}^s ( z) \dd s. 
\end{equation}
Note that by definition of $\chi$, the term $\{ Z_4,\chi\} $ and $K_{6}^N$ cancel. The first three terms in the asymptotics are thus $Z_2 + Z_4 + Z_6$. 
Now let us look at the other terms generated. 
As  $\chi := Q_{\Lambda'_6}[b_3]$ with $\Lambda'_6 \in \Hscr_{2,\omega}^*$, and as $Z_4(I) \in \Hscr_{2,\omega}$, Lemma \ref{cap} shows that $ \mathrm{ad}_{\chi}^\alpha Z_4  \in \Fscr_{2 + \alpha, \omega}$. 
Similarly, we have 
$\mathrm{ad}_{\chi}^\alpha K_{2m}^N  \in \Fscr_{m + \alpha,\omega}$ 
By collecting the elements of same degree, we obtain the claimed decomposition \eqref{norm_form_ind} where $R^{\rm ord}$ is a sum of terms of order greater than $2r + 2$ and where by a slight abuse of notation we still denote by $R^{\rm high}$ its composition by $\tau_4$ (which is closed to the identity). \\
The estimates on the remainder are then consequences of the previous estimates on $\tau_4$, upon using  the condition \eqref{CFL}. 
\end{proof}

\begin{remark}\label{TheDoors}
We have for $z \in B_s(0,2\varepsilon)  \cap \mathcal{U}_{\gamma/2,\varepsilon,r,s}^N$
\begin{multline*}
\Norm{z - \tau_2\circ \tau_4(z)}{s} \leq \Norm{z - \tau_2(z)}{s} + \Norm{\tau_2(z) - \tau_2 \circ \tau_4(z)}{s}
\\
\leq C \varepsilon^3 + C \Norm{z - \tau_4(z)}{s}\leq C  \varepsilon^{3}  \left(\frac{N^{\alpha_r}}{\gamma}\right)^{2}
\end{multline*}
as $\tau_2$ is $\mathcal{C}^1$ in a neighborhood of the origin and up to some change of constant $C$. Hence by the same argument as in the proof, we have that 
the application $\tau_2 \circ \tau_4$ maps $B_s(0,2\varepsilon)  \cap \mathcal{U}_{\gamma/2,\varepsilon,r,s}^N$ into $B_s(0,4\varepsilon)  \cap \mathcal{U}_{\gamma/4,\varepsilon,r,s}^N$. 
\end{remark}

\subsection{Quintic normal form}\label{killbill2}

{\em You and I have unfinished business.} Beatrix Kiddo in "Kill Bill: Volume 2" (Q. Tarentino, 2004).

Recall that $\Fscr_{r,\Omega}^A$ is the set of rational functions that depend only on the actions and can be written $ Q_\Gamma$ with $\Gamma\in\Ascr_{r,\Omega}$. By solving iteratively homological equations with the normal form term $Z_4 + Z_6$, we obtain the following proposition: 
\begin{proposition} \label{prop64}
Let $r\geq4$ be given. For all $s\geq0$, 
there exist   $A_{2m} \in \Ascr_{m,\Omega}$, for $4 \leq m \leq r$, and a constant $C>0$, such that for all $0<\varepsilon<1$, $\gamma>0$, $N \geq 1$ satisfying 
\begin{equation}
\label{CFL2}
C \varepsilon^\frac{3}{2} \left(\frac{N^{\alpha_r}}{\gamma}\right)^{3} < 1,
\end{equation}
 there exist
\begin{itemize}
\item $e_8,\dots,e_{2r} \in \ell^\infty_{A_8} \times \dots \times \ell^\infty_{A_{2r}}$
\item  $\tau_6 : B_s(0,\frac{3}{2}\varepsilon)  \cap \mathcal{U}_{\gamma,\varepsilon,r,s}^N \to \ell^1_s$ a $\mathcal{C}^1$ injective symplectomorphism, 
\item $R^{\rm high},R^{\rm ord} \in \mathcal{C}^1(B_s(0,\frac{3}{2}\varepsilon)  \cap \mathcal{U}_{\gamma,\varepsilon,r,s}^N)$ 
\end{itemize}
such that
\begin{equation}
\label{norm_form_ind2}
H\circ \tau_2 \circ \tau_4 \circ \tau_6 = Z_2+Z_4+Z_6 + \sum_{m=4}^r Z_{2m} + R^{\rm high} + R^{\rm ord},
\end{equation}
where $Z_{2m} = Q_{A_{2m}}[e_{2m}] \in \Fscr_{m,\Omega}^A$ depends only on the actions. Furthermore 
 we have the following bounds
\begin{itemize}
\item for all $m=4\dots r$, $\Nc_{\Gamma_{2m}}(e_{2m}) \leq N^2$
\item for all $m=4\dots r$, $\|c_{2m}\|_{\ell^{\infty}}\leq C_r$
\item For all $z \in B_s(0,\frac{3}{2}\varepsilon)  \cap \mathcal{U}_{\gamma,\varepsilon,r,s}^N$, we have 
\begin{equation}
\label{XRR}
\|X_{R^{\rm high}}(z)\|_{\ell_s^1} \leq  C \varepsilon^{5} N^{-s}  \quad \mbox{and}\quad \|X_{R^{\rm ord}}(z)\|_{\ell_s^1} \leq C \varepsilon^{2r + 1} \left(  \frac{N^{\alpha_{r+1}}}\gamma \right)^{4r-9} \end{equation}
\item $\tau_6$ takes values in $z \in B_s(0,2\varepsilon)  \cap \mathcal{U}_{\gamma/2,\varepsilon,r,s}^N$ and satisfies the estimates
\begin{equation}
\label{esto6}
\Norm{\tau_6(z) - z}{s} \leq C \varepsilon^{\frac{3}{2}} \gamma N^{-\alpha_r}. 
\end{equation}
\end{itemize}
\end{proposition}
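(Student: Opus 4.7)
The plan is to proceed by induction on $m \in \{4,\dots,r\}$, eliminating at each step the non-integrable part of the $2m$-th order term by composing with a flow $\Phi_{\chi_{2m}}^1$ that solves a homological equation against $Z_4+Z_6$. The composite transformation will be $\tau_6 = \Phi_{\chi_{2r}}^1\circ\cdots\circ\Phi_{\chi_8}^1$. More precisely, I shall show by induction that after the $m$-th step the Hamiltonian has the form
\begin{equation*}
H \circ \tau_2\circ\tau_4\circ\Phi_{\chi_8}^1\circ\cdots\circ\Phi_{\chi_{2m}}^1 = Z_2 + Z_4 + Z_6 + \sum_{k=4}^{m} Z_{2k} + \sum_{k=m+1}^{r} Q_{\Gamma_{2k}^{(m)}}[c_{2k}^{(m)}] + R^{\rm high}_m + R^{\rm ord}_m,
\end{equation*}
with $Z_{2k}\in\Fscr_{k,\Omega}^A$, $\Gamma_{2k}^{(m)}\in\Hscr_{k,\Omega}$ and $\Nc_{\Gamma_{2k}^{(m)}}(c_{2k}^{(m)})\leq N^2$. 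The base case $m=3$ is the output of Proposition 6.2 from the previous subsection, using the inclusion $\Hscr_{k,\omega}\subset\Hscr_{k,\Omega}$ (taking $\alpha_i=0$ for $i\geq 2$ in the defining constraints of $\Hscr_{k,\Omega}$).

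For the inductive step, I use the decomposition \eqref{decomposition} to write $Q_{\Gamma_{2m}^{(m-1)}}[c_{2m}^{(m-1)}] = Q_{A_{2m}}[e_{2m}] + Q_{R_{2m}}[\tilde c_{2m}]$ with $A_{2m}\in\Ascr_{m,\Omega}$ and $R_{2m}\in\Rscr_{m,\Omega}$. I set $Z_{2m} = Q_{A_{2m}}[e_{2m}]\in\Fscr_{m,\Omega}^A$ and, invoking Lemma \ref{lemmehomo}, define $\chi_{2m} = Q_{R'_{2m}}[\tilde c_{2m}]$ with $R'_{2m}\in\Hscr_{m-2,\Omega}^*$, so that $\{Z_4+Z_6,\chi_{2m}\} = Q_{R_{2m}}[\tilde c_{2m}]$ and $\Nc_{R'_{2m}}(\tilde c_{2m})=\Nc_{R_{2m}}(\tilde c_{2m})\leq N^2$. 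Lemma \ref{XQG} applied to $\Hscr_{m-2,\Omega}^*$ then gives the vector field bound
\begin{equation*}
\Norm{X_{\chi_{2m}}(z)}{s} \leq C \varepsilon^{2m-5}\left(\tfrac{N^{\alpha_r}}{\gamma}\right)^{4m-13},
\end{equation*}
which under the smallness condition \eqref{CFL2} is bounded by $C\varepsilon^{3/2}$.

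Expanding $H \circ (\cdots)\circ \Phi_{\chi_{2m}}^1$ via Taylor's formula, the cancellation $\{Z_4+Z_6,\chi_{2m}\} = Q_{R_{2m}}[\tilde c_{2m}]$ removes the non-action part at order $2m$, while the remaining Poisson brackets $\{Z_{2k},\chi_{2m}\}$, $\{Q_{\Gamma_{2k}^{(m-1)}}[c_{2k}^{(m-1)}],\chi_{2m}\}$ and the iterated $\mathrm{ad}_{\chi_{2m}}^\alpha$ terms are all, by Lemma \ref{cap}, of the form $Q_{\Gamma}[c]$ with $\Gamma\in\Hscr_{\ell,\Omega}$ for some $\ell\geq m+1$ and $\Nc_\Gamma(c)\leq N^2$ (the last bound coming from \eqref{caribou}). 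One reorganizes these contributions to update $\Gamma_{2k}^{(m)}$ for $k\geq m+1$, and dumps everything of order strictly greater than $2r$ into $R^{\rm ord}_m$. The term $R^{\rm high}_m$ is simply $R^{\rm high}_{m-1}$ composed with $\Phi_{\chi_{2m}}^1$, whose vector-field estimate \eqref{XRR} follows from the one at step $m-1$ since $\Phi_{\chi_{2m}}^1$ is close to the identity.

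The principal technical point, which I expect to be the main obstacle, is the bootstrap needed to keep the successive flows inside $B_s(0,2\varepsilon)\cap \mathcal{U}_{\gamma/2,\varepsilon,r,s}^N$. At each step, the bound $\Norm{\Phi_{\chi_{2m}}^1(z)-z}{s}\leq C\varepsilon^{3/2}$ together with Proposition \ref{georgesand} (stability of the truncated resonant set under $\ell_s^1$-perturbation of size $c\varepsilon N^{-\alpha_r}(\gamma-\gamma')$) forces one to shrink the non-resonance margin by a small amount at each of the finitely many ($r-3$) iterations, and the condition \eqref{CFL2} has exactly been calibrated so that such shrinking is permissible while the cumulative displacement stays controlled by $\varepsilon^{3/2}\gamma N^{-\alpha_r}$, yielding \eqref{esto6}. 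Finally, the estimate on $R^{\rm ord}$ follows by summing the $\ell_s^1$ norms of all the leftover terms produced across the $r-3$ iterations: each such term lies in some $\Fscr_{\ell,\Omega}$ with $\ell\geq r+1$, so Lemma \ref{XQG} gives $\Norm{X}{s}\leq C\varepsilon^{2\ell-1}(N^{\alpha_r}/\gamma)^{4\ell-13}$, and taking the worst $\ell$ (of order $r+1$ and bounded by a constant times $r$ through \eqref{je_sers_a_rien}) yields the exponent $4r-9$ announced in \eqref{XRR}.
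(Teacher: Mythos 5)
Your proposal follows essentially the same route as the paper's proof: induction on the order, the splitting \eqref{decomposition} into an action part $Z_{2m}$ and an irreducible part, resolution of the homological equation against $Z_4+Z_6$ via Lemma \ref{lemmehomo}, the vector-field bound of Lemma \ref{XQG} combined with \eqref{CFL2} and Proposition \ref{georgesand} to run the bootstrap with a shrinking non-resonance margin, and Lemma \ref{cap} to keep the new terms in the classes $\Hscr_{\ell,\Omega}$ with controlled weight; the derivation of the exponent $4r-9$ for $R^{\rm ord}$ also matches. Aside from a harmless inconsistency in the order of composition of the flows defining $\tau_6$, the argument is correct.
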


\begin{proof}
We construct $\tau_6$ by induction. 
Note that in the Hamiltonian \eqref{norm_form_ind}, the terms are in $\Fscr_{m,\omega} \subset \Fscr_{m,\Omega}$. Starting with this Hamiltonian, we define $A_{8}$ and $R_8$ according to the decomposition (see \eqref{decomposition}) 
$$
Q_{\Gamma_8}[c_8] = Q_{A_8}[c_8] + Q_{R_8}[c_8], 
$$
where $R_8 \in \Rscr_{4,\Omega}$. Then Lemma \ref{lemmehomo} gives us $\Lambda_8 \in \Rscr_{2,\Omega}^* $ such that
$$
\{ Z_4 + Z_6, Q_{\Lambda_8}[c_8]\} = R_8[c_8], 
$$
We define $e_8 = c_8$, and we easily verity that $A_8$ and $e_8$ satisfy the hypothesis of the proposition. 
Setting $\chi_8 = Q_{\Lambda_8}[c_8]$, and using \eqref{prokofiev}, 
the application $\Phi_{\chi_8}^1$ satisfies an estimate under the form 
$$
\Norm{\Phi_{\chi_8}^1 (z) - z}{\ell_s^1} \leq C \varepsilon^{2p - 1} \Big( \frac{N^{\alpha_r}}{\gamma}\Big)^{4p - 5}\quad \mbox{with}\quad p = 8. 
$$
Thus using \eqref{CFL2} we conclude
$$\Norm{\Phi_{\chi_8}^1 (z) - z}{\ell_s^1} \leq C \varepsilon^\frac{3}{2} \gamma N^{-\alpha_r}.$$ 
As in the previous Proposition, we verify by using Proposition \ref{georgesand} that if $z \in B_s(0,\frac{3}{2}\varepsilon)  \cap \mathcal{U}_{\gamma,\varepsilon,r,s}^N$, then  $\Phi_{\chi_8}^s(z) \in B_s(0,2\varepsilon)  \cap \mathcal{U}_{\gamma' ,\varepsilon,r,s}^N$ for all $s \in (0,1)$ where we take $\gamma' = \frac{\gamma}{2}( 2 - \frac{1}{r})$. 

By using formulas similar to \eqref{dev} and shrinking $\gamma'$ up to $\gamma/2$, we see that 
$$
H\circ \tau_2 \circ \tau_4 \circ \Phi_{\chi_8}^1  = Z_2+Z_4+Z_6 + Z_8 + \sum_{m=5}^r  Q_{\Gamma_{2m}'}[c_m']+ R^{\rm high} + R^{\rm ord},
$$
where $\Gamma_{2m}'$, $c_{2m}'$, $ R^{\rm high}$ and $ R^{\rm ord}$ satisfy the condition of the Theorem. \\
By induction, for a given $p$, let us assume that the Hamiltonian is put on normal form up to order $2p$,
$$
\sum_{m = 2}^{p-1} Z_{2m} + \sum_{m = p}^{r-1} K_{2m} + R^{\rm high} + R^{\rm ord}, 
$$
with remainder terms $ R^{\rm high}$, $ R^{\rm ord}$ satisfying \eqref{XRR}, $Z_{2m} \in \Fscr_{m,\Omega}^A$ and $K_{2m}\in \Fscr_{m,\Omega}^R$. Let us decompose $K_{2p} = Z_{2p} + R_{2p}$ where 
$Z_{2m} \in \Fscr_{p,\Omega}^A$ and $R_{2p} \in \Fscr_{p,\Omega}^R$. Then to eliminate $R_{2p}$ we construct $\chi_{2p} \in \Hscr_{p-2,\Omega}^*$ by solving the homological equation of Lemma \eqref{lemmehomo}. 
We have $\chi_{2p} = Q_{\Lambda_{2p}}[c_{2p}]$ with $\Lambda_{2p} \in \Rscr_{p,\Omega }$ and by Lemma \ref{XQG} and under the assumption \eqref{CFL}
$$
\Norm{\Phi_{\chi_{2p}}^1 (z) - z}{\ell_s^1} \leq C \varepsilon^{2p - 1} \Big( \frac{N^{\alpha_r}}{\gamma}\Big)^{4 p - 5} \leq C \varepsilon^{\frac{3}{2}} \gamma N^{-\alpha_r}. 
$$
We then easily verify that the transformation $\tau_{6} = \Phi_{\chi_8}^1  \circ \cdots \circ \Phi_{\chi_{2r}}^1 $ satisfies the conditions of the Theorem. 
\end{proof}
\subsection{Proof of the rational normal form Theorem}
\begin{proof}[Proof of Theorem \ref{mainth}]
To prove Theorem \ref{mainth} it suffices to apply Proposition \ref{prop64} at order $r' = 6r$ and to choose
\begin{equation}\label{para}\alpha_{r'}=24 r',\quad N_\eps=\eps^{-\frac{2r-2}s},\quad \gamma_\eps=\eps^{\frac{1}{3} + \frac{1}{12}}, \quad s\geq s_0(r) = 48 \times 24  \times 6 (2r-2). \end{equation}
With this choice of $N = N_\varepsilon$, we have 
$$
\varepsilon^5 N_\varepsilon^{-s} = \varepsilon^{2r + 3}  \quad \mbox{and}\quad \varepsilon^{2r'+ 1} \left(  \frac{N^{\alpha_{r'+1}}}\gamma \right)^{4r'-9} \leq \varepsilon^{2r+3} 
$$ so that the estimate \eqref{XR} is satisfied for $R = R^{\rm high} + R^{\rm ord}$ in \eqref{XRR} for $\varepsilon$ small enough (we reach the order $2r+3$ instead of $2r+1$ to normalize the constant to $1$). 

Now condition  \eqref{CFL2} can be written 
$$
\varepsilon^{\frac{1}{2} - \frac{1}{3} - \frac{1}{12}  -  24 r' \frac{2r-2}s}  \leq C^{- \frac{1}{3}}.
$$
Choosing  $\eps<\eps_0(r,s) = C^{- 16/3}$ and using the definition of $s_0$, this condition is satisfied:
$$
\varepsilon^{\frac{1}{2} - \frac{1}{3} - \frac{1}{12}  -  24 r' \frac{2r-2}s} \leq \varepsilon^{\frac3{48}} \leq C^{- \frac{1}{3}}.
$$
With these choices, Theorem \ref{mainth} holds true with
\begin{equation}\label{Vc}
\mathcal{C}_{\eps,r,s}=\mathcal{U}_{\gamma_\eps,\varepsilon,r',s}^{N^\eps}\cap B_s(0,\textstyle\frac{3}{2}\varepsilon),\qquad \tau=\tau_2 \circ \tau_4 \circ \tau_6, \quad\mbox{and}\quad \mathcal{O}_{\eps,r,s} = \tau(\mathcal{C}_{\varepsilon,r,s}), 
\end{equation}
as $\tau$ is injective on $\mathcal{C}_{\eps,r,s}$. Moreover, by Remark \eqref{TheDoors} and the previous estimates, we have 
$\mathcal{O}_{\varepsilon,r,s} \subset \mathcal{U}_{\gamma_\eps/4,\varepsilon,r',s}^{N^\eps}\cap B_s(0,4\eps)$, and  
\begin{equation}
\label{gdansk}
\Norm{\tau(z) - z}{s} \leq C \varepsilon^{\frac{3}{2}} \gamma N^{-\alpha_r'} \leq \varepsilon^{\frac32}. 
\end{equation}
\end{proof}

\section{Dynamical consequences and probability estimates}\label{dyn}
We are now in position to prove Corollary \ref{corodyn} and Theorem \eqref{clouzot}. First, we define the sets
$$
\Vc_{\varepsilon,r,s} = \mathcal{U}_{4\gamma_\eps,\varepsilon,r',s}^{N^\eps}\cap B_s(0,\textstyle\frac{1}{2}\varepsilon) \subset \mathcal{O}_{\eps,r,s}
\quad \mbox{and} \quad 
\Wc_{\varepsilon,r,s} = \tau^{-1}(\Vc_{\varepsilon,r,s}) \subset \mathcal{C}_{\varepsilon,r,s} ,
$$
 where as previously $r'=6r$.
With this definition, Estimate \eqref{probaNLS} is a consequence of Proposition \ref{counting_eps_fixed} and Proposition \eqref{musset}. Note that the condition required in this proposition is ensured (with $\gamma' = \gamma/2 = 4\gamma_\varepsilon$) under the condition \eqref{CFL2}. Note moreover that we use the term $\varepsilon^{\frac{1}{12}}$ in the definition of $\gamma$ to fix the constant to $1$ in the final probability estimate and obtain $\varepsilon^{\frac13}$. 

Similarly, \eqref{lastone} is obtained from Corollary \eqref{counting_sequence} with $\nu = \varepsilon^{\frac16}$. This proves Theorem \eqref{clouzot}. 

To prove the dynamical consequences, we note that the open set $\Wc_{\eps,r,s}$ contains the initial value in the new variable. 
Let $z \in \Vc_\varepsilon$ and $z' = \tau^{-1}(z) \in \Wc_{\varepsilon,r,s}$. 
By using \eqref{gdansk} we have $\Norm{z' - z}{s} = \Norm{z' - \tau(z')}{s} \leq C \varepsilon^{\frac{3}{2}} \gamma N^{-\alpha_r}$. 
Hence by using Proposition \ref{georgesand}, we deduce that 
$$
\Wc_{\varepsilon,r,s} \subset  \mathcal{U}_{2 \gamma_\eps,\varepsilon,r',s}^{N^\eps}\cap B_s(0,\varepsilon). 
$$
The goal of the analysis is thus to prove that the dynamics starting in $\mathcal{U}_{2 \gamma_\eps,\varepsilon,r',s}^{N^\eps}\cap B_s(0,\varepsilon)$ remains in the set $\mathcal{C}_{\eps,r,s}=\mathcal{U}_{\gamma_\eps,\varepsilon,r',s}^{N^\eps}\cap B_s(0,\textstyle\frac{3}{2}\varepsilon)$ for a time $T \leq \varepsilon^{-2r + 1}$. To prove this, 
we first recall a {\it small} lemma proved in \cite{FG10}:
\begin{lemma}
\label{petit}
let $f:\R \to \R_+$ a continuous function, and $y:\R \to \R_+$ a differentiable function satisfying the inequality
$$
\forall\, t \in \R, \quad  \frac{\dd}{\dd t} y(t) \leq 2 f(t) \sqrt{y(t)}.
$$
Then we have the estimate
$$
\forall\, t \in \R, \quad \sqrt{y(t)} \leq \sqrt{y(0)} + \int_0^t f(s) \, \dd s.
$$
\end{lemma}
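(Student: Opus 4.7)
The statement is a mild Gronwall/comparison-type result; the only subtlety is that one cannot directly differentiate $\sqrt{y(t)}$ because $y$ may vanish. The plan is therefore to regularize by a small positive parameter and then pass to the limit.

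First, I would fix an arbitrary $\varepsilon > 0$ and set $u_\varepsilon(t) := \sqrt{y(t) + \varepsilon}$, which is differentiable on $\R$ since $y$ is differentiable and $y + \varepsilon \geq \varepsilon > 0$. Differentiating gives
$$
u_\varepsilon'(t) \;=\; \frac{y'(t)}{2\sqrt{y(t) + \varepsilon}} \;\leq\; \frac{2 f(t) \sqrt{y(t)}}{2 \sqrt{y(t) + \varepsilon}} \;\leq\; f(t),
$$
where in the last inequality I use $\sqrt{y(t)} \leq \sqrt{y(t) + \varepsilon}$ together with $f(t) \geq 0$.

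Next I would integrate this pointwise bound from $0$ to $t \geq 0$ to obtain
$$
\sqrt{y(t) + \varepsilon} \;\leq\; \sqrt{y(0) + \varepsilon} + \int_0^t f(s)\, \dd s.
$$
Letting $\varepsilon \to 0^+$ and using continuity of the square root yields the stated estimate for $t \geq 0$. (The statement is only dynamically meaningful for $t \geq 0$; if $t < 0$ is intended one would reverse the time direction, but here $\int_0^t$ must be read as $\int_{\min(0,t)}^{\max(0,t)}$ and the sign of $f$ guarantees the correct direction.)

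There is no real obstacle here — the only point to handle with care is the non-differentiability of $\sqrt{y}$ at the zero set of $y$, which is why the $\varepsilon$-regularization is used rather than a direct computation of $\frac{\dd}{\dd t}\sqrt{y(t)}$. Once this is done, the conclusion follows by a one-line integration and a limit passage.
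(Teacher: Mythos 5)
Your regularization argument is correct and is essentially the standard proof: the paper itself does not reprove the lemma but defers to \cite{FG10}, where the same idea of smoothing $\sqrt{y}$ via $u_\varepsilon=\sqrt{y+\varepsilon}$, bounding $u_\varepsilon'\leq f$, integrating, and letting $\varepsilon\to 0^+$ is used. One small caveat: the estimate genuinely holds only for $t\geq 0$ (the only case the paper uses); for $t<0$ integrating $u_\varepsilon'\leq f$ over $[t,0]$ yields $u_\varepsilon(t)\geq u_\varepsilon(0)-\int_t^0 f(s)\,\dd s$, a \emph{lower} bound on $\sqrt{y(t)}$, so even your $\int_{\min(0,t)}^{\max(0,t)}$ reading does not rescue the negative-time case (take $f\equiv 0$ and $y$ decreasing), and your parenthetical claim that the sign of $f$ fixes the direction should simply be dropped.
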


\begin{proof}[Proof of Corollary \ref{corodyn}]
We use a bootstrap argument. Let us fix $r\geq 2$, $s\geq s_0(r)$ and $\eps<\eps_0(r,s)$ as in Theorem \ref{mainth}. Let $U(0)=(u_a(0))_{a\in\Z} \in \Vc_{\varepsilon,r,s}$ and $V(0) = \tau^{-1}(U(0))  = (v_a(0))_{a\in\Z} \in \Wc_{\varepsilon,r,s}$. 
By definition, we have 
$$
V(0) \in \Wc_{\varepsilon,r,s} \subset  \mathcal{U}_{2 \gamma_\eps,\varepsilon,r',s}^{N^\eps}\cap B_s(0,\varepsilon) 
$$
and let 
$$
T=\sup\{t>0\mid  V(t')  \in \mathcal{C}_{\varepsilon,r,s} \text{ for all }0\leq t'< t\}.
$$
Note that for $t < T$, we have $U(t) = \tau(V(t)) \in \mathcal{O}_{\varepsilon,r,s}$ which coincides with the solution governed by 
the Hamiltonian $H_{\rm NLS}$ by uniqueness of the solution. We are going to prove that if $t\leq\min(T,\eps^{-2r +1 })$ then  $V(t)  \in \mathcal{U}_{\gamma_\eps,\varepsilon,r',s}^{N^\eps}\cap B_s(0,\textstyle\frac{3}{2}\varepsilon) = \mathcal{C}_{\varepsilon,r,s}$ 
and then conclude to $T\geq\eps^{-2r}$ by a continuity argument. To prove this we have, in view of \eqref{Vc}, to control the small divisors \eqref{nonres1N} and \eqref{nonres2N} and the norm $\Norm{V(t)}{s}$.\\

Let $J_a(t) = |v_a(t)|^2$ denote the actions of $V(t)$. 
For $t<T$ we can use Theorem \ref{mainth} to conclude that 
\begin{align}
\label{I'}\big|\frac \dd{\dd t}J_a(t)\big|&\leq ( J_a(t))^{1/2} \left(\left|\frac{\partial R}{\partial \xi_a}(V(t))  \right| + \left|\frac{\partial R}{\partial \xi_a}(V(t))  \right| \right)\\
\label{I'2}&\leq
(J_a(t))^{1/2} \Norm{X_R(V(t))}{s} \langle a  \rangle^{-s}\leq C\eps^{2r+2}\langle a  \rangle^{-2s}.
\end{align}
Therefore  for $t<T$, we have 
\begin{equation}\label{Itilde}
|J_a(t)-J_a(0)  |\leq C T\eps^{2r+2}\langle a  \rangle^{-2s} \quad \forall a\in\Z. 
\end{equation}
Together with Proposition \ref{georgette}, 
this equation shows that for $T \leq \varepsilon^{-2r + 1}$ and under the condition \eqref{CFL2} fulfilled by $N_\varepsilon$ and $\gamma_\varepsilon$, we have $V(t) \in \mathcal{U}_{\gamma_\eps,\varepsilon,r',s}^{N^\eps}$. 

To control the norm of $V(t)$, we note that 
since  $\Norm{V(t)}{s}=\sum_{a \in \Z} \langle a \rangle^s |v_a(t)| =\sum_{a \in \Z} \langle a \rangle^s |J_a(t)|^{1/2} $ we get using \eqref{I'} and Lemma \ref{petit}  
\begin{equation*}
\Norm{V(t)}{s}\leq\Norm{V(0)}{s} +\int_0^t \|X_R(V(s))\|_sds \leq \Norm{V(0)}{s} +  t\eps^{2r +1}. 
\end{equation*}
Using \eqref{estimtau} we get for $t\leq\min(T,\eps^{-2r+1})$ and $\eps$ small enough
\begin{equation}\label{NU}
\Norm{ V(t)}{s}\leq\Norm{ V(0)}{s} + t\eps^{2r + 1}\leq \eps+ \eps^2\leq \textstyle\frac32\eps, 
\end{equation}
hence $V(t) \in \mathcal{C}_{\varepsilon,r,s}$ for $T \leq {\varepsilon^{-2r +1}}$. This shows in particular that $U(t) \in \mathcal{O}_{\varepsilon,r,s}$ on this time horizon and conclude our bootstrap argument.

Finally it remains to prove \eqref{estimua}. Let $I_a(t) = |u_a(t)|$, by
\eqref{estimtau} we get that $|J_a(t) - I_a(t)  |\leq  
C \varepsilon^{\frac{5}{2} + \frac{1}{24}}\langle a  \rangle^{-2s}$. We then deduce that for $t\leq\min(T,\eps^{-2r+2})$
\begin{align}\nonumber|I_a(t)-I_a(0) |&\leq| I_a(t)-J_a(t)  |+|\tilde I_a(t)-J_a(0)  |+|J_a(0)- I_a(0)  |\\
\label{II2}&\leq 2\eps^\frac{5}{2}\langle a  \rangle^{-2s}+T\eps^{2r+2}\langle a  \rangle^{-2s}\leq 3 \eps^{\frac{5}{2}}\langle a  \rangle^{-2s}, \end{align}
which shows \eqref{estimua} and conclude the proof of the Corollary. 
%
\end{proof}


\appendix
\section{The case of (NLSP)}\label{casNLSP}
As explain in section \ref{sketch}, the main difference between \eqref{nls} and \eqref{nlsp} appears when we calculate $Z_4$. Indeed, the resonant normal form procedure used in section \ref{killbill0} leads, in the \eqref{nlsp} case, to  the following formula (see \eqref{eqmagic} with $\hat V_a = a^2$, $a \neq 0$ and $\hat V_0 = 0$) 
$$Z_4 =  \varphi'(0)  \sum_{a\neq b \in \Z} \frac{1}{(a - b)^2} I_a I_b. 
$$
Thus the frequencies associated with this integrable Hamiltonian are  
$$
\lambda_a (I) = \frac{\partial Z_4}{\partial I_a} = 2\varphi'(0)  \sum_{b\neq a \in \Z} \frac{1}{(a - b)^2}  I_b. 
$$
For these frequencies we obtain a much better control of the small denominators that the one obtained for \eqref{nls}, in particular, contrary to the \eqref{nls} case (see \eqref{nlsbad}), the loss of derivative is independent of $s$.\\
For $\jb = (j_1,\ldots,j_{2m}) \in \U_2\times \Z$, if $j_\alpha = (\delta_\alpha,a_\alpha)$ for $\alpha = 1,\ldots,2m$, the small denominators in the \eqref{nlsp} case are given by 
$$
\omega_{\jb}(I)= \sum_{\alpha = 1}^{2m} \delta_\alpha \frac{\partial Z_4}{\partial I_{a_\alpha}} (I) = 2 \varphi'(0) \sum_{\alpha = 1}^{2m} \delta_\alpha  \sum_{b \neq a_\alpha  \in \Z} \frac{1}{(a_\alpha - b)^2}  I_b. 
$$
Let us remark that $\omega_\jb(I)$ has the same structure of the small denominator associated with $Z_6$ used to obtain non resonance estimates, except that $I_b^2$ is replaced by $I_b$ as it can be easily seen by comparing the previous formula with \eqref{nocturne}. By proceeding as in Section \ref{gene}, with the crucial use of Lemma \ref{proba_need_algebra}, we obtain the following result whose proof whose proof is left to the reader. 
\begin{lemma}
\label{lemma_nonresNLSP} Assume that $I_a$, $a\in \Z$ are independent random variable with $I_a$ uniformly distributed in $(0,\langle a \rangle^{-2s - 4})$, then
there exists a constant $c>0$ such that for all $\gamma\in (0,1)$ we have
$$
\mathbb{P}\left( \forall \kb \in \Ic, \ \length k \leq 2r \Rightarrow |\omega_{\kb}( I)| \geq \gamma \left( \prod_{\alpha=1}^{\length\kb } \langle k_\alpha \rangle^{-4} \right) \right) \geq 1 - c\gamma.
$$
\end{lemma}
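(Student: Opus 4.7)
The plan is to mimic the scheme of Lemma \ref{lemma_nonres1}, but exploit the structure specific to \eqref{nlsp}: the small denominator $\omega_\kb(I)$ is now a linear combination of the \emph{independent} variables $I_b$ themselves (not just the $I_{a_\alpha}$ appearing in $\kb$), with coefficients depending on $\kb$, and crucially these coefficients can be controlled via Lemma \ref{proba_need_algebra}.

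First I would rewrite, for $\kb=((\delta_1,a_1),\ldots,(\delta_{2m},a_{2m}))\in \Ic$ of length $2m\leq 2r$,
\[
\omega_\kb(I)=2\varphi'(0) \sum_{b\in\Z} c_b(\kb)\, I_b,\qquad c_b(\kb):=\sum_{\alpha\,:\,a_\alpha\neq b} \frac{\delta_\alpha}{(a_\alpha-b)^2}.
\]
This is a sum of independent random variables whose densities satisfy $\|f_{I_b}\|_{L^\infty}\leq \langle b\rangle^{2s+4}$ (the analogue of \eqref{gloubi}). Set the target threshold $\gamma_\kb=\gamma\prod_{\alpha=1}^{\length\kb}\langle k_\alpha\rangle^{-4}$. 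By Lemma \ref{whaou} applied with $I:=c_{b^*}(\kb)\,I_{b^*}$ isolated and $J$ the rest of the sum, for any admissible choice of $b^*$ with $c_{b^*}(\kb)\neq 0$,
\[
\mathbb{P}\bigl(|\omega_\kb(I)|<\gamma_\kb\bigr)\ \leq\ \frac{\gamma_\kb\,\|f_{I_{b^*}}\|_{L^\infty}}{|\varphi'(0)|\,|c_{b^*}(\kb)|}\ \leq\ \frac{\gamma_\kb\,\langle b^*\rangle^{2s+4}}{|\varphi'(0)|\,|c_{b^*}(\kb)|}.
\]

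The key step is to pick $b^*$ using Lemma \ref{proba_need_algebra}: there exists $b^*\in\rrbracket -3m,3m\llbracket\setminus\{a_1,\ldots,a_{2m}\}$ with
\[
|c_{b^*}(\kb)|\ \geq\ (6m)^{-4m}\prod_{\alpha=1}^{2m}\langle a_\alpha\rangle^{-2}.
\]
This is exactly where the improvement over \eqref{nls} happens: because $|b^*|\leq 3r$ is uniformly bounded, the factor $\langle b^*\rangle^{2s+4}\leq \langle 3r\rangle^{2s+4}$ is a constant $C_{r,s}$ that does \emph{not} depend on $\kb$, and in particular is independent of the large indices appearing in $\kb$. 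Combining the two displays,
\[
\mathbb{P}\bigl(|\omega_\kb(I)|<\gamma_\kb\bigr)\ \leq\ C_{r,s}\,(6r)^{4r}\,\gamma\,\prod_{\alpha=1}^{\length\kb}\langle k_\alpha\rangle^{-2}.
\]

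Finally I would sum over $\kb\in\Ic$ with $\length\kb\leq 2r$. Since $\Ic\subset\Rc$, the two zero-momentum conditions \eqref{zeromomenta} express two of the indices $a_\alpha$ as functions of the $2m-2$ remaining ones, so
\[
\sum_{\substack{\kb\in\Ic\\\length\kb\leq 2r}}\prod_{\alpha=1}^{\length\kb}\langle k_\alpha\rangle^{-2}\ \leq\ \sum_{m=1}^{r} C_m \sum_{a_1,\dots,a_{2m-2}\in\Z}\prod_{\alpha=1}^{2m-2}\langle a_\alpha\rangle^{-2}\ <\ \infty,
\]
and a union bound yields the claimed estimate with a constant $c=c(r,s)$. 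The main conceptual obstacle is really contained in Lemma \ref{proba_need_algebra}: one must find a lattice point $b^*$ of bounded modulus at which the rational function $\sum_\alpha \delta_\alpha(X-a_\alpha)^{-2}$ does not vanish and has a sufficiently large value; the remainder of the argument is a routine application of small-ball estimates plus summability from the zero-momentum relations.
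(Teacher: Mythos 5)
Your proof is correct and follows exactly the route the paper intends (the proof is left to the reader there, with the instruction to proceed as in Section \ref{gene} using Lemma \ref{proba_need_algebra}): isolate the single independent variable $I_{b^*}$ at the bounded index $b^*$ supplied by Lemma \ref{proba_need_algebra}, apply the small-ball estimate of Lemma \ref{whaou} with the density bound $\|f_{I_{b^*}}\|_{L^\infty}\leq \langle 3r\rangle^{2s+4}$, and sum via the zero-momentum relations. This mirrors the treatment of $\mathscr{E}_6$ and $\mathscr{E}_{46}$ in Propositions \ref{counting_u0_fixed} and \ref{counting_eps_fixed}, so there is nothing to add.
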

The major difference with the \eqref{nls} case is that now the small denominator do not depend on $s$ (compare with Lemma \ref{lemma_nonres1}). Hence, the construction can be performed without having to distribute the derivative and we can apply a normal form procedure using only $Z_4$ (and not $Z_4+Z_6$ as in the \eqref{nls} case). 

Following the general strategy, for $\eps,\gamma >0$, $r\geq1$, $N \geq 1$ and $s\geq0$,
 we say that  $z\in \ell_s^1$ belongs to the non resonant set $ \mathcal{U}_{\gamma,\varepsilon,r,s}$, if for all $\kb\in \Ic$ of length $\length\kb \leq 2r$ we have
$$
|\omega_\kb(I)|> \gamma  \varepsilon^2 \left( \prod_{\alpha=1}^{\length\kb } \langle k_\alpha \rangle^{-4} \right);
$$
and the  that  $z\in \ell_s^1$ belongs to the truncated non resonant set $ \mathcal{U}_{\gamma,\varepsilon,r,s}^N$, if for all $\kb\in \Ic$ of length $\length\kb \leq 2r$ such that
$\langle \mu_1(\kb)\rangle \leq N^2$,  we have 
\begin{equation}
\label{goek}
|\omega_\kb(I)|>   \gamma \varepsilon^2  N^{-16r}.
\end{equation}

An adapted Proposition \ref{musset} remains valid, namely: for $N$ large enough depending on $\varepsilon$ and on $\gamma'<\gamma$ we have $\mathcal{U}_{\gamma,\varepsilon,r,s}\subset \mathcal{U}_{\varepsilon,\gamma',r,s}^N$. Moreover, by using the previous Lemma, if $z \in \ell_s^1$ depends on random actions $I_a$ independent and uniformly distributed in $(0,\langle a \rangle^{-2s - 4})$,  there exists a constant $c>0$ such that for all $\gamma\in(0,1)$ we have
\begin{equation}
\label{gasp}
 \ \mathbb{P}\left( \forall \varepsilon>0,\ \varepsilon z\in  \mathcal{U}_{\gamma,\varepsilon,r,s} \right) \geq 1 - c\gamma.
\end{equation}
Note that the difference with Proposition 
\ref{counting_eps_fixed} is that for one choice of non resonant actions, the non resonance condition holds for all $\varepsilon$. In other words, the phenomenon of resonances between $\varepsilon$ and $I$ cannot occur in the \eqref{nlsp} case. 

The class of rational Hamiltonians we need is also simpler: we only need to consider $\Hscr_\omega$ and $\Hscr_\omega^*$ defined in Section \eqref{rational}, {\em i.e.} functionals of the form 
$$
 Q_\Gamma[c](z)= \sum_{ \ell \in \mathbb{Z}^* } c_\ell (-1)^{n_\ell} \frac{z_{\pig_\ell}}{\displaystyle \prod_{\alpha =1}^{p_{\ell}} \omega_{\kb_{\ell,\alpha}}  } .
$$
with the same condition as in the \eqref{nls} case, 
but {\em without } the restrictive condition {\bf (vi)} on the distribution of derivatives, making the proof of the Poisson bracket estimate considerably much simpler, as can be seen in the next Appendix. 

By using the estimate \eqref{goek}, we can prove an equivalent of Lemma \eqref{XQG} for this class of functional (with $\alpha_r = 16r$) and the steps of the rational normal form construction can be then followed as in Section \ref{RNF} under the same condition \eqref{CFL2}. The optimization process in $N$ and $\gamma$ can then be done in the same way. \\
In the end, the probability estimate \eqref{gasp} gives Theorem \eqref{gilles}. 

\section{Proof of Lemma \ref{cap}}\label{appendixA}

This section is devoted to the proof of Lemma \ref{cap}. As in the statement of the Lemma, let $W = \omega$ or $W = \Omega$ and let
$\Gamma = (\pig,\kbsf,\hbsf,n)\in \Hscr_{r,W}^*$ and $\Gamma = (\pig',\kbsf',\hbsf',n')\in \Hscr_{r',W}$. 

To compute the poisson bracket between  $Q_\Gamma[c]$ and $Q_{\Gamma'}[c']$, we only need to calculate the poisson brackets of the summands (see the expression \eqref{Q}). Applying the Leibniz's rule we see that,
up to combinatorial factors and finite linear combinations depending on $r$, four kind of terms appear depending on which part of the Hamiltonians the Poisson bracket applies to: 
%
%
 
\medskip 
\noindent {\bf \emph{Type I.} }
The first type of terms we consider are those where the derivatives apply only on the  numerators. They are of the form 
$$
\frac{c_\ell c_{\ell'}(-i)^{p_\ell + q_\ell + p_{\ell'} + q_{\ell'}}}{\displaystyle \prod_{\alpha =1}^{n_{\ell}} \omega_{\kb_{\ell,\alpha}}  \prod_{\alpha =n_{\ell}+1}^{p_{\ell}} \Omega_{\kb_{\ell,\alpha}} \prod_{\alpha =1}^{q_{\ell}} \Omega_{\hb_{\ell,\alpha}}
 \prod_{\alpha =1}^{n_{\ell'}} \omega_{\kb'_{\ell',\alpha}}  \prod_{\alpha =n_{\ell'}+1}^{p_{\ell'}} \Omega_{\kb'_{\ell',\alpha}} \prod_{\alpha =1}^{q_{\ell'}} \Omega_{
 \hb_{\ell',\alpha}}
 }
\{ z_{\pig_\ell},z_{\pig'_{\ell'}}\}
$$
for some $\ell$ and $\ell'$ in $\Z^*$. Let us set $\jb = \pig_\ell$ and $\jb' = \pig'_{\ell'}$, i.e. $z_{\pig_\ell} = z_{j_1}\cdots z_{j_{2m}}$ and $z_{\pig'_{\ell'}} = z_{j'_1}\cdots z_{j'_{2m'}}$. The product $\{z_{\pig_\ell},z_{\pig_{\ell'}}\}$ is a linear combination of terms of the form $z_{\jb''}$ with $\jb'' \in \mathcal{R}_{m_\ell + m_{\ell'}-1}$ . 

Up to a combinatorial factor, linear combinations and renumbering to define the application $\pig''$, we can concentrate on terms $z_{\jb''}$ with $\jb'' = \pig''_{\ell''}$ of the form 
$$
z_{j_2}\cdots z_{j_{2m}}z_{j'_2}\cdots z_{j'_{2m'}}, 
$$
provided $\bar j_1 = j'_1$. Clearly, the produced term is of the good form with $r'' = r+ r' - 1$, $n_{\ell'} = n_\ell + n_\ell'$, $q_{\ell'} = q_\ell + q_{\ell'}$ and $p_{\ell''} = p_{\ell} + p_{\ell'}$. In particular the reality condition is easily verified by considering the terms corresponding to $-\ell$ and $- \ell'$ and imposing $\overline{\pi''_{\ell''}} = \pi''_{-\ell''}$, and the conditions ${\bf (i)-(v)}$ of the definition of the class are trivially satisfied. 
We can also verify that these terms fulfill the conditions defining the subclass $\Hscr_{r'',W}$. Indeed, in the case when $W = \omega$, we have $q_{\ell''} = q_\ell + q_{\ell'} = 0$ and $n_{\ell''} = n_{\ell} + n_{\ell'} \leq 2(r +1) - 5 + 2r' - 6 = 2 (r + r') - 9 \leq 2 r'' - 7$. 
In the case $ W  = \Omega$, we can set $\alpha_i'' = \alpha_i + \alpha_i'$ for $i = 1,\ldots,4$ and $\alpha_5'' = \alpha_5 + \alpha_5'+1$, and we can easily check that the relations \eqref{grizzli} are satisfied for $\Gamma''$. Moreover, using \eqref{ilpleut} and \eqref{beaucoup}, we check that $\alpha_5'' = \alpha_5 + \alpha_5'  +1 \leq (r + 2) - 4 + r' - 4 +1 \leq r''-4$, and similarly that the three conditions in \eqref{ilpleut} are satisfied.

It remains to prove the conditions ${\bf (vi)}$ and ${\bf (vii)}$ that are the most delicate. 
We analyze different cases according to which are the largest indices among $\jb$, $\jb'$ and $\jb''$. The three main case are $\langle j_1\rangle \leq \langle \mu_3(\jb)\rangle$, $ j_1 = \mu_2(\jb)$ and $j_1 = \mu_1(\jb)$, and by symmetry, we are left to the following cases to be studied:

\begin{center}
\begin{tabular}{|c|c|c|c|c|}
\hline
\multirow{6}{*}{$\langle j_1 \rangle \leq \langle \mu_3(\jb) \rangle$} & \multirow{2}{*}{$\langle j_1' \rangle \leq \langle \mu_3(\jb') \rangle$} & $\mu_1(\jb'')  = \mu_1(\jb)$ & $\mu_2(\jb'') = \mu_1(\jb')$ & 3.3.a \\
  &  & $\mu_1(\jb'')  = \mu_1(\jb)$ & $\mu_2(\jb'') = \mu_2(\jb)$ & 3.3.b \\
  \cline{2-5}
 & \multirow{3}{*}{$j_1' = \mu_2(\jb')$} & $\mu_1(\jb'')  = \mu_1(\jb')$ & $\mu_2(\jb'') = \mu_1(\jb)$ & 3.2.a\\
 & & $\mu_1(\jb'')  = \mu_1(\jb)$ & $\mu_2(\jb'') = \mu_2(\jb)$ & 3.2.b\\
 & & $\mu_1(\jb'')  = \mu_1(\jb)$ & $\mu_2(\jb'') = \mu_1(\jb')$ & 3.2.c \\
\cline{2-5}
&  $j_1' = \mu_1(\jb')$ & $\mu_1(\jb'') = \mu_1(\jb)$ & $\mu_2(\jb'') = \mu_2(\jb)$ & 3.1 \\
\hline 
\hline
\multirow{2}{*}{$j_1 = \mu_2(\jb)$} & $j_1' = \mu_2(\jb')$ & $\mu_1(\jb'') = \mu_1(\jb)$ & $\mu_2(\jb'') = \mu_1(\jb')$ & 2.2 \\
\cline{2-5}
& \multirow{2}{*}{$j_1' = \mu_1(\jb')$} & $\mu_1(\jb'') = \mu_1(\jb)$ & $\mu_2(\jb'') = \mu_3(\jb)$ & 2.1.a \\
&  & $\mu_1(\jb'') = \mu_1(\jb)$ & $\mu_2(\jb'') = \mu_2(\jb')$ & 2.1.b \\
\hline
\hline
\multirow{2}{*}{$j_1 = \mu_1(\jb)$} & \multirow{2}{*}{$j_1' = \mu_1(\jb')$} & $\mu_1(\jb'') = \mu_2(\jb)$ & $\mu_2(\jb'') = \mu_2(\jb')$ & 1.1.a \\
&  & $\mu_1(\jb'') = \mu_2(\jb)$ & $\mu_2(\jb'') = \mu_3(\jb)$ & 1.1.b \\
\hline
\end{tabular}
\end{center}

%
%
%
%
%
%
%
%
%
%
%
%
%
%
%
%

These cases are summarized in Figure \ref{mon_beau_dessin} where we try to visualize the different configurations.

\begin{figure}
\includegraphics[scale=0.75]{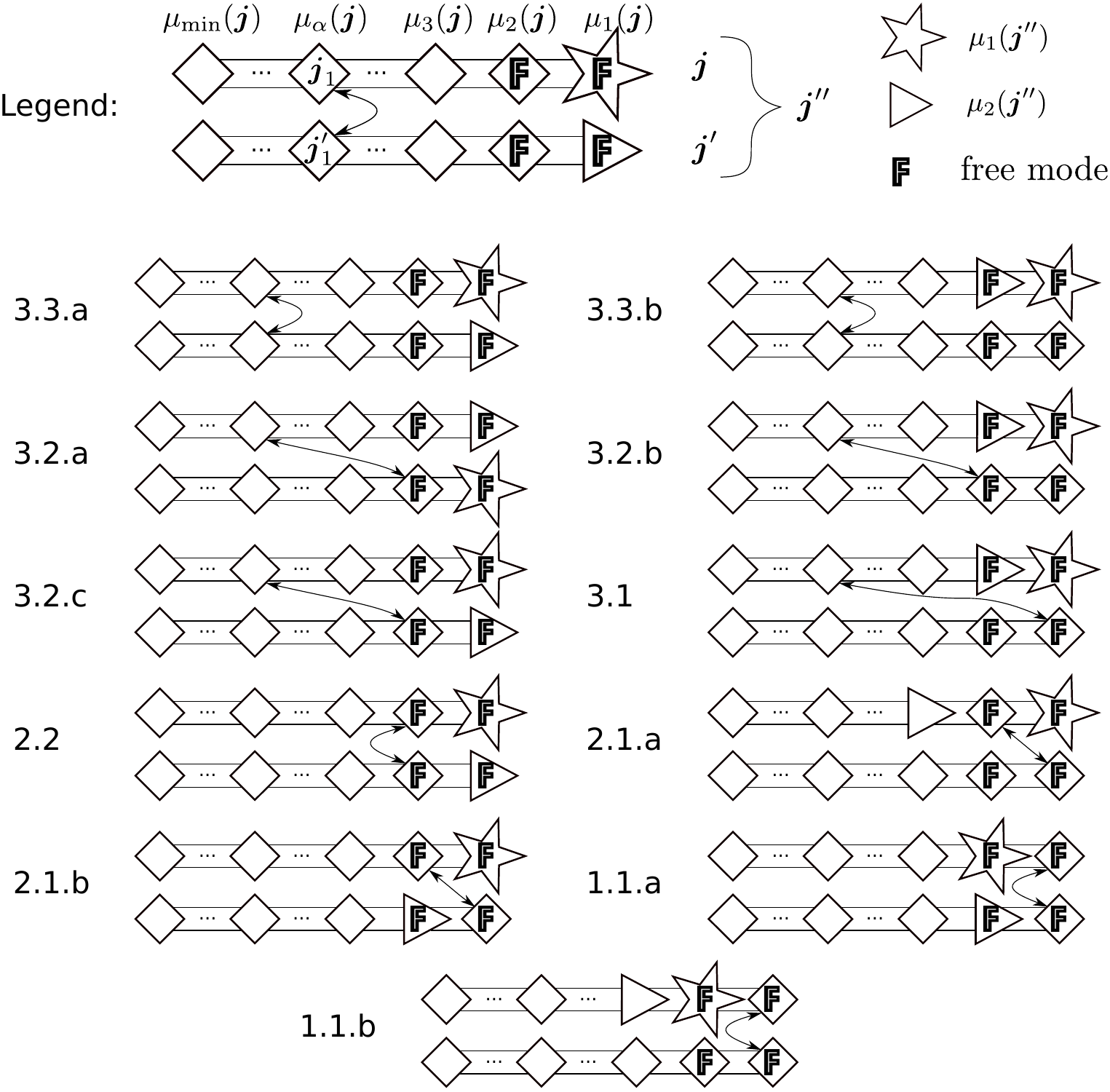}
\caption{Possible configurations arising from the calculation of $\{ z_{\jb}, z_{\jb'} \}$.}
\label{mon_beau_dessin}
\end{figure}

\medskip 
\noindent \ul{\bf Cases 3.3}. 
In these cases, we have $\langle j_1\rangle \leq \langle\mu_3(\jb)\rangle$ and $\langle j'_1\rangle \leq \langle\mu_3(\jb')\rangle$
and  ${\bf (vii)}$ is automatically satisfied as $\mu_2(\jb'')$ is always greater than $\mu_2(\jb)$ and $\mu_2(\jb')$. 

To prove ${\bf (vi)}$, we must contruct a fontion $\iota''$ that distributes the derivatives in $\jb''$ from the functions $\iota$ and $\iota'$ distributing the derivatives in $\jb$ and $\jb'$. 
Note that by induction hypothesis and the definition of the condition \eqref{crux}, the modes $\mu_1(\jb)$, $\mu_{2}(\jb)$, $\mu_1(\jb')$ and $\mu_2(\jb')$ are {\em free} in the sense that $1$ and $2$ are not in the image of $\iota$ and $\iota'$. 

We see that we can build $\iota''$ from $\iota$ and $\iota'$ easily if $j_1$ or $j_1'$ do not correspond to some $\mu_{\iota_{\alpha}}(\jb)$ or $\mu_{\iota'_{\alpha}}(\jb')$, as $j_1$ and $j_1'$ do not appear in $\jb''$. We thus see that the issue is to control $\langle j_1 \rangle = \langle j_1' \rangle$ by two free modes {\em and} by letting the two highest modes $\mu_1(\jb'')$ and $\mu_2(\jb'')$ free.  
 Indeed, in such a case, up to a reconfiguration of $\iota''$,  the relation \eqref{crux} will hold again for $\jb''$, by using the induction hypothesis on $\jb$ and $\jb'$. 
By symmetry, we thus are led to distinguish two cases: 

\medskip 
\noindent \ul{\em Case 3.3.a}.  $\mu_1(\jb'') = \mu_1(\jb)$ and $\mu_2(\jb'') = \mu_1(\jb')$. 
 In this case, $\langle j_1 \rangle \leq \langle \mu_{2}(\jb)\rangle$, $\langle j_1' \rangle \leq \langle\mu_{2}(\jb')\rangle$ and we can distribute the derivative to the free modes $\mu_2(\jb)$ and $\mu_2(\jb')$ by letting the two highest modes of $\jb''$ free. 
 
\medskip 
\noindent \ul{\em Case 3.3.b}. $\mu_1(\jb'') = \mu_1(\jb)$ and $\mu_2(\jb'') = \mu_2(\jb)$.
In this case, we use the fact that $\langle j_1 \rangle = \langle j_1' \rangle$ to control $\langle j_1 \rangle$ by $\langle \mu_2(\jb')\rangle$ and 
$\langle j_1' \rangle$ by $\langle \mu_1(\jb')\rangle$ which are modes always smaller that $\langle\mu_3(\jb'')\rangle$.


\medskip 
\noindent \ul{\bf Cases 3.2}. $\langle j_1\rangle \leq \langle\mu_3(\jb)\rangle$ and $j_1' = \mu_2(\jb')$. The main difference with the previous case is that condition {\bf (vii)} is not automatically satisfied. To prove it, we need a control of   $\langle \mu_2(\jb) \rangle$ and $\langle \mu_2(\jb') \rangle$ by $\langle \mu_2(\jb'')\rangle $. 
But on the other hand, we only need to control $\langle j_1 \rangle$ by one mode, as $j_1'$ was not used in the distribution of the derivative (condition \eqref{crux}) for $\jb'$. 
As necessarily the first to highest modes of $\jb''$ are in the set $\{ \mu_1(\jb), \mu_2(\jb),\mu_1(\jb')\}$ we are thus led to the following three cases: 

\medskip 
\noindent \ul{\em Case 3.2.a}. $\mu_1(\jb'')  = \mu_1(\jb')$ and $\mu_2(\jb'') = \mu_1(\jb)$. In this situation, we can easily control $\langle j_1 \rangle$ by $\langle \mu_2(\jb) \rangle$ which is free, and fulfill condition {\bf (vi)}. Moreover, we have $\langle\mu_2(\jb) \rangle\leq \langle\mu_1(\jb)\rangle =\langle \mu_2(\jb'')\rangle$ and $\langle \mu_2(\jb')\rangle = \langle j_1 \rangle \leq \langle\mu_2(\jb)\rangle$ and hence condition \eqref{HP_old_momo} for $\jb''$ is inherited from condition {\bf (vii)} for $\jb$ and $\jb'$. 

\medskip 
\noindent \ul{\em Case 3.2.b}. $\mu_1(\jb'')  = \mu_1(\jb)$ and $\mu_2(\jb'') = \mu_2(\jb)$. Here, we can control $\langle j_1 \rangle = \langle \mu_2(\jb') \rangle$ by $\langle\mu_1(\jb')\rangle$ which is free and smaller than $\mu_2(\jb'')$ which shows ${\bf (vi)}$. Moreover, in this situation, we have $\mu_{2}(\jb) = \mu_2(\jb'')$ and $\langle \mu_2(\jb') \rangle = \langle  j_1 \rangle \leq \langle \mu_2(\jb)\rangle =\langle \mu_2(\jb'') \rangle$ so that ${\bf (vii)}$ holds true for $\jb''$.  

\medskip 
\noindent \ul{\em Case 3.2.c}. $\mu_1(\jb'')  = \mu_1(\jb)$ and $\mu_2(\jb'') = \mu_1(\jb)$. In this situation ${\bf (vi)}$ can be easily shown as $\langle j_1 \rangle \leq \langle \mu_2(\jb)\rangle$ which free and smaller than $\langle \mu_2(\jb'')\rangle$. To prove ${\bf (vii)}$, we notice that $\mu_2(\jb) = \mu_2(\jb'')$  and $\langle \mu_2(\jb')\rangle = \langle j_1\rangle \leq \langle \mu_2(\jb) \rangle$. 

\medskip 
\noindent \ul{\bf Case 3.1}. $\langle j_1\rangle \leq \langle\mu_3(\jb)\rangle$ and $j_1' = \mu_1(\jb')$. In this situation we have $\mu_1(\jb'') = \mu_1(\jb)$ and $\mu_2(\jb'') = \mu_2(\jb)$.  As in the previous case, we only have to distribute derivative in one free mode, {\em i.e.} control $\langle j_1 \rangle$ by $\langle\mu_2(\jb')\rangle$. This is done by using the zero momentum condition: we have $\langle j_1 \rangle = \langle \mu_1(\jb') \rangle \leq C_{r'} \langle \mu_2(\jb') \rangle$ where $C_{r'}$ depends only on $r'$. This shows ${\bf (vi)}$ and ${\bf (vii)}$ is proved by noticing that $\mu_2(\jb) = \mu_2(\jb'')$ and $\langle \mu_2(\jb') \rangle \leq \langle \mu_1(\jb') \rangle  = \langle j_1 \rangle \leq \langle \mu_2(\jb) \rangle$. 

\medskip 
\noindent \ul{\bf Case 2.2}. $j_1 = \mu_2(\jb)$ and $j_1' = \mu_2(\jb')$ and by symmetry we can assume $\mu_1(\jb'') = \mu_1(\jb)$ and $\mu_2(\jb'')=\mu_1(\jb')$. 
In this case, ${\bf (vi)}$ for $\jb''$  is directly inherited from the condition for $\jb$ and $\jb'$ as $j_1$ and $j_1'$ were not involved in them. 
To prove ${\bf (vii)}$, we notice that 
$ \langle\mu_2(\jb)\rangle = \langle\mu_2(\jb')\rangle \leq \langle\mu_1(\jb')\rangle = \langle\mu_2(\jb'')\rangle$.  

\medskip 
\noindent \ul{\bf Cases 2.1}. $j_1 = \mu_2(\jb)$ and $j_1' = \mu_1(\jb')$. In this necessarily, we have $\mu_1(\jb'') = \mu_1(\jb)$. As in the previous case, ${\bf (vi)}$ is easily obtained. To prove ${\bf (vii)}$ we have to distinguish two cases: 

\medskip 
\noindent \ul{\em Case 2.1.a}. $\mu_2(\jb'') = \mu_3(\jb)$, which means in particular that $\langle\mu_2(\jb')\rangle \leq \langle\mu_3(\jb)\rangle = \langle \mu_2(\jb'')\rangle$. Moreover, by using the zero-momentum condition, we have $\langle \mu_2(\jb) \rangle  = \langle \mu_1(\jb') \leq C_{r'} \langle\mu_2(\jb')\rangle \leq C_{r'} \langle \mu_2(\jb'')\rangle$ and this shows the result. 

\medskip 
\noindent \ul{\em Case 2.1.a}. $\mu_2(\jb'') = \mu_2(\jb')$. In this situation we just need to prove that $\langle\mu_2(\jb)\rangle $ is controlled by $\langle\mu_2(\jb'')\rangle$ which is ensured by the fact that $\langle\mu_2(\jb)\rangle = \langle\mu_1(\jb')\rangle \leq C_{r'} \langle\mu_2(\jb')\rangle$ by using the zero momentum condition. 

\medskip 
\noindent \ul{\bf Cases 1.1}. $j_1 = \mu_1(\jb)$ and $j_1' = \mu_1(\jb')$. As before, ${\bf (vi)}$ is easily obtained. 
To verify ${\bf (vii)}$, by symmetry, we have only two cases to examine:
 
\medskip 
\noindent \ul{\em Case 1.1.a}. $\mu_1(\jb'') = \mu_2(\jb)$ and $\mu_2(\jb'') = \mu_2(\jb')$. In this situation, we have by using the zero momentum condition 
$\langle\mu_2(\jb)\rangle =\langle \mu_1(\jb'')\rangle \leq C_{r''}\langle \mu_2(\jb'')\rangle$ which shows ${\bf (vii)}$. 

\medskip 
\noindent \ul{\em Case 1.1.b}. $\mu_1(\jb'') = \mu_2(\jb)$ and $\mu_2(\jb'') = \mu_3(\jb)$. In this case we have necessarily $\langle\mu_2(\jb')\rangle \leq \langle\mu_3(\jb)\rangle \leq \langle\mu_2(\jb)\rangle = \langle \mu_1(\jb'') \rangle$ and we conclude by using the zero momentum condition as in the previous case. 
%
%
%
%
%
%
%

\medskip 
To conclude the analysis of this type, we just observe that \eqref{caribou} is a consequence of the fact that in all the previous cases, we have $\langle \mu_1(\jb'') \rangle \leq \max (\langle \mu_1(\jb'') \rangle, \langle \mu_1(\jb'') \rangle)$ and the definition \eqref{weight} of $\Nc_{\Gamma}(c)$.

\medskip
\noindent {\bf \emph{Type II.}}
The second type of terms we consider are those where one $\omega_{\kb_{\ell,\alpha}}$ appears in the Poisson bracket. They are of the form 
$$
\frac{c_\ell c_{\ell'}(-i)^{p_\ell + q_\ell + p_{\ell'} + q_{\ell'}}z_{\pig_\ell}}{\displaystyle \prod_{\alpha =1}^{n_{\ell} - 1} \omega_{\kb_{\ell,\alpha}}  \prod_{\alpha =n_{\ell}+1}^{p_{\ell}} \Omega_{\kb_{\ell,\alpha}} \prod_{\alpha =1}^{q_{\ell}} \Omega_{\hb_{\ell,\alpha}}
 \prod_{\alpha =1}^{n_{\ell'}} \omega_{\kb'_{\ell',\alpha}}  \prod_{\alpha =n_{\ell'}+1}^{p_{\ell'}} \Omega_{\kb'_{\ell',\alpha}} \prod_{\alpha =1}^{q_{\ell'}} \Omega_{
 \hb_{\ell',\alpha}}
 }
\{ \frac{1}{\omega_{\kb_{\ell,n_\ell}}},z_{\pig'_{\ell'}}\}
$$

Let us set $\jb^* =\kb_{\ell,n_\ell} = (j^*_1,\ldots,j_{\sharp \kb_{\ell,n_\ell}}^*)$. 
The Poisson bracket above is in general zero, except if one of the index of $\jb^*$ is conjugated to one of the index of $\jb' = \pig'_{\ell'}$. We can assume here that $\bar j^*_1 = j'_1$. In this case, we have 
\begin{equation}
\label{dnn}
\{  \frac{1}{\omega_{\jb^*}} ,z_{\jb'}\} = \pm i \frac{1}{\omega_{\jb^*}^2} z_{\jb'}
\end{equation}
So the new term is of the good form with $\jb'' = \jb \cup \jb'$ and up to a combinatorial factor, linear combinations and renumbering we can define the application $\pig''$ in such a way that  $\jb'' = \pig''_{\ell''}$. The term in the denominator has one more factor repeating the index $\kb_{\ell,n_\ell}$. Hence we have $m_{\ell''} = m_\ell + m_{\ell'}$, $n_{\ell''} = n_\ell + n_{\ell'} + 1$,  $p_{\ell''} = p_\ell + p_{\ell'} + 1$, $q_{\ell''} = q_\ell + q_{\ell'}$ and $r'' = r + r' - 1$. As in the Type I case, we can fulfill the reality condition by considering the terms corresponding to $-\ell$ and $-\ell'$ and imposing $\overline{\pi''_{\ell''}} = \pi''_{-\ell''}$, and the conditions ${\bf (i)-(v)}$ of the definition of the class are hence satisfied. 
Moreover, we can verify that these terms fulfill the conditions associated with the subclass $\Hscr_{r'',W}$. In the case when $W = \omega$, we have $q_{\ell''} = q_\ell + q_{\ell'} = 0$ and $n_{\ell''} = n_{\ell} + n_{\ell'} + 1 \leq 2(r +1) - 5 + 2r' - 5 = 2 (r + r') - 8 \leq 2 r'' - 6$. 
Moreover, in the case $ W  = \Omega$, we can set $\alpha_i'' = \alpha_i + \alpha_i'$ for $i \in\{ 1,3,4\}$ and $\alpha_i'' = \alpha_i + \alpha_i' + 1$ for $i \in \{ 2,5\}$ and check that the relations \eqref{grizzli} and \eqref{ilpleut} are satisfied for $\Gamma''$.

In this case $\mu_2(\jb'')$ is necessarily greater than $\mu_2(\jb)$ and $\mu_2(\jb')$, so that ${\bf (vii)}$ is easily proved. 

To prove ${\bf (vi)}$, we observe that the functions $\iota$ and $\iota'$ distribute the indices $\kb_{\ell,\alpha}$ and $\kb'_{\ell',\alpha}$ to some indices in $\jb$ and $\jb'$ respectively that are always lower than the third ones. Hence we have four free indices, and two new derivatives to distribute coming from the presence of the new term $\omega_{\jb^*}$.  We can distinguish two cases: 
\begin{itemize}
\item $\langle j_1'\rangle \leq \langle\mu_2(\jb')\rangle$. In this situation, we use ${\bf (vi)}$ saying that $\langle\mu_{\min}(\jb^*) \rangle \leq C \langle\mu_2(\jb)\rangle$. Hence as $\langle\mu_{\min}(\jb^*)\rangle \leq \langle j_1^*\rangle = \langle j_1' \rangle \leq \langle\mu_2(\jb')\rangle$, we can construct $\iota''$ from $\iota$ and $\iota'$ and by making $\jb^*$ correspond to the third and fourth largest indices amongst $\mu_1(\jb),\mu_2(\jb), \mu_1(\jb')$ and $\mu_2(\jb')$. 

\item $j_1' = \mu_1(\jb')$. We still have by ${\bf (vi)}$ that $\langle\mu_{\min}(\jb^*) \rangle \leq C \langle\mu_2(\jb)\rangle$. Moreover by zero momentum condition, we have $\langle\mu_{\min}(\jb^*)\rangle \leq \langle j_1^*\rangle = \langle j_1' \rangle = \langle \mu_1(\jb')\rangle \leq C_{r'}\langle\mu_2(\jb')\rangle$  and we are back the the previous case.  
\end{itemize}
\noindent {\bf {\emph{Type III.}}}
Now we consider terms where one $\Omega_{\kb_{\ell,\alpha}}$ appears in the Poisson bracket. They are of the form 
$$
\frac{c_\ell c_{\ell'}(-i)^{p_\ell + q_\ell + p_{\ell'} + q_{\ell'}}z_{\pig_\ell}}{\displaystyle \prod_{\alpha =1}^{n_{\ell}} \omega_{\kb_{\ell,\alpha}}  \prod_{\alpha =n_{\ell}+1}^{p_{\ell} - 1} \Omega_{\kb_{\ell,\alpha}} \prod_{\alpha =1}^{q_{\ell}} \Omega_{\hb_{\ell,\alpha}}
 \prod_{\alpha =1}^{n_{\ell'}} \omega_{\kb'_{\ell',\alpha}}  \prod_{\alpha =n_{\ell'}+1}^{p_{\ell'}} \Omega_{\kb'_{\ell',\alpha}} \prod_{\alpha =1}^{q_{\ell'}} \Omega_{
 \hb_{\ell',\alpha}}
 }
\{ \frac{1}{\Omega_{\kb_{\ell,p_\ell}}},z_{\pig'_{\ell'}}\}
$$

Let us set $\jb^* =\kb_{\ell,p_\ell} = (j^*_1,\ldots,j_{\sharp \kb_{\ell,p_\ell}}^*)$, $\jb = \pig_\ell$ and $\jb' = \pig'_{\ell'}$ as before. 
 To compute the Poisson bracket there two case to examine. 

\begin{itemize}
\item First, if $\overline{\jb^*} \cap \jb'= \emptyset $ then 
\[ \{  \frac{1}{\Omega_{\jb^*}} ,z_{\jb'}\} = \pm i P(I)\frac{z_{\jb'}}{\Omega_{\jb^*}^2}, \]
where, in view of the form $Z_6$ (see \eqref{Z6}) , $P$ is a polynomial of degree $1$ with real coefficients. 
Up to a combinatorial factor, linear combinations and renumbering we can define the application $\pig''$ satisfying the reality condition, and we can set 
$m_{\ell''} = m_{\ell} + m_{\ell'} + 1$,  $n_{\ell''} = n_\ell + n_{\ell'}$,  $p_{\ell''} = p_\ell + p_{\ell'}$ and $q_{\ell''} = q_\ell + q_{\ell'} + 1$. The conditions ${\bf (i)-(v)}$ of the definition of the class are hence satisfied. Moreover, we can set $\alpha_i'' = \alpha_i + \alpha_i'$ for $i \in\{ 1,2,3\}$, $\alpha_i'' = \alpha_i + \alpha_i'+1$ for $i \in\{ 4,5\}$  and check that the relations \eqref{grizzli} and \eqref{ilpleut} are satisfied for $\Gamma''$. 
Moreover, ${\bf (vii)}$ is satisfied as $\langle \mu_2(\jb)\rangle \leq \langle \mu_2(\jb'')\rangle$ and $\langle \mu_2(\jb')\rangle \leq \langle \mu_2(\jb'')\rangle$, and ${\bf (vi)}$ is also satisfied as there is no new derivative to distribute.

\item If one of the index of $\jb^*$ is conjugate of one of the index of $\jb'$, then we get 
\begin{equation}
\label{personnenelira}
 \{  \frac{1}{\Omega_{\jb^*}} ,z_{\jb'}\} = \pm i \frac{z_{\jb'}}{\Omega_{\jb^*}^2} +  \pm i P(I)\frac{z_{\jb'}}{\Omega_{\jb^*}^2}, 
 \end{equation}
where $P$ is a polynomial of degree $1$ with real coefficients. We thus treat the second term as previously. To treat the first term, we use the same analysis than the one in type II with $n_{\ell'} = n_{\ell} + n_{\ell'}$,  $p_{\ell''} = p_{\ell} + p_{\ell'} + 1$ $q_{\ell''} =q_{\ell} + q_{\ell'}$. The only difference is that we set  $\alpha_i'' = \alpha_i + \alpha_i'$ for $i \in\{ 1,2,4\}$ and $\alpha_i'' = \alpha_i + \alpha_i' + 1 $ for $i \in \{ 3,5\}$ but the distribution of derivatives is achieved in a similar way. 
\end{itemize}

\noindent {\bf \emph{Type IV.}} Finally  we consider terms where one $\Omega_{\hb_{\ell,\alpha}}$ appears in the Poisson bracket. They are of the form 
$$
\frac{c_\ell c_{\ell'}(-i)^{p_\ell + q_\ell + p_{\ell'} + q_{\ell'}}z_{\pig_\ell}}{\displaystyle \prod_{\alpha =1}^{n_{\ell}} \omega_{\kb_{\ell,\alpha}}  \prod_{\alpha =n_{\ell}+1}^{p_{\ell}} \Omega_{\kb_{\ell,\alpha}} \prod_{\alpha =1}^{q_{\ell} - 1} \Omega_{\hb_{\ell,\alpha}}
 \prod_{\alpha =1}^{n_{\ell'}} \omega_{\kb'_{\ell',\alpha}}  \prod_{\alpha =n_{\ell'}+1}^{p_{\ell'}} \Omega_{\kb'_{\ell',\alpha}} \prod_{\alpha =1}^{q_{\ell'}} \Omega_{
 \hb_{\ell',\alpha}}
 }
\{ \frac{1}{\Omega_{\hb_{\ell,q_\ell}}},z_{\pig'_{\ell'}}\}
$$

It is almost the same as type III except that to deal with the first term in the right-hand side of \eqref{personnenelira} we count one $\Omega_{\jb^*}$ in the denominator as $\Omega_{\hb_{\ell'',q_{\ell''}}}$ with $q_{\ell''} = q_{\ell} + q_{\ell'}+1$  and the other is counted as $\Omega_{\kb_{\ell'',p_{\ell''}}}$ with $p_{\ell''} = p_{\ell} + p_{\ell'}$. The analysis is then the same as in Type II for the distribution of derivatives.

\end{document}